\documentclass[a4paper]{amsart}
\numberwithin{equation}{section}

\usepackage{amssymb}
\usepackage{mathrsfs}
\usepackage{enumerate, xspace}
\usepackage{marvosym} 
\usepackage[sans]{dsfont}        
\usepackage{upgreek}
 \usepackage[backgroundcolor=blue!20!white%
             ,linecolor=blue!20!white,%
             ,textsize=footnotesize%
             ]{todonotes}
\usepackage[colorlinks]{hyperref}
\usepackage{verbatim}
\usepackage{bm}
\usepackage[all]{xy}
\usepackage{enumitem}

\ifx\XeTeXversion\undefined
\usepackage[english]{babel}
\else
\usepackage{fontspec}
\usepackage{polyglossia}
\setmainlanguage{english}
\setmainfont[
Ligatures={Common,TeX},
]{Linux Libertine}
\usepackage{microtype}
\fi

\hfuzz=15pt

\makeatletter

\newcommand\mergedsub[2]{#1\sc@sub{#2}}
\newcommand\newsubcommand[3]{\newcommand#1{\mergedsub{#2}{#3}}}
\def\sc@sub#1{\def\sc@thesub{#1}\@ifnextchar_{\sc@mergesubs}{_{\sc@thesub}}}
\def\sc@mergesubs_#1{_{\sc@thesub#1}}

\newcommand\mergedsubC[2]{#1\sc@subC{#2}}

\def\sc@subC#1{\def\sc@thesub{#1}\@ifnextchar_{\sc@mergesubsC}{_{\sc@thesub}}}
\def\sc@mergesubsC_#1{_{#1,\sc@thesub}}

\newcommand\mergedsubCR[2]{#1\sc@subCR{#2}}
\newcommand\newsubcommandCR[3]{\newcommand#1{\mergedsubCR{#2}{#3}}}
\def\sc@subCR#1{\def\sc@thesub{#1}\@ifnextchar_{\sc@mergesubsCR}{_{\sc@thesub}}}
\def\sc@mergesubsCR_#1{_{\sc@thesub,#1}}

\newcommand\mergedsup[2]{#1\sc@sup{#2}}
\newcommand\newsupcommand[3]{\newcommand#1{\mergedsup{#2}{#3}}}
\def\sc@sup#1{\def\sc@thesup{#1}\@ifnextchar^{\sc@mergesups}{^{\sc@thesup}}}
\def\sc@mergesups^#1{^{\sc@thesup,#1}}

\newcommand\mergedsupC[2]{#1\sc@supC{#2}}
\newcommand\newsupcommandC[3]{\newcommand#1{\mergedsupC{#2}{#3}}}
\def\sc@supC#1{\def\sc@thesup{#1}\@ifnextchar^{\sc@mergesupsC}{^{\sc@thesup}}}
\def\sc@mergesupsC^#1{^{#1,\sc@thesup}}

\newcommand\mergedpar[2]{\sc@parC{#1}{#2}}

\def\sc@parC#1#2{\def\sc@thebase{#1}\def\sc@thepar{#2}\@ifnextchar_{\sc@mergeparsub}{\@ifnextchar'{\sc@mergeparprime}{\@ifnextchar^{\sc@mergeparsup}{\sc@thebase(\sc@thepar)}}}}
\def\sc@mergeparsub_#1{\sc@thebase_{#1}(\sc@thepar)}
\def\sc@mergeparprime#1{\sc@thebase #1(\sc@thepar)}
\def\sc@mergeparsup^#1{\sc@thebase^{#1}(\sc@thepar)}
\makeatother

\newtheorem*{thmm}{Main Theorem}

\newtheorem{thm}{Theorem}[section]
\newtheorem{lem}[thm]{Lemma}
\newtheorem{cor}[thm]{Corollary}
\newtheorem{sublem}[thm]{Sub-lemma}
\newtheorem{prop}[thm]{Proposition}

\newtheorem{rem}[thm]{Remark}
\newtheorem{nrem}[thm]{Notational remark}


\newcommand\cC{{\mathcal C}}
\newcommand\cD{{\mathcal D}}

\newcommand\cF{{\mathcal F}}
\newcommand\cG{{\mathcal G}}
\newcommand\cH{{\mathcal H}}

\newcommand\cJ{{\mathcal J}}
\newcommand\cK{{\mathcal K}}

\newcommand\cO{{\mathcal O}}

\newcommand\cR{{\mathcal R}}

\newcommand\cT{{\mathcal T}}
\newcommand\cW{{\mathcal W}}
\newcommand\cZ{{\mathcal Z}}


\newcommand\bE{{\mathbb E}}
\newcommand\bG{{\mathbb G}}
\newcommand\bH{{\mathbb H}}
\newcommand\bL{{\mathbb L}}
\newcommand\bN{{\mathbb N}}
\newcommand\bP{{\mathbb P}}

\newcommand\bR{{\mathbb R}}
\newcommand\bS{{\mathbb S}}
\newcommand\bT{{\mathbb T}}

\newcommand\bZ{{\mathbb Z}}

\newcommand\fkL{{\mathfrak{L}}}

\newcommand\ve{\varepsilon}
\newcommand\eps{\epsilon}
\newcommand\vf{\varphi}
\newcommand\vt{\vartheta}
\newcommand\lt{\varrho}

\newcommand\nc[1]{_{\cC^{#1}}}
\newcommand\nTV{_\text{TV}}

\newcommand\bRp{\bR_+}
\newcommand\intr{\textup{int}\,}
\newcommand\cl{\textup{cl}\,}
\newcommand{\sink}[1]{\theta_{#1,-}}

\makeatletter
\newcommand{\correctPeriodSpacing}[1]{#1\@ifnextchar,{}{\@ifnextchar:{}{\ }}}
\makeatother
\newcommand{\eg}{\correctPeriodSpacing{e.g.}}
\renewcommand{\ae}{\correctPeriodSpacing{a.e.}} 
\newcommand{\ie}{\correctPeriodSpacing{i.e.}}
\newcommand{\resp}{\correctPeriodSpacing{resp.}}
\newcommand{\as}{\correctPeriodSpacing{a.s.}}
\setlist[enumerate,1]{label=\textup{(\alph*)}}


\newcommand{\ispinnedto}{is \pinnedto}
\newcommand{\bepinnedto}{be \pinnedto}

\newcommand{\arepinnedto}{are \pinnedto}

\newcommand{\pinnedto}{located at}
\newcommand{\notpinnedto}{not located at}

\newcommand\nin{\not\in}
\newcommand\Id{{\mathds{1}}}


\newcommand\unifsmall{\bar\varrho}

\newcommand\Lip{\textup{Lip}}
\newcommand{\bigtimes}{{\sf X}}
\newcommand\stable{{s}}  

\newcommand{\cond}{\big|}
\newcommand{\Const}{{C_\#}}
\newcommand{\const}{{c_\#}}
\newcommand{\Kcone}{\gamma}
\newcommand{\Kconeu}{\Kcone^\un}
\newcommand{\Kconec}{\Kcone^\nt}
\newcommand{\cone}{\mathfrak{C}}
\newcommand{\coneu}{\cone^\un}
\newcommand{\conec}{\cone^\nt}
\newcommand{\sve}{\sqrt{\ve}}
\newcommand{\vei}{\ve^{-1}}
\newcommand{\veh}{\ve^{1/2}}

\newcommand{\fa}{\forall\,}
\newcommand{\deh}{{d}}

\newcommand{\st}{\,:\,}
\newcommand{\prob}[1]{\bP\left(#1\right)}

\newcommand{\invr}{^{-1}}
\renewcommand{\ln}{\textup{log}\,}
\newcommand{\dist}{\textup{dist}}
\newcommand{\vdist}{\textup{dist}_{\textup{V}}}
\newcommand{\wDist}{\textup{d}_\text{W}}
\newcommand{\supp}{\textup{supp}\,}
\newcommand{\pint}[1]{{\lfloor#1\rfloor}}
\newcommand{\npi}{\boldsymbol\pi}
\newsubcommand{\npiA}{\boldsymbol\pi}{\alphaset}
\newcommand{\pres}{\gamma}

\newcommand{\alphaset}{\mathcal{A}}
\newcommand{\holexp}{\beta}
\newcommand{\holexpa}{\alpha}
\newcommand{\holexpb}{\beta}
\newcommand{\prodSet}[2]{#1\times#2}
\newcommand{\prodElement}[2]{(#1,#2)}
\newcommand{\malpha}{\mathscr{A}}

\newcommand{\invariant}{X}      
\newcommand{\trapa}{\cT}
\newsubcommandCR{\trap}{\trapa}{\aeps}
\newsubcommandCR{\ttrap}{\tilde\trapa}{\aeps}
\newsubcommandCR{\trapp}{\trapa}{\aeps'}

\newcommand{\avgtheta}[1]{\theta^*_{#1}}
\newcommand{\avgthetaa}[1]{\theta^{*,\privatea}_{#1}}

\newcommand{\fpath}{h}
\newcommand{\dfpath}{\boldsymbol\partial \fpath}
\newcommand{\reachPrivate}{{A}}
\newcommand\freach[1]{\reachPrivate^+_{\aeps,#1}}
\newcommand\freachp[1]{\reachPrivate^+_{\aeps',#1}}
\newcommand\freacha[1]{\reachPrivate^+_{#1}}
\newcommand\fbreach[1]{\reachPrivate^\pm_{\aeps,#1}}

\newcommand\breach[1]{\reachPrivate^-_{\aeps,#1}}
\newcommand\breachp[1]{\reachPrivate^-_{\aeps',#1}}
\newcommand\breacha[1]{\reachPrivate^-_{#1}}

\newcommand\Omp{\Ompx\aeps}
\newcommand\Omm{\Ommx\aeps}
\newcommand{\Ompx}[1]{\Omega^+_{#1}}
\newcommand{\Ommx}[1]{\Omega^-_{#1}}
\newcommand{\ho}{\hat\omega}

\newcommand{\ksp}{{k_*}}

\newcommand{\fbas}[1]{I_{#1,-}}
\newcommand{\bbas}[1]{I_{#1,+}}

\newcommand\numToPrime[1]{%
  \ifnum0=#1\relax%
  \else%
  \ifnum1=#1\relax%
  '\else%
  \ifnum2=#1\relax%
  ''\else%
  \ifnum3p=#1\relax%
  '''\else%
  ^{(#1)}%
  \fi\fi\fi\fi} 
\newcommand\dtt[1]{D_1\numToPrime{#1}}
\newcommand\dttrho[1]{D_2\numToPrime{#1}}

\newcommand\zeros{\textup{Z}}
\newsubcommandCR\nz{n}{\zeros}
\newcommand\nTraps{n_\trapa}

\newcommand{\iid}{\xi}
\newcommand{\iidb}{\eta}
\newcommand{\siid}{\mathbf{\Xi}}

\newcommand{\siidb}{\mathbf{H}}
\newcommand{\counter}{\cG}
\newcommand{\butime}{U} 

\newcommand{\clyapPrivate}{\psi}
\newcommand{\clyap}{\clyapPrivate_*}
\newcommand{\bclyap}{\bar\clyapPrivate_*}
\newcommand{\clyapReg}{\clyapPrivate}
\newcommand{\bclyapReg}{\bar\clyapPrivate}
\newcommand{\clyapn}[1]{\clyapPrivate_{#1}}
\newcommand{\slim}{s_*}
\newcommand{\leftoverm}{\varkappa}
\newcommand{\NCoup}{N}

\newcommand{\qh}{R}

\newcommand{\happy}{H}
\newcommand{\hhappy}{\hat\happy}

\newcommand{\sad}{S}
\newcommand{\uhappy}{\bH}
\newcommand{\usad}{\bS}
\newcommand{\uhhappy}{\hat\uhappy}

\newcommand{\lyapFunc}{\mathcal V}

\newcommand{\un}{\textup{u}}
\newcommand{\nt}{\textup{c}}

\newcommand{\couplingTime}{\cK}
\newcommand{\Bs}{\textup{B}}  
\newcommand{\RBs}{\cR_\Bs}
\newcommand{\Cp}{\textup{C}} 
\newcommand{\TCp}{\cR_\Cp}
 
\newcommand{\Rmd}{\textup{R}}
\newcommand{\Uc}{\textup{U}}
\newcommand{\Rd}{{\textup{U}*}}
\newcommand{\Rg}{\textup{R}} \newcommand{\NRg}{{N_{\Rg}}}

\newcommand{\aeps}{\eps}
\newcommand{\tmod}{\textup{mod}} 

\newsubcommand{\TBas}{T}{I}

\newsubcommand{\TTrap}{T}{\trapa}
\newsubcommand{\TForb}{T}{\text{F}}
\newcommand{\Cn}{\textup{S}}
\newcommand{\NCn}{{N_{\Cn}}} \newsubcommand{\TCn}{T}{\Cn}   
\newcommand{\Sl}{\textup{A}}
\newcommand{\TSl}{\couplingTime_\Sl} \newcommand{\RSl}{\cR_\Sl} 

\newcommand{\Rdp}{\cR_\text{D}}
\newcommand{\Rgen}{\cR}
\newcommand{\Tgen}{\couplingTime}
\newcommand{\RCp}{\couplingTime_{\Cp}}
\newcommand{\Mhr}{N_0}\newcommand{\Thr}{T_0}
\newcommand{\Nesc}[1]{N_{1}} \newcommand{\Tesc}[1]{T_{1}}
\newcommand{\Leb}{\textup{Leb}}
\newcommand{\fm}{\nu}
\newcommand{\fmm}{Z}

\newcommand{\expb}{\Psi} 
\newcommand{\expoCloseness}{\tau}

\newcommand{\smallness}{\gamma}
\newcommand{\smallnesst}{\hat\gamma}

\newcommand{\closeness}{\varrho}

\newcommand\admissiblep[2]{admissible \tpath{#1}{#2}}
\newcommand\eadmissiblep[2]{$\aeps$-admissible \tpath{#1}{#2}}
\newcommand\tpath[2]{$(#1,#2)$-\hspace{0pt}path}
\newcommand\couple{couple}\newcommand\couples{couples}%

\newcommand{\expo}[1]{\exp(#1)}
\newcommand\shiftPar{\kappa}
\newcommand\deviation{\Delta}
\newcommand\Var{\boldsymbol{\upsigma}} 
\newcommand\bVar{\boldsymbol{\hat{\upsigma}}}

\newcommand{\couplingmu}{\boldsymbol\mu}

\newcommand{\privateFell}{\ell} 
\newcommand{\privateFellC}{\ellC}
\newcommand{\fell}{\privateFell}

\newcommand{\fellC}{\privateFellC}
\newcommand{\fellCi}{\privateFellC^{\privateIndex{i}}}
\newcommand{\fellCa}{\mergedsupC\privateFellC{\privatea}}
\newcommand{\fellCb}{\mergedsupC\privateFellC{\privateb}}
\newcommand{\fellf}[1]{\mergedpar{\privateFell}{#1}}

\newcommand{\fellaf}[1]{\mergedpar{\privateFell^{\privatea}}{#1}} 

\newcommand{\fellbf}[1]{\mergedpar{\privateFell^{\privateb}}{#1}} 

\newcommand{\fellCf}[1]{\mergedpar{\privateFellC}{#1}}
\newcommand{\fellCfa}[1]{\mergedpar{\mergedsupC{\privateFellC}{\privatea}}{#1}}
\newcommand{\fellCfi}[1]{\mergedpar{\privateFellC^{i}}{#1}}
\newcommand{\fellCfb}[1]{\mergedpar{\privateFellC^{\privateb}}{#1}}

\newcommand{\SFF}[2]{(#2,#1)}
\newsubcommand{\liftMap}{\hat F}{\ve}
\newcommand{\alphaMapPrivate}{\boldsymbol\alpha}
\newcommand{\alphaMap}[1]{\alphaMapPrivate_{#1}}
\newcommand{\inj}{\boldsymbol\iota}
\newcommand{\privateIndex}[1]{#1}

\newcommand{\privatea}{\privateIndex{0}}
\newcommand{\privateb}{\privateIndex{1}}
\newcommand{\privatei}{\privateIndex{i}}

\newcommand{\pFve}{\mergedsub{F}{\ve}}

\newcommand{\spc}[1]{c_#1}

\newcommand{\eqc}[1]{\left[#1\right]}
\newcommand{\stdf}{\fkL}
\newcommand{\subfamily}{\subset}

\newcommand{\stdfSet}[2]{\bL_{#1}\ifx&#2&\else(#2)\fi}
\newcommand{\stdpSet}[1]{L_{#1}}
\newsupcommandC{\stdfa}{\stdf}{\privatea}
\newsupcommandC{\stdfb}{\stdf}{\privateb}
\newsupcommandC{\stdfi}{\stdf}{\privateIndex{i}}
\newsupcommand{\stdfpa}{\stdf}{\prime\privatea}
\newsupcommand{\stdfpb}{\stdf}{\prime\privateb}
\newsupcommand{\stdfpi}{\stdf}{\prime\privateIndex{i}}

\newsupcommandC{\tstdfa}{\tilde\stdf}{\privatea}
\newsupcommandC{\tstdfb}{\tilde\stdf}{\privateb}
\newsupcommandC{\tstdfi}{\tilde\stdf}{\privateIndex{i}}
\newcommand{\stdfC}{\duoPrivate{\stdf}{2}}
\newsupcommandC{\stdfCa}{\stdfC}{\privatea}
\newsupcommandC{\stdfCb}{\stdfC}{\privateb}
\newsupcommandC{\stdfCi}{\stdfC}{\privatei}
\newcommand{\tstdfC}{\widetilde\stdfC{}}

\newsupcommandC{\bstdfa}{\bar\stdf}{\privatea}

\newcommand{\privateStepSeq}[3]{#1^#3_{#2}}
\newcommand{\coupledStepSeq}[2]{\privateStepSeq{#1}{#2}{\Cp}}
\newcommand{\uncoupledStepSeq}[2]{\privateStepSeq{#1}{#2}{\Uc}}

\newcommand\prepC[1]{\left(#1\right){}}
\newcommand{\bootstrapped}[2]{#1^\Bs}
\newcommand{\notBootstrapped}[2]{#1^\Rmd}

\newcommand{\datingC}[2]{{#1}^{[#2]}}
\newcommand{\availableC}[2]{{#1}^{[#2]*}}

\newcommand{\coupledStepIniPrivate}[2]
{#1|_{#2}}

\newcommand{\auxFamily}[2]{#1_{[#2]}}

\newcommand{\MCS}[1]{M_{[#1]}}
\newcommand{\MUS}[1]{M_{[#1]*}}

\newcommand{\bGbb}{\bG^\privateb}
\newcommand{\bGs}[1]{\bG^{\privateIndex{#1}}}
\newcommand{\Ga}{G^\privatea}
\newcommand{\Gb}{G^\privateb}
\newcommand{\Gi}{\bG^\privateIndex{i}}
\newcommand{\GRi}{\bG^\privateIndex{i}_\textup{R}}
\newcommand{\GLi}{\bG^\privateIndex{i}_\textup{L}}

\newcommand{\bGa}{\bG^\privatea}
\newcommand{\bGb}{\bG^\privateb}
\newcommand{\bGi}{\bG^\privateIndex{i}}

\newcommand{\Gs}[1]{G^{\privateIndex{#1}}}
\newcommand{\dGs}[1]{G^{\privateIndex{#1}}{}'}

\newcommand{\duoPrivate}[2]{\underline{#1\mkern-#2mu}\mkern#2mu}

\newsupcommandC{\ella}{\ell}{\privatea}
\newsupcommandC{\ellb}{\ell}{\privateb}
\newcommand{\ellC}{\duoPrivate{\ell\,}{4}}
\newsupcommandC{\ellCa}{\ellC}{\privatea}
\newsupcommandC{\ellCb}{\ellC}{\privateb}
\newsupcommandC{\ellCi}{\ellC}{\privatei}

\newsupcommandC{\elli}{\ell}{\privateIndex{i}}
\newsupcommandC{\gapi}{\gap}{\privateIndex{i}}

\newcommand{\ellLi}{\ell_\textup{L}^\privateIndex{i}}
\newcommand{\ellRi}{\ell_\textup{R}^\privateIndex{i}}

\newcommand{\rhoi}{\rho^\privateIndex{i}}
\newcommand{\rhoLi}{\rho^\privateIndex{i}_\textup{L}}
\newcommand{\rhoRi}{\rho^\privateIndex{i}_\textup{R}}
\newcommand{\rhoa}{\rho^\privatea}
\newcommand{\rhob}{\rho^\privateb}
\newcommand{\ua}{u^\privatea}
\newcommand{\ub}{u^\privateb}
\newcommand{\ui}{u^\privateIndex{i}}
\newcommand{\pa}{p^\privatea}
\newcommand{\pb}{p^\privateb}

\newcommand{\pii}{p^\privateIndex{i}} 
\newcommand{\alphaseti}{\alphaset^\privateIndex{i}}
\newsupcommandC{\alphaseta}{\alphaset}{\privatea}
\newsupcommandC{\alphasetb}{\alphaset}{\privateb}
\newsupcommandC{\alphasetg}{\alphaset}{\text{g}}
\newsupcommandC{\balphasetg}{\bar\alphaset}{\text{g}}
\newsupcommandC{\balphasetng}{\bar\alphaset}{\text{b}}
\newsupcommandC{\alphasetng}{\alphaset}{\text{b}}

\newcommand{\gap}{U}

\newcommand{\fmi}{\fm^\privateIndex{i}}

\newsupcommandC{\fma}{\fm}{\privatea} 
\newsupcommandC{\fmb}{\fm}{\privateb}

\newcommand{\mi}{m^\privateIndex{i}}
\newcommand{\mLi}{m_\textup{L}^\privateIndex{i}}
\newcommand{\mLa}{m_\textup{L}^\privatea}
\newcommand{\mRi}{m_\textup{R}^\privateIndex{i}}
\newcommand{\mRa}{m_\textup{R}^\privatea}
\newcommand{\ma}{m^\privatea}
\newcommand{\mb}{m^\privateb}

\newcommand{\consth}{\cD}
\newcommand{\caa}{c_*}
\newcommand{\aaa}{a_*^\privatea} 
\newcommand{\ab}{a_*^\privateb}
\newcommand{\ba}{b_*^\privatea}
\newcommand{\bb}{b_*^\privateb}
\newcommand{\ai}{a_*^\privateIndex{i}}
\newcommand{\bi}{b_*^\privateIndex{i}}
\newcommand{\xii}{x^\privateIndex{i}}
\newcommand{\xa}{x^\privatea}
\newcommand{\xb}{x^\privateb}

\newcommand{\thetaa}{\theta^\privatea}
\newcommand{\thetab}{\theta^\privateb}

\newcommand{\VH}[1]{W_{#1,-}}
\newcommand{\VS}[1]{W_{#1,+}}
\newcommand{\VSshort}{W_{+}}
\newcommand{\fuzz}{\varkappa}
\newcommand{\fuzziness}{(1/2,2)}
\newcommand{\soRad}{c_S}

\newcommand{\holoDiff}{{h}}
\newcommand{\holoMap}{\cH}

\newcommand{\mPrivate}{m}
\newsubcommand{\mC}{\mPrivate}{\Cp}
\newsubcommand{\msC}{\tilde{\mPrivate}}{\Cp}

\newcommand{\mB}{m_\Bs}

 \newcommand{\MCp}[1]{M_{\Cp_{#1}}}


\newcommand{\tI}{\textrm{I}} \newcommand{\tII}{\textrm{I}\textrm{I}}\newcommand{\tIII}{\textrm{I}\textrm{I}\textrm{I}}
\newcommand{\largestart}{\bar k}
\newcommand\slo{z}



\begin{document}
\title[Statistical properties]{Statistical properties of mostly contracting fast-slow
  partially hyperbolic systems.}
  \author{Jacopo De Simoi}
\address{Jacopo De Simoi\\
  Department of Mathematics\\
  University of Toronto\\
  40 St George St. Toronto, ON M5S 2E4} \email{{\tt jacopods@math.utoronto.ca}}
\urladdr{\href{http://www.math.utoronto.ca/jacopods/}{http://www.math.utoronto.ca/jacopods}}
\author{Carlangelo Liverani}
\address{Carlangelo Liverani\\
  Dipartimento di Matematica\\
  II Universit\`{a} di Roma (Tor Vergata)\\
  Via della Ricerca Scientifica, 00133 Roma, Italy.}  \email{{\tt liverani@mat.uniroma2.it}}
\urladdr{\href{http://www.mat.uniroma2.it/~liverani/}{http://www.mat.uniroma2.it/~liverani}}

\thanks{This work would not exist without the many and fruitful discussions which both
  authors had with Dmitry Dolgopyat, who would very well deserve to be listed among the
  authors.  The authors are also glad to thank Piermarco Cannarsa, Bastien Fernandez, Ian
  Morris, Christophe Poquet and Ke Zhang for their very useful comments. We also thank the
  anonymous referees for many very helpful suggestions among which to include the examples
  and discussion in Section~\ref{sec:examples} and for pointing out several imprecisions
  in a previous version.  This work was supported by the European Advanced Grant
  Macroscopic Laws and Dynamical Systems (MALADY) (ERC AdG 246953) and by NSERC. Both
  authors are pleased to thank the Fields Institute in Toronto, Canada, (where this work
  started) for the excellent hospitality and working conditions provided during the spring
  semester 2011.}

\begin{abstract}
  {\ifx\XeTeXversion\undefined \relax \else \normalsize \small 
    \fi%
    We consider a class of $\cC^{4}$ partially hyperbolic systems on $\bT^2$ described by
    maps $F_\ve(x,\theta)=(f(x,\theta),\theta+\ve\omega(x,\theta))$ where
    $f(\cdot,\theta)$ are expanding maps of the circle.  For sufficiently small $\ve$ and
    $\omega$ generic in an open set, we precisely classify the SRB measures for $F_\ve$
    and their statistical properties, including exponential decay of correlation for
    H\"older observables with explicit and nearly optimal bounds on the decay rate.}
\end{abstract}
\keywords{Averaging, partially hyperbolic, decay of correlations, metastability}
\subjclass[2000]{37A25, 37C30, 37D30, 37A50, 60F17}
\maketitle
\section{Introduction}
There has been a lot of attention lately to the properties of {\em partially hyperbolic}
systems and their perturbations. The main emphasis has been on geometric properties and on
stable ergodicity. In the latter field many deep results have been obtained starting
with~\cite{MPS94, pugh-shub97}. Nevertheless, it is well known, at least since the work of
Krylov~\cite{Krylov79}, that for many applications ergodicity is not sufficient and some
type of mixing (usually in the form of effective quantitative estimates) is of paramount
importance. Unfortunately, very few results are known regarding stronger statistical
properties for partially hyperbolic systems.  More precisely, we have some results in the
case of mostly expanding central direction~\cite{ALP05}, and mostly contracting central
direction~\cite{Dimacontract, deCastro02}.  For central direction with zero Lyapunov
exponents (or close to zero) there exist quantitative results on exponential decay of
correlations only for group extensions of Anosov maps and Anosov flows~\cite{Dima-group,
  Che98, Dima98, Liverani04, Tsujii10}, but none of them apply to an open class (although
some form of rapid mixing is known to be typical for large classes of
flows~\cite{FMT07, Melbourne}). It would then be of great interest in the field of Dynamical
Systems, but also, e.g., for {\em non-equilibrium Statistical Mechanics}, to extend the
class of systems for which statistical properties are well understood. See~\cite{pesin10,
  liverani10} for a discussion of some aspects of these issues and~\cite{ruelle} for an
interesting application to non-equilibrium Statistical Mechanics.

Another argument of renewed interest is {\em averaging theory}, due to new powerful
results~\cite{DimaAveraging, Dima12} and, among others, new applications of clear
relevance for non-equilibrium Statistical Mechanics~\cite{Dimaliverani}. Yet, averaging
theory only provides information on a given time scale; a natural and very relevant
question is {\em what happens at longer, possibly infinitely long, time scales}. Such
information would be encoded in the SRB measure and its statistical properties.  Hence we
have a natural connection with the above mentioned open problem in partially hyperbolic
systems (as indeed the slow variables can often be considered as central directions).

The above program can be carried out for deterministic systems subject to small random
perturbations (\eg stochastic differential equations with vanishing diffusion
coefficient), where the fast variable (modeled by Brownian Motion), is (in some sense)
infinitely fast~\cite{WeFr}.  In our setting, on the one hand the motion of the fast
variable is deterministic, although chaotic; on the other hand, the motion of the slow
variable is not hyperbolic (hence one cannot implement strategies based on the strong
chaoticity of the unperturbed motion and the essential irrelevance of the perturbation,
where many powerful technical tool are available, starting with~\cite{Kifer88}). It is
then not so surprising that a preliminary step needed to carry out the above program is to
establish, in a very precise technical sense, to which extent the motion of the fast
variable can be confused with the motion of a random variable. In particular, this
requires to go well beyond the known results on averaging and deviations from the average
that can be found in~\cite{Kifer92, DimaAveraging}.  One needs the analog of a Local
Central Limit Theorem for the process of the fluctuations around the average. This is in
itself a non trivial endeavor which has been first accomplished, for a simple but relevant class of
systems, in~\cite{DeL1}.

Finally, in analogy with the stochastic case, see~\cite{WeFr}, one can expect {\em
  metastable} behavior.\footnote{\label{foot:meta} By a {\em metastable} system here we
  mean a situation in which two time scales are present: one, the short one, in which the
  system seems to have several invariant measures, and hence to lack ergodicity, and a
  longer time scale in which it turns out that the system has indeed only one relevant,
  mixing, invariant measure. This can be seen experimentally by the presence of two time
  scales in the decay of correlations.} Indeed, metastability is a phenomenon that has
been widely investigated in the stochastic setting, see~\cite{Olivieri-Vares05} for a
detailed account. Yet, to our knowledge, no results whatsoever exist in the deterministic
setting. The strongest results in such a direction can be found in~\cite{Kifer09} where it is
proven, for a fairly large class of systems, that the transition between basins of
attractions takes place only at exponentially long time scales, thus one has a clear
indication of the existence of, at least, two time scales in such systems. Yet, the
results in~\cite{Kifer09} are not sufficient to investigate the longer times needed to
establish a full metastability scenario (in the sense of Footnote~\ref{foot:meta}). It is
then natural to ask if metastability results hold in the present deterministic setting.
Of course, to answer to such questions, one needs to combine good {\em Large Deviation
Estimates}\footnote{ These were first derived in~\cite{Kifer09}. But a much more refined
  and quantitative version can be found in~\cite{DeL1}.} with a
precise quantitative understanding of the mixing properties of the local dynamics. This is
the topic of this paper and it clarifies the connection of metastability with the above
mentioned general problems. Accordingly, metastability (together with partial
hyperbolicity, non-equilibrium statistical mechanics and averaging) constitutes a fourth
natural and important line of research among the ones that motivate and converge in this
paper.

To carry out the above research program it turns out that a preliminary understanding of
the long time properties of the averaged motion is necessary. In general, this is an
impossible task, since the averaged system can be essentially any ordinary differential
equation (ODE). To simplify matters, as a first step, we consider the simplest possible
averaged dynamics: a one dimensional ODE on the circle with finitely many, non degenerate,
equilibrium points.
\newpage
\section{The model and the results}\label{sec:results}
\subsection{Our model}
Let us now introduce the class of systems we are going to investigate.  For $\ve>0$ we
consider the maps $F_\ve\in\cC^{4}(\bT^2,\bT^2)$ defined by
\begin{equation}\label{eq:map}
  F_\ve(x,\theta)=(f(x,\theta),\theta+\ve \omega(x,\theta) \mod 1)
\end{equation}
where $f$ and $\omega$ are both $\cC^{4}$ functions.  We assume that
$f(\cdot,\theta) = f_\theta:\bT\to\bT$ is an orientation-preserving expanding map for each
$\theta\in\bT$; moreover, by possibly replacing $F_\ve$ by a suitable iterate, we will always
assume that $\partial_x f\ge\lambda>2$.
\begin{rem}
  In the sequel, we will take $\ve$ to be fixed and sufficiently small depending on
  $f$ and $\omega$.  We could indeed regard~\eqref{eq:map} as an
  arbitrary perturbation of the map $\widetilde F(x,\theta) = (\tilde f(x,\theta),\theta)$ by
  $\ve(g(x,\theta),\omega(x,\theta))$; in fact, since a sufficiently small perturbation of
  a family of expanding maps is still a family of expanding maps, one could always
  set $f=\tilde f+\ve g$.  However, in order to treat this slightly more general case, we would need to show
  that our conditions on the smallness of $\ve$ depend on $\omega$
  uniformly in a neighborhood of $\tilde f$.  We do not pursue this for sake of simplicity.
\end{rem}
Since $f_\theta$ are expanding maps of the circle, there exists a unique family of
absolutely continuous (SRB) $f_\theta$-invariant probability measures whose densities we
denote by $\rho_\theta$.  By our regularity assumptions on $F_{\ve}$ it follows (see
e.g.~\cite[Section~8]{GL06}) that $\rho_\theta$ is a $\cC^3$-smooth family of
$\cC^3$-densities.

Let us recall a few well-known definitions: a function$\phi\in\cC^0(\bT)$ is said to
be a \emph{(continuous) coboundary (with respect to a map $f:\bT\to\bT$)} if there exists
$\beta\in\cC^0(\bT)$ so that
\begin{align*}
\phi=\beta-\beta\circ f.
\end{align*}
Two functions $\phi_1,\phi_2\in\cC^0(\bT)$ are said to be \emph{cohomologous (with respect
  to $f$)} if their difference $\phi_2-\phi_1$ is a coboundary (with respect to $f$).

Our first assumption on $F_\ve$ is:
\begin{enumerate}[start=0,label=\textup{(A\arabic*)},ref=(A\arabic*)]
\item\label{a_noCobo} for each $\theta\in\bT$, the function $\omega(\cdot,\theta)$ is not
  cohomologous to a constant function with respect to $f_\theta$.
\end{enumerate}
Let us now define $\bar\omega(\theta)=\int_{\bT}\omega(x,\theta)\rho_\theta(x) \deh x$.
Observe that our earlier considerations concerning the smoothness of the family $\rho_\theta$
imply that $\bar\omega\in\cC^3(\bT)$.

Our second standing assumption reads
\begin{enumerate}[label=\textup{(A\arabic*)},,ref=(A\arabic*),resume]
  \item\label{a_discreteZeros} $\bar\omega$ has a non-empty discrete set of non-degenerate zeros.
\end{enumerate}
In particular, we assume the set of zeros to be given by
${\{\theta_{i,\pm}\}}_{i\in\bZ/\nz\bZ}$ with $\nz\in\bN$, $\bar\omega'(\theta_{i,+})>0$ and
$\bar\omega'(\theta_{i,-})<0$; we assume, having fixed an orientation of $\bT$, that the
indexing is so that for any $k$, $\theta_{k,+}<\theta_{k,-}<\theta_{k+1,+}$, where all
indices $k$ are taken $\bmod\,\nz$.

In Section~\ref{s_geometry} we will see that the map $F_\ve$ has an invariant center
distribution $(\slim(x,\theta),1)$.  Let us now introduce the function
\begin{align}\label{e_definitionPsi}
  \clyap(x,\theta)=\partial_\theta\omega(x,\theta)+\partial_x
  \omega(x,\theta) \slim(x,\theta)
\end{align}
together with its average
$\bclyap(\theta)=\int_{\bT}\clyap(x,\theta)\rho_{\theta}(x)\deh x$.  As it is made clear
by~\eqref{eq:central-lyap} and subsequent discussion, $1+\ve\clyap $ is the one
step-contraction (or expansion) in the center direction.  Remark that the system is
non-uniformly hyperbolic and it is far from obvious how to compute the central Lyapunov
exponent for Lebesgue-\ae point.  Our third assumption will, eventually, allow us
to prove that the center Lyapunov exponent is Lebesgue-\as negative:
\begin{enumerate}[label=\textup{(A\arabic*)},,ref=(A\arabic*),start = 2]
  \item\label{a_almostTrivial}\label{a_almostTrivialp} $\max_{k\in\{1,\cdots,\nz\}}\bclyap(\theta_{k,-}) = -1$.
\end{enumerate}
\begin{rem}
  Observe that if $\max_{k\in\{1,\cdots,\nz\}}\bclyap(\theta_{k,-}) < 0$, it is always
  possible to rescale\footnote{ That is, for $\varrho_r\ne0$, we let
    $\omega\mapsto\varrho_r\omega$ and $\ve\mapsto\varrho_r\invr\ve$ so that the product
    $\ve\omega$ is left unchanged, together with all other dynamically defined quantities,
    e.g. $\slim(\theta)$ (see~\eqref{eq:central-lyap-s} and following equations).  Observe
    that under this rescaling,~\eqref{e_definitionPsi} gives
    $\clyap\mapsto\varrho_r\clyap$.} $\omega$ and $\ve$ so that~\ref{a_almostTrivial}
  holds.  In other words, the $-1$ on the right hand side is just a normalization which
  can be achieved without loss of generality.
\end{rem}
\begin{rem}
  Observe moreover that one can explicitly compute an arbitrarily precise approximation of
   $\clyap$ (see~\eqref{eq:regularizedpsi} and Remark~\ref{rem:newpsi}).  Thus (in view of
  the above remark) our condition~\ref{a_almostTrivial} is in principle explicitly
  checkable for a given map.
\end{rem}
The above condition is not optimal, it implies that the center direction is mostly
contracting on average (see Lemma~\ref{l_goodSet}) in a neighborhood of \emph{every} sink.
Of course, a negative Lyapunov exponent in the center direction could also emerge from the
interaction between different sinks but this would be much harder to investigate.
\begin{rem} It is quite possible that~\ref{a_almostTrivial} is not necessary and our Main
  Theorem holds also in the case of zero or positive central Lyapunov exponent. Yet, its
  proof clearly would require a different approach and it remains the subject of further
  studies, see Section~\ref{sec:conclusion} for more details.
\end{rem}
\begin{rem}
  Observe that if $\partial_\theta f = 0$ identically, then~\ref{a_almostTrivial} follows
  by~\ref{a_discreteZeros} since $\slim=0$, $\partial_\theta\rho_\theta=0$ and hence
  $\bar\omega'(\theta)=\bclyap(\theta)$. Thus, in such cases, the center Lyapunov exponent
  turns out to be determined by the averaged system and it is always negative.  This is
  not true in general if $\partial_\theta f\neq 0$, as it is clearly shown in
  Section~\ref{sec:no-ex}: hence the need of assumption~\ref{a_almostTrivial}.
\end{rem}

In order to state our Main Theorem, it is necessary to introduce a few more definitions,
which force us to take a (very minor) detour through non-smooth analysis (see \eg \cite{Clarke}).
A Lipschitz function $\fpath\in\cC^\Lip([0,T],\bT)$ is said to be a
\emph{\tpath{\thetaa}{\thetab}} if it satisfies the boundary conditions
$\fpath(0) = \thetaa$, $\fpath(T) = \thetab$; $T$ will be referred to as the \emph{length}
of $\fpath$.  Recall that Rademacher's Theorem implies that a Lipschitz function $\fpath$
is differentiable everywhere except on a set of zero Lebesgue measure which we denote with
$E_\fpath$.  For each $s\in[0,T]$ let us define the \emph{Clarke generalized derivative} of
$\fpath$ as the set-valued function:
\begin{align*}
  \dfpath(s) = \text{hull}\{\lim_{k\to\infty} \fpath'(s_k) \st s_k\to s \text{ and } \{s_k\}\subset
  [0,T]\setminus E_\fpath
  \}.
\end{align*}
The set $\dfpath(s)$ is compact and non-empty for any $s\in[0,T]$ (see \eg~\cite[Chapter
2, Proposition 1.5]{Clarke}) and so is its
graph, \ie the set $\bigcup_{s\in[0,T]} \{s\}\times\dfpath(s)\subset[0,T]\times\bR$ (this
follows from the definition and from a standard Cantor diagonal argument).  Moreover if
$s\nin E_\fpath$, then $h'(s)\in\dfpath(s)$ and if $h'$ is continuous at $s$ we have
$\dfpath(s) = \{h'(s)\}$ (see~\cite[Chapter 2, Proposition 3.1]{Clarke}).

We say that a path $\fpath$ of length $T$ is \emph{admissible} if for any $s\in[0,T]$,
$\dfpath(s)\subset\intr\Omega(h(s))$, where for any $\theta\in\bT$, we define the (convex
and compact) set
\begin{align}\label{e_definitionOmega}
  \Omega(\theta)=\{\mu(\omega(\cdot,\theta))\,| \,\mu\text{ is a
  }f_\theta\text{-invariant probability}\}.
\end{align}

We can now state two more conditions:
\begin{enumerate}[label=\textup{(A\arabic*)},ref=(A\arabic*),resume]
  \item\label{a_fluctuation} there exists $i\in\{1,\cdots,\nz\}$ so that for any
  $\theta\in\bT$, there exists an \admissiblep{\theta}{\theta_{i,-}}{}.  We can always
  assume, without loss of generality, that $i=1$.
\end{enumerate}
Observe that, under conditions~\ref{a_noCobo} and~\ref{a_discreteZeros}, condition
\ref{a_fluctuation} is trivially satisfied if $\nz=1$ (see
Section~\ref{ss_furtherProperties}).  In cases where~\ref{a_fluctuation} does not hold, we
can still obtain interesting results under the following additional non-degeneracy
condition:
\begin{enumerate}[label=\textup{(A\arabic*)},ref=(A\arabic*),start = 4]
\item \label{a_fluctuationGap} the set of zeros $\{\theta_{i,\pm}\}_{i = 1,\cdots,\nz}$ of
  $\bar\omega$ cuts $\bT$ in $2\nz$ open intervals: any such
  interval $J$ satisfies one of the following two properties
  \begin{enumerate}[label = \textup{\roman*.},ref = \textup{\roman*}]
  \item \label{p_twoway} for any $\theta\in J$, $0\in\intr\Omega(\theta)$
  \item \label{p_oneway} there exists $\theta\in J$ so that $0\nin\Omega(\theta)$.
  \end{enumerate}
\end{enumerate}
\subsection{Our results} We are now finally ready to state our main results.
\begin{thmm}
  Under assumptions~\ref{a_noCobo},~\ref{a_discreteZeros},~\ref{a_almostTrivialp}
  and~\ref{a_fluctuationGap}, if $\ve>0$ is sufficiently small, $F_\ve$ admits at most
  $\nz$ SRB measures.

  Under assumptions~\ref{a_noCobo},~\ref{a_discreteZeros},~\ref{a_almostTrivialp}
  and~\ref{a_fluctuation}, if $\ve > 0$ is sufficiently small, $F_\ve$ admits a unique SRB
  measure $\mu_\ve$; this measure enjoys exponential decay of correlations for H\"older
  observables.  
  More precisely: there exist $C_1,C_2,C_3,C_4>0$ (independent of $\ve$) such that, for
  any $\holexpa\in (0,3]$ and $\holexp\in(0,1]$, any two functions
  $A\in\cC^\holexpa(\bT^2)$ and $B\in \cC^\holexpb(\bT^2)$:
  \begin{align*}
    \left| \Leb(A\cdot B\circ F_\ve^n) - \Leb(A)\mu_\ve(B)\right|\leq
    C_1\sup_\theta\|A(\cdot, \theta)\|\nc{\holexpa}\sup_x\|B(x,\cdot)\|\nc{\holexpb}
    e^{-{\holexpa\holexpb} c_\ve n},
  \end{align*}
  where
  \begin{equation}\label{e_lowerBoundRate}
    c_\ve=
    \begin{cases}
      C_2\ve/\log\vei&\text{if }\nz=1,\\
      C_3\exp(-C_4\vei)&\text{otherwise.}
    \end{cases}
  \end{equation}
\end{thmm}
The proof of our Main Theorem will be given in Section~\ref{ss_proofMainTheorem} for a
slightly stronger version (detailing the exact number and properties of the SRB measures
in the case~\ref{a_fluctuation} does not hold, but~\ref{a_fluctuationGap} does) which, to
be properly stated, needs the introduction of several extra notations (see
Theorem~\ref{t_mainTheoremForTraps} and Corollary~\ref{c_mainTheoremCorollary} for more
details).  Now, we provide a number of remarks to clarify and put into context the above
result.
\begin{rem}\label{r_endomorphismSRB}
 There is a long lasting controversy regarding the definition of SRB measures
    (see e.g.~\cite{young02}).  Since endomorphisms do not have an unstable foliation, we
    will follow common practice~(see e.g.~\cite[Corollary 2]{DimaPH}) and say that
    $\mu_\ve$ is an SRB measure if it is $F_\ve$-invariant and its \emph{ergodic basin}
  \begin{align*}
    B(\mu_\ve) = \left\{p\in\bT^{2}\st\frac1n\sum_{k = 0}^{n-1}\delta_{F_\ve^k(p)}\to\mu_\ve\text{ weakly
    as }n\to\infty\right\}
  \end{align*}
  has positive Lebesgue measure. These measures are also called {\em physical measures}.
\end{rem}
\begin{rem}\label{r_completeness}
  As a particular case of~\ref{a_fluctuationGap} (\ie every interval $J$ satisfies
  property~\ref{p_twoway}) let us introduce the following condition\footnote{
    In~\cite{Kifer09}, $\omega$ is said to be \emph{complete} at $\theta$ if this
    condition holds at $\theta$.}
  \begin{enumerate}[label=\textup{(A\arabic*$^*$)},,ref=(A\arabic*$^*$),start=4]
    \item\label{a_completeness} for any $\theta\in\bT$, $0\in\intr\Omega(\theta)$;
  \end{enumerate}
  Condition~\ref{a_completeness} immediately implies~\ref{a_fluctuation} (it is actually
  stronger and assuming it in our Main Theorem would imply the existence of a unique SRB measure with $\ve$-dense support).
  Most importantly, it can in principle be checked in concrete examples as it
  suffices\footnote{ The equivalence holds since the measures supported on periodic orbits
    are weakly dense in the set of the invariant measures~\cite{Parthasarathy61}.}  to
  find, for every $\theta\in\bT$, two periodic orbits of $f_\theta$ so that the average of
  $\omega(\cdot,\theta)$ is positive on one of them and negative on the other one.
  Moreover, it is obvious to observe that, for any given $F_0$, the set
  $\{\omega\st \text{\ref{a_completeness} holds}\}$ contains an open set in the
  $\cC^4$-topology.  Finally, it is immediate to check that Condition~\ref{a_completeness}
  also implies Condition~\ref{a_noCobo}.
\end{rem}
\begin{rem}\label{r_optimality}
  A natural question is whether the values of $c_\ve$ in~\eqref{e_lowerBoundRate} are
  optimal or not.  The answer is ``essentially yes". To clarify this, in
  Section~\ref{sec:examples} we give some explicit examples to which our Theorem
  applies and we provide a lower bound for the decay of correlations in such
  examples. Also we take the opportunity to compare our situation with the case of small
  stochastic perturbations discussed by Wentzell-Freidlin~\cite{WeFr} uncovering both
  strong similarities and fundamental differences.  We summarize our findings in Remarks~\ref{rem:diff0-fw},~\ref{rem:diff1-fw},~\ref{rem:no-FW}
  and~\ref{rem:no-lyap-fw}.
\end{rem}
\begin{rem}\label{rem:lebtd}
  For simplicity our Main Theorem is stated for the Lebesgue measure.  In fact it holds,
  for a much wider class of measures, \ie measures that can be obtained as weak limit of
  standard families (see Section~\ref{s_standardPairs} for details).  Such measures
  include, in particular, SRB measures as a special example.  Also, note that for the
  SRB measure it is certainly possible for the decay of correlations to be much faster
  also in the case $\nz>1$ since, the mass being already distributed in equilibrium, one
  may not have metastable states (see later discussion in the following subsection).
\end{rem}
\begin{rem} Note that several related results are present in the literature. First of all
  Tsujii in~\cite{Tsujii05} proves that a generic\footnote{ The exact meaning of
    \emph{generic} is a bit technical and we refer to~\cite{Tsujii05} for the
    details.}  family of type~\eqref{eq:map} has a finite number of SRB measures
  absolutely continuous with respect to Lebesgue. We believe that such a result applies to
  the present context, as Tsujii's genericity condition should reduce to our
  hypotheses~\ref{a_noCobo} and~\ref{a_fluctuationGap}, but this is not obvious to check.
  Next, exponential decay of correlations has been proven in the case of {\em mostly expanding}
  and {\em mostly contracting} center foliations. The mostly contracting case is studied
  in~\cite{BV00, deCastro02, deCastro04, Dimacontract}, but see~\cite{VY} for a recent
  overview on the subject; the mostly expanding case in~\cite{ABV00, ALP05, Gou06}.
  Unfortunately, to apply such results it is necessary to either show that the central
  Lyapunov exponent is negative or to estimate the Lebesgue measure of the points that
  have not expanded up to time $n$. On the one hand this is rather tricky to do,\footnote{
    We essentially prove that the Lyapunov exponents are negative, but this takes a good
    part of this paper.}  on the other hand the estimates on the rate of correlation decay
  provided by these papers are not quantitative. In particular, such results do not
  provide any information on how the rate of decay depends on $\ve$, hence they completely
  miss the issue of metastability. On the contrary, Kifer's papers~\cite{Kifer04,
    Kifer09} address very clearly the metastability issue, but, as already remarked, the
  results there do not allow to investigate the longer time scales, that is the SRB
  measures and their statistical properties.

  Finally let us remark that, contrary to most of the current literature (which discusses
  ``generic" systems), our conditions are explicit and, often, checkable by just studying
  few periodic orbits of the system.
\end{rem}
\subsection{Overview and structure of the paper} Let us now sketch the strategy of our
proof and outline the structure of this paper.  In Section~\ref{sec:examples} we present
some explicit class of examples to which our Main Theorem applies and an interesting case
to which it does not.  For some simple situations we compute how the SRB measure looks
like, we investigate metastability and compare it with the Wentzell-Freidlin case.

Our system is an example of fast-slow system (see Section~\ref{s_geometry}): averaging
theory (see Section~\ref{s_averaging}) implies that the slow variable $\theta$ undergoes a
diffusion around the dynamics of the averaged system, which is described by the ODE
$\dot\theta=\bar\omega(\theta)$. Assumption~\ref{a_noCobo} implies,\footnote{
  In~\cite{DeL1} it is shown that assumption~\ref{a_noCobo} is in fact generic in $\cC^2$.
  Observe moreover that the condition can be easily checked on periodic orbits.} by the
results of~\cite{DeL1}, that the diffusion is non-degenerate and indeed satisfies precise
Large Deviation Estimates and a Local Central Limit Theorem. In turn, Assumption
\ref{a_discreteZeros} implies that the averaged system has $\nz$ pairs of sinks and
sources: the set $\{\theta_{k,+}\}$ partitions $(\tmod~0)$ the torus $\bT$ in $\nz$
intervals $\fbas{k}=[\theta_{k,+},\theta_{k+1,+}] $, whose interiors are the basins of
attraction of $\theta_{k,-}$, \ie the averaged dynamics pushes every point in
$\intr\fbas{k}$ to $\theta_{k,-}$ exponentially fast.

In particular, if $\nz=1$, then the averaged dynamics pushes almost every initial
condition to the unique sink $\theta_-$.  Introducing a suitable notion of standard pairs
(see Section~\ref{s_standardPairs}), we can prove that the true dynamics closely follows
the averaged one with high probability (Section~\ref{s_averaged2real}). Thanks to this
fact we can establish a coupling argument (see Section~\ref{sec:coupling-def} for the
basic facts on coupling, Section~\ref{s_coupling} for the setup of the argument and
Section~\ref{sec:coup-proof} for proofs and the details) among sufficiently close standard
pairs: this implies exponential decay of correlations with a rate that is essentially
given by the time-scale of the averaged motion.

On the other hand, if $\nz>1$, then the averaged dynamics will push initial conditions
belonging to different basins to the corresponding sink; we thus need to rely on large
deviations to prove that standard pairs (\ie mass) are allowed to move from one basin to
another, although with very small probability.  Such events are called \emph{adiabatic
  transitions} and their typical time-scale is exponentially small in $\vei$.  If the
diffusion were purely stochastic and unbounded (\ie, in a Wentzell-Freidlin system
~\cite{WeFr}), then all transitions between different basins would be allowed.  On the
contrary, in our deterministic realization, some of the transitions might not be actually
possible (see Section~\ref{subsec:noerg} for an explicit example of this phenomenon and
Section~\ref{ss_furtherProperties} for an accurate description),
since the ``noise" is bounded, hence some sinks could act as traps for the real dynamics:
this constitutes an obstruction to ergodicity.  We need assumption~\ref{a_fluctuation} to
guarantee that no such obstructions occur.  In case~\ref{a_fluctuation} does not
  hold we need~\ref{a_fluctuationGap} to exclude borderline situations in which the number
  of different SRB measures could change in an arbitrarily small neighborhood of $F_\ve$.
  It is simple to check (see Lemma~\ref{l_genericFluctuationGap})
  that~\ref{a_fluctuationGap} is an open and dense property among maps
  enjoying~\ref{a_noCobo},\ref{a_discreteZeros} and~\ref{a_almostTrivial}. Finally, in
Section~\ref{sec:conclusion} we discuss the strengths and shortcomings of our approach and
we illustrate several open problems that must be addressed to push forward the research
program started by this paper.

\begin{nrem} We will henceforth fix $f$ and $\omega$ to satisfy all properties enumerated
  before; all values that we declare to be \emph{constant} below will depend on this
  choice.  We will often use $\Const,\const$ to designate some constants (again possibly
  depending on $f$ and $\omega$), whose actual value is irrelevant and can thus change
  form one instance to the next.
\end{nrem}
\section{Examples}\label{sec:examples}
To provide a better understanding of the results obtained in the present paper we first
discuss in some detail a few examples to which our theory applies. Then we briefly mention
an example that does not satisfy our conditions since the central direction seems to be,
unexpectedly, mostly expanding. Along the way, we take the occasion to carry our a precise
comparison with the case of small random perturbations of a dynamical system (the
so-called Wentzell--Freidlin systems). The conclusions of such a comparison are summarized in  Remarks~\ref{rem:diff0-fw},~\ref{rem:diff1-fw},~\ref{rem:no-FW}  and~\ref{rem:no-lyap-fw}.

Carrying out explicit computations in a specified example can be rather laborious.  We
will thus start by considering a particularly simple class of systems in which such
explicit computations can be done fairly easily: skew-products over the doubling map:
\begin{equation}\label{eq:skew}
  F_{\ve}(x,\theta)=(2x,\theta+\ve\bar\omega(\theta)+\ve \hat\omega(x))\mod 1.
\end{equation}
Also, to further simplify matters, we assume that
$\int_\bT\bar\omega(\theta) \deh\theta=0$.

Note that in this case the fast dynamics does not depend on $\theta$, hence making the
example very simple, although far from trivial. Thus the SRB measure for the fast dynamics
is always the Lebesgue measure $m$ for every $\theta\in \bT$.  Recall that our Main
Theorem discusses statistical properties of the process $\theta_n$ where
$(x_n,\theta_n)=F_\ve^n(x_0,\theta_0)$ and $x_0$ is distributed according to a measure
with smooth density w.r.t. Lebesgue.  The corresponding Wentzell--Freidlin scenario, which
should resemble our rescaled process $\theta_\ve(t)=\theta_{\ve^{-1}n}$, reads\footnote{
  It is possible to make this correspondence quantitatively precise for times of order
  $\ve^{-\alpha}$ for some $\alpha>0$.  We refrain from doing it to keep the length of the
  paper under control and we postpone it to further work.}
\begin{equation}\label{eq:f-w}
\deh w=\bar\omega(w)dt+\sqrt\ve\bVar(w) \deh B
\end{equation}
where $B$ is the standard Brownian motion. Note that, due to the fact that we have a skew
product and the simple form of $\omega$, in this particular case $\bVar(\theta)=\bVar$ is
independent of $\theta$.

Later, we will also mention less trivial examples without entering in too many details.

\subsection{Skew-products over the doubling map--one sink}\label{subsec:onesink}
Consistently with our notation we assume $m(\hat\omega)=0$. Also, to further simplify
matters, we assume that $\hat\omega(0)=3$ and $\|\bar\omega\|_{\infty}\leq 1$. Since the
Dirac measure $\delta_0$ is an invariant measure for the doubling map, we have that
$\hat\omega$ cannot be a coboundary, hence~\ref{a_noCobo} is satisfied. We
assume~\ref{a_discreteZeros}.  Since $\partial_\theta f=0$, we have
$\clyap(x,\theta)=\bar\omega'(\theta)=\bclyap(\theta)$.  We can then assume
that~\ref{a_almostTrivial} is satisfied.

Let us start with the case in which $\bar\omega$ has only one sink $\theta_-$, hence
$\bar\omega'(\theta_-)=-1$ by~\ref{a_almostTrivialp}.  Observe moreover that
assumption~\ref{a_fluctuation} is automatically verified since we have only one sink.
First of all let us understand the SRB measure $\mu_\ve$. Let $\hat H$ be a suitable
neighborhood of $\theta_-$ (see Section~\ref{subsec:averagedDynDescription} for more
details) and let $1-p=\mu_\ve(\hat H)$; setting
$B_\ve=[\theta_--\Const\sqrt\ve\ln\ve^{-1},\theta_-+\Const\sqrt\ve\ln\ve^{-1}]$,
$q=\mu_\ve(\hat H\backslash B_\ve)$.  Then Lemma~\ref{l_escapeFromAlcatraz} implies that
$p\leq \ve^\beta$.  Lemma~\ref{l_deepPurple} imply that at least $\frac 23$ of the mass of
a standard pair in $\hat H$ moves to $B_\ve$ is a time of order $\ve^{-1}\ln\ve^{-1}$.
While~\cite[Theorem~2.7]{DeL1} implies that at time $T\ve^{-1}$ the mass on any standard
pair $\ell$ in $B_\ve$ will be distributed according to a Gaussian centered in
$\theta_-+e^{-2T}(\theta_\ell-\theta_-)$ and with variance $\frac{1-e^{-4T}}2\ve\bVar^2$
apart from a mass $\ve^{2\beta}$, eventually making $\beta$ smaller. Iterating this
$\ln\ve^{-1}$ times we have that the mass is distributed according to a Gaussian centered
in $\theta_-$ and with variance $\ve\bVar^2$ apart from a mass $\ve^{\beta}$. This implies
that,
\[
1-p-q\geq  \frac 23 q-(1-p-q)(1-\ve^\beta)-\Const \ve^{\beta}
\]
hence $q\leq \Const\ve^\beta$.  It follows that $\mu_\ve$ consists of a Gaussian of
variance $\ve\bVar^2$ centered at $\theta_-$, a part from a mass of order
$\ve^\beta$.\footnote{ Of course, we mean this in the sense of~\cite[Theorem~2.7]{DeL1},
  on a scale smaller than $\ve$ the SRB could have some complicated fine structure. This
  issue is here left open.}

Now that we have a good understanding of the SRB measure $\mu_\ve$, we can address the
issue of the decay of correlations, in particular it is natural to wonder if the results
of our Main Theorem is optimal or not. Let us choose $A$ supported on a $\delta$
neighborhood of $\{\theta_+\}\times\bT$, where $\bar\omega(\theta_+)=0$ and
$\bar\omega'(\theta_+)>0$, and $B$ supported in a $\delta$ neighborhood of
$\{\theta_-\}\times \bT$, then we ask what is the maximal $c_\ve$ such that
\[
\begin{split}
  \left| \Leb(A\cdot B\circ F_\ve^n) - \Leb(A)\mu_\ve(B)\right|&\leq
  C_1\sup_\theta\|A(\cdot, \theta)\|_{\cC^2}\sup_x\|B(x,\cdot)\|_{\cC^2} \exp(-c_\ve n)\\
  &\leq \Const \exp(-c_\ve n).
\end{split}
\]
If we choose $\delta$ small enough, there will be a distance bigger than
$\frac 12|\theta_--\theta_+|$ between the support of $A$ and $B$. Moreover,
$\Leb(A)=\delta$, while $\mu_\ve(B)\geq 1-\Const\ve^\beta$. In addition, since $\theta$
can move only of steps of order $\ve$, we will have that $A\cdot B\circ F_\ve^n=0$ for all
$n\leq \Const\ve^{-1}$. Hence, $\frac 12\leq \Const \exp(-c_\ve \ve^{-1})$ which implies
that it must be $c_\ve\leq \Const\ve$.

We have thus seen that in the present case our Main Theorem is, at least, close to
optimal. Whether or not the $\ln\ve^{-1}$ is really there, or it is an artifact of our
method of proof, it remains to be seen.

To gain some more insight, let us compare the above situation with the Wentzell--Freidlin
system~\eqref{eq:f-w}. First of all, note that, taking advantage of the fact that we are
in dimension one, we can write the generator associated to the process as\footnote{ Note that, by hypotheses, $\rho_\ve(1)=Z\bVar^{-2}e^{2\ve^{-1}\int_0^1\frac{\bar\omega}{\bVar^2}}=Z\bVar^{-2}=\rho_\ve(0)$, hence $\rho_\ve$ is a smooth function on $\bT$.}
\begin{equation}\label{eq:generator}
\begin{split}
&L_\ve\vf=\bar\omega\vf'+\frac \ve 2\bVar^2\vf''=\frac{\ve}{2 \rho_\ve}(\bVar^2\rho_\ve\vf')',\\
&\rho_\ve(\theta)=Z\bVar^{-2}e^{2\ve^{-1}\int_0^\theta\frac{\bar\omega}{\bVar^2}}\;;\quad \int_{\bT}\rho_\ve=1.
\end{split}
\end{equation}
One can then easily check that $L_\ve$ is reversible with respect to the probability
measure $d\nu_\ve=\rho_\ve dx$. Thus $\nu_\ve$ is the invariant probability measure
of~\eqref{eq:f-w} and it is a direct computation to see that its Wasserstein distance from
$\mu_\ve$ is less that $\Const\ve^\beta$. Next, we need an estimate of the spectral gap of
$L_\ve$ in $L^2(\bT,\nu_\ve)$. As we were unable to locate it in the literature, we
provide it here (and since we are at it, we do it for the general case in which $\bVar$ is
not constant, but still strictly positive).

\begin{lem}
  If $\bar\omega,\bVar\in\cC^2(\bT,\bR)$, $\bar\omega$ has only two non degenerate zeroes
  $\theta_-,\theta_+$ and $\inf\bVar>0$, then there exists $c_0>0$ such that, for each
  $\ve$ small enough, $L_\ve$ has a spectral gap larger than $c_0$.
\end{lem}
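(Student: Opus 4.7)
My plan is to exploit the reversibility noted in~\eqref{eq:generator} and reduce the claim to a one-dimensional weighted Poincar\'e inequality that is uniform in $\ve$. Since $L_\ve$ is self-adjoint on $L^{2}(\nu_\ve)$ with Dirichlet form $\mathcal{E}(\vf,\vf)=\frac{\ve}{2}\int_\bT(\vf')^{2}\bVar^{2}\deh\nu_\ve$, showing $\mathrm{gap}(L_\ve)\ge c_0>0$ is equivalent to proving $\mathrm{Var}_{\nu_\ve}(\vf)\le c_0^{-1}\mathcal{E}(\vf,\vf)$ for all $\vf\in H^{1}(\bT)$, with $c_0$ independent of $\ve$. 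Since $\inf\bVar>0$ and $\bVar$ is continuous on $\bT$, the factor $\bVar^{-2}$ in the density and the factor $\bVar^{2}$ in the Dirichlet form are bounded above and below by positive constants, so it suffices to establish the inequality for the measure $\tilde\mu_\ve\propto e^{-V/\ve}\deh\theta$ and Dirichlet form $\ve\int(\vf')^{2}e^{-V/\ve}\deh\theta$, where $V$ is any primitive of $-2\bar\omega/\bVar^{2}$ (well-defined on $\bT$ because $\rho_\ve$ being a density forces $\int_\bT\bar\omega/\bVar^{2}=0$).

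The next step is to use the structure of $V$. Under the hypotheses, $V\in\cC^{2}(\bT)$ with $V'=-2\bar\omega/\bVar^{2}$; the two non-degenerate zeros of $\bar\omega$ translate into a unique non-degenerate minimum of $V$ at $\theta_-$ (with $V''(\theta_-)>0$) and a unique non-degenerate maximum at $\theta_+$. Thus $V$ is a \emph{single-well} potential on the circle. I identify $\bT$ with $[\theta_+,\theta_++1]$ by cutting at $\theta_+$ and apply the one-dimensional Muckenhoupt-Hardy criterion: writing $g=\vf-\vf(\theta_-)$ and splitting the interval at $m=\theta_-$, on each half one has
\begin{equation*}
\int g^{2}e^{-V/\ve}\deh\theta\le 4B_\pm\cdot\ve\int(g')^{2}e^{-V/\ve}\deh\theta,
\qquad
B_+=\frac{1}{\ve}\sup_{x}\left(\int_x^{\theta_++1}e^{-V/\ve}\deh\theta\right)\left(\int_{\theta_-}^{x}e^{V/\ve}\deh\theta\right),
\end{equation*}
and analogously for $B_-$. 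Since $\mathrm{Var}_{\tilde\mu_\ve}(\vf)\le\int(\vf-\vf(\theta_-))^{2}\deh\tilde\mu_\ve$, bounding $B_\pm$ uniformly in $\ve$ gives the desired Poincar\'e constant and hence the spectral gap.

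The heart of the argument is the uniform bound $B_\pm=O(1)$, which I would establish by the Laplace method in three regimes. In the \emph{bulk} regime (where $x$ is bounded away from both $\theta_-$ and $\theta_+$), Laplace at $x$ yields $\int_x^{\theta_++1}e^{-V/\ve}\sim (\ve/V'(x))e^{-V(x)/\ve}$ and $\int_{\theta_-}^{x}e^{V/\ve}\sim (\ve/V'(x))e^{V(x)/\ve}$, so the product contributes $\ve^{2}/V'(x)^{2}$ and the resulting contribution to $B_+$ is $O(\ve)$. In the \emph{critical} regime $|x-\theta_-|\lesssim\sqrt\ve$, the right integral is approximately $\sqrt\ve\, e^{V_0/\ve}/\sqrt{V''(\theta_-)}$ (Gaussian scale at the minimum) while the left integral is approximately the full mass $\sqrt{2\pi\ve/V''(\theta_-)}\,e^{-V_0/\ve}$, and their product times $1/\ve$ is of order $1/V''(\theta_-)$. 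The regime $|x-\theta_+-1|\lesssim\sqrt\ve$ is controlled by a symmetric Gaussian-scale computation at $\theta_+$ and yields the same $O(1)$ contribution. Crucially, no $e^{\Delta V/\ve}$ Arrhenius factor appears, because in each of the three regimes the two Laplace factors $e^{\pm V(x)/\ve}$ cancel exactly: this cancellation is the analytic manifestation of the absence of metastability in the single-well case on $\bT$.

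The main obstacle is therefore purely technical: carrying out the Laplace-method bookkeeping carefully enough to show that the transitions between the three regimes match, so that $B_\pm$ remains bounded uniformly. Once this is done, combining the two halves gives $\mathrm{Var}_{\tilde\mu_\ve}(\vf)\le 4\max(B_+,B_-)\,\mathcal{E}(\vf,\vf)$, and translating back through the bounded factor $\bVar^{-2}$ yields the claimed uniform spectral gap $c_0$, whose value depends only on $\inf\bVar$, $\|\bVar\|_{\cC^{0}}$, $V''(\theta_\pm)$, and a uniform bound on $V'$ away from $\{\theta_-,\theta_+\}$.
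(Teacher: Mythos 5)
Your approach is correct but genuinely different from the paper's. The paper follows the Lyapunov/Bakry--\'Emery route (attributed there to Villani and Deuschel--Stroock): it introduces the auxiliary potential $W_\ve=\bVar^2(V_\ve')^2/2-(\bVar^2V_\ve')'$, derives the functional inequality $\int_\bT W_\ve\vf^2\rho_\ve\le2\int_\bT\bVar^2(\vf')^2\rho_\ve$ from the nonnegativity of $\int_\bT[\bVar(e^{-V_\ve/2}\vf)']^2$, observes that $W_\ve\ge\ve^{-1}$ outside a $\sqrt\ve$-window $A_\ve$ around the minimum, and closes with a local Poincar\'e inequality on $A_\ve$, where $\rho_\ve$ varies by a bounded factor. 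You instead reduce to a weighted Poincar\'e inequality for $e^{-V/\ve}\deh\theta$, cut the circle at the potential maximum, center at the minimum, and invoke the one-dimensional Muckenhoupt--Hardy criterion, verifying $B_\pm=O(1)$ by Laplace asymptotics in three matching regimes (bulk, near $\theta_-$, near $\theta_+$); the observed cancellation of the $e^{\pm V/\ve}$ factors is exactly why no Arrhenius factor appears in the single-well case. The Lyapunov/$W_\ve$ argument is shorter, needs no regime matching, and extends verbatim to higher dimensions, which is why it is standard in that literature; the Muckenhoupt computation is intrinsically one-dimensional but is essentially sharp and makes the absence of metastability more transparent, since it is read off directly from the monotonicity of $V$ between the two critical points. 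Two points you should make explicit if you write this up: first, the regime near the cut point $\theta_++1$ is a genuine third case, and the uniform bound there relies on $\int_{\theta_-}^{x}e^{V/\ve}\deh\theta\le\Const\,\ve\,|V'(x)|^{-1}e^{V(x)/\ve}$ paired with $\int_x^{\theta_++1}e^{-V/\ve}\deh\theta\le(\theta_++1-x)e^{-V(x)/\ve}$, which requires tracking the prefactor $|V'(x)|^{-1}\sim(\sqrt\ve\cdot|x-\theta_+-1|/\sqrt\ve)^{-1}$ as $x\to\theta_++1$ and noting the Gaussian decay at the max beats the $1/a$ blow-up; second, you are silently using that the non-degenerate zeros of $\bar\omega$ are exactly two, so $V'$ has one sign on each arc and the Hardy criterion is applied to a monotone potential --- with more zeros there would be an inner barrier and $B_\pm$ would contain an Arrhenius factor, which is the failure mode the lemma's hypotheses are designed to rule out.
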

\begin{proof}
We follow the logic used in~\cite[Theorem~1.2, Appendix A.19]{Villani09}. Setting $V_\ve(\theta)=- 2\int_0^\theta\frac{\bar\omega}{\ve\bVar^2}+2\ln\bVar -\ln Z$, we have $\rho_\ve=e^{-V_\ve}$.  Next, let
\[
W_\ve=\bVar^2(V_\ve')^2/2-(\bVar^2V_\ve')',
\]
then for each $\vf\in\cC^2$ we have (see~\cite[Proof of Theorem~6.2.21]{DS89})\footnote{ For the reader convenience, here is how to argue: compute using
\[
0\leq \int_{\bT}\left[\bVar\left(e^{-V/2}\vf\right)'\right]^2 \textrm{ and } \int_{\bT} \bVar^2V'\vf'\vf e^{-V}=\frac 12\int_{\bT}\left[ (\bVar V')^2-(\bVar^2V')'\right]\vf^2 e^{-V}.
\]
}
\begin{equation}\label{eq:strook-d}
\int_\bT W_\ve\vf^2\rho_\ve\leq 2\int_\bT\bVar^2 (\vf')^2\rho_\ve.
\end{equation}
Note that the right hand side of~\eqref{eq:strook-d} is nothing else than the Dirichlet form associated to $L_\ve$. In addition, $W_\ve=\frac{2}{\ve^2\bVar^2}\bar\omega^2-4\frac{\bar\omega\bVar'}{\ve\bVar}+\frac{2}{\ve}\bar\omega'+2(\bVar')^2-2(\bVar'\bVar)'$ is always positive apart from a neighborhood of $\theta_-$. Indeed, let $A_\ve=[\theta_--a\sqrt\ve,\theta_--a\sqrt\ve]$, then, if $a$ is chosen large enough, $\inf_{A_\ve^c}W_\ve\geq \ve^{-1}$. Moreover, if $\theta,\theta'\in A_\ve$, then
\begin{equation}\label{eq:rho-constant}
\frac{\rho_\ve(\theta)}{\rho_\ve(\theta')}\leq e^{\const \ve^{-1}|\theta-\theta'|^2}\leq e^{\const a^2}.
\end{equation}
Let $K$ to be chosen later (large enough) and choose  $a$ so that $\int_{A_\ve^c}\rho_\ve\leq K^{-1}$. Note that there exist a constant $C_a>0$ such that $C_a^{-1}\ve^{-\frac 12}\leq\rho_\ve(\theta)\leq C_a\ve^{-\frac 12}$, for all $\theta\in A_\ve$.

The last needed ingredient is the standard Poincar\`e inequality in $A_\ve$: there exists $b>0$ such that, for all $\ve$ small enough,\footnote{ Again, for the reader convenience, here is how to argue: first of all note that~\eqref{eq:rho-constant} implies that, on $A_\ve$ the ratio between the sup and inf to $\rho_\ve$ is bounded by $e^{\const a^2}$, and remember that $\nu_\ve(A_\ve)\geq 1/2$. Then
\[
\int_{A_\ve}\vf^2\rho_\ve=\frac{1}{2\nu_\ve(A_\ve)}\int_{A_\ve^2}[\vf(x)-\vf(y)]^2\rho_\ve(x)\rho_\ve(y) dxdy+\frac{1}{\nu_\ve(A_\ve)}\left(\int_{A_\ve}\vf\rho_\ve\right)^2.
\]
While $[\vf(x)-\vf(y)]^2\leq (\int_{A_\ve}|\vf'|)^2\leq |A_\ve|\int_{A_\ve}(\vf')^2\leq \Const\ve\int_{\bT}(\vf')^2\rho_\ve$.
}
\[
\int_{A_\ve}\vf^2\rho_\ve\leq b\ve\int_{\bT}(\vf')^2\rho_\ve+2\left(\int_{A_\ve}\vf\rho_\ve\right)^2.
\]
Thus, for each $\vf\in\cC^2$, we have, for $K$ large enough,
\[
\begin{split}
\int_{\bT}\vf^{2}\rho_\ve&\leq \Const \ve\int_{\bT}W_\ve\vf^2e^{-V_\ve}+\frac K 4 \int_{A_\ve}\vf^2\rho_\ve\\
&\leq \Const\ve\int_{\bT}\bVar^2(\vf')^2\rho_\ve+\frac K2 \left(\int_{A_\ve}\vf\rho_\ve\right)^2.
\end{split}
\]
To conclude, assume that $\int_{\bT}\vf\rho_\ve=0$, hence
$\int_{A_\ve}\vf\rho_\ve=-\int_{A_\ve^c}\vf\rho_\ve$. Thus
\[
 \left(\int_{A_\ve}\vf\rho_\ve\right)^2\leq\nu_\ve(A_\ve^c)\int_{A_\ve^c}\vf^2\rho_\ve\leq K^{-1}\int_\bT \vf^2\rho_\ve.
\]
We are thus ready to compute the spectral gap: let $\vf\in\cC^2$ such that $\nu_\ve(\vf)=0$, then
\[
\int_\bT\vf(-L_\ve\vf)\rho_\ve=\frac{\ve}2\int_\bT\bVar^2(\vf')\rho_\ve\geq \Const\int_\bT\vf^2\rho_\ve.
\]
\end{proof}
 The above Lemma implies that,
 \[
 \left|\nu_\ve(A(w(0))B(w(t))-\nu_\ve(A)\nu_\ve(B)\right|\leq \Const \|A\|_{L^2(\nu_\ve)}\|B\|_{L^2(\nu_\ve)}e^{-c_0t}.
 \]
 If the Wentzell--Freidlin process~\eqref{eq:f-w} is a good predictor of what happens for
 the process $\theta_{\ve^{-1}t}$ (as we, in the case, conjecture), then the factor
 $\ln\ve^{-1}$, in our estimate for the rate decay of correlations~\eqref{e_lowerBoundRate}, should be absent if one starts from the SRB measure rather then
 the Lebesgue measure.  Note however that if we start from a measure non absolutely
 continuous with respect to $\nu_\ve$ (e.g. a $\delta$ at some $\theta_0$) or with an
 exponentially large Radon-Nikodyn derivative (e.g. Lebesgue), then it will take a time at
 least $\Const \ln\vei$ before the measure becomes uniformly absolutely continuous with
 respect to $\nu_\ve$. Hence, the question if such a factor is present or not when
 starting from a more general measure remains unclear, see also Remark~\ref{rem:lebtd}.
\begin{rem}\label{rem:diff0-fw} We have thus seen that, in this simple case, our
   deterministic process and the Wentzell--Freidlin process are remarkably similar. In fact,
   we conjectured that they have the same exact statistical properties.
\end{rem}
\subsection{Skew-products over the doubling map--two sinks (non ergodic case)}\label{subsec:noerg}
Next, let us consider the case in which $n_Z=2$.  To start with, we assume that
$|\bar\omega|$ reaches the value one in each of the intervals
$(\theta_{1,-},\theta_{1,+})$, $(\theta_{1,+},\theta_{2,-})$,
$(\theta_{2,-},\theta_{2,+})$, $(\theta_{2,+},\theta_{1,-})$. Also we assume that
$|\hat\omega|\leq \frac 12$. Note that now assumption~\ref{a_fluctuation} is {\bf not}
satisfied while it is satisfied hypotheses~\ref{a_fluctuationGap} where
alternative~\ref{p_oneway} holds for any interval $J$.  In the language of Subsection
\ref{ss_furtherProperties} this implies that there are two trapping sets
$(\theta_{2,+},\theta_{1,+})\supset\trap_1\ni\theta_{1,-}$ and
$(\theta_{1,+},\theta_{2,+})\supset\trap_2\ni\theta_{2,-}$.\footnote{ The choice of
  $\aeps$ is rather arbitrary, it suffices that it is small enough so that
  $\trap_{i}\neq \emptyset$. In the present case $\aeps=1/4$ will do.} Hence, the dynamics
has two attractors with basins that contain the respective trapping sets and there are two
SRB measures $\mu_{i,\ve}$ supported in $\{\trap_i\}$, respectively.  In
  fact, the SRB measure $\mu_{1,\ve}$ charges any $\ve$-ball in a fixed (\ie independent
  of $\ve$) neighborhood\footnote{ Again, in the language of
    Section~\ref{ss_furtherProperties}, one can take the neighborhood to be
    $\bigcap_{\aeps>0}\freach{\sink 1}$} of $\sink 1$.  Yet, by
arguments similar to the ones used above, such measures are $\ve^\beta$-close to two
Gaussians, with variance of order $\ve$ and centered at $\{\theta_{1,-},\theta_{2,-}\}$,
respectively.  That is, the system looks superficially like it has two attractors
contained in a $\sqrt \ve$ neighborhood of $\{\theta_{1,-},\theta_{2,-}\}$.
\begin{rem} \label{rem:diff1-fw} Note that in this case we have a drastic difference with
  the Wentzell--Freidlin process which, on the contrary, is ergodic. For the
  Wentzell--Freidlin process the measures $\mu_{i,\ve}$ are essentially the {\em
    metastable states}. On the contrary, in the deterministic case they are stable (\ie
  invariant). As already remarked, this is due to the substantial difference in the large
  deviations rate function of the two processes.
\end{rem}

\subsection{Skew-products over the doubling map--two sinks (ergodic case)}\label{subsec:twoerg}
In this case we choose $\bar\omega$ as in our previous example , but with
$-\frac 13<\hat\omega\leq 3$ with $\hat \omega(0)=3$.  Next, remember that the set of
invariant probability measures is a closed convex set and so
$\Omega(\theta)=\{\mu(\omega(\cdot,\theta))\;|\; \mu \textrm{ invariant for }2x\mod 1\}$
is a closed interval for each $\theta\in\bT$. Thus
$\Omega(\theta)\supset [\bar\omega(\theta),\bar\omega(\theta)+3]\supset [1,2]$.
Accordingly, the path $h(t)=\frac 32t$ is admissible and visits all the circle, hence also
assumption~\ref{a_fluctuation} is satisfied.  We are thus in a setting to which our
results apply, hence the map has a unique SRB measure that can be represented as a
standard family.

As in the previous case the invariant measure will essentially consist of two Gaussians of
variance of order $\ve$ centered at $\{\theta_{1,-},\theta_{2,-}\}$, respectively. Yet, to
really understand how the SRB looks like we must know the mass of the two Gaussians, let
us call them $\{p_i\}$, respectively. Of course, $1-p_1-p_2\leq\Const\ve^\beta$.

In general, to figure out the latter we can use~\cite[Theorem~2.2]{DeL1} to compute the
probability for a trajectory to go from a neighborhood of $\theta_{1,-}$ to a neighborhood
of $\theta_{2,-}$ and viceversa.  Since the ratio of $p_1$ and $p_2$ depends on the
probability of going from one sink to the other.  This entails some work and more
information on $\bar\omega$.

In order to keep things as simple as possible we choose
$\bar\omega(\theta)=\sin(4\pi\theta)$. Note that if we set
$R(x,\theta)=(x,\theta+\frac 12\mod 1)$, then $F_\ve\circ R=R\circ F_\ve$. But such a
symmetry implies that, for each continuous function $\vf$,
\[
R_*\mu_\ve(\vf)=\mu_\ve(\vf\circ R\circ F_\ve)=\mu_\ve(\vf\circ F\circ
R)=R_*\mu_\ve(\vf\circ F_\ve).
\]
Thus $R_*\mu_\ve$ is invariant and, as $\mu_\ve$, can be written in terms of a standard
family. Since such a measure is unique, it must be $\mu_\ve=R_*\mu_\ve$, that is
$p_1=p_2=\frac 12+\cO(\ve^\beta)$.

Now that we have identified the SRB measure, we can discuss the issue of the decay of
correlations. We choose $A$ to be supported in a $\delta$ neighborhood of $\theta_{1,-}$,
for $\delta$ small enough, such that $\Leb(A)=1$ and $B$ to be supported on a $\delta$
neighborhood of $\theta_{2,-}$ such that $\mu_\ve(B)=1$. We then ask what is the maximal
$c_\ve$ such that
\[
\begin{split}
\left| \Leb(A\cdot B\circ F_\ve^n) - \Leb(A)\mu_\ve(B)\right|&\leq
  C_1\sup_\theta\|A(\cdot, \theta)\|_{\cC^2}\sup_x\|B(x,\cdot)\|_{\cC^2} \exp(-c_\ve n)\\
  &\leq \Const \exp(-c_\ve n).
\end{split}
\]
We know that $\Leb(A)\mu_\ve(B)=1$. It remains to compute
$\Leb(A\cdot B\circ F_\ve^n)$. Note that at $1/8$ and $7/8$ we have $\bar\omega=1$, on the
other hand, by hypotheses $\hat\omega\geq -1/2$, thus
$\bar\omega+\hat\omega\geq \frac 12$. That is, around $1/8$ and $7/8$ the motion can take
place only from left to right.

Let $I_{i,-}=\{\theta\in\bT\;:\; |\theta-\theta_{i,-}|\leq \delta\}$.  Consider a process
starting from a standard pair $\ell$ with $\theta_\ell\in I_{1,-}$. Our aim is to compute
the probability of the event $Q_T=\{\theta_\ve(T)\in I_{2,-}\}$.  Note that, by choosing
$T_0$ large enough, we have $|\bar\theta (T_0)-\theta_{1,-}|\leq \frac 14\delta$.  Thus if
$\gamma(T_0)\not\in I_-$, then $\sup_{t\in [0,T_0]}|\gamma(t)-\bar\theta(t)|>
\delta/2$. Then, using Theorem~\ref{t_largeDevz}, we have
\[
\bP_{\ve}(\gamma(T_0)\in I_{2,-})\le\bP_\ve(\{\gamma(T_0)\not\in I_{1,-}\})\leq
\mu_\ell(Q(\delta/2,1))\leq e^{-\const\ve^{-1}}.
\]
Since the distribution at time $T_0$ is still made of standard pairs we can apply the same argument and obtain that
\[
\bP_\ve(\{\gamma(T_0 n)\nin I_{1,-}\})\leq ne^{-\const\ve^{-1}}
\]
It follows that, for each $n\leq e^{\const\ve^{-1}}$ we have
\[
\Leb(A\cdot B\circ F_\ve^n)\leq \|A\|_\infty\|B\|_\infty\bP_\ve(\{\gamma(\ve n)\not\in I_{1,-}\})\leq e^{-\const\ve^{-1}}.
\]
It follows that
\[
c_\ve\leq e^{-\const\ve^{-1}}.
\]
Thus the estimate in our Main Theorem has the right dependence on $\ve$, even though, of
course, the value of the constants are very hard to determine.

This means that, if we start with an initial distribution with mass, say, $1/4$ in a
neighborhood of $\theta_{1,-}$ and $3/4$ in a neighborhood of $\theta_{2,-}$, then for
an exponentially long time we will see a situation very similar to what we have seen in
Section~\ref{subsec:noerg}: it looks like the system is distributed according to an
invariant measure. Yet, if we look at a longer exponential time, we will see the ratio of
the masses of the two Gaussian change till it reaches the values approximately $1/2, 1/2$
which characterize the true invariant measure. Hence the metastability phenomena we have
announced.

\begin{rem}\label{rem:no-FW} We have seen that, in this case, we have metastable states as
  in the Wentzell--Freidlin case. Only, the attentive reader has certainly noticed that, in
  absence of a symmetry, there is no reason for the masses in the two sinks to be the
  same. Also we have seen by our large deviation computations that the probabilities to
  transit from one sink to the other are always exponentially small in $\ve^{-1}$. It is
  thus to be expected that in a non symmetric (generic) case one of the two masses will be
  exponentially smaller than the other. Thus the SRB measure will look very much like a
  single Gaussian centered at the ``winning" sink. Of course, the same occurs for
  Wentzell--Freidlin, yet who is the winning sink is decided by the large deviation
  functionals which are very different. So, again, we should expect cases in which the
  invariant measures of the two processes looks completely different, being essentially
  centered at different sinks.
\end{rem}
\subsection{Not a skew-product} To conclude our discussion, here is another, slightly less simple, example:
  \[
  F_\ve(x,\theta)=(\ell x+ a\theta, \theta +\ve b \cos 2\pi x-\frac{\ve}{2\pi}\sin 2\pi\theta) \mod 1.
  \]
  Note that $\rho_\theta=1$ and $\bar \omega(\theta)= -\frac{1}{2\pi} \cos 2\pi \theta$.
  Note that the fixed points of $\ell x+a\theta$ are $x=\frac{p-a\theta}{\ell-1}$,
  $p\in\bN$, thus $\cos 2\pi \frac{p-a\theta}{\ell-1}-\frac{\ve}{2\pi}\sin 2\pi\theta$
  cannot be zero for all $p$ if $\ell>2$, hence condition (A0) holds. A direct computation
  shows that such maps satisfy (A1-3) as well, provided $\ell, b$ are chosen large and $a$
  small enough.  The reader can check that the analysis carried out in
  Section~\ref{subsec:onesink} can be replicated almost verbatim for the present
  case. Also it is fairly easy to produce examples with several sinks (like in
  Sections~\ref{subsec:twoerg},~\ref{subsec:noerg}). Thus all above consideration apply
  here as well.

  Of course, it is also possible to consider examples where the fast
  dynamics has a $\theta$ dependent invariant measure and to which our results apply. Yet,
  the latter case can hold in storage interesting surprises, as we are going to see.

\subsection{An interesting non-example}\label{sec:no-ex}
Let $\ell\in\bN$, $\ell > 1$, and consider the family
\begin{align*}
  F_\ve(x,\theta)=(\ell x + \sin(2\pi\theta)\left[\alpha\sin(2\pi
    x)+\beta\sin(2\ell\pi x) \right], \theta + \ve\cos(2\pi x))\mod \bZ^2.
\end{align*}
In the above example, assuming $\ell-2\pi(\alpha+\ell\beta) > 2$,
  Assumption~\ref{a_noCobo},~\ref{a_discreteZeros} are satisfied; moreover if $\ell$ is
  odd, we have that $x = 0$ and $x = 1/2$ are fixed points of $f_\theta$ for any
  $\theta\in\bT$; since $\omega(0) = 1$ and $\omega(1/2) = -1$, we have
  $\Omega(\theta)\supset[-1,1]$, hence Assumption~\ref{a_completeness} is satisfied (see
  Remark~\ref{r_completeness}) and in particular~\ref{a_fluctuation} holds.  However,
Assumption~\ref{a_almostTrivial} is not obvious to verify.  This whole subsection is
devoted to the discussion of this issue. In doing so we will uncover the possibility of a
most surprising feature: an ``attractor" with all Lyapunov exponents almost surely
positive (with respect to the SRB measure).\footnote{ Of course, technically speaking,
  there is no attractor as condition~\ref{a_completeness} guarantees that the dynamics
  will visit an $\ve$-dense set in configuration space.  Yet, for small $\ve$ and each
  $\beta\in (0,1/2)$ a portion $1-e^{-\const \ve^{-1+2\beta}}$ of the mass is concentrated
  in a $\cO(\ve^\beta)$-neighborhood of $\theta = 0$.  So the situation differs indeed
  very little from an attractor. In passing, this example shows that a purely topological
  description of the dynamics can fail miserably in capturing the relevant properties of
  the motion.} To actually prove this would take some non trivial work; here we content
ourselves by showing that for $\alpha,\beta>0$ the average dynamics has a sink and yet the
true dynamics near such a sink has center vectors that are mostly expanding.

Observe that if $\theta=0$ or $\theta=1/2$ (so that $\sin(2\pi\theta)=0$), then
$f_\theta(x)=\ell x$, thus $\rho_\theta=1$, and $\bar\omega(\theta)=0$.  Let us
now compute $\bar\omega'(\theta)$ at $\theta=0$:
\begin{align*}
  \bar\omega'(\theta)&=\frac{\deh}{\deh\theta}\int_{\bT}\omega(x)\rho_\theta(x)\deh x=
 \sum_{k=1}^\infty\int_{\bT}(\omega\circ f_\theta^k(x))'\frac{\partial_\theta
    f(x,\theta)}{f'_\theta(x)}\rho_\theta(x)\deh x\\
   &=-{(2\pi)^2}\sum_{k=1}^\infty\ell^{k-1}\int_\bT\sin(2\ell^k\pi x)[\alpha\sin(2\pi
  x)+\beta\sin(2\ell\pi x)]\\&=%
  -2\pi^2\beta,
\end{align*}
where we have used the perturbative results detailed in~\cite[Appendix A.3]{DeL1}.  Thus
if $\beta>0$, then $\theta=0$ is a sink.

On the other hand, as will be shown in~\eqref{eq:central-lyap}, the expansion of center
vectors at time $n$ is
\begin{equation}\label{eq:mun-formula}
  \ln\mu_n(p)=\ve\sum_{k=0}^{n-1}\left[\partial_\theta\omega(p_k)+\partial_x\omega(p_k)\stable_{n-k}(p_k)\right]+\cO(\ve^2n),
\end{equation}
where $\stable_n$ is defined in~\eqref{e_defineSlopes}; we thus have the formula:
\begin{align*}
  \stable_n(p_0)&= \stable_n(x_0,\theta_0)=-\sum_{k=0}^{n-1}\frac{\partial_\theta
    f(x_k,\theta_k)}{\prod_{j=0}^{k}\partial_x f(x_j,\theta_j)}+\cO(\ve)\\&=
  -\sum_{k=0}^{n-1}\frac{\partial_\theta
    f(f_{\theta_0}^k(x_0),\theta_0)}{(f_{\theta_0}^{k+1})'(x_0)}+\cO(\ve\log\vei)
\end{align*}
where we have used~\cite[Lemma~4.1]{DeL1}.  Substituting this is~\eqref{eq:mun-formula}
yields
\[
\begin{split}
  \ln\mu_n(p)&=\ve\sum_{k=0}^{n-1}\left[\partial_\theta\omega(p_k)+\partial_x\omega(p_k)\slim(p_k)\right]+\cO(n\ve^2\ln\vei+\ve)\\
  &=\ve\sum_{k=0}^{n-1}\clyap(p_k)+\cO(n\ve^2\ln\vei+\ve).
\end{split}
\]
The above formula illustrates the announced relation between the function $\clyap$ and the central Lyapunov exponent.
Also we have seen that
\[
\clyap(p_0)=-\sum_{k=0}^{\infty}\frac{\partial_\theta
  f(f_{\theta_0}^k(x_0),\theta_0)}{(f_{\theta_0}^{k+1})'(x_0)}+\cO(\ve\log\vei).
\]
The average of the logarithm of the expansion of center vectors, at $\theta_0=0$, is
\begin{align*}
  \bclyap(\theta_0)=&-\sum_{k=0}^{\infty}\int_\bT\partial_x\omega(x,\theta_0)\frac{\partial_\theta
    f(f_{\theta_0}^k(x),\theta_0)}{(f_{\theta_0}^{k+1})'(x)} \deh x+\cO(\ve\log\vei)=\\&=
  2\pi\sum_{k=0}^{\infty}\int_\bT\sin(2\pi x)\frac{\alpha\sin(2\ell^k\pi
    x)+\beta\sin(2\ell^{k+1}\pi x)}{\ell^{k+1}}\deh x+\cO(\ve\log\vei)=\\&=
  2\pi^2\frac{\alpha}\ell+\cO(\ve\log\vei)
\end{align*}
Consequently, if $\alpha<0$, then assumption~\ref{a_almostTrivial} is satisfied and our
Main Theorem applies. On the contrary, if $\alpha>0$, then
assumption~\ref{a_almostTrivial} is violated and, as announced, we have a map that we
expect to have positive central Lyapunov exponent.

\begin{rem}\label{rem:no-lyap-fw}
  We have just seen another drastic difference between the Wentzell--Freidlin process and
  the deterministic process: in the Wentzell--Freidlin process the Lyapunov exponent
  associated to the slow variable is always negative\footnote{ This follows from a direct
    computation.} while we have seen that for the deterministic process it can be
  positive.  This depends on the fact that the stochastic process does not reflect
  completely the interplay between the slow and the fast variable which can be much more
  subtle in the deterministic case.
\end{rem}
\section{Geometry}\label{s_geometry}
Throughout this article, $\pi:\bT^2\to\bT$ denotes the projection on the
$x$-coordinate. We denote a point in $\bT^2$ by $p=(x,\theta)$; we use the notation
$p_n=(x_n,\theta_n)=F_\ve^n\,p$.  Our first task is to find invariant cones for the
dynamics: for $\Kconeu,\Kconec>0$ to be specified later, let us define the \emph{unstable cone}
and the \emph{center cone} as, respectively:
\begin{align}\label{e_definitionCones}
  \coneu&=\{(\xi,\eta)\in\bR^2\;:\; |\eta|\leq\ve \Kconeu|\xi|\}&
  \conec&=\{(\xi,\eta)\in\bR^2\;:\; |\xi|\leq \Kconec|\eta|\}.
\end{align}
We claim that there exist $\Kconeu,\Kconec$ such that, if $\ve$ is small enough, $\deh
F_{\ve}\coneu\subset\coneu$ and $\deh F_{\ve}\invr\conec\subset\conec$.  In fact, let us
compute the differential of $F_\ve$:
\begin{equation}\label{e_formulaDF}
  \deh F_\ve=\begin{pmatrix} \partial_{x}f &\partial_\theta f\\ \ve\partial_x\omega &1+\ve\partial_\theta\omega\end{pmatrix};
\end{equation}
consequently, if we consider the vector $(1,\ve u)$
\begin{align}
  \deh_pF_\ve(1,\ve u) &= (\partial_{x}f(p) +\ve u\partial_\theta f(p),\ve\partial_x\omega(p) +\ve u+\ve^2 u\partial_\theta\omega(p))\notag\\
  &=\partial_{x}f(p)\left(1 +\ve \frac{\partial_\theta
      f(p)}{\partial_{x}f(p)}u\right)\cdot (1,\ve \Xi_p(u))\label{e_formulaExpansionX}
\end{align}
where
\begin{equation}\label{e_evolutionXi}
  \Xi_p(u)=\frac{\partial_x\omega(p) + (1+\ve\partial_\theta\omega(p))u}{\partial_{x}f(p) +\ve \partial_\theta f(p)u},
\end{equation}
from which we obtain our claim, choosing for instance
\begin{align}\label{e_definitionKconeuc}
  \Kconeu&=2\|\partial_x\omega\|_\infty& \textrm{ and }& &
  \Kconec&=2\|\partial_\theta f\|_\infty.
\end{align}
From the above computations it is easy to see that $F_\ve$ is a partially hyperbolic map
with expanding direction in $\coneu$ and central direction in $\conec$.

It follows that, for any $p\in\bT^2$ and $n\in\bN$, we can define the real quantities
$\mu_n$, $\nu_n$, $u_n$ and $\stable_n$ as follows:
\begin{align}\label{e_defineSlopes}
  \deh_pF_{\ve}^n(1,0)&=\nu_{n}(1,\ve u_n)&\deh_p F_{\ve}^n(\stable_n,1)&=\mu_n(0,1)
\end{align}
with $|u_n|\leq \Kconeu$ and $|\stable_n|\leq \Kconec$. For each $n$ the slope field $\stable_n$ is
smooth, therefore integrable; given any (small) $h>0$ and $p_*=(x_*,\theta_*)\in\bT^2$, define
$\cW_n^\nt(p_*,h)$ the \emph{local $n$-step center manifold of size $h$} as the
connected component containing $p_*$ of the intersection with the strip
$\{\theta\in B(\theta_*,h)\}$ of the integral curve of $(\stable_n,1)$ passing through $p_*$.
Observe that, by definition, any vector tangent to a local $n$-step center manifold
belongs to the center cone.

Moreover, notice that, by definition,
\begin{align*}
\deh_p F_\ve(\stable_n(p),1) = \mu_n(p)/\mu_{n-1}(F_\ve p)(\stable_{n-1}(F_\ve p),1);
\end{align*}
a direct application of~\eqref{e_formulaDF} yields
\begin{equation}\label{eq:central-lyap}
  \frac{\mu_n(p)}{\mu_{n-1}(F_\ve p)}=1+\ve\left[\partial_\theta\omega(p)
    + \partial_x\omega(p)\stable_n(p)\right]
\end{equation}
Observe that the above expression implies
\begin{equation}\label{e_trivialBound}
  \log{\mu_m(p)}-\log{\mu_{m-n}(F_\ve^np)} \le\expb n\ve,
 \text{ where } \expb = \|\partial_\theta\omega\|+\Kconec\|\partial_x\omega\|.
\end{equation}
Moreover,
\begin{align}\label{eq:central-lyap-s}
  s_n(p)&=\frac{(1+\ve\partial_\theta\omega(p))s_{n-1}(F_\ve(p))-\partial_\theta f(p)}{\partial_x f(p)-\ve\partial_x\omega(p) s_{n-1}(F_\ve(p))}=:\Xi_p^-(s_{n-1}(F_\ve(p)).
 \end{align}
Note that
\[
\frac{\deh}{\deh s}\Xi^-_p(s)=\frac{(1+\ve\partial_\theta\omega(p))\partial_x
  f(p)-\ve\partial_x\omega(p)\partial_\theta
   f(p)}{\left[\partial_xf(p)-\ve\partial_x\omega(p) s\right]^2}.
\]
Accordingly, for each $|s|\leq \Kconec$ and $\ve$ small enough, we have that there exists
$\sigma_c\in (0,1)$ such that
\[
\left|\frac{\deh}{\deh s}\Xi^-_p(s)\right|\leq \sigma_c.
\]
This implies that $s_n$ is a converging sequence: let $\slim$ be its limit. Then, for all
$p\in\bT^2$, $|s_n(p)-\slim(p)|\leq \Const \sigma_c^n$.  We have thus a formula for the
center slope.  Yet, it is well known that, in general, $\slim$ is not a very regular
function of the point.

This could create trouble while using our assumption~\ref{a_almostTrivial} since typically
we will need to apply to it formulae that require some regularity.  To overcome this
problem we define a regularized function $\clyapReg$ that approximates $\clyap$:
\begin{align}\label{eq:regularizedpsi}
  \clyapReg(p)&=\partial_\theta\omega(p)+\partial_x \omega(p) s_{\bar n}(p)&
  \bclyapReg(\theta)&=\int_{\bT}\clyapReg(x,\theta)\rho_{\theta}(x)\deh x
\end{align}
where $\bar n$ is such that for any $p\in\bT^2$
\begin{align*}
  \|\clyapReg(p)-\clyap(p)\| < \closeness < 1/8,
\end{align*}
for some $\closeness$ small to be specified in due course.
\begin{rem}\label{rem:newpsi}
  Note that $\bar n\sim|\log\closeness|$ is independent of $\ve$; hence, due to the
  uniformity in $p$ of all estimates involved, we have uniform bounds on the norms of
  $\clyapReg$, \ie: $\|\clyapReg\|_{\cC^0} < \expb$ and
  $\|\clyapReg\|_{\cC^1} < \exp(\Const \bar n)$.  Under assumption~\ref{a_almostTrivialp}
  we have
  \[
  \max_{k\in\{1,\cdots,\nz\}}\bclyapReg(\theta_{k,-})\in [-9/8,-7/8].
  \]
\end{rem}
Define the function $\zeta_n$ as:
\begin{equation} \label{e_definitionZ}%
\zeta_n=\ve\sum_{k=0}^{n-1}\clyapReg\circ F_\ve^k.
\end{equation}
\begin{lem}[Distortion]\label{l_distortion}
  For any $T>0$ there exists $C_T$ so that, for any $p\in\bT^2$ and $h>0$ sufficiently
  small, let $N = \pint{T\vei}$
  \begin{align*}
    \sup_{q\in\cW_N^\nt(p,h)}\mu_N(q)\leq%
    \expo{\zeta_N(p)+C_T h+2T\varrho+2\bar n\expb\ve}.
  \end{align*}
\end{lem}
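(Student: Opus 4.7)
The starting point is the one-step relation~\eqref{eq:central-lyap}: setting $\psi_n(p):=\partial_\theta\omega(p)+\partial_x\omega(p)\,s_n(p)$, telescoping along the orbit of $q$ gives
\[
\log\mu_N(q)=\sum_{k=0}^{N-1}\log\bigl(1+\ve\psi_{N-k}(F_\ve^k q)\bigr)\;\le\;\ve\sum_{k=0}^{N-1}\psi_{N-k}(F_\ve^k q),
\]
where the inequality uses $\log(1+x)\le x$ (valid since $\ve|\psi_n|\le\ve\expb<1$ for small $\ve$). I would then rewrite the right-hand side as $\zeta_N(p)+\cE_1+\cE_2$, where
\[
\cE_1 := \ve\sum_{k=0}^{N-1}\bigl[\psi_{N-k}(F_\ve^k q)-\clyapReg(F_\ve^k q)\bigr],\qquad \cE_2 := \ve\sum_{k=0}^{N-1}\bigl[\clyapReg(F_\ve^k q)-\clyapReg(F_\ve^k p)\bigr].
\]

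To bound $\cE_1$, I would split the sum according to whether $N-k\le\bar n$ or $N-k>\bar n$. The first block contains $\bar n$ terms, each bounded by $2\expb$ using the uniform estimates $|\psi_n|,|\clyapReg|\le\expb$, producing a contribution of at most $2\bar n\expb\ve$. For the second block, the geometric contraction of the iteration $\Xi_p^-$ (contraction factor $\sigma_c\in(0,1)$) yields $|s_n-\slim|\le\Const\,\sigma_c^n$; combining with $\|\clyapReg-\clyap\|_\infty<\varrho$ gives $|\psi_{N-k}-\clyapReg|\le\varrho+\Const\,\sigma_c^{N-k}$ pointwise, so summing over the second block produces at most $N\ve\varrho+\cO(\ve)\le 2T\varrho$ once $\ve$ is small enough.

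To bound $\cE_2$, I would use the uniform Lipschitz control $\|\clyapReg\|_{\cC^1}\le\exp(\Const\bar n)$ from Remark~\ref{rem:newpsi} (independent of $\ve$), together with a diameter estimate for $F_\ve^k\cW_N^\nt(p,h)$. Since $q\in\cW_N^\nt(p,h)$, the image $F_\ve^k q$ lies on the $(N-k)$-step center manifold through $F_\ve^k p$, thanks to the compatibility $\deh F_\ve(s_n,1)\parallel(s_{n-1}\circ F_\ve,1)$ used in~\eqref{eq:central-lyap}; along this curve the tangent stretching at step $j$ is exactly $1+\ve\psi_{N-j}(F_\ve^j p)\le 1+\ve\expb$, so the arclength of $F_\ve^k\cW_N^\nt(p,h)$ is multiplied by at most $e^{T\expb}$. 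The initial arclength is $\cO(h)$ by the center-cone condition $|s_N|\le\Kconec$, and hence $\dist(F_\ve^k p,F_\ve^k q)\le\Const\, e^{T\expb}h$. Summing over $k\le N=\pint{T\vei}$ yields $|\cE_2|\le C_T h$ with $C_T=T\|\clyapReg\|_{\cC^1}\Const\, e^{T\expb}$.

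The main delicate point I anticipate is ensuring that $\cW_N^\nt(p,h)$ does not fold or wrap around $\bT^2$ under forward iteration for $k\le N$, so that the arclength bound genuinely translates into a $\dist$-bound. This should follow from the invariance of the center cone~\eqref{e_definitionCones}, the one-dimensionality of the center manifold, and taking $h$ sufficiently small compared with $e^{-T\expb}$ so that all intermediate curves $F_\ve^k\cW_N^\nt(p,h)$ project diffeomorphically onto a small $\theta$-interval; once this is in place, collecting the bounds on $\cE_1$ and $\cE_2$ with the telescoping upper bound for $\log\mu_N(q)$ gives exactly $\zeta_N(p)+C_T h+2T\varrho+2\bar n\expb\ve$.
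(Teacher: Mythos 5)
Your proof is correct and follows the paper's own argument: telescope $\log\mu_N$ via~\eqref{eq:central-lyap}, split the sum into the error from replacing $\clyapn{N-k}$ by $\clyapReg$ (which yields the $2T\closeness+2\bar n\expb\ve$ terms) and the variation of $\zeta_N$ along $\cW_N^\nt(p,h)$ (which yields $C_T h$). The paper obtains the latter by computing $\deh\zeta_N(\stable_N,1)$ and summing $e^{k\expb\ve}$, whereas you estimate the diameter of $F_\ve^k\cW_N^\nt(p,h)$ directly, but the two are the same (your $\cE_2$ is precisely the integral of the paper's derivative along the curve); your worry about folding is also unnecessary, since the arclength bound on $F_\ve^k\cW_N^\nt(p,h)$ already dominates $\dist(F_\ve^k p,F_\ve^k q)$ regardless of whether the image projects diffeomorphically.
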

\begin{proof}
  Let us introduce the convenient function
  $\clyapn{n}(p) = \partial_\theta\omega(p)+\partial_x\omega(p)s_n(p)$; then
  by~\eqref{eq:central-lyap} we can write
  $\mu_N(p) \le \exp\left(\ve\sum_{n = 0}^{N-1}\clyapn{n} \right))$.  On the other hand,
  by construction and the triangle inequality we have
  $\|\clyapReg-\clyapn{n}\| < 2\closeness$ if $n\ge\bar n$ (otherwise the trivial bound
  $\|\clyapReg-\clyapn{n}\| <2\expb$ holds); we conclude that
  $\mu_N(p)\leq\expo{\zeta_N(p)+2T\varrho+2\bar n\expb\ve}$; next, we need to compute the
  derivative of $\zeta_N$ along the $N$-step central direction.  By~\eqref{e_definitionZ}
  it follows
  \[
  \deh\zeta_N (\stable_N,1)=\ve\sum_{k=0}^{N-1}\langle \nabla \clyapReg\circ F_\ve^k,\deh
  F_\ve^k (\stable_N,1)\rangle;
  \]
  hence,~\eqref{e_defineSlopes} and~\eqref{e_trivialBound} imply that $\deh F_\ve^k
  (\stable_N,1)=e^{\cO(k \expb\ve)}(\stable_{N-k},1)$.  Thus there exists
  $b_T\sim\expo{\const T}>0$ such that
  \[
  |\deh\zeta_N (\stable_N,1)|\leq b_T\ve\sum_{k=0}^{N-1}e^{k\expb\ve}\leq \Const b_T\expb^{-1}.
  \]
  Accordingly,
  \begin{align*}
    \sup_{q\in\cW_N^\nt(p,h)}\mu_N(q)&\leq \expo{\zeta_N(p)+\Const b_T\expb^{-1}h+2T\varrho+2\bar n\expb\ve}.\qedhere
  \end{align*}
\end{proof}
\section{Standard pairs, families and couplings}\label{s_standardPairs}
\subsection{Definitions and basic facts}
In this section we recap the standard families formalism, first introduced by Dolgopyat
(see e.g.~\cite{DimaPH,DimaSRB,DimaAveraging}) to study statistical properties of
partially hyperbolic dynamical systems\footnote{ See also~\cite[Section 3.2]{DeL1} for a
  similar, but more general, account of the framework in this context.}.
\begin{rem}
  The educated reader will certainly notice that our regularity assumptions are stronger
  than the ones which are usually required to apply the coupling argument (see
  e.g.~\cite{Chernov}).  The stronger regularity conditions are in fact needed in order to
  obtain the refined statistical properties (\ie,\ the Local Central Limit Theorem) that
  we use to set up the coupling argument in an efficient manner.  Consequently, they are
  crucial to obtain the near-optimal bounds on the rate of decay of correlations that we seek.
\end{rem}
\subsubsection{Standard pairs}\label{subsec:Standardpairs}
Let us fix a small $\delta>0$, and $\dtt0, \dtt1>0$ large to be specified later; for
any $\spc1>0$ let us define the set of functions
\begin{align*}
  \Sigma_{\spc1}=\{G\in \cC^3([a,b], \bT)\st&
  a,b\in\bT, b-a\in[\delta/2,\delta],\\&
  \|G'\|\leq \ve \spc1,\, \|G''\|\leq \ve \dtt0 \spc1,\,\|G'''\|\leq \ve \dtt1 \spc1\}.
\end{align*}
Let us associate to any $G\in \Sigma_{\spc1}$ the map $\bG(x)=(x,G(x))$; the graph of any
such $G$ (\ie the image of $\bG$) will be called a \emph{proper $\spc1$-standard curve}.
With a little abuse of terminology, we refer to the quantity $b-a$ as the \emph{length of
  the curve}.  If we do not require the lower bound for the length of the curve, we obtain
the definition of a \emph{short $\spc1$-standard curve}; for ease of exposition we adopt
the convention that all standard curves are assumed to be proper unless otherwise
specified.  Also, with another convenient abuse of terminology, we use the term
\emph{$\spc1$-standard curve} to indicate also the function $G$ or the map $\bG$.  Two
$\spc1$-standard curves $\Ga$ and $\Gb$ are said to be \emph{stacked} if their projection
on the $x$ axis coincide; we say that $\Ga$ and $\Gb$ are $\Delta$\emph{-stacked} if they
are stacked and $\|\Ga-\Gb\|\nc1<\Delta$.


Let us fix $\dttrho0>0$ once again to be specified in due course.  For any $\spc2>0$
define the set of $\spc2$-\emph{standard} probability densities on the standard curve
$G$ as
\begin{align*}
  D_{\spc2}(G)=\left\{\rho\in \cC^2([a,b],\bRp)\st \int_a^b\rho(x)\deh x=1,\
\left\|\frac{\rho'}{\rho}\right\|\leq
  c_2,\,\left\|\frac{\rho''}{\rho}\right\|\leq \dttrho0\spc2\right\}.
\end{align*}
A \emph{$(\spc1,\spc2)$-standard pair} $\ell$ is given by $\ell=(\bG,\rho)$, where
$G\in\Sigma_{\spc1}$ and $\rho\in D_{\spc2}(G)$.  We similarly define \emph{short
  $(\spc1,\spc2)$-standard pairs}, by allowing $G$ to be a short $\spc1$-standard curve.
We define $|\ell|=b-a$ to be the \emph{length of} $\ell$.  A $(\spc1,\spc2)$-standard pair
$\ell=(\bG,\rho)$ uniquely identifies a probability measure $\mu_\ell$ on $\bT^2$ defined
as follows: for any Borel-measurable function $g$ on $\bT^2$ let
\[
\mu_\ell(g):=\int_a^{b} g(\bG(x))\rho(x) \deh{}x.
\]
Let $\stdpSet{\spc1,\spc2}$ denote the set of all $(\spc1,\spc2)$-standard pairs.

\subsubsection{Standard families}\label{sec:standardfamiles} A standard family can be conveniently regarded as a
\emph{random standard pair}.  More precisely: a \emph{$(\spc1,\spc2)$-standard family}
$\stdf$ is given by a Lebesgue probability space%
\footnote{ Recall that a probability space is a Lebesgue space if it is isomorphic to the
  disjoint union of an interval $[0,a]$ with Lebesgue measure and (at most) countably many
  atoms.} %
$\malpha=(\alphaset,\cF,\fm)$ and a $\cF$-measurable\footnote{ The set
  $\stdpSet{\spc1,\spc2}$ of $(\spc1,\spc2)$-standard pairs is in fact a space of smooth
  functions; it is thus a measurable space with the Borel $\sigma$-algebra.  More in
  detail, if $\bG:[a,b]\to\bT^2$ and $\rho:[a,b]\to\bR^{+}$ are defined as above, let
  $\hat\bG$ and $\hat\rho$ be defined by precomposing $\bG$ and $\rho$ respectively with
  the affine orientation-preserving map $[0,1]\to[a,b]$.  A standard pair-valued function
  is thus $\cF$-measurable if both maps $(\alpha,s)\mapsto \hat\bG_{\alpha}(s)$ and
  $(\alpha,s)\mapsto \hat\rho_{\alpha}(s)$ are jointly measurable. In particular, for any
  Borel set $E\subset\bT^2$, the function $\alpha\mapsto\mu_{\fellf{\alpha}}(E)$ is
  $\cF$-measurable.}  map $\fell:\alphaset\to\stdpSet{\spc1,\spc2}$.

For simplicity's sake, in this paper we will mostly restrict to standard families such
that $\malpha=(\alphaset,\cF,\fm)$ is a discrete probability space (\ie, $\alphaset$ is
at most countable and $\cF$ is the power set of $\alphaset$). We will thus imply that
$\alphaset$ is at most countable, and simply write $\malpha=(\alphaset,\fm)$, otherwise
explicitly stated.  We will denote the set of all $(\spc1,\spc2)$-standard families by
$\stdfSet{(\spc1,\spc2)}{}$.

A $(\spc1,\spc2)$-standard family $\stdf$ identifies a unique probability measure $\hat\mu_\stdf$
on the product space $\prodSet{\alphaset}{\bT^2}$ (with the product $\sigma$-algebra): for
any measurable function $\hat g$ on $\prodSet{\alphaset}{\bT^2}$ let
\[
\hat\mu_\stdf(\hat g):=\int_\alphaset\mu_{\fellf\alpha}(\hat g\prodElement{\alpha}{\cdot})\deh\fm.
\]
Define the \emph{support} of $\stdf$ as \(\supp\stdf =
\supp\hat\mu_\stdf\subset\prodSet{\alphaset}{\bT^2}\).  The natural projection
\(\npi:\prodSet{\alphaset}{\bT^2} \to \bT^2 \) induces a probability measure on $\bT^2$
which we denote by $\mu_\stdf=\npi_*\hat\mu_\stdf$; in other words, for any
Borel-measurable function $g$ of $\bT^2$, let
\[
\mu_\stdf(g):=\int_\alphaset\mu_{\fellf{\alpha}}(g)\deh\fm.
\]
Clearly, we have \( \supp\mu_\stdf=\npi\,\supp\stdf\).\footnote{
  This concept can be obviously applied to a single standard pair, considering it a family
  with just one element.  In such case, the support of the standard pair and the support
  of the associated measure can be trivially identified.}  We therefore obtain a
correspondence between $(\spc1,\spc2)$-standard families and probabilities on $\bT^2$; we
denote by $\sim$ the equivalence relation induced by the above correspondence \ie we let
$\stdf\sim\stdf'$ if and only if $\mu_\stdf=\mu_{\stdf'}$. We denote with $\eqc\stdf$ the
corresponding equivalence class, which therefore uniquely identifies a probability
measure.  We say that a probability measure $\mu$ \emph{admits a $(\spc1,\spc2)$-standard
  disintegration} if there exists a $(\spc1,\spc2)$-standard family $\stdf$ so that
$\mu_\stdf=\mu$; we write $\stdf\in\stdfSet{\spc1,\spc2}{\mu}$.

\subsubsection{Conditioning}
Let $\stdf=\SFF{\fell}{(\alphaset,\nu)}\in\stdfSet{(\spc1,\spc2)}{}$; a family
$\stdf'=\SFF{\fell'}{(\alphaset',\nu')}$ is said to be a \emph{subfamily of} $\stdf$
(denoted with $\stdf'\subfamily\stdf$) if
\begin{itemize}
\item $\supp\stdf'\subset\supp\stdf$, that is: $\alphaset'\subset\alphaset$ and
  $\fa\alpha\in\alphaset'$ we have $\supp\fellf\alpha'\subset\supp\fellf{\alpha}$;
\item for any measurable set $E\subset\prodSet\alphaset{\bT^2}$,
  $\hat\mu_{\stdf'}(E)={\hat\mu_{\stdf}(E\cap\supp\stdf')}/{\hat\mu_\stdf(\supp\stdf')}$.
\end{itemize}
Given $\alphaset'\subset\alphaset$, we define the \emph{subfamily conditioned on
  $\alphaset'$} to be
$\stdf\cond{\alphaset'}=\SFF{\fell\cond{\alphaset'}}{(\alphaset',\nu')}$, where
$\nu'(E)=\nu(E|\alphaset')$ and $\fell\cond{\alphaset'}$ is the restriction of $\fell$ on
$\alphaset'$.

\subsubsection{Convex combinations of pairs and families} We call a real number $\kappa$ a
\emph{weight} if $\kappa\in[0,1]$. %
Given a (at most countable) collection of $(\spc1,\spc2)$-standard families $\{\stdf_j =
\SFF{\fell_j}{\malpha_j}\}$ together with a collection of weights $\{\kappa_j\}$ such that
$\sum_j\kappa_j=1$, we can define the \emph{convex combination} $\sum_j\kappa_j\stdf_j$ as
the $(\spc1,\spc2)$-standard family $\stdf=\SFF{\fell}{\malpha}$ obtained by ``choosing a
standard family $\stdf_j$ at random with probability $\kappa_j$''.  More precisely, let
$\malpha=(\alphaset,\fm)$ be the discrete probability space given by
$\alphaset=\{(j,\alpha)\st \alpha\in\alphaset_j\}$ and measure $\fm=\sum_j\kappa_j\cdot
\inj_{j*}\fm_j$, where $\inj_j$ is the natural injection $\inj_j:\alphaset_j\to\alphaset$.
Last, let us define the random element $\fell$ as $\fell(j,\alpha)=\fellf{\alpha}_j$;
clearly $\mu_{\stdf}=\sum_j\kappa_{j}\mu_{\stdf_j}$.  With this in mind, observe that we
can recover the components of a convex combination by conditioning with respect to the
events $\bar\alphaset_k=\{(j,\alpha)\st j=k,\alpha\in\alphaset_k\}$. Observe, moreover,
that standard families can naturally be regarded as convex combinations of standard pairs.

\subsection{Standard pairs and dynamics}
Having made precise the concept of standard pair and families, our next step is to
illustrate their relation with the dynamics generated by the map $F_\ve$.
\subsubsection{Invariance}
As a first step we study the evolution of a $(\spc1,\spc2)$-standard pair.
\begin{prop}[Invariance]\label{p_invarianceStandardPairs}
  There exist $\spc1$, $\spc2$ such that, if $\ve$ is sufficiently small and $\ell$ is a
  $(\spc1,\spc2)$-standard pair, $F_{\ve*}\mu_{\ell}$ admits a $(\spc1,\spc2)$-standard
  disintegration.
\end{prop}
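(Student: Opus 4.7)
The plan is to write down the pushforward $F_{\ve*}\mu_\ell$ explicitly, reparametrize the image curve by its new $x$-coordinate, verify the requisite derivative bounds on the resulting graph function and density, and finally chop the (possibly long) image curve into pieces of admissible length. The construction of the standard family is then completely explicit: a countable convex combination of standard pairs indexed by the pieces, weighted by their conditional masses.

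First I reparametrize. Write $\bG(x)=(x,G(x))$ with $G\in\Sigma_{\spc1}$. Since $\partial_x f\ge\lambda>2$ and $|G'|\le\ve\spc1$, for $\ve$ small the map $x\mapsto y:=f(x,G(x))$ is a smooth diffeomorphism onto its image with Jacobian $J(x):=\partial_x f+\partial_\theta f\cdot G'(x)$ satisfying $J\ge\lambda/2$. The image of $\bG$ is thus the graph of $\tilde G(y):=G(x(y))+\ve\,\omega(x(y),G(x(y)))\pmod 1$, where $x(y)$ is the inverse. A chain-rule calculation (essentially a rewriting of~\eqref{e_formulaExpansionX}--\eqref{e_evolutionXi}) yields
\begin{align*}
\tilde G'(y)=\frac{G'(x)+\ve\bigl[\partial_x\omega(p)+\partial_\theta\omega(p)G'(x)\bigr]}{J(x)},
\end{align*}
from which $|\tilde G'|\le\ve\spc1$ follows provided $\spc1$ is chosen so that $\spc1/(\lambda/2)+2\|\partial_x\omega\|_\infty/\lambda\le\spc1$, \ie\ $\spc1\gtrsim\|\partial_x\omega\|_\infty/(\lambda-2)$. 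Two further differentiations give expressions of the schematic form $\tilde G''=J^{-2}(G''+\text{l.o.t.})$ and $\tilde G'''=J^{-3}(G'''+\text{l.o.t.})$, where the lower-order terms involve products of derivatives of $f,\omega,G$ up to order two (resp.\ three) of total order strictly less than the leading one. The contraction factors $J^{-k}\le(2/\lambda)^k$ dominate the cross-term contributions, so the bounds $\|\tilde G''\|\le\ve\dtt0\spc1$ and $\|\tilde G'''\|\le\ve\dtt1\spc1$ close if $\dtt0,\dtt1$ are chosen large enough \emph{in that order}, depending on $\spc1$ and the $\cC^4$ norms of $f$ and $\omega$.

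Next I handle the density. The pushforward density is $\tilde\rho(y)=\rho(x(y))/J(x(y))$; logarithmic differentiation gives
\begin{align*}
\frac{\tilde\rho'}{\tilde\rho}(y)=\frac{1}{J(x)}\left[\frac{\rho'(x)}{\rho(x)}-\frac{J'(x)}{J(x)}\right],
\end{align*}
so $|\tilde\rho'/\tilde\rho|\le \spc2/(\lambda/2)+K$ for some constant $K=K(f,\spc1)$, which closes the bound provided $\spc2\gtrsim K/(1-2/\lambda)$. A second differentiation produces an analogous expression whose dominant term is again $O(1/\lambda)$ times $\rho''/\rho$, closing the bound $|\tilde\rho''/\tilde\rho|\le\dttrho0\spc2$ after $\dttrho0$ is picked accordingly. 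Finally, the image curve has length $\int J(x)\,\deh x\ge(\lambda/2)|\ell|\ge\lambda\delta/4>\delta/2$; I cut it at any fundamental-domain boundary of $\bT$ and then partition each remaining piece of length $L$ into $\lceil L/\delta\rceil$ equal subintervals, each of length in $[\delta/2,\delta]$. Restricting $\tilde\bG$ and renormalizing $\tilde\rho$ on each subinterval yields a (finite) collection of standard pairs with weights $\kappa_j:=\int_{I_j}\tilde\rho\,\deh y$ summing to one, and the associated standard family has push-measure equal to $F_{\ve*}\mu_\ell$ by construction.

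The main obstacle is bookkeeping: the constants $\spc1,\dtt0,\dtt1,\spc2,\dttrho0$ must be chosen in the right order so that every bootstrap inequality closes uniformly in $\ve$. The third derivative of $\tilde G$ and the second logarithmic derivative of $\tilde\rho$ are the most tedious to control because they involve many pairwise products of derivatives of $f,\omega,G,\rho$; however each such cross-term either carries a positive power of $\ve$ or at least one factor of $J^{-1}\le 2/\lambda<1$, so the whole bootstrap closes for $\ve$ sufficiently small. The cone invariance established in~\eqref{e_definitionCones}--\eqref{e_definitionKconeuc} is what, at a conceptual level, underlies all these estimates.
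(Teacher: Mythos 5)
Your proposal is correct and follows essentially the same route as the paper's proof: you change variables via the expanding map $y = f_\bG(x) := f(x,G(x))$ (whose inverse $x(y)$ is exactly the paper's $\vf$), derive the transformed graph function $\tilde G$ and density $\tilde\rho$, close the derivative bounds by exploiting the contraction factor $J^{-1} = (f_\bG')^{-1} < 1$, and cut into pieces of admissible length. Your formulas for $\tilde G'$ and $\tilde\rho'/\tilde\rho$, the schematic $J^{-k}(\cdot + \text{l.o.t.})$ structure for higher derivatives, and the order in which the constants $\spc1,\dtt0,\dtt1,\spc2,\dttrho0$ and then $\ve$ must be fixed all match the paper's computation (cf.\ the paper's $G_j = \bar G\circ\vf_j$, $\rho_j = Z_j^{-1}\rho\circ\vf_j\cdot\vf_j'$ and formulas \eqref{e_C1}--\eqref{e_C2}), with only a slightly different choice of uniform lower bound on the Jacobian ($\lambda/2$ vs.\ $3/2$), which is immaterial.
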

\begin{rem}
  The above proposition is a simplified version of the corresponding Proposition 3.3
  in~\cite{DeL1} where it is proved in a more general setting.  Since there are a few
  differences in the notation and terminology between this version and the one
  of~\cite{DeL1}, we prefer to give an adapted proof below for the reader's convenience.
  Despite its technical nature, the proof is instrumental for a few definitions which will
  be given later.  We thus prefer to give it now rather than relegating it to some appendix.
\end{rem}
\begin{proof}
  Let $\ell=(\bG,\rho)$ be a $(\spc1,\spc2)$-standard pair. For any sufficiently smooth function $A$ on $\bT^2$, by the
  definition of standard curve, it is trivial to check that:
  \begin{subequations}\label{e_AG}
    \begin{align}
      \|(A\circ\bG)'\| &\leq \|\deh A\| (1+\ve c_1)\label{e_AG'}\\
      \|(A\circ\bG)''\| &\leq \ve\|\deh A\| \dtt0c_1+\|\deh A\|\nc1(1+\ve c_1)^2\label{e_AG''}\\
      \|(A\circ\bG)'''\| &\leq \ve\|\deh A\| \dtt1c_1+\|\deh A\|\nc2(1+\ve(1+\dtt0) c_1)^3\label{e_AG'''}.
    \end{align}
  \end{subequations}
  Let us then introduce the maps $f_{\bG}= f\circ\bG$ and $\omega_{\bG}= \omega\circ\bG$.
  Recall that $\lambda > 2$, defined in Section~\ref{sec:results} denotes the minimal
  expansion of $f_\theta$; we will assume $\ve$ to be small enough (depending on our
  choice of $c_1$) so that $f_{\bG}'\geq \lambda-\ve c_1 \|\partial_\theta f\|>3/2$; in
  particular, $f_{\bG}$ is an expanding map.  Provided $\delta$ has been chosen small
  enough, $f_\bG$ is invertible. Let $\vf(x)=f_{\bG}\invr(x)$. Differentiating we obtain
  \begin{align}\label{e_estimatesVf}
    \vf'   &= \frac1{f_\bG'}\circ\vf &%
    \vf''  &= -\frac{f_\bG''}{f_\bG'^3}\circ\vf &%
    \vf''' &= \frac{3f_\bG''^2-f_\bG'''f_\bG'}{f_\bG'^5}\circ\vf.
  \end{align}
  We can thus write:
  \begin{align*}
    F_{\ve*}\mu_\ell(g)=\mu_{\ell}(g\circ F_{\ve})&=\int_{a}^{b} g(f_\bG(x),\bar
    G(x))\rho(x) \deh x\\&=\int_{f_{\bG(a)}}^{f_{\bG(b)}} g(x, \bar
    G(\vf(x)))
    \cdot\rho(\vf(x))\vf'(x)\deh x,
  \end{align*}
  where $\bar G(x):=G(x)+\ve\omega_\bG(x)$.  Fix a partition (mod $0$) of
  $[f_{\bG}(a),f_{\bG}(b)]=\bigcup_{j\in\cJ}[a_{j},b_{j}]$, with
  $b_{j}-a_{j}\in[\delta/2,\delta]$ and $b_{j}=a_{j+1}$.  We can thus write
  \begin{align*}
    F_{\ve*}\mu_\ell(g) &= \sum_j\fmm_{j}\int_{a_{j}}^{b_{j}} g(x, G_j(x))
    \cdot\rho_j(x)\deh x=
    \sum_j \fmm_{j}\mu_{(\bG_{j},\rho_{j})}(g).
  \end{align*}
  provided that $G_{j}= \bar G\circ\vf_{j}$ and $\rho_{j} = \fmm_{j}^{-1} \cdot
  \rho\circ\vf_{j}\cdot\vf_{j}'$ where $\vf_j=\vf|_{[a_j,b_j]}$ and $ \fmm_{j} =
  \int_{a_{j}}^{b_{j}} \rho(\vf_{j}(x))\vf'_{j}(x)\deh x$.  Observe that, by construction,
  we have $\sum_j \fmm_{j}=1$.  Differentiating the above definitions and
  using~\eqref{e_estimatesVf} we obtain
  \begin{subequations}\label{e_C1}
    \begin{align}
      G_j' &=%
      \frac{\bar G'}{f'_\bG}\circ\vf_j\label{e_c1'}\\%
      G_j'' &=%
      \frac{\bar G''}{f'^2_\bG}\circ\vf_j%
      -G_j'\cdot\frac{f_\bG''}{f_\bG'^2}\circ\vf_j\label{e_c1''}\\%
      G_j''' &=%
      \frac{\bar G'''}{f'^3_\bG}\circ\vf_j%
      -3 G''_j\cdot\frac{f_\bG''}{f_\bG'^2}\circ\vf_j-
      G'_j\cdot\frac{f_\bG'''}{f_\bG'^3}\circ\vf_j\label{e_c1'''}%
    \end{align}
  \end{subequations}
  and similarly
  \begin{subequations}\label{e_C2}
    \begin{align}
      \frac{\rho_j'}{\rho_j}&=\frac{\rho'}{\rho\cdot f'_\bG}\circ\vf_j-\frac{f_\bG''}{f_\bG'^2}\circ\vf_j\label{e_c2'}\\
      \frac{\rho_j''}{\rho_j}&=\frac{\rho''}{\rho\cdot
        f'^2_\bG}\circ\vf_j-3\frac{\rho'_j}{\rho_j}\cdot
      \frac{f_\bG''}{f_\bG'^2}\circ\vf_j -\frac{f_\bG'''}{f_\bG'^3}\circ\vf_j.\label{e_c2''}
    \end{align}
  \end{subequations}
  Using~\eqref{e_c1'}, the definition of $\bar G$
  and~\eqref{e_AG'} we obtain, for small enough $\ve$:
  \begin{align*}
    \|G'_j\| &\leq \left\|\frac{ G' + \ve\omega_\bG'}{f_\bG'}\right\|
    \leq\frac23(1+\ve\|\deh\omega\|)\ve c_1 + \frac23\ve\|\deh\omega\|\\%
    &\leq\frac34\ve c_1+ \ve D_1%
  \end{align*}
  where $D_1=\frac23\|\deh\omega\|$.  We can then fix $c_1$ large enough so that the
  right hand side of the above inequality is less than $c_1$.  Next we will use $C_*$ for a generic constant depending on $c_1,D_1,D_1',c_2, D_2$ and $\Const$ for a generic constant depending only on $F_\ve$. Then, we find\footnote{ The reader can easily fill in the
    details of the computations.}
  \begin{align*}
    \|G''_j\|&\leq \frac34\ve [ c_1 D_1+\Const]+\ve^2 C_*\,;&%
    \|G'''_j\|&\leq \frac34\ve\left[ c_1( D_1'+D_1\Const+\Const)+\Const\right]+\ve^2 C_*\\
    \left\|\frac{\rho'_j}{\rho_j}\right\|&\leq \frac34 c_2 + \Const+\ve C_*\,;&
    \left\|\frac{\rho''_j}{\rho_j}\right\|&\leq \frac34 c_2 [D_2 +\Const]+\Const+\ve C_*.
  \end{align*}
  We can then fix $c_1,D_1',c_2, D_2$ sufficiently large and then $\ve$ sufficiently small
  to ensure that the $(\bG_{j},\rho_{j})$ are standard pairs.  We have thus obtained a
  decomposition of $F_{\ve*}\mu_{\ell}$ given by the discrete standard family
  $\stdf'=\SFF{\ell_j}{(\cJ,\fmm_{j})}$.
\end{proof}
\begin{rem}\label{rem:inv-F} The construction described in the above proposition yields more than just a
standard disintegration of $F_{\ve*}\mu_\ell$. In fact, it gives an \emph{invertible} map
$\liftMap:\supp\ell\to \supp\stdf$ such that $F_\ve=\npi\circ\liftMap$ and
$\hat\mu_\stdf=\liftMap_*\mu_\ell$ (such map does not exist in general for a standard
disintegration of $F_{\ve*}\mu_\ell$).
\end{rem}
It is immediate to extend the above proposition to standard families: let
$\stdf=\SFF{\fell}{(\alphaset,\fm)}$ be a standard family; then by definition we have, for
any measurable function $g$:
\begin{align*}
  F_{\ve*}\mu_\stdf(g) &=%
  F_{\ve*} \sum_{\alpha\in\alphaset}\fm_{\alpha}\mu_{\ell_{\alpha}}(g) =
  \sum_{\alpha\in\alphaset}\fm_{\alpha}F_{\ve*}\mu_{\ell_{\alpha}}(g) =
  \sum_{\alpha\in\alphaset}\fm_{\alpha} \mu_{\stdf'_{\alpha}}(g)
\end{align*}
where $\stdf'_\alpha$ is the standard family obtained by applying
Proposition~\ref{p_invarianceStandardPairs} to $\ell_\alpha$.  We conclude that the convex
combination
\begin{align*}
  \stdf'=\sum_{\alpha\in\alphaset}\fm_{\alpha}\stdf'_{\alpha}
\end{align*}
is a standard disintegration of $F_{\ve*}\mu_{\stdf}$; moreover there exists an invertible
map (which we still denote) $\liftMap:\supp\stdf\to\supp\stdf'$ so that
$\npi\circ\liftMap=F_\ve\circ\npi$ and $\liftMap_*\hat\mu_\stdf=\hat\mu_{\stdf'}$.

\subsubsection{Pushforwards and filtrations}\label{ss_pushforwards}
A standard disintegration of $F_{\ve*}\mu_{\stdf}$ equipped with a map $\liftMap$ as above
is called a $(\spc1,\spc2)$-\emph{standard pushforward} of $\stdf$.  A (finite or
countable) sequence $\{\stdf_n\}$ is said to be a \emph{sequence of
  $(\spc1,\spc2)$-standard pushforwards} of $\stdf_0$ if for each $n\ge0$, $\stdf_{n+1}$
is a $(\spc1,\spc2)$-standard pushforward of $\stdf_n$. At times, when some confusion
might arise, we will write $\stdf_n(\stdf)$ to make clear that $\stdf_n$ is a pushforward
of the family $\stdf$.

Let us comment on the above important definition
\begin{rem}\label{rem:lofp}
  Consider a sequence of $(\spc1,\spc2)$-standard pushforwards of a standard pair $\ell$;
  it is instructive to consider the sequence $\stdf_n$ as a random process.  For each
  $p\in\supp\ell$, let $\alphaMap{n}:\supp\ell\to\alphaset_n$ be the map
  $\alphaMap{n}=\npiA\circ\liftMap^{n}$.\footnote{ Obviously, $\npiA(\alpha,p)=\alpha$,
    for each $\alpha\in\alphaset, p\in\bT^2$.} Next, let us introduce the shorthand
  (abusing but suggestive) notation $\fellf{p}_n=\fellf{\alphaMap{n}(p)}_n$. Accordingly,
  the sequence of functions $\{\fell_n\}$ can be regarded as a random process on the
  standard pair $\ell$ with values in the space of standard pairs.
\end{rem}

Observe moreover that our construction of $\liftMap$ implies the following important
property: given $\alpha\in\alphaset_n$ let $\gap_n(\alpha)$ be the connected subcurve
$\alphaMap{n}\invr(\alpha)\subset\supp\ell$ whose $n$-image is $\fellf{\alpha}_n$; then
let $\cF_n$ be the $\sigma$-algebra generated by the collection
$\{\gap_n(\alpha)\}_{\alpha\in\alphaset_n}$ (\ie, the $\sigma$-algebra generated by
$\alphaMap{n}$). The sequence $\{\cF_n\}$ is a filtration and the process
$\{\alphaMap{n}\}$ (or, loosely speaking, $\{\fell_n\}$) is (naturally) adapted to such a
filtration.

For each $p\in\supp\ell$ let us also introduce the shorthand notation
$\gap_n(p)=\gap_n(\alphaMap{n}(p))$: observe that standard distortion arguments yield:
\begin{align}\label{e_gapDistortion}
  \Const\invr\Lambda_n(p)\invr\le|\gap_n(p)|\le\Const\Lambda_n(p)\invr,
\end{align}
where $\Lambda_{n}(p)=\frac{\deh x_n}{\deh x_0}$ and the derivative is taken along the
curve; in particular $|\gap_n(p)|\le\Const2^{-n}$.

Henceforth we assume $\spc1$, $\spc2$ to be fixed in order for
Proposition~\ref{p_invarianceStandardPairs} to hold and we fix $\delta$ to be so small
that $\delta c_2<1/50$.  Moreover, since $\spc1$ and $\spc2$ are now fixed, we will refer
to a $(\spc1,\spc2)$-standard pair (\resp family, pushforward) simply as a \emph{standard
  pair} (\resp family, pushforward); we let --with a further slight abuse of notation--
$\eqc{\pFve\stdf}=\stdfSet{\spc1,\spc2}{F_{\ve *}\mu_\stdf}$.

The proof of Proposition~\ref{p_invarianceStandardPairs} in fact shows the existence of a
standard pushforward of any standard family $\stdf$.  A pair $\ell$ is said to be
\emph{$N$-prestandard} if $F^N_{\ve*}\mu_\ell$ admits a standard decomposition; we say
that $\ell$ is \emph{prestandard} if it is $N$-prestandard for some $N$.  We say that a
family $\stdf$ is \emph{$N$-prestandard} (\resp \emph{prestandard}) if every
$\ell\in\stdf$ is $N$-prestandard (\resp prestandard).
\begin{rem}\label{r_short}
  Consider a short standard pair $\ell$ of length at least $\delta_{*}$: the proof of
  Proposition~\ref{p_invarianceStandardPairs} implies that standard curves are expanded at
  an exponential rate.  We can conclude that $\ell$ is $\NRg$-prestandard with
  $\NRg\sim\Const|\log\delta_{*}|$.  We call $\NRg$ the \emph{recovery time} of $\ell$.
\end{rem}
\begin{rem}\label{r_prestandard}
  Let $\ell$ be a $(\spc1,\pres\spc2)$-standard pair with $\pres>1$: the proof of
  Proposition~\ref{p_invarianceStandardPairs} implies that densities on standard curves
  are regularized by the dynamics at an exponential rate; hence $\ell$ is
  $\NRg$-prestandard with $\NRg\sim \Const\, {\ln \pres}$.  Again, we call $\NRg$ the
  \emph{recovery time} of $\ell$.
\end{rem}

\begin{rem}\label{r_flatDensity}
  Consider a standard pair $\ell=(\bG,\rho)$; by definition of standard density, we have,
  for any $x\in[a,b]$:
  \begin{equation}
    \frac{\expo{-2c_2\delta}}{|\ell|}\leq\rho(x)\leq\frac{\expo{2c_2\delta}}{|\ell|}.\label{e_boundRho}
  \end{equation}
  Consequently, for any constant $m_{*}\leq1/2$, we can define $\hat\rho(x)$ so that
  $\rho(x)=m_*/|\ell|+\hat\rho(x)$, and by the above estimate and our choice for $\delta$
  we have $\hat\rho(x)\geq\rho(x)/3$.  Consequently, since $\hat\rho'=\rho'$ (and thus
  $\hat\rho''=\rho''$), we have:
  \begin{align*}
    \left\|\frac{\hat\rho'}{\hat\rho}\right\| &%
    \leq 3\left\|\frac{\rho'}{\rho}\right\|%
    \leq 3 c_2'&
    \left\|\frac{\hat\rho''}{\hat\rho}\right\| &%
    \leq 3\left\|\frac{\rho''}{\rho}\right\|%
    \leq 3 c_2''
  \end{align*}
  \ie  $\hat\rho(x)\in D_{3\spc2}(G)$.  The standard pair $\ell$ can thus be \emph{split}
  as:
  \[
  \ell \sim m_*\ell_* + (1-m_*)\hat\ell,
  \]
  where $\ell_*=(\bG,1/|\ell|)$ is a standard pair and $\hat\ell=(\bG,\hat\rho/(1-m_*))$
  is a $\cO(1)$-prestandard pair.
\end{rem}
A fundamental property of standard families is that any SRB measure is a weak limit of a
sequence of measures that can be disintegrated into standard families; we do not give the
proof of this fact here, since we will prove a slightly stronger statement in
Lemma~\ref{l_weakLimit}.

\section{Averaged dynamics}\label{s_averaging}

Standard pairs are a very convenient way to describe initial conditions which are, in a
sense, well distributed with respect to the dynamics (see the discussion at the beginning
of Section~\ref{sec:coupling-def} for further comments).  Let us start making this vague
statement more concrete by stating some results which follow from the  ones
that are proved in~\cite{DeL1}.  First of all, let us introduce some useful notation; recall
that for $p\in\bT^2$ we denote $(x_n(p),\theta_n(p)) = F_\ve^n(p)$; recall moreover the
definition of $\zeta_n$ given in~\eqref{e_definitionZ}; let
$\slo_n(p)=(\theta_n(p),\zeta_n(p))$ and define the polygonal interpolation
\begin{align*}
  \slo_\ve(t;p)=%
  \slo_{\pint{t\vei}}(p) +%
  (t\vei-\pint{t\vei})%
  (\slo_{\pint{t\vei}+1}(p)-\slo_{\pint{t\vei}}(p)).
\end{align*}
Let us also introduce the functions $\theta_\ve,\, \zeta_\ve$ so that
$\slo_\ve(t;p) = (\theta_\ve(t;p),\zeta_\ve(t;p))$.  Note that if $p = (x_0,\theta_0)$ is
distributed according to some measure $\mu$ (\eg $\mu = \mu_\ell$, where $\ell$ is a
standard pair), then for any $T > 0$ $\slo_\ve$ is naturally a random variable with values
in $\cC^0([0,T],\bT\times\bR)$ (and likewise $\theta_\ve$ and $\zeta_\ve$) and thus the
pushforward $\slo_{\ve*}\mu_\ell$ is a probability on $\cC^0([0,T],\bT\times\bR)$.

For any $t\geq 0$ and $\theta_*\in\bT$, we define the function
$\bar\slo(t;\theta_*) = (\bar\theta(t;\theta_*),\bar\zeta(t;\theta_*)))$ to be the
solution of the ODE problem
\begin{align}\label{e_averagedSlo}
  \frac{\deh}{\deh t}\bar\slo(t;\theta_*)&%
  =\left(\bar\omega(\bar\theta(t;\theta_*)),\bclyapReg(\bar\theta(t;\theta_*))\right), \text{ with }\bar \slo(0;\theta_*)=(\theta_*,0),
\end{align}
where $\bar\omega(\theta)=\int_{\bT}\omega(x,\theta)\rho_\theta(x) \deh x$,
$\bar \clyapReg(\theta)=\int_{\bT}\clyapReg(x,\theta)\rho_\theta(x)\deh x$, $\clyapReg$ is
defined in~\eqref{eq:regularizedpsi}, and $\rho_\theta$ is the density of the unique
absolutely continuous invariant measure for the expanding map $f_\theta$.  Observe
that~\eqref{e_averagedSlo} admits a unique solution since we use the regularized function
$\bclyapReg$, which is smooth by~\ref{rem:newpsi}.

Then, the \emph{Averaging Principle} (see~\cite[Theorem~2.1]{DeL1}) states that for any
$T > 0$, if the initial conditions $(x_0,\theta_0)$ are distributed on a standard
pair\footnote{ Notice that the definition of standard pair in fact depends on $\ve$,
  therefore this convergence holds for any sequence of standard pairs which in turn weakly
  converges to the flat standard pair $\{\theta=\theta_*\}$. } that crosses
$\{\theta=\theta_*\}$, the random variable $\slo_\ve$ converges in probability to
$\bar\slo(\cdot;\theta_*)$ on $[0,T]$ as $\ve\to 0$.

\subsection{Large and moderate deviations}\label{ss_limitThms}
Given the above facts, it is then natural to attempt a description of the behavior of
deviations from the averaged dynamics.  For $p = (x_0,\theta_0)$, let us define\footnote{
  The function $\Delta\slo$ (and thus $\Delta\theta$ and $\Delta\zeta$) indeed depend on
  $\ve$ (since so does $\slo_\ve$); however, we do not explicitly add a subscript $\ve$
  for ease of notation.}:
\begin{align}\label{e_defineDeltas}
\Delta\slo(t;p)&=(\Delta\theta(t;p),\Delta\zeta(t;p)):=\slo_\ve(t;p)-\bar\slo(t;\theta_0).
\end{align}
A first rough (but useful) result that follows from~\cite{DeL1} is the following
\begin{thm}\label{t_largeDevz}
  There exists $\ve_0>0$ such that if we fix $T>0$, then there exist $\bar C>0$ such that
  for any $\ve\leq \ve_0$, $R\ge\bar C\sqrt\ve$ and standard pair $\ell$, we have
  \begin{align*}
    \mu_\ell\left(\sup_{t\in[0,T]}|\Delta\theta(t;\cdot)|\ge R\right)&\le\expo{-c_T R^2\vei}
  \end{align*}
  where $c_T$ is a constant which depends on $T$ only.
  The same statement, where we replace $\Delta\theta$ with $\Delta\zeta$, holds.
\end{thm}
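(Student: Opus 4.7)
The plan is to view $\Delta\theta(t;\cdot)$ as an $\ve$-rescaled Birkhoff sum of a fiberwise mean-zero observable along the orbit, and invoke the sub-Gaussian exponential moment estimate for such sums over standard pairs that underlies the Averaging Principle of~\cite{DeL1}. Write $N=\pint{t\vei}$: from the definition of $F_\ve$ one has $\theta_N-\theta_0=\ve\sum_{k=0}^{N-1}\omega(p_k)$, while integrating~\eqref{e_averagedSlo} and approximating the integral by a Riemann sum gives $\bar\theta(t;\theta_0)-\theta_0=\ve\sum_{k=0}^{N-1}\bar\omega(\bar\theta(k\ve;\theta_0))+\cO(\ve)$. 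Modulo an $\cO(\ve)$ interpolation error, one therefore gets the decomposition
\[
\Delta\theta(t;p)=\ve\sum_{k=0}^{N-1}\bigl[\omega(p_k)-\bar\omega(\theta_k)\bigr]+\ve\sum_{k=0}^{N-1}\bigl[\bar\omega(\theta_k)-\bar\omega(\bar\theta(k\ve;\theta_0))\bigr]+\cO(\ve).
\]
The second sum, since $\bar\omega$ is Lipschitz, is dominated by $\Const\int_0^t|\Delta\theta(s;p)|\deh s$; this is the term that will force a Gronwall/stopping-time argument.

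The main input from~\cite{DeL1} is a sub-Gaussian moment bound: for any standard pair $\ell$, every $M\le N$, and $|\lambda|\le \lambda_0$,
\[
\mu_\ell\Bigl(\expo{\lambda\sum_{k=0}^{M-1}[\omega(p_k)-\bar\omega(\theta_k)]}\Bigr)\le \expo{\Const\lambda^2 M},
\]
with constants uniform in $\ell$. Assumption~\ref{a_noCobo} is precisely what keeps this estimate non-degenerate. Applied with the optimal $\lambda\sim R/(\Const T)$ (which lies in $(0,\lambda_0)$ since $R\in\bT$ is a priori bounded), Markov's inequality yields, for each fixed $M\le N$,
\[
\mu_\ell\Bigl(\Bigl|\ve\sum_{k=0}^{M-1}[\omega(p_k)-\bar\omega(\theta_k)]\Bigr|\ge R/2\Bigr)\le 2\expo{-c_T R^2/\ve}.
\]

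To pass from pointwise to supremum control, I would discretize: the increments of $\slo_\ve$ are of size $\cO(\ve)$, so $\sup_{t\in[0,T]}$ differs from the maximum over an $\ve$-grid of $\cO(\vei)$ times by $\cO(\ve)$; the union bound cost is absorbed by the exponential factor thanks to the hypothesis $R\ge \bar C\sqrt\ve$, which ensures $e^{-c_TR^2/\ve}$ remains dominant even after losing a polynomial factor in $\vei$. To close the Gronwall feedback, introduce the stopping time $\tau=\inf\{t\in[0,T]\st|\Delta\theta(t;p)|\ge R\}$; on $\{\tau>t\}$ the second sum is bounded by $\Const T R$, so by choosing $\bar C$ large enough relative to $\Const T$ one can absorb this into $R/2$ and conclude that $\{\tau\le T\}$ forces the Birkhoff-sum term at the stopped time to exceed $R/2$, an event of probability at most $e^{-c_T R^2/\ve}$ by the previous step.

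The bound on $\Delta\zeta$ is proved by the verbatim same argument, with $\omega$ replaced by $\clyapReg$ and $\bar\omega$ by $\bclyapReg$; the uniform-in-$\ve$ $\cC^1$-regularity of $\clyapReg$ furnished by Remark~\ref{rem:newpsi} is exactly what is required to invoke the same moment bound from~\cite{DeL1}. The principal obstacle is producing this sub-Gaussian bound on Birkhoff sums of fiberwise mean-zero observables uniformly over all standard pairs with constants independent of $\ve$; this is genuine dynamical content (obtained in~\cite{DeL1} via a transfer-operator analysis of the fluctuation process), and it is the one place where~\ref{a_noCobo} enters critically.
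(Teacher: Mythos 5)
Your route is genuinely different from the paper's. The paper does not decompose $\Delta\theta$ into a mean-zero Birkhoff sum plus a feedback term and then run an MGF/chaining/stopping-time argument. Instead, it invokes the path-space large-deviation upper bound of~\cite[Theorem~2.2]{DeL1} directly: it identifies $\mu_\ell(\sup_t|\Delta\theta|\ge R)$ with $\bP_{\omega,\ve}(Q)$ for a suitable set of paths $Q$, and then lower-bounds the rate function $\mathscr{I}^-_{\theta_0,\Delta_*}$ on $Q$ by $C_T R^2$. The lower bound on the rate function uses the quadratic estimate $\cZ^-_{\Delta_*}(b,\theta)\ge c(b-\bar\omega(\theta))^2$ (this plays the role your MGF input plays) together with a Gr\"onwall estimate (playing the role of your feedback absorption). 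Both approaches are legitimate; yours is more elementary in spirit, but it needs a sub-Gaussian exponential moment bound extracted from~\cite{DeL1} in a form not literally stated there, whereas the paper uses what~\cite{DeL1} actually proves, namely a trajectory-level LDP that already packages the discrete-to-continuous passage in the neighborhoods $Q^+$. Your union bound over the $\ve$-grid essentially re-does by hand the exponential tightness that the LDP handles for you.

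There is, however, a genuine flaw in your absorption step. Before the hitting time $\tau$ you bound the feedback term by $\Const\int_0^\tau|\Delta\theta|\,\deh s\le\Const TR$ and claim to absorb this into $R/2$ by ``choosing $\bar C$ large enough relative to $\Const T$.'' But $\bar C$ only imposes a lower bound on $R$; it does not change the ratio $\Const TR : R/2$. If $\Const T\ge 1/2$ (the generic situation since $T$ is arbitrary), the feedback term already exceeds $R/2$ and the absorption fails. The correct closure is the usual Gr\"onwall comparison, giving
\begin{align*}
  \sup_{s\le t}|\Delta\theta(s)|\le e^{\Const t}\Bigl[\sup_{s\le t}\Bigl|\ve\sum_{k<\pint{s\vei}}[\omega(p_k)-\bar\omega(\theta_k)]\Bigr|+\cO(\ve)\Bigr],
\end{align*}
so the Birkhoff-sum supremum must exceed $e^{-\Const T}R/2$ rather than $R/2$, and $c_T$ picks up an $e^{-2\Const T}$ factor, which is harmless since $T$ is fixed. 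The role of $\bar C$ is confined to absorbing the $\cO(\ve)$ interpolation errors via $R\ge\bar C\sqrt\ve$. This Gr\"onwall step is present in the paper's own proof too, where it produces the factor $\Const(1+T\Const e^{\Const T^2})$ relating $|\eta'|^2$ to $|\zeta|^2$ inside the rate-function estimate.
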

\begin{proof}
  In this proof we will be using notations introduced in~\cite{DeL1}; for simplicity, we
  will prove the statement for $\Delta\theta$ only, leaving the similar derivation of the
  case of $\Delta\zeta$ to the reader.  We will invoke~\cite[Theorem~2.2]{DeL1} with
  $A = \omega$; indeed the statement of~\cite[Theorem~2.2]{DeL1} involves a probability
  $\bP_{\omega,\ve}$ on the space $\cC^0([0,T],\bT)$; in our case we take
  $\bP_{\omega,\ve} = \theta_{\ve*}\mu_\ell$ (see~\cite[Remark~6.2]{DeL1}).  Hence, let
  $\widehat Q=\{p\in \bT^2\st \sup_{t\in[0,T]}|\Delta\theta(t;\cdot)|\ge R\}$. By
  construction, if we define
  $Q=\{\gamma\in \cC^0([0,T],\bT)\;:\; \|\gamma(\cdot)-\bar\theta(\cdot,
  \gamma(0))\|_\infty\ge R\}$,
  then ${\mu_\ell(\widehat Q)} = {\bP_{\omega,\ve}(Q)}$, which establishes the connection
  between the two notations.

  Next, by~\cite[Lemma~6.6 and Remark~6.7]{DeL1} we gather that, for any $\theta\in\bT$,
  $\cZ(\cdot,\theta)$ (defined in~\cite[(6.4)]{DeL1}) is finite only in a compact set on
  which it is bounded.  This, together with~\cite[Lemma~6.3]{DeL1} implies that there
  exists $c>0$ such that $\cZ^-_{\Delta_*}(b,\theta)\geq c (b-\bar\omega(\theta))^2$
  (see~\cite[(6.11)]{DeL1} for the definition of $\cZ^-_{\Delta_*}$) for all $\Delta_*>0$
  small enough, $\theta\in\bT$.  Let us fix
    some small $\Delta_*$, then (see~\cite[(6.12)]{DeL1} for the definition of
    $\mathscr{I}_{\theta,\Delta_*}^-$):
  \begin{equation}\label{eq:silly-billy}
    \mathscr{I}_{\theta_0,\Delta_*}^-(\gamma) =
    \int_{0}^{T}\cZ_{\Delta_*}^-(\gamma'(t),\gamma(t))\deh t\geq c
    \int_0^T\left(\gamma'(t)-\bar\omega(\gamma(t))\right)^2 \deh t,
 \end{equation}
 where we assumed that $\gamma$ is Lipschitz (otherwise
 $ \mathscr{I}_{\theta_0,\Delta_*}^-=\infty$ by definition).  Next, let
 $\eta(t)=\gamma(t)-\bar\theta(t, \gamma(0))$ and
 $\zeta(t)=\gamma'(t)-\bar\omega(\gamma(t))$; observe that
 $\eta'(t) = \gamma'(t)-\bar\omega(\bar\theta(t,\gamma(0)))$, so
\[
|\eta'(t)|^2\leq \left(|\zeta(t)|+\Const |\eta(t)|\right)^2\leq 2 |\zeta(t)|^2+\Const T\int_0^t|\eta'(s)|^2\deh s,
\]
which, by Gr\"onwall inequality, implies
\[
|\eta'(t)|^2\leq \Const(1+T\Const e^{\Const T^2})  |\zeta(t)|^2.
\]
Substituting the above in~\eqref{eq:silly-billy} yields
\[
 \mathscr{I}_{\theta_0,\Delta_*}^-(\gamma) \geq  C_T R^2
\]
for some $C_T>0$ dependent on $T$.  We can now conclude by applying~\cite[Theorem
2.2]{DeL1}.  To do so note that
$R_+(\gamma)= \ve^{1/6}\|\gamma-\bar\theta(\cdot, \gamma(0))\|_\infty^{2/3} \leq \frac
12\|\gamma-\bar\theta(\cdot, \gamma(0))\|_\infty$
provided that $\|\gamma-\bar\theta(\cdot, \gamma(0))\|_\infty\geq 8\ve^{1/2}$. This
implies that
$Q^+\subset Q_1=\{\gamma\in \cC^0([0,T],\bT)\;:\; \|\gamma(\cdot)-\bar\theta(\cdot,
\gamma(0))\|_\infty\ge \frac 12 R\}$.
In addition, $\varrho(Q_1,\theta_*)\geq 2R^{-1}$ which means that
$C_{\Delta_*,T}\ve^{1/72}\varrho(Q_1,\theta_*)^{-1/36}\leq 1/2$ provided $\bar C$ has been
chosen large enough. Accordingly, provided again $\bar C$ is large enough and $\ve$ small
enough:
\[
\mu_\ell(\widehat Q)=\bP_{\omega,\ve}(Q)\leq \bP_{\omega,\ve}(Q_1)\leq \expo{-\frac 1{16} C_T R^2\vei}\qedhere
\]
\end{proof}

We now proceed to prove two other results (Theorems~\ref{l_largeDevzLowerBound}
  and~\ref{t_forbiddenForRealDyn}) which also follow from the Large Deviations estimates
  obtained in~\cite{DeL1}. Loosely speaking these result state that if there exists an
  admissible \tpath\thetaa\thetab, then there exists an orbit of the real system
  connecting a neighborhood of $\{\theta = \thetaa\}$ to a neighborhood of
  $\{\theta = \thetab\}$; conversely if all \tpath\thetaa\thetab{s} are not admissible, we
  would like to say that no orbit of the real system connects the two said sets.  Indeed
  such statements could only have a chance to hold true in the limit $\ve\to0$, and even
  in this case there would be borderline admissible paths for which none of our statements
  would hold.  In order to properly state our results in the case $\ve > 0$ we need to refine the notion of admissibility that has been introduced in Section~\ref{sec:results};
recall the definition of $\Omega(\theta)$ given in~\eqref{e_definitionOmega}; in
particular $\Omega(\theta)$ is a closed interval for any $\theta\in\bT$.  For $\aeps > 0$
and $\theta\in\bT$, introduce the notations
\begin{align*}
\Omp(\theta) &= \Omega(\theta)\cup\partial_{\aeps}\Omega(\theta)&
\Omm(\theta) &= \Omega(\theta)\setminus\partial_{\aeps}\Omega(\theta),
\end{align*}
where $\partial_\aeps\Omega(\theta) = \{b\st \dist(b,\partial\Omega(\theta)) < \aeps\}$.
It is immediate to observe that if $\aeps' < \aeps$, $\Ompx{\aeps'}\subset\Omp$ and
$\Ommx{\aeps'}\supset\Omm$; moreover $\intr{\Omega(\theta)} = \bigcup_{\aeps > 0}\Omm$ and
$\Omega(\theta) = \bigcap_{\aeps > 0}\Omp$.  We say that a \tpath{\thetaa}{\thetab}{} $h$
of length $T$ is \emph{$\aeps$-admissible} if for any $s\in[0,T]$ we have
$\dfpath(s)\subset\intr\Omm(h(s))$; likewise we say that $\fpath$ is
\emph{$\aeps$-forbidden} if for some $s\in[0,T]$ we have
$\dfpath(s)\not\subset\Omp(h(s))$.  Observe that, by definition, and by compactness of the
graph of $\dfpath(s)$, $\fpath$ is admissible if and only if it is $\aeps$-admissible for
some $\aeps > 0$.

First of all let us prove an auxiliary
\begin{lem}\label{l_continuityOmegapm}
  Let $\bar\omega^+(\theta) = \max\Omega(\theta)$ and
  $\bar\omega^-(\theta) = \min\Omega(\theta)$; then $\bar\omega^{\pm}$ are (uniformly)
  continuous functions.
\end{lem}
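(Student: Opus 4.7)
I would prove that $\bar\omega^+$ and $\bar\omega^-$ are in fact continuous on $\bT$; uniform continuity then follows by compactness. Continuity reduces to showing upper and lower semi-continuity of each function, and by a symmetric argument it suffices to handle $\bar\omega^+$. The set $\cM_\theta$ of $f_\theta$-invariant Borel probabilities is non-empty, convex and compact in the weak-$*$ topology, and the functional $\mu\mapsto\mu(\omega(\cdot,\theta))$ is weak-$*$ continuous, so the supremum $\bar\omega^+(\theta)$ is attained by some $\mu^*_\theta\in\cM_\theta$. The two semi-continuity statements will be handled by two complementary properties of the set-valued map $\theta\mapsto\cM_\theta$: upper semi-continuity in the Hausdorff sense (which is free from compactness) and lower semi-continuity (which will require a perturbative input).

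\emph{Upper semi-continuity of $\bar\omega^+$.} Let $\theta_n\to\theta$ and pick $\mu_n\in\cM_{\theta_n}$ with $\mu_n(\omega(\cdot,\theta_n))=\bar\omega^+(\theta_n)$. By compactness, a subsequence converges weakly-$*$ to some probability $\mu$. I would verify that $\mu\in\cM_\theta$: since $F_\ve\in\cC^4$, the family $\theta\mapsto f_\theta$ is continuous into $\cC^0(\bT,\bT)$, so for any $g\in\cC^0(\bT)$ the composition $g\circ f_{\theta_n}$ converges uniformly to $g\circ f_\theta$, and combining this with weak-$*$ convergence of $\mu_n$ shows $\mu(g\circ f_\theta)=\mu(g)$. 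The same uniform-convergence-plus-weak-$*$ argument applied to $\omega(\cdot,\theta_n)\to\omega(\cdot,\theta)$ yields $\bar\omega^+(\theta_n)\to\mu(\omega(\cdot,\theta))\le\bar\omega^+(\theta)$.

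\emph{Lower semi-continuity of $\bar\omega^+$.} This is the step I expect to be the main obstacle and it cannot be settled by compactness alone: I need to produce, near any $\mu^*_\theta$, genuine $f_{\theta'}$-invariant measures for $\theta'$ near $\theta$. The plan is to use structural stability of expanding maps of the circle: since each $f_\theta$ is uniformly expanding and $\theta\mapsto f_\theta$ is continuous in $\cC^1$, a standard theorem produces, for $\theta'$ in a neighborhood of $\theta$, a topological conjugacy $h_{\theta,\theta'}\in\cC^0(\bT,\bT)$ satisfying $h_{\theta,\theta'}\circ f_\theta=f_{\theta'}\circ h_{\theta,\theta'}$ and $h_{\theta,\theta'}\to\textup{id}$ uniformly as $\theta'\to\theta$. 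Pushing forward: $(h_{\theta,\theta'})_*\mu^*_\theta\in\cM_{\theta'}$, so
\begin{equation*}
\bar\omega^+(\theta')\ge\int_\bT\omega(h_{\theta,\theta'}(x),\theta')\,\deh\mu^*_\theta(x).
\end{equation*}
Uniform continuity of $\omega$ on $\bT^2$ and uniform convergence $h_{\theta,\theta'}\to\textup{id}$ let the right-hand side converge to $\mu^*_\theta(\omega(\cdot,\theta))=\bar\omega^+(\theta)$, giving $\liminf_{\theta'\to\theta}\bar\omega^+(\theta')\ge\bar\omega^+(\theta)$.

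Combining the two semi-continuity bounds yields pointwise continuity of $\bar\omega^+$; the identical argument, replacing $\max$ by $\min$ and using the same conjugacy, handles $\bar\omega^-$. Since $\bT$ is compact, continuity upgrades to uniform continuity. The only non-trivial ingredient is the existence and $\cC^0$-stability of the conjugacies $h_{\theta,\theta'}$; if desired, one could replace this by the density of periodic-orbit measures in $\cM_\theta$ (Sigmund's theorem for expanding maps) together with continuous dependence of the hyperbolic periodic orbits on $\theta$ via the implicit function theorem, which provides the required $\liminf$ approximation without invoking a global conjugacy.
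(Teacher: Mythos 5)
Your proof is correct, and the essential ingredient — structural stability of expanding circle maps, producing a conjugacy $\Phi$ (your $h_{\theta,\theta'}$) close to the identity in $\cC^0$ — is exactly what the paper relies on (citing \cite{JenkMorris}). The difference is organizational. You split the argument into upper semi-continuity (via weak-$*$ compactness of the space of $f_\theta$-invariant measures) and lower semi-continuity (via the conjugacy). The paper instead uses the pointwise characterization $\bar\omega^+(\theta) = \sup_{x}\limsup_N \frac1N\sum_{n\le N}\omega(f_\theta^n x,\theta)$ from~\cite{Jenkinson} and applies the conjugacy once, getting $|\bar\omega^+(\thetaa)-\bar\omega^+(\thetab)|$ small directly with no measure-theoretic compactness at all.

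One observation that would simplify your version: once you have the conjugacy $h_{\theta,\theta'}$ with $\|h_{\theta,\theta'}-\textup{id}\|\nc0$ small, you get the upper bound for free as well, since $h_{\theta,\theta'}\invr$ is a conjugacy from $f_{\theta'}$ back to $f_\theta$ and is equally close to the identity. Pushing forward a maximizer $\mu^*_{\theta'}\in\cM_{\theta'}$ under $h_{\theta,\theta'}\invr$ gives $\bar\omega^+(\theta)\ge\bar\omega^+(\theta')-o(1)$, the mirror image of your lower bound. So your weak-$*$ compactness step, while perfectly valid, is doing redundant work: the conjugacy alone yields two-sided control, which is precisely what makes the paper's single-formula argument clean. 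Your alternative suggestion at the end — periodic-orbit density plus hyperbolic continuation of periodic points — is also viable and avoids quoting the global conjugacy theorem, though it again only appears to give the lower bound directly and would need a symmetrization.
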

\begin{proof} Let us prove\footnote{ We are indebted to Ian Morris for suggesting this
    simple argument.} the statement for $\bar\omega^+(\theta)$ (the statement for
  $\bar\omega^-$ follows by noting that $\min\Omega(\theta) = -\max [-\Omega(\theta)]$).
  It is possible to characterize $\bar\omega^+$ as follows (see~\cite[Proposition
  2.1]{Jenkinson}):
  \begin{align*}
    \bar\omega^+(\theta) = \sup_{x\in\bT}\limsup_{N \to\infty}\frac1N\sum_{n = 0}^{N}\omega(f_\theta^n(x),\theta).
  \end{align*}
  Observe that for any $\varrho > 0$, if $|\thetaa-\thetab|$ is sufficiently small, there
  exists a diffeomorphism $\Phi:\bT\to\bT$ with $\|\Phi-\Id\|\nc0 < \varrho$ so that
  $f_{\thetab} = \Phi\circ f_{\thetaa}\circ \Phi\invr$ (see e.g.~\cite[Lemma~2]{JenkMorris}).  We gather that
  \begin{align*}
    \bar\omega^+(\thetab) = \sup_{x\in\bT}\limsup_{N \to\infty}\frac1N\sum_{n = 0}^{N}\omega(f_{\thetab}^n(x),\thetab) %
    =\sup_{x\in\bT}\limsup_{N \to\infty}\frac1N\sum_{n = 0}^{N}\omega(\Phi\circ f_{\thetaa}^n(x),\thetab).
  \end{align*}
  Since $\|\omega(\cdot,\thetab)-\omega(\Phi(\cdot),\thetab)\|\nc0$ can be made
  arbitrarily small by choosing $\varrho$ arbitrarily small, and since $\omega$ is a
  smooth function, our lemma follows.
\end{proof}
Recalling the definition of the Large Deviation Rate Function (see~\cite[(6.12)]{DeL1}) and its
characterization given by~\cite[Lemma~6.6]{DeL1}; we have the following lower bound.
\begin{thm}\label{l_largeDevzLowerBound}
  Assume that there exists an $\aeps$-\admissiblep{\thetaa}{\thetab} of length $T > 0$ for
  some $T > 0$ and $\aeps > 0$; if $\ve > 0$ is sufficiently small (depending on $T$ and
  $\aeps$), for any standard pair $\ell$ whose support intersects $\{\theta=\thetaa\}$ we
  have, if $C$ is sufficiently large,
  \begin{align}\label{e_largeDevzLowerBound}
    \mu_\ell(\theta_{\pint{T\vei}}\in B(\thetab,C\ve^{5/12}))\ge\expo{-\Const T\vei}.
  \end{align}
\end{thm}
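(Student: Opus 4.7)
The plan is to apply the lower bound part of the large deviations estimates from \cite{DeL1}, which is the natural counterpart to the upper bound used in the proof of Theorem~\ref{t_largeDevz}. The key observation is that if $\fpath$ is $\aeps$-admissible, then the large deviations rate function evaluated along $\fpath$ is uniformly bounded in terms of $\aeps$ and $T$, so that a tubular neighborhood of $\fpath$ is charged with probability at least $\expo{-\Const T\vei}$.

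First I would handle the regularity of $\fpath$. Since $\fpath$ is Lipschitz with $\fpath'(s) \in \dfpath(s) \subset \intr\Omm(\fpath(s))$ almost everywhere, I would use Lemma~\ref{l_continuityOmegapm} together with standard mollification to replace $\fpath$ by a $\cC^1$ path $\tilde\fpath$ with $\|\tilde\fpath - \fpath\|_\infty$ arbitrarily small and such that $\tilde\fpath'(s) \in \intr\Ommx{\aeps/2}(\tilde\fpath(s))$ for every $s \in [0,T]$. By \cite[Lemma~6.6]{DeL1}, the local rate $\cZ(b,\theta)$ is finite and continuous on $\intr\Omega(\theta)$; since $\tilde\fpath'(s)$ stays at distance at least $\aeps/2$ from $\partial\Omega(\tilde\fpath(s))$, uniformly in $s$, there exists $M = M(\aeps) < \infty$ with $\cZ(\tilde\fpath'(s),\tilde\fpath(s)) \leq M$ for all $s$, whence
\begin{align*}
  \mathscr{I}_{\thetaa,\Delta_*}^+(\tilde\fpath) = \int_0^T \cZ(\tilde\fpath'(s),\tilde\fpath(s))\,\deh s \le MT.
\end{align*}

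Next I would invoke the lower bound version of~\cite[Theorem~2.2]{DeL1} applied to the open tube $U = \{\gamma \st \|\gamma - \tilde\fpath\|_\infty < C\ve^{5/12}\}$, obtaining $\mu_\ell(\theta_\ve(\cdot;p) \in U) \ge \expo{-\vei(MT + o(1))}$. On the event $\{\theta_\ve \in U\}$ we have $|\theta_\ve(T;p) - \thetab| \leq C\ve^{5/12} + \|\tilde\fpath - \fpath\|_\infty \leq C'\ve^{5/12}$, which, after translating from the rescaled time to the discrete time $\pint{T\vei}$, yields~\eqref{e_largeDevzLowerBound}. The delicate point is controlling the error corrections in the LDP lower bound so that a tube of radius $C\ve^{5/12}$ is admissible: the exponent $5/12$ is dictated by the error term $R_+(\gamma) = \ve^{1/6}\|\gamma-\tilde\fpath\|_\infty^{2/3}$ appearing in the LDP of \cite{DeL1} (the same $R_+$ that entered the proof of Theorem~\ref{t_largeDevz}), so that the error does not swamp $MT\vei$ in the exponent. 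A minor additional technicality is uniformity over the initial condition: since $\ell$ is a standard pair whose support merely intersects $\{\theta = \thetaa\}$ rather than concentrating there, one needs the LDP lower bound uniformly over this class of initial densities, which follows by a routine adaptation of the arguments in \cite{DeL1}.
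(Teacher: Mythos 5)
Your plan is correct and follows the same core strategy as the paper: define a suitable open set of paths, verify that the $\aeps$-admissible path $\bar\fpath$ is in it and has finite rate $\mathscr{I}_{\thetaa,\Delta_*}^+$, and then invoke the lower bound direction of~\cite[Theorem~2.2]{DeL1} with $\bP_{\omega,\ve} = \theta_{\ve*}\mu_\ell$.  The minor implementation differences are harmless: you work with a $\cC^0$-tube around a mollified path while the paper works directly with the endpoint set $Q_\ve = \{\fpath\st\fpath(T)\in B(\thetab,C\ve^{5/12})\}$ and the Lipschitz path itself (the rate functionals of~\cite{DeL1} are already defined on Lipschitz paths, so mollification is unnecessary; if you insist on it, make sure to pin the endpoints, since $\mathscr{I}_{\thetaa,\Delta_*}^+(\tilde\fpath)=\infty$ unless $\tilde\fpath(0)=\thetaa$ exactly).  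Your concern about uniformity over the standard pair $\ell$ is already handled in~\cite{DeL1} at the level of $\bP_{\omega,\ve}$ (see Remark~6.2 there), so that is a non-issue.

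The one step where you genuinely diverge from the paper is how the uniform constant $\Const$ in $\expo{-\Const T\vei}$ is obtained.  You derive it by bounding the local rate, $\cZ(\tilde\fpath'(s),\tilde\fpath(s))\le M(\aeps)$ uniformly along the path (using~\cite[Lemma~6.6]{DeL1} plus compactness), which gives $\mathscr{I}_{\thetaa,\Delta_*}^+(\tilde\fpath)\le MT$ directly.  The paper instead contents itself with the qualitative conclusion that the LDP lower bound is \emph{positive}, deduces that some point of the image lands in $B(\thetab,C\ve^{5/12})$, and then upgrades this single point to a definite amount of mass via Proposition~\ref{p_invarianceStandardPairs} and the distortion estimate~\eqref{e_gapDistortion}: the standard pair of the pushforward containing that point must have $\mu_\ell$-mass at least $\Const\invr\Lambda_{\pint{T\vei}}\invr\ge\expo{-\Const T\vei}$, with $\Const$ controlled by the maximal expansion of $F_\ve$ along a standard curve.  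Your route is more direct and yields a constant depending on $\aeps$; the paper's route sidesteps any quantitative control of the rate function and yields a constant depending only on the map.  Both arguments are valid for the stated result, so you have correctly proved the lemma.
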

\begin{proof}
  Let us assume $\ve$ is sufficiently small (with respect to $\aeps$ and $T$) to be
  specified later and define the set
  \begin{align*}
    Q_\ve = \{\fpath\in\cC^0([0,T],\bT)\st \fpath(T)\in B(\thetab,C\ve^{5/12})\}.
  \end{align*}
  Once again we plan to apply~\cite[Theorem~2.2]{DeL1} with $A = \omega$ where
  $\bP_{\omega,\ve} = \theta_{\ve*}\mu_\ell$.  We thus look for $\Delta_* > 0$ such that
  $\inf_{\fpath\in Q_\ve^-}\mathscr{I}_{\thetaa,\Delta_*}^+(\fpath) < \infty$, where
  $\mathscr{I}_{\thetaa,\Delta_*}^+$ is defined\footnote{ The crucial properties of
    $\mathscr{I}^+_{\thetaa,\Delta_*}$ which we will use in the proof are that
    $\mathscr{I}^+_{\thetaa,\Delta_*}(\fpath)$ is finite only if $\fpath$ is Lipschitz,
    $\fpath(0) = \thetaa$ and $\fpath$ is $\aeps$-admissible for some $\aeps > \Delta_*$.}
  in~\cite[(6.12)]{DeL1} and
  $Q^-_\ve = \{\fpath\in Q_\ve\st B_{[0,T]}(\fpath,\Const\ve^{5/12})\subset Q_\ve\}$ for
  some universal $\Const$.

  Let $\bar\fpath$ be an $\aeps$-admissible \tpath\thetaa\thetab{} of length $T$ (which
  exists by hypothesis).  Let us choose $\Delta_* < \aeps$ (\eg $\Delta_* = \aeps/2$), so
  that $\mathscr{I}_{\thetaa,\Delta_*}^+(\bar\fpath) < \infty$; hence, provided that
  $\bar\fpath\in Q_\ve^-$ (which holds true if $C > \Const$) and $\ve$ is sufficiently
  small (with respect to $\Delta_*$) we obtain
  \begin{align*}
    \mu_\ell(\theta_{\pint{T\vei}}\in B(\thetab,C\ve^{5/12})) \ge \expo{-C_{T,\bar \fpath}\vei}.
  \end{align*}
  Observe that, in the above inequality, the right hand side depends on the path
  $\bar\fpath$, but we want a uniform bound.  However,
  Proposition~\ref{p_invarianceStandardPairs} implies that if some point in the $n$-th
  image of a standard pair $\ell$ belongs to $\{\theta\in B(\thetab,C\ve^{5/12})\}$, then
  there is a whole standard pair in any $n$-pushforward of $\ell$ which belongs to (a
  $\cO(\ve)$-neighborhood of) the given set.  Hence the above inequality indeed
  implies~\eqref{e_largeDevzLowerBound}, where $\expo\Const$ is proportional to the
  maximal expansion of $F_\ve$ along a standard curve.
\end{proof}
We now prove what can be regarded as a converse of the above theorem.
\begin{thm}\label{t_forbiddenForRealDyn}
  For any $\aeps > 0$, there exist $\varrho > 0$ and $\TForb > 0$ so that if $\ve$ is
  sufficiently small (depending on $\aeps$) the following holds: if every
  \tpath{\thetaa}{\thetab} is $\aeps$-forbidden, then for any $N \ge \pint{\TForb\vei}$
  \begin{align}\label{e_disjointness}
    F_\ve^N(\{\theta \in B(\thetaa,\varrho)\})\cap \{\theta\in B(\thetab,\varrho)\} = \emptyset.
  \end{align}
\end{thm}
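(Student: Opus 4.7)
I would argue by contradiction, via a compactness argument.  Suppose the conclusion fails: then for some sequences $\ve_k\to0$, $\varrho_k\to 0$ and $N_k\ge\pint{\TForb/\ve_k}$ there exist orbits $(x_n^{(k)},\theta_n^{(k)})_{n=0}^{N_k}$ of $F_{\ve_k}$ with $\theta_0^{(k)}\to\thetaa$ and $\theta_{N_k}^{(k)}\to\thetab$.  I will first assume $N_k\ve_k\to T<\infty$ along a subsequence, and return to the unbounded case at the end.  The interpolants $h_k(t)=\theta_{\pint{t/\ve_k}}^{(k)}$ on $[0,T]$ are equi-Lipschitz with constant $\|\omega\|_\infty$, so Arzel\`a--Ascoli produces a uniform limit $h:[0,T]\to\bT$, a Lipschitz \tpath{\thetaa}{\thetab}.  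The strategy is to show that $h$ is not $\aeps$-forbidden, contradicting the hypothesis.

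To do so I would introduce the empirical measures
\[
\Pi_k=\frac{1}{N_k}\sum_{n=0}^{N_k-1}\delta_{(n\ve_k,\,x_n^{(k)})}
\]
on $[0,T]\times\bT$, pass to a weak subsequential limit $\Pi$ (whose marginal on $[0,T]$ is Lebesgue by construction), and disintegrate it as $\Pi(\deh t,\deh x)=\Pi_t(\deh x)\,\deh t/T$.  Using $x_{n+1}^{(k)}=f_{\theta_n^{(k)}}(x_n^{(k)})$ together with $\|h_k-h\|_\infty\to0$ and a short telescoping computation, for every smooth $g\in\cC^0([0,T]\times\bT)$ one obtains
\[
\int\bigl(g(t,f_{h(t)}(x))-g(t,x)\bigr)\,\Pi(\deh t,\deh x)=0,
\]
which forces $\Pi_t$ to be $f_{h(t)}$-invariant for almost every $t$.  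The same kind of approximation applied to $\theta_{n+1}^{(k)}-\theta_n^{(k)}=\ve_k\omega(x_n^{(k)},\theta_n^{(k)})$ yields $h(b)-h(a)=\int_a^b\Pi_t(\omega(\cdot,h(t)))\,\deh t$ for every $0\le a<b\le T$, so $h'(t)=\Pi_t(\omega(\cdot,h(t)))\in\Omega(h(t))$ almost everywhere.  Continuity of $\theta\mapsto\Omega(\theta)$ (Lemma~\ref{l_continuityOmegapm}) together with the convexity of $\Omega(\theta)$ then upgrade this a.e.\ statement to $\dfpath(s)\subset\Omega(h(s))\subset\Omp(h(s))$ for every $s\in[0,T]$, so $h$ is not $\aeps$-forbidden -- the desired contradiction.

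The principal obstacle is the case $N_k\ve_k\to\infty$: the $h_k$ then live on unbounded intervals and no limit can be extracted directly.  A natural remedy is to first establish the bounded-time version (with $N\ve\in[\TForb,2\TForb]$, say) as above, then use it to show that an appropriate small neighborhood $U$ of the differential-inclusion-reachable set from $\thetaa$ is forward-invariant under $F_\ve$ on a timescale $\pint{\TForb/\ve}$ while avoiding $\{\theta\in B(\thetab,\varrho)\}$, and iterate $F_\ve^{\pint{\TForb/\ve}}(U)\subset U$ to cover every $N\ge\pint{\TForb/\ve}$.  Setting up the correct reachable set $U$ -- using the continuity of $\Omega$ from Lemma~\ref{l_continuityOmegapm} and the hypothesis that $\thetab\notin U$ -- is the delicate bookkeeping I expect to dominate the complete proof.
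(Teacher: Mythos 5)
Your compactness argument for a fixed slow-time window is genuinely different from the paper's approach and has some appeal: the paper works entirely at fixed $\ve$ and derives a contradiction by evaluating the large-deviation rate functional of \cite[Theorem~2.2]{DeL1} on a carefully constructed set of trajectories, whereas you pass to a weak limit of empirical measures and land on a differential inclusion. Your route avoids invoking the full LDP machinery and is arguably more elementary, and the main technical content (that the limit empirical disintegration $\Pi_t$ is $f_{h(t)}$-invariant and that $h'(t)\in\Omega(h(t))$ a.e.) is a standard averaging-type calculation. One wrinkle you gloss over: promoting the a.e.\ inclusion to $\dfpath(s)\subset\Omega(h(s))$ for \emph{every} $s$ requires the fact that Clarke's generalized gradient may be computed while discarding any additional null set (see \cite[Theorem~2.5.1]{Clarke}); continuity and convexity of $\Omega$ alone do not dispose of the worry that there might be differentiability points $s_k\to s$ at which $h'(s_k)\notin\Omega(h(s_k))$. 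That is a small, fixable omission.

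The genuine gap is the unbounded-time case, and it is not a side issue: the hypothesis gives no control whatsoever over $N$, so in the contradiction you cannot assume $N_k\ve_k$ bounded along a subsequence, and the Arzel\`a--Ascoli extraction simply does not apply. Your proposed ``forward-invariance of a neighborhood $U$ of the reachable set'' remedy is a plausible sketch, but several non-trivial steps are left undone: (i) you need the bounded-time statement to hold uniformly in the starting point over a compact neighborhood of the reachable set, not just at $\thetaa$; (ii) you need uniform $\aeps'$-forbidden-ness of bounded-length paths from points near the reachable set to points outside it (compactness of the complement plus Lemma~\ref{l_fbreachOpen} should give this, but it must be said); (iii) the one-step invariance $F_\ve^{\pint{\TForb\vei}}(\{\theta\in U\})\subset\{\theta\in U\}$ only controls the dynamics at multiples of $\pint{\TForb\vei}$, so to reach every $N\ge\pint{\TForb\vei}$ you need the invariance on a whole range of exponents $[\pint{\TForb\vei},2\pint{\TForb\vei}]$ and then a careful division with remainder; and (iv) one must handle the fact that $\thetaa$ itself may not lie in $R(\thetaa)$ when $0\nin\Omega(\thetaa)$. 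None of this is impossible, but it is precisely the work you call ``delicate bookkeeping'' and defer. The paper sidesteps the entire difficulty by building, at fixed $\ve$, a polygonal approximation $\fpath_0$ of the orbit, proving it $\aeps/2$-forbidden, and then \emph{extracting a single bounded-length sub-path} $\fpath_*$ that is still $\aeps/2$-forbidden; since forbidden-ness is a pointwise property this extraction costs nothing and works for any $N$, after which a single application of the LDP on a fixed-length window closes the argument. You would do well to either adopt this sub-path extraction to reduce to the bounded case (in which case your compactness argument finishes the job) or actually carry out the invariance scheme; as written, the proposal does not establish the statement.
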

\begin{proof}
  The idea of the proof is to argue by contradiction: assume that there is an orbit
  connecting $B(\thetaa,\varrho)$ to $B(\thetab,\varrho)$; by continuity a small
  neighborhood of the initial point of the orbit will have the same property, hence we
  have a positive measure set of trajectories that start in $B(\thetaa,\varrho)$ and end
  in $B(\thetab,\varrho)$; on the other hand, using the original orbit, we will construct
  a path $\fpath_*$ so that every sufficiently $\cC^0$-close path $h$ is so that the large
  deviation rate function $\mathscr{I}_{\theta_0,\Delta_*}^-(h) = \infty$.  We then use
  this fact to show that the above set of trajectories must have zero measure, thus giving
  a contradiction.

  Uniform continuity of $\bar\omega^{\pm}$ (proved in Lemma~\ref{l_continuityOmegapm})
  guarantees that there exists $\unifsmall > 0$ so that if
  $|\theta-\theta'| < \unifsmall$,
  $|\bar\omega^\pm(\theta)-\bar\omega^\pm(\theta')| < \aeps/16$.  Define
  $\TForb = \unifsmall/\|\omega\|$ and let
  $\varrho = \unifsmall\min\{1,\aeps/(8\|\omega\|)\}$.  Assume that $\ve$ is sufficiently
  small (depending on $\aeps$ only, as we will prescribe later) and that there exists
  $(x_0,\theta_0)$ so that $\theta_0\in B(\thetaa,\varrho)$ and
  $\theta_N\in B(\thetab,\varrho)$ for some $N \ge\TForb\vei$.  We can thus partition
  $[0,N]$ in intervals as follows:
  \begin{align*}
    [0,N] = \bigcup_{k = 0}^{l-1}[n_k,n_{k+1}]
  \end{align*}
  where $n_k\in\bZ$, $n_0 = 0$, $n_l = N$ and
  $\pint{\TForb\vei/2}\le n_{k+1}-n_{k}\le\pint{\TForb\vei}$.  Given $t\in[0,N\ve]$,
  let $k(t) = \max\{k:n_k \le t\vei\}$ and let us define the polygonal approximation of
  the orbit given by:
  \begin{align}\label{e_polygonalApprox}
    \fpath_0(t) = \theta_{n_{k(t)}} + \frac{t\vei-n_{k(t)}}{n_{k(t)+1}-n_{k(t)}} ({\theta_{n_{k(t)+1}}-\theta_{n_{k(t)}}}).
  \end{align}
  First we claim that $\fpath_0$ is $\aeps/2$-forbidden: assume by contradiction that
  $\fpath_0$ is not $\aeps/2$-forbidden; let $\tilde\fpath_0$ the polygonal path obtained
  using definition~\eqref{e_polygonalApprox}, replacing $\theta_{n_0}$ with $\thetaa$ and
  $\theta_{n_l}$ with $\thetab$, respectively.  Observe that the two paths $\fpath_0$ and
  $\tilde\fpath_0$ coincide except in the first and last interval, in particular
  $E_{\fpath_0} = E_{\tilde\fpath_0}$.  Moreover
  $\|\fpath-\tilde\fpath\|\nc0 \le \varrho\le \unifsmall$, which in particular implies
  that $\|\bar\omega^\pm\circ\tilde\fpath_0-\bar\omega^{\pm}\circ\fpath_0\| < \aeps/4$,
  and for any $t\in[0,T]/E_{\fpath_0}$, we have
  $|\fpath_0'(t)-\tilde\fpath_0'(t)| \le 2\varrho/\TForb \le \aeps/4$, from which we
  conclude that $\tilde\fpath_0$ is a non $\aeps$-forbidden \tpath\thetaa\thetab, which
  contradicts our assumptions.

  We thus proved that $\fpath_0$ is $\aeps/2$-forbidden; its length, however, can be
  arbitrarily long, and in order to use~\cite[Theorem~2.2]{DeL1}, we need to extract from
  $\fpath_0$ a sub-path $\fpath_*$ of bounded length which is also $\aeps/2$-forbidden.
  This is simple, since by definition there exists $\ksp\in\{0,\cdots,l-1\}$ so that the
  restriction of $\fpath_0$ to the corresponding interval $[n_\ksp\ve,n_{\ksp+1}\ve]$ is
  also $\aeps/2$-forbidden.  In other words, if we let, for ease of notation,
  $\theta_* = \theta_{n_{\ksp}}$, $\theta^* = \theta_{n_{\ksp}+1}$ and
  $T_* = (n_{\ksp+1} -n_\ksp)\ve$ and we define the path
  \begin{align*}
    \fpath_{*}(t) = \theta_*+\frac{t}{T_*}(\theta^*-\theta_*),
  \end{align*}
  we know by construction that $\fpath_{*}$ is $\aeps/2$-forbidden.  Since $\fpath_*$ is
  an affine (hence smooth) path and it is $\aeps/2$-forbidden, we conclude, by
  definition of $\unifsmall$ that, for any $t\in[0,T_*]$:
  \begin{align*}
    \frac1{T_*}(\theta^*-\theta_*)\nin[\bar\omega^-(\fpath_*(t))-7\aeps/16,\bar\omega^+(\fpath_*(t))+7\aeps/16].
  \end{align*}
  Assume that $(\theta^*-\theta_*)/T_* > \bar\omega^+(\fpath_*(t))+7\aeps/16$ for any
  $t\in[0,T_*]$ (the other possibility can be treated similarly and it is left to the
  reader).  Let $\varrho_* = \unifsmall\min\{1,\aeps/(32\|\omega\|)\}$ and
  \begin{align*}
    Q = \{\fpath\in\cC^0[0,{T_*}]: \|\fpath-\fpath_*\| < 3\unifsmall,
    |\fpath(0)-\theta_*| < \varrho_*
    ,|\fpath({T_*})-\theta^*| < \varrho_*\}.
  \end{align*}
  We now claim that $\mathscr{I}^-_{\theta_*,\aeps/4}(\fpath) = \infty$ for any
  $\fpath\in Q$ (see~\cite[(6.12)]{DeL1} for the definition\footnote{ Once again, the
    crucial properties of $\mathscr{I}^-_{\theta,\Delta_*}$ are that
    $\mathscr{I}^-_{\theta,\Delta_*}(\fpath)$ is $\infty$ if $\fpath$ is not Lipschitz,
    $\fpath(0)\ne\theta$, or $\fpath'(t)\nin \bar B(\Omega(\fpath(t)),\Delta_*)$ for a positive measure
    set of times $t$.} of $\mathscr{I}^-_{\theta,\Delta_*}$).  If $\fpath$ is not
  Lipschitz, or $\fpath(0)\ne\theta_*$, we conclude by definition that
  $\mathscr{I}^-_{\theta_*,\aeps/4}(\fpath) = \infty$ .  So we can assume
  $\fpath$ to be Lipschitz; by definition of $Q$ and $\unifsmall$, we can ensure that
  $(\theta^*-\theta_*)/T_* > \bar\omega^+(\fpath(t))+5\aeps/16$ for any $t\in[0,T_*]$; hence
  \begin{align*}
    -\varrho_* < \int_0^{T_*}\fpath'(t)dt - (\theta^*-\theta_*) < \int_0^{T_*}\left[\fpath'(t)-\bar\omega^+(\fpath(t))-5\aeps/16\right]dt.
  \end{align*}
  Since $T_* > \TForb/2$ and $\varrho_* \le \TForb\aeps/32$, we conclude that
  $\fpath'(t) > \bar\omega^+(\fpath(t))+\aeps/4$ on a positive measure set, which
  implies
  \begin{align}\label{e_rateFunctionOnQ}
  \mathscr{I}^-_{\theta_*,\aeps/4}(\fpath) = \infty \text{ for any }\fpath\in Q.
  \end{align}

  Now let $x_* = x_{\ksp}$; by continuity of $F_\ve$ there exists a neighborhood
  $B_*\ni x_*$ so that $F^{T_*\vei}_\ve(B_*\times\{\theta_*\})\subset \{\theta\in B(\theta^*,\varrho_*/2)\}$.  Let
  $Q_* = \theta_\ve(B_*\times\{\theta_*\})$; observe that, since $\theta_\ve(p)$ is
  $\|\omega\|$-Lipschitz for any $p\in\bT^2$, for any $\fpath\in Q_*$ we have
  $\|\fpath-\fpath_*\| < 2\|\omega\|T_* < 2\unifsmall$; hence $Q_*\subset Q$.
  Therefore, choosing $\bP_{\omega,\ve} = \theta_{\ve*}\Leb_{\theta_*}$ (where
  $\Leb_{\theta_*}$ is the one-dimensional Lebesgue measure restricted to
  $\{\theta = \theta_*\}$), we have $\bP_{\omega,\ve}(Q_*) > 0$.  According
  to~\cite[Theorem~2.2]{DeL1}, we need to build a neighborhood $Q_*^+$ of $Q_*$ defined as:
  \begin{align*}
    Q_*^+ = \bigcup_{\fpath\in Q_*} B_{[0,T_*]}(\fpath,\ve^{1/6}\|\fpath-\bar\theta(\cdot,\theta_*)\|\nc0^{2/3})
  \end{align*}
  Trivially, $\|\fpath-\bar\theta(\cdot,\theta_*)\|\le 1$.  Hence, by choosing $\ve$
  sufficiently small, we can guarantee that $Q_*^+\subset Q$ (in fact, for any
  $\fpath\in Q_*^+$ we have $\|\fpath-\fpath_*\|\le 2\unifsmall+\ve^{1/6} < 3\unifsmall$
  (the conditions at the boundary are satisfied by identical arguments).  But then we
  reach a contradiction, since by~\eqref{e_rateFunctionOnQ} and~\cite[Theorem~2.2]{DeL1},
  we then obtain $\bP_{\omega,\aeps}(Q_*) = 0$, provided that $\ve$ is chosen
  sufficiently small.
\end{proof}
We conclude this subsection with a useful characterization
\begin{lem}\label{l_forbiddenEquivalence}
  Every \tpath{\thetaa}{\thetab} is $\aeps$-forbidden if and only if
  \begin{align}\label{e_maxminOmega}
    \min_{\theta\in[\thetaa,\thetab]}\bar\omega^+(\theta)&\le-\aeps &\text{and}&&
    \max_{\theta\in[\thetab,\thetaa]}\bar\omega^-(\theta)&\ge\aeps.
  \end{align}
\end{lem}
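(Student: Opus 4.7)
The proof separates into the two implications. As a preliminary observation, note that since $\partial_\aeps\Omega(\theta)$ is defined by a strict inequality, $\Omp(\theta)=\Omega(\theta)\cup\partial_\aeps\Omega(\theta)$ equals the open interval $(\bar\omega^-(\theta)-\aeps,\bar\omega^+(\theta)+\aeps)$. The requirement that a Lipschitz \tpath\thetaa\thetab{} $h$ be not $\aeps$-forbidden therefore reads: $\max\dfpath(s)<\bar\omega^+(h(s))+\aeps$ and $\min\dfpath(s)>\bar\omega^-(h(s))-\aeps$ for every $s\in[0,T]$, both \emph{strictly}. Keeping in mind the openness of $\Omp$ is what allows the borderline case of the lemma to go through.

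For the $\Rightarrow$ direction I would argue by contrapositive. Assume, without loss of generality, that $\min_{[\thetaa,\thetab]}\bar\omega^+>-\aeps$. By continuity of $\bar\omega^+$ (Lemma~\ref{l_continuityOmegapm}) and compactness of the arc, there exists $\delta>0$ with $\bar\omega^+(\theta)+\aeps\ge\delta$ on $[\thetaa,\thetab]$. Define the continuous function $g(\theta)=\max(\bar\omega^-(\theta)-\aeps/2,\,\delta/2)$ on the arc; by a trivial case distinction (using only $\bar\omega^-\le\bar\omega^+$) one sees that $g(\theta)\ge\delta/2>0$ and $g(\theta)\in(\bar\omega^-(\theta)-\aeps,\bar\omega^+(\theta)+\aeps)$ throughout. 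Setting $G(\theta)=\int_{\thetaa}^\theta d\tau/g(\tau)$ along the arc, the inverse $h=G^{-1}\colon[0,G(\thetab)]\to[\thetaa,\thetab]$ is a $C^1$ \tpath\thetaa\thetab{} with $h'(s)=g(h(s))\in\Omp(h(s))$, hence not $\aeps$-forbidden. The failure of the other inequality is handled symmetrically by producing a path that runs negatively through $[\thetab,\thetaa]$.

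For the $\Leftarrow$ direction I would fix $\theta_1\in[\thetaa,\thetab]$ with $\bar\omega^+(\theta_1)\le-\aeps$ and $\theta_2\in[\thetab,\thetaa]$ with $\bar\omega^-(\theta_2)\ge\aeps$, suppose by contradiction that $h$ is a non-$\aeps$-forbidden \tpath\thetaa\thetab, and establish the following no-upward-crossing lemma: if $\tilde h$ is a lift of $h$ to $\bR$ and $\tilde\theta_1$ a lift of $\theta_1$, then there cannot exist $s_0<s_1$ in $[0,T]$ with $\tilde h(s_0)=\tilde\theta_1$ and $\tilde h(s)>\tilde\theta_1$ for every $s\in(s_0,s_1]$. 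Indeed, for any sequence $s_n\searrow s_0$, absolute continuity and $\int_{s_0}^{s_n}\tilde h'(t)\,dt=\tilde h(s_n)-\tilde\theta_1>0$ yield points $s_n'\in(s_0,s_n)\setminus E_{\tilde h}$ with $\tilde h'(s_n')>0$; since $|\tilde h'|$ is bounded, a convergent subsequence has limit $v\ge 0$, and by the very definition of the Clarke generalized derivative $v\in\dfpath(s_0)$. This contradicts $\dfpath(s_0)\subset\Omp(\theta_1)\subset(-\infty,0)$, where the last inclusion uses $\bar\omega^+(\theta_1)+\aeps\le 0$ and the openness of $\Omp$. The symmetric statement rules out downward crossings of $\theta_2$.

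To finish, lift so that $\tilde h(0)=\tilde\thetaa$ and let $\tilde\thetab\in(\tilde\thetaa,\tilde\thetaa+1]$ be the positive lift of $\thetab$; then $\tilde h(T)=\tilde\thetab+k$ for some $k\in\bZ$. If $k\ge 0$, pick the lift $\tilde\theta_1\in[\tilde\thetaa,\tilde\thetab]$: either $\tilde h(s^\ast)>\tilde\theta_1$ for some $s^\ast\in(0,T]$, in which case $s_0=\sup\{s\in[0,s^\ast]\colon\tilde h(s)\le\tilde\theta_1\}$ and $s_1=s^\ast$ realize the configuration of the no-crossing lemma; or $\tilde h\le\tilde\theta_1$ throughout with $\tilde h(T)=\tilde\theta_1$, which forces $k=0$ and $\theta_1=\thetab$, and then the strict bound $\max\dfpath(T)<0$ together with upper semicontinuity of the Clarke derivative forces $\tilde h(s)>\tilde\theta_1$ for $s$ slightly less than $T$, again producing the crossing. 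The case $k\le-1$ is treated symmetrically via $\theta_2$ and the downward version of the lemma. The principal technical subtlety is the borderline case $\bar\omega^+(\theta_1)=-\aeps$ (resp.\ $\bar\omega^-(\theta_2)=\aeps$): the argument would collapse if the definition of $\partial_\aeps\Omega$ used $\le$ instead of $<$, and it is exactly the strict inclusion $\dfpath(s_0)\subset(-\infty,0)$ that produces the contradiction.
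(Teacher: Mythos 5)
Your proof is correct, and for the reverse implication it takes a genuinely different route from the paper's.

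For $(\Rightarrow)$ both proofs argue by contrapositive and both construct an explicit admissible path; the paper solves the autonomous ODE $h'=\bar\omega^+(h)+\aeps_*$ via Cauchy--Peano, while you integrate the reciprocal of a hand-built velocity $g=\max(\bar\omega^--\aeps/2,\delta/2)$. These are essentially the same idea, though yours sidesteps the (slightly delicate, because non-unique) existence question for the ODE by inverting the arrival-time integral.

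For $(\Leftarrow)$ the paper restricts the path so that $\fpath(s)\nin\{\thetaa,\thetab\}$ for interior $s$, picks a zero $\theta_*$ of $\bar\omega^++\aeps$, builds crossing intervals $[s_-,s_+]$ of shrinking width $2\varrho$, applies Lebourg's Mean Value Theorem to extract a positive element of $\dfpath$ at some $s\in[s_-,s_+]$, and then invokes compactness of the graph of $\dfpath$ as $\varrho\to0$. You instead lift $h$ to $\bR$, formulate a clean ``no upward crossing'' lemma at $\theta_1$, and prove it directly from absolute continuity: if $\tilde h$ increases past $\tilde\theta_1$ immediately after $s_0$, the fundamental theorem of calculus furnishes differentiability points $s_n'\searrow s_0$ with $\tilde h'(s_n')>0$, and a convergent subsequence of these derivatives lands a nonnegative value in $\dfpath(s_0)$ by the very definition of the Clarke derivative. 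This avoids Lebourg's MVT entirely, replaces the implicit ``compactness of graph'' limiting step with an elementary Bolzano--Weierstrass argument, and handles the circle topology transparently through the lift and a winding-number case split. Your observation about the borderline case $\bar\omega^+(\theta_1)=-\aeps$ is accurate and worth making explicit: the argument hinges on $\Omp$ being open, so that $\dfpath(s_0)\subset(-\infty,0)$ strictly and the nonnegative limit $v$ cannot lie inside it; the paper states this case without drawing attention to the role of strictness. The tail case $k=0$, $\theta_1=\thetab$, where the path sits below $\tilde\theta_1$ and only touches it at $T$, is correctly dispatched by upper semicontinuity of the Clarke derivative forcing $\tilde h$ to be strictly decreasing near $T$, which creates the forbidden crossing after all. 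Altogether this is a sound and somewhat more self-contained proof.
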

\begin{proof}
  First, let $\min_{\theta\in[\thetaa,\thetab]}\bar\omega^+(\theta) > -\aeps $; in
  particular there exists $0 < \aeps_* < \aeps$ and $\varrho_* > 0$ so that
  $\min_{\theta\in[\thetaa,\thetab]}\bar\omega^+(\theta)+\aeps_* > \varrho_*$. Let
  $\fpath_*$ be a\footnote{ The function $\bar\omega^+(\theta)$ is continuous, therefore a
    solution of the given differential equation exists (by Cauchy--Peano Theorem), but in
    general is not unique.} path solving the differential equation
  $\fpath'_*(s) = \bar\omega^+(\fpath_*(s))+\aeps_*$ with initial condition
  $\fpath_*(0) = \thetaa$; since $\fpath'_*(s) \ge \varrho_*$ there exists
  $T \le 1/\varrho_*$ so that $\fpath_*(T) = \thetab$, so $\fpath_*$ is a
  \tpath{\thetaa}{\thetab}.  Our construction moreover guarantees that
  $\bar\omega^+(\fpath(s))\le\fpath'_*(s) < \bar\omega^+(\fpath(s))+\aeps$, which implies
  that $h_*$ is not $\aeps$-forbidden and contradicts our assumptions.  If we assume
  $\max_{\theta\in[\thetab,\thetaa]}\bar\omega^-(\theta) < \aeps$, a similar argument also
  allows to construct a non $\aeps$-forbidden \tpath{\thetaa}{\thetab}.  This concludes
  the proof of the direct implication.

  Let us prove the reverse implication.  Let $\fpath$ be a \tpath{\thetaa}{\thetab} of
  length $T$; we want to prove that it is $\aeps$-forbidden, \ie that
  $\dfpath(s)\not\subset\Omega_\ve^+(h(s))$ for some $s\in[0,T]$. Without loss of
  generality we can assume that $\fpath(s)\nin\{\thetaa,\thetab\}$ if $s\in(0,T)$.  Then
  either $\fpath([0,T]) = [\thetaa,\thetab]$ or $\fpath([0,T]) = [\thetab,\thetaa]$.  Let
  us assume the first possibility; the second case can be completed by the reader
  following an analogous argument.  Let $\theta_*\in[\thetaa,\thetab]$ so that
  $\bar\omega^+(\theta_*)\le -\aeps$: assume first that $\theta_*\in(\thetaa,\thetab)$.
  Then by our assumptions for any $\varrho > 0$ sufficiently small there exist
  $0\le s_- < s_+ \le T$ so that $\fpath(s_-) = \theta_*-\varrho$,
  $\fpath(s_+) = \theta_*+\varrho$ and
  $\fpath([s_-,s_+]) = [\theta_*-\varrho,\theta_*+\varrho]$.  By Lebourg's Mean Value
  Theorem (see~\cite[Chapter 2, Theorem 2.4]{Clarke}) there exists $s\in[s_-,s_+]$ so that
  $\dfpath(s)\ni2\varrho/(s_+-s_-) > 0$. Moreover, by construction
  $|\fpath(s)-\theta_*| < \varrho$; since $\varrho$ was arbitrary and by compactness of
  the graph of $\dfpath(s)$ we conclude that there exists $s$ so that $h(s) = \theta_*$
  and $\dfpath(s)\cap[0,\infty)\ne\emptyset$; hence $\fpath$ is $\aeps$-forbidden since
  $\bar\omega^+(\theta_*) = -\aeps$.  If, on the other hand, $\theta_* = \thetaa$ (\resp
  $\theta_* = \thetab$), the same arguments carries on, considering the half ball
  $[\theta_*,\theta_*+\varrho]$ (\resp $[\theta_*-\varrho,\theta_*]$).
\end{proof}
\begin{rem}
  The same argument used in the above proof indeed shows that no \tpath\thetaa\thetab{} is
  $\aeps$-admissible if and only if
\begin{align}\label{e_maxminOmega2}
    \min_{\theta\in[\thetaa,\thetab]}\bar\omega^+(\theta)&\le\aeps &\text{and}&&
    \max_{\theta\in[\thetab,\thetaa]}\bar\omega^-(\theta)&\ge-\aeps.
  \end{align}
\end{rem}
\subsection{Local Central Limit Theorem}
In~\cite{DeL1} we also obtained a Local Central Limit Theorem (see~\cite[Theorem~2.7 and
Proposition~7.1]{DeL1}):
\begin{thm}\label{thm:lclt}
  For any $T>0$, there exists $\ve_0>0$ and $0 < \alpha_0 < 1$ so that the following
  holds.  For any compact interval $I\subset\bR$, real numbers $\shiftPar>0$,
  $\ve\in(0,\ve_0)$ and $t\in[\ve^{1/2000},T]$, any standard pair $\ell$ which intersects
  $\{\theta=\theta_0\}$, we have:
  \begin{equation} \label{e_indicatorlclt}
    \ve^{-1/2}\mu_\ell(\deviation\theta(t;\cdot)\in
    \ve I + \shiftPar\veh) = \Leb\, I\cdot
    \frac{e^{-\shiftPar^2/(2\Var_t^2)}}{\Var_t\sqrt{2\pi}}+\cO(\ve^{\alpha_0}).
  \end{equation}
  where the variance $\Var_t^2 = \Var_t^2(\theta_0)$ is given by
  \begin{equation}\label{e_variancelclt}
    \Var_t^2 = \int_0^t e^{2\int_s^t\bar\omega'(\bar\theta(r,\theta_0))\deh r}\bVar^2(\bar\theta(s,\theta_0))\deh s,
  \end{equation}
  and $\bVar^2(\theta)$ is given by the usual Green--Kubo formula
  \begin{align*}
    \bVar^2(\theta) &= \int_{\bT}\left[\ho^2(x,\theta) + 2 \sum_{m=1}^{\infty}
      \ho(f_\theta^m(x),\theta)\ho(x,\theta)\right]\rho_\theta(x)\deh x,
  \end{align*}
  where $\ho(x,\theta)=\omega(x,\theta)-\bar\omega(\theta)$.
\end{thm}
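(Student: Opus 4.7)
My plan is to reduce the statement to a characteristic-function estimate via Fourier inversion: if I can show that
\[
\Phi_{\ell,t}(\xi) := \mu_\ell\bigl(e^{i\xi(\Delta\theta(t;\cdot)-\shiftPar\veh)/\ve}\bigr)
\]
is close to the Gaussian characteristic function $e^{-i\xi\shiftPar\veih}\cdot e^{-\xi^2\Var_t^2/2}$ on a range $|\xi|\le\ve^{-\eta}$ (for some explicit $\eta>0$), with an $L^1$-integrable remainder in $\xi$, then testing the probability against a smooth bump function approximating $\Id_{\ve I+\shiftPar\veh}$ and estimating the error by the usual smoothing inequalities will yield~\eqref{e_indicatorlclt}. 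This reduces the LCLT to two tasks: a small-$\xi$ asymptotic expansion and a large-$\xi$ decay bound for $\Phi_{\ell,t}$.

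\textbf{Linearization and identification of the variance.} I would first write the telescoping identity $\theta_n-\theta_0=\ve\sum_{k=0}^{n-1}\omega(x_k,\theta_k)$ and compare it with the corresponding sum for $\bar\theta$. Expanding $\omega(x_k,\theta_k)=\bar\omega(\theta_k)+\ho(x_k,\theta_k)$ and using the mean value theorem for $\bar\omega$ around $\bar\theta_k$, one gets a discrete Duhamel identity of the form
\[
\Delta\theta_n=\ve\sum_{k=0}^{n-1}\ho(x_k,\theta_k)+\ve\sum_{k=0}^{n-1}\bar\omega'(\bar\theta_k)\Delta\theta_k+\text{(quadratic remainder)}.
\]
Solving this as a discrete linear equation and controlling the remainder via Theorem~\ref{t_largeDevz} (on the set where $|\Delta\theta|\ll 1$, which carries all but exponentially small mass), one obtains
\[
\Delta\theta(t;\cdot)\approx\ve\sum_{k=0}^{\pint{t\vei}-1} e^{\ve\sum_{j=k+1}^{\pint{t\vei}-1}\bar\omega'(\bar\theta_j)}\,\ho(x_k,\theta_k).
\]
A direct Riemann-sum computation, combined with the Green--Kubo formula for the variance of Birkhoff sums of $\ho(\cdot,\theta)$ against the invariant measure $\rho_\theta$, identifies the asymptotic variance of this weighted sum as the $\Var_t^2$ given by~\eqref{e_variancelclt}.

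\textbf{Spectral analysis and small-$\xi$ expansion.} The characteristic function $\Phi_{\ell,t}(\xi)$ is then conjugate to a product of twisted transfer operators $\cL_{\xi,\theta}$ (acting on densities on the circle fibre) along the averaged trajectory, with twist parameter of order $\xi\veh$. Because each $f_\theta$ is a $\cC^4$ expanding map, $\cL_{0,\theta}$ has a spectral gap on $\cC^3(\bT)$, and standard analytic perturbation theory provides a leading eigenvalue $\evalue_\theta(s)$ with $\evalue_\theta(0)=1$, $\evalue_\theta'(0)=i\bar\omega(\theta)$, and $\evalue_\theta''(0)=-\bVar^2(\theta)$. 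Multiplying these along the averaged orbit and integrating reproduces exactly the factor $e^{-\xi^2\Var_t^2/2}$ together with the phase. Assumption~\ref{a_noCobo} is crucial at this step: it ensures $\bVar^2(\theta)>0$, so the second-order term is genuinely Gaussian.

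\textbf{Main obstacle: large-$\xi$ bounds.} The hard step is controlling $\Phi_{\ell,t}(\xi)$ for $|\xi|$ in the non-perturbative range $1\ll|\xi|\le\ve^{-\eta}$, which is what distinguishes an LCLT from a mere CLT. One needs a Dolgopyat-type bound showing that the iterated twisted transfer operator contracts at an exponential rate uniformly for $\xi$ in this range; equivalently, that $\omega(\cdot,\theta)$ does not conspire with the fast dynamics to create nearly constant phases. Assumption~\ref{a_noCobo} is exactly the non-cohomology condition that should power such a bound, but making this quantitative and uniform in $\theta$ along the averaged trajectory is the main technical difficulty; I would expect it to require a careful adaptation of Dolgopyat's oscillatory cancellation argument along the lines of Theorem~2.7 and Proposition~7.1 of~\cite{DeL1}. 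Once this estimate is in hand, Fourier inversion closes the argument, producing the error $\cO(\ve^{\alpha_0})$ with $\alpha_0>0$ determined by the interplay between the exponent $\eta$ and the restriction $t\ge\ve^{1/2000}$ needed to make the Gaussian width dominate the lattice spacing $\ve$.
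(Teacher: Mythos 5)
The theorem you are trying to prove is not proved in this paper at all: the sentence immediately preceding the statement says explicitly that it is imported from the companion paper (see \cite[Theorem~2.7 and Proposition~7.1]{DeL1}), and the paper simply restates the result in its present notation. There is therefore no proof here to compare your plan against; any genuine comparison would have to be carried out against~\cite{DeL1}.

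As a blind reconstruction, your outline is a sensible description of the standard route (Fourier inversion, perturbed transfer operator, Nagaev--Guivarc'h), and you correctly identify~\cite{DeL1} as the place where this is carried out. Two cautions about the details. First, the normalization in your characteristic function is off: with $\Phi_{\ell,t}(\xi)=\mu_\ell\bigl(e^{i\xi(\deviation\theta(t;\cdot)-\shiftPar\veh)/\ve}\bigr)$ and $\deviation\theta(t;\cdot)\approx\veh\,\cN(0,\Var_t^2)$, the Gaussian factor is $e^{-\xi^2\Var_t^2/(2\ve)}$, not $e^{-\xi^2\Var_t^2/2}$; you presumably meant to normalize by $\veh$ rather than by $\ve$, or to change variables $\xi\mapsto\xi\ve$ before comparing. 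Second, calling the intermediate-frequency estimate a ``Dolgopyat-type bound'' overstates the difficulty of that step in the present expanding-map setting: the twisted transfer operator $\cL_{s,\theta}g=\cL_\theta(e^{is\omega(\cdot,\theta)}g)$ already has a genuine spectral gap on $\cC^3(\bT)$ because each $f_\theta$ is uniformly expanding, and assumption~\ref{a_noCobo} (non-cohomology) together with compactness of $\theta\in\bT$ yields the required uniform strict drop of the spectral radius for nonzero twist via the classical Nagaev--Guivarc'h aperiodicity mechanism. Dolgopyat's oscillatory-cancellation argument is needed precisely when the spectral gap itself is in doubt (flows, compact group extensions of Anosov maps), which is not the situation here. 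The genuinely delicate points in the actual proof are different from what you flag: tracking how the spectral data of $\cL_{s,\theta}$ move as $\theta$ drifts along the averaged orbit, making the error term a clean power $\cO(\ve^{\alpha_0})$ uniformly down to $t\ge\ve^{1/2000}$, and carrying out the Fourier inversion in the standard-pair framework with the correct conditioning on the fibre.
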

Observe that, $\bVar$ defined above is uniformly bounded away from $0$ by
Assumption~\ref{a_noCobo} and compactness of $\bT$; hence we conclude that
\begin{align}\label{e_varianceEstimate}
  \Const t \leq \Var^2_t\leq\Const\expo{\const t}t
\end{align}
\subsection{Averaged dynamics: description}\label{subsec:averagedDynDescription}
In this section we will describe the averaged dynamics of the variable $\theta_n$ and of
the auxiliary variable $\zeta_n$.  As we noted earlier, assumption~\ref{a_discreteZeros}
enables us to give the following simple description of the averaged dynamics $\bar\theta$:
let us start by fixing some terminology and notation.  As already briefly mentioned in
Section~\ref{sec:results}, we define the intervals:
\begin{align*}
  \fbas{k}&:=[\theta_{k,+},\theta_{k+1,+}]\ni\theta_{k,-}&
  \bbas{k}&:=[\theta_{k-1,-},\theta_{k,-}]\ni\theta_{k,+}.
\end{align*}
By~\eqref{e_averagedSlo}, any point in $\intr \fbas{k}$ (\resp $\intr\bbas{k}$) converges
in forward time (\resp backward time) to $\theta_{k,-}$ (\resp $\theta_{k,+}$): we thus
call $\intr \fbas{k}$ (\resp $\intr\bbas k$) the \emph{(forward) basin of attraction} of
$\theta_{k,-}$ (\resp \emph{backward basin of attraction} of $\theta_{k,+}$).  In
particular, any sufficiently small ball $B_k$ containing $\theta_{k,-}$ is
forward-invariant, that is:
\begin{align}\label{e_sink}
  \fa\ k\in\{1,\cdots,\nz\},\,t>0,\,\theta_0\in B_k&&%
  |\bar\theta(t,\theta_0)-\theta_{k,-}|&\leq |\theta_0-\theta_{k,-}|.
\end{align}
Let us now define the sets
\begin{subequations}\label{e_definitionW}
  \begin{align}
    \VH{k}&:=\{\theta\in \fbas{k}\st \bar\omega'(\theta)<\bar\omega'(\theta_{k,-})/2;\: \bclyapReg(\theta)<-3/4\}\\
    \VS{k}&:=\{\theta\in\bbas{k}\st \bar\omega'(\theta)>\bar\omega'(\theta_{k,+})/2\};
  \end{align}
\end{subequations}
observe that $\VH{k}\ne\emptyset$ by Assumption~\ref{a_almostTrivialp} and
Remark~\ref{rem:newpsi}.  For fixed $r_{-},r_+>0$ small, define
$\happy_k=B(\theta_{k,-},r_{-})$ and $\sad_{k}=B(\theta_{k,+}, r_+)$.  We prescribe $r_-$
(\resp $r_+$) to be small enough so that $\happy_k\subset\VH{k}$ (\resp
$\sad_k\subset\VS{k}$) for any $k$.  Define moreover
\begin{equation}\label{eq:HS-def}
  \uhappy=\bigcup_k\happy_k \hskip2cm
  \usad=\bigcup_k\sad_k.
\end{equation}
Finally, let us define $\hhappy_k=B(\theta_{k,-},3r_-/4)$ and
$\uhhappy=\bigcup_k\hhappy_k$.

By~\eqref{e_sink}, each of the sets $\happy_k$ is invariant for the averaged dynamics;
using~\eqref{e_averagedSlo} we thus conclude that $\zeta$ has an average negative drift on
$\happy_k$ whose rate is strictly less than $-1/2$.  This will imply that center vectors
are, \emph{on average}, contracted at an exponential rate, as long as the trajectory stays
in one of the $\happy_k$'s.  We will then use Large Deviation estimates (\ie
Theorem~\ref{t_largeDevz}), to obtain similar result for the real dynamics.

\subsection{Averaged dynamics: further properties}\label{ss_furtherProperties}
Let us now introduce a few additional notions which we will need, in particular, in the
case which~\ref{a_fluctuation} does not hold.
\begin{rem}
  The goal of this section is to define trapping sets for the dynamics in an abstract
  manner.  The reason for this is twofold: first it makes very clear where
  assumption~\ref{a_fluctuationGap} is actually used (see Lemma~\ref{l_fluctuationGap});
  second, it allows to prove all our results with little or no reference to the actual
  geometry of the trapping sets, which we think would be useful for further generalization
  to higher dimensional settings.
\end{rem}
For each $\theta\in\bT$ and $T > 0$ we define the sets
\begin{align*}
  \freach{\theta,T} &=\{\theta'\in\bT\st\exists\,\text{\tpath{\theta}{\theta'}{}
                      of length $\le T$ that is not $\aeps$-forbidden})\};\\
  \breach{\theta,T} &=\{\theta'\in\bT\st\exists\,\text{\eadmissiblep{\theta'}{\theta}{}
                      of length $\le T$})\}.
\end{align*}
Observe that $\freach{\theta,T}$ is given by \emph{end points} of paths \emph{starting
  from} $\theta$, while $\breach{\theta,T}$ is given by \emph{starting points} of paths
\emph{ending at} $\theta$.  We denote with
$\fbreach{\theta} = \bigcup_{T > 0}\fbreach{\theta,T}$.
\begin{lem}\label{l_trivialAdmissibleProperties} The following properties hold for any $\theta\in\bT$ and $\aeps > 0$:
  \begin{enumerate}
  \item $\fbreach{\theta}$ is connected (\ie an interval) for any $\theta\in\bT$;
  \item \label{pp_relationFbreach}
    $\theta\in\breach{\theta'}\Rightarrow\freach{\theta}\cap\breach{\theta'}\ne\emptyset\Rightarrow\theta'\in\freach{\theta}\;$;
  \item \label{pp_simpleInclusion} if $\theta'\in\fbreach{\theta}$, then $\fbreach{\theta'} \subset\fbreach{\theta}\;$;
  \item if $0\in\intr\Omega(\theta)$ then $\theta\in\fbreach{\theta}$, provided that
    $\aeps$ has been chosen small enough.
  \end{enumerate}
\end{lem}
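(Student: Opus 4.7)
The proof rests on two elementary operations on paths. \emph{Truncation}: any initial or terminal sub-path of a non-$\aeps$-forbidden (respectively admissible) path inherits the same property, since these are pointwise conditions on $\dfpath(s)$. \emph{Concatenation}: joining two compatible paths at a common endpoint preserves the property, because $\Omp(\theta)$ and $\intr\Omm(\theta)$ are convex intervals of $\bR$ for every $\theta\in\bT$, so the Clarke generalized derivative at the join point (which is the convex hull of the one-sided limits) remains in the appropriate set; since moreover admissibility implies non-$\aeps$-forbiddenness, concatenating an admissible segment with a non-forbidden one still yields a non-forbidden path.

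Property~(d) is immediate: if $\aeps<\dist(0,\partial\Omega(\theta))$, then $0\in\intr\Omm(\theta)$, and the constant path $\fpath(s)=\theta$ on $[0,T]$ (any $T>0$) satisfies $\dfpath(s)=\{0\}\subset\intr\Omm(\theta)$; it is therefore an admissible (hence non-forbidden) $(\theta,\theta)$-path, placing $\theta$ in both $\freach{\theta}$ and $\breach{\theta}$. Property~(b) follows similarly: fixing the admissible $(\theta,\theta')$-path $\fpath$ provided by $\theta\in\breach{\theta'}$, any sub-endpoint $\fpath(s)$ lies simultaneously in $\breach{\theta'}$ (by terminal truncation) and in $\freach{\theta}$ (by initial truncation, since admissibility implies non-forbiddenness), so the intersection is nonempty; conversely any $\tilde\theta\in\freach{\theta}\cap\breach{\theta'}$ yields a non-forbidden $(\theta,\theta')$-path by concatenation, giving $\theta'\in\freach{\theta}$. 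Property~(c) is also pure concatenation: unpacking $\theta'\in\fbreach{\theta}$ and $\theta''\in\fbreach{\theta'}$ yields four paths whose non-forbidden halves concatenate to a non-forbidden $(\theta,\theta'')$-path and whose admissible halves concatenate to an admissible $(\theta'',\theta)$-path, establishing $\theta''\in\fbreach{\theta}$.

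The main obstacle is property~(a): showing that $\fbreach{\theta}$ is actually a single arc of $\bT$ rather than two disjoint arcs straddling~$\theta$. The strategy is to verify that for every $\theta_1\in\fbreach{\theta}$ the entire image of any admissible $(\theta_1,\theta)$-path $\fpath$ lies in $\fbreach{\theta}$, with the possible exception of the endpoint $\theta$ itself. Indeed, for any intermediate $\tilde\theta=\fpath(s)$, the terminal truncation is an admissible $(\tilde\theta,\theta)$-path (so $\tilde\theta\in\breach{\theta}$), while concatenating the non-forbidden $(\theta,\theta_1)$-path with $\fpath|_{[0,s]}$ yields a non-forbidden $(\theta,\tilde\theta)$-path (so $\tilde\theta\in\freach{\theta}$). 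Hence $\fbreach{\theta}\cup\{\theta\}$ is a union of connected arcs each containing $\theta$ as an endpoint, and is therefore itself an arc. The two-arc scenario is ruled out because when $0\in\intr\Omm(\theta)$, property~(d) already places $\theta$ in $\fbreach{\theta}$, and when $0\notin\intr\Omm(\theta)$ the admissibility requirement $\dfpath(T)\subset\intr\Omm(\theta)$ at the endpoint forces every admissible path to approach $\theta$ from the same side; in both cases the union assembles into a single interval of $\bT$.
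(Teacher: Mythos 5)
The paper delegates this lemma to the reader, so there is no reference proof to compare against. Your overall strategy — truncation, concatenation (using convexity of $\Omp(\theta)$ and $\intr\Omm(\theta)$ to control the Clarke derivative at the join), and the arc-union argument for connectedness — is the right one, and your treatments of (b), (c), (d) are sound (modulo the harmless observation that the statement $\fbreach{\theta}$ is shorthand for two separate assertions, one for $\freach{}$ and one for $\breach{}$, so your "four paths" in (c) are really two pairs used independently). The one place that needs tightening is the final step of (a). Since $\freach{\theta}$ is built from \emph{not-$\aeps$-forbidden paths starting at $\theta$} while $\breach{\theta}$ is built from \emph{$\aeps$-admissible paths ending at $\theta$}, the correct dichotomy is not the single condition $0\in\intr\Omm(\theta)$: for $\breach{\theta}$ you should split on whether $0\in\intr\Omm(\theta)$ (constant path admissible, so $\theta\in\breach{\theta}$) versus not (then $\dfpath(T)\subset\intr\Omm(\theta)$ pins the sign of the terminal derivative, forcing approach from one side only), whereas for $\freach{\theta}$ the split must instead be on whether $0\in\Omp(\theta)$ (constant path not forbidden, so $\theta\in\freach{\theta}$) versus not (then $\dfpath(0)\subset\Omp(\theta)$ pins the sign of the \emph{initial} derivative, forcing departure from $\theta$ on one side only); in the latter case one also argues that a not-forbidden path leaving $\theta$ cannot later return to $\theta$ without certifying $\theta\in\freach{\theta}$ by initial truncation, so its image stays on one side of $\theta$. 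Relatedly, your middle sentence in (a) builds the not-forbidden $(\theta,\tilde\theta)$-path by an unnecessary concatenation; the natural route is simply initial truncation of a not-forbidden $(\theta,\theta_1)$-path. None of this changes the conclusion, but the $\Omp$/$\intr\Omm$ distinction and the initial-versus-terminal derivative distinction should be kept separate for the argument to be airtight.
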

The proof of the above properties readily follows from the definition and it is left to
the reader

\begin{lem}\label{l_fbreachOpen}
  $\fbreach{\theta,T}$ is an open set for any $T > 0$, $\aeps > 0$ and $\theta\in\bT$.
\end{lem}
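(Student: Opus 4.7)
The plan is to prove openness of the two components separately: since $\fbreach{\theta,T} = \freach{\theta,T} \cup \breach{\theta,T}$, showing that each is open yields the lemma. I focus on the forward case, which uses the \emph{not $\aeps$-forbidden} condition; the backward case is completely analogous after swapping the relevant open set ($\intr\Omm$ in place of $\Omp$) and modifying the path near its \emph{initial} point rather than its terminal one.

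Fix $\theta' \in \freach{\theta,T}$ with a non-$\aeps$-forbidden witness path $h:[0,T']\to\bT$, $T'\le T$. The first task is a \emph{uniform-slack} argument: at every $s\in[0,T']$ the compact set $\dfpath(s)$ lies inside the open interval $\Omp(h(s))$, so $\dist(\dfpath(s),\bT\setminus\Omp(h(s)))>0$ pointwise. Using the continuity of $\bar\omega^{\pm}$ from Lemma~\ref{l_continuityOmegapm} together with upper semi-continuity of the set-valued map $s\mapsto\dfpath(s)$ (a standard consequence of its definition via Cantor diagonalization), this distance is lower semi-continuous in $s$ and hence attains a strictly positive minimum $\aeps_*>0$ on the compact interval $[0,T']$.

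Next, for any $\theta''$ sufficiently close to $\theta'$, I construct a non-$\aeps$-forbidden \tpath{\theta}{\theta''} of length exactly $T'$ by modifying $h$ only on a short terminal sub-interval. Pick $\eta>0$ small (to be chosen), set $s^*=T'-\eta$ and $\alpha^*=h(s^*)$, and replace $h|_{[s^*,T']}$ by the straight segment joining $\alpha^*$ to $\theta''$, whose constant slope is
\[
\sigma=\frac{\theta''-\alpha^*}{\eta}=\bar h'_{[s^*,T']}+\frac{\theta''-\theta'}{\eta}.
\]
Crucially, this modification preserves the total length, which side-steps the borderline case $T'=T$ where extending $h$ would exceed the allowed length. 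To verify that the resulting path $\tilde h$ is still not $\aeps$-forbidden, one only needs $\sigma\in\Omp(\tilde h(s))$ on the replaced interval (at the junction $s^*$ the Clarke derivative becomes a convex hull of values from both sides, each already in $\Omp(h(s^*))$, hence admissible).

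The main technical obstacle will be calibrating the two small parameters in the right order. First, using continuity of $\Omp$ and Lipschitzness of $h$ (which keeps the argument of $\Omp$ close to $\theta'$ on $[s^*,T']$), one fixes $\eta$ small enough that $\Omp(h(s))$ is within $\aeps_*/10$ of $\Omp(\theta')$ for every $s\in[s^*,T']$; note that the average $\bar h'_{[s^*,T']}$ automatically lies in the $\aeps_*$-inner part of $\Omp(\theta')$ by the uniform-slack estimate. Only then does one restrict $|\theta''-\theta'|<\aeps_*\eta/10$, which forces the perturbation $(\theta''-\theta')/\eta$ to be of order at most $\aeps_*/10$. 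With this ordering, $\sigma$ stays well inside $\Omp(\tilde h(s))$ throughout the terminal segment, proving $\theta''\in\freach{\theta,T}$ and thus openness.
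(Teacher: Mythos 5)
Your proof is correct and rests on the same two pillars the paper uses: compactness of the graph of the Clarke derivative (giving uniform slack $\aeps_*$) and continuity of $\theta\mapsto\Omp(\theta)$ (via Lemma~\ref{l_continuityOmegapm}). The difference is in the perturbation you use to move the endpoint. The paper takes the entire path and applies the global affine shift $\fpath_\varrho(s)=\fpath(s)+\varrho\aeps s$, which changes the Clarke derivative everywhere by the constant $\varrho\aeps$ and moves the endpoint to $\theta'+\varrho\aeps T'$; compactness plus continuity of $\Omp$ then gives admissibility of $\fpath_\varrho$ for $|\varrho|$ small in one stroke. You instead leave $h$ untouched on $[0,s^*]$ and replace only the terminal segment with a straight line to $\theta''$. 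This is a genuinely different (local) construction; it forces you to do a more explicit calibration (fix $\eta$, then shrink $|\theta''-\theta'|$ relative to $\aeps_*\eta$, and separately check the Clarke derivative at the junction $s^*$, where convexity of $\Omp$ is what saves you). The paper's global shift is tidier because the slope changes uniformly and there is no junction to worry about; your version is more transparent about exactly which quantitative ingredients are being used. Note also that your remark about "side-stepping the borderline case $T'=T$" is not a comparative advantage: the paper's shift is also defined on $[0,T']$ and preserves the length, so both constructions handle it identically. Finally, a small reading point: $\fbreach{\theta,T}$ with the superscript $\pm$ is the paper's shorthand for \emph{each of} $\freach{\theta,T}$ and $\breach{\theta,T}$ rather than their union; since you prove both are open this does not affect anything, but it is the stronger statement (openness of each) that later lemmas actually invoke.
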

\begin{proof}
  Let us prove the statement for $\freach{\theta}$; the proof for $\breach{\theta}$ is
  similar.  Assume that $\theta'\in\freach{\theta}$; then there exists a
  \tpath\theta{\theta'} $\fpath$ which is not $\aeps$-forbidden.  Recall that the graph
  $\{s,\dfpath(s)\}_{s\in[0,T]}$ is compact and that by definition of a $\aeps$-forbidden
  path, for any $s$ $\dfpath(s)\subset\Omp(s)$, which is an open set.  We
  conclude that if $|\varrho|$ is sufficiently small, the path
  $\fpath_\varrho(s) = \fpath(s)+\varrho\aeps s$ is also not $\aeps$-forbidden.  Then
  our statement holds since $\fpath_{\varrho}(T) = \theta'+\varrho\aeps T$.
\end{proof}
Notice that if $\aeps' < \aeps$ we have $\freachp\theta\subset\freach\theta$ and
$\breachp\theta\supset\breach\theta$; in particular we can define
$\freacha{\theta} = \bigcap_{\aeps > 0}\freach{\theta}$ and
$\breacha{\theta} = \bigcup_{\aeps > 0}\breach{\theta}$.

Further, by~\ref{a_noCobo}, we know that for any $\theta\in\bT$ we have
$\bar\omega(\theta)\in\intr\Omega(\theta)$ (see e.g.~\cite[Lemma~6.3]{DeL1}).  In
particular, there exists $\varrho > 0$ so that
$\Omega(\theta_{i,\pm})\supset(-\varrho,\varrho)$ for each $i = 1,\cdots,\nz$.  Since
$f_\theta$ is a smooth family of expanding maps, we conclude that, possibly by choosing a
smaller $\varrho$, we can find open neighborhoods $\Theta_{i,\pm}\ni\theta_{i,\pm}$ so
that for each $i = 1,\cdots,\nz$ And $\theta\in\Theta_{i,\pm}$, we have
$\Omega(\theta)\supset(-\varrho,\varrho)$.  We conclude (unsurprisingly) that
$\fbas{i}\subset\breach{\theta_{i,-}}$, provided that $\aeps$ is small enough.
Moreover, observe that by definition $\fa\theta,\theta'\in\Theta_{i,\pm}$ we have
$\fbreach{\theta} = \fbreach{\theta'}$.  Finally, observe that by possibly decreasing
$\varrho$ we can assume that if $\theta$ does not belong to any of the $\Theta_{i,\pm}$'s,
$|\bar\omega(\theta)| > \varrho$.  We conclude that for sufficiently small $\aeps$,
$\fbas{i}\subset\breach{\theta_{i,-},\TBas}$ where we define $\TBas := 2\varrho\invr$.

We are now in the position to prove genericity of Condition~\ref{a_fluctuationGap}:
\begin{lem}\label{l_genericFluctuationGap}%
  Condition~\ref{a_fluctuationGap} holds for a set that is $\cC^4$-open and dense in the set of
  $\omega$ which satisfy~\ref{a_noCobo},~\ref{a_discreteZeros} and~\ref{a_almostTrivial}.
\end{lem}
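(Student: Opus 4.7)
The plan has two parts: showing that condition~\ref{a_fluctuationGap} is $\cC^4$-open, and that it is $\cC^4$-dense, within the set of $\omega$ satisfying~\ref{a_noCobo},~\ref{a_discreteZeros} and~\ref{a_almostTrivial}.

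For openness, I would first invoke~\ref{a_discreteZeros} and the implicit function theorem: since $\bar\omega$ depends linearly (hence smoothly) on $\omega$ via $\bar\omega(\theta)=\int\omega(x,\theta)\rho_\theta(x)\deh x$, a small $\cC^4$ perturbation $\tilde\omega$ of $\omega$ produces $\bar{\tilde\omega}$ with the same number of non-degenerate zeros, located close to the originals, so the resulting intervals $\tilde J$ are close to $J$. The central ingredient is that $\bar\omega^{\pm}$ depends $\cC^0$-Lipschitz-continuously on $\omega$: since for any $f_\theta$-invariant probability $\mu$ we have $|\mu(\tilde\omega(\cdot,\theta))-\mu(\omega(\cdot,\theta))|\le\|\tilde\omega-\omega\|\nc0$, taking sup and inf over $\mu$ gives $\|\bar{\tilde\omega}^\pm-\bar\omega^\pm\|\nc0\le\|\tilde\omega-\omega\|\nc0$. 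Preservation of alternative~\ref{p_oneway} is then immediate from its strict inequality. For alternative~\ref{p_twoway}, I would combine it with~\ref{a_noCobo}, which forces $0\in\intr\Omega(\theta_{k,\pm})$ at the endpoints of $J$ as well, so that $0\in\intr\Omega(\theta)$ holds on the compact set $\bar J$; a compactness argument then yields a uniform gap $\delta>0$ with $\bar\omega^+\ge\delta$ and $\bar\omega^-\le-\delta$ on $\bar J$, which persists under small perturbation.

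For density, suppose $\omega$ satisfies the first three assumptions but fails~\ref{a_fluctuationGap}. On each \emph{bad} interval $J$ (where neither~\ref{p_twoway} nor~\ref{p_oneway} holds) we must have $\bar\omega^-\le 0\le\bar\omega^+$ everywhere on $J$, with at least one of these inequalities being an equality at some interior point $\theta^*\in J$. I would then construct a perturbation $\tilde\omega(x,\theta)=\omega(x,\theta)+\tau\phi(\theta)$, where $\phi\in\cC^4(\bT,\bR_+)$ vanishes at every zero of $\bar\omega$ and is strictly positive on each bad $J$. Since $\phi$ depends only on $\theta$, every $f_\theta$-invariant measure pairs with $\tau\phi(\theta)$ to give the constant $\tau\phi(\theta)$, so $\tilde\Omega(\theta)=\Omega(\theta)+\tau\phi(\theta)$ is a uniform translation; consequently $\bar{\tilde\omega}^{\pm}=\bar\omega^{\pm}+\tau\phi$, while $\bar{\tilde\omega}=\bar\omega+\tau\phi$ retains exactly the same zeros, non-degenerate for small $\tau>0$. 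Assumption~\ref{a_noCobo} is trivially preserved because $\tilde\omega-\omega$ is constant in $x$, and~\ref{a_almostTrivial} can be restored by the rescaling described in the remark following~\ref{a_almostTrivial} via a factor $1+\cO(\tau)$, staying arbitrarily close to the original $\omega$.

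The heart of the density step is then checking the alternatives on each bad $J$. If $\bar\omega^-(\theta^*)=0$ for some $\theta^*\in J$, then $\bar{\tilde\omega}^-(\theta^*)=\tau\phi(\theta^*)>0$, so $0\notin\tilde\Omega(\theta^*)$ and~\ref{p_oneway} holds on $\tilde J=J$. Otherwise $\bar\omega^-<0$ strictly on $J$ and, by~\ref{a_noCobo}, also at the endpoints of $J$; by compactness $\sup_{\bar J}\bar\omega^-<0$, whence $\bar{\tilde\omega}^-<0$ on $\bar J$ for small enough $\tau$. The failure of~\ref{p_twoway} for $\omega$ then forces $\bar\omega^+(\theta^*)=0$ at some $\theta^*\in J$, and $\bar{\tilde\omega}^+=\bar\omega^++\tau\phi>0$ strictly on $J$, so~\ref{p_twoway} holds. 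The intervals which were already good for $\omega$ remain good by the openness argument applied to the perturbation $\tilde\omega$. I expect the main subtlety to be obtaining the \emph{uniform} bound on $\bar J$ (rather than just on $J$) needed for openness of~\ref{p_twoway}; this is precisely where~\ref{a_noCobo} (which upgrades $0\in\Omega$ to $0\in\intr\Omega$ at the endpoints) and compactness of $\bar J$ play an essential role.
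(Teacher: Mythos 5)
Your proposal is correct, and it covers more ground than the paper's own proof, which argues only density (openness is apparently taken as implicit). Your openness argument is the right one: the uniform translation identity $\tilde\Omega(\theta)=\Omega(\theta)+\tau\phi(\theta)$ for a $\theta$-dependent perturbation, the $\cC^0$-Lipschitz dependence of $\bar\omega^\pm$ on $\omega$, persistence of the non-degenerate zeros via the implicit function theorem, and the compactness argument on $\bar J$ (upgraded by~\ref{a_noCobo} at the endpoints) to get a uniform gap for alternative~\ref{p_twoway}.

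For density, your route differs mildly from the paper's. The paper first fixes the sign of $\bar\omega$ on the bad interval $J$, then adds $\pm\eps\bar\omega_J$ with the bump $\bar\omega_J$ supported in a compact subset of $J$ and $\bar\omega_J(\theta_*)=1$; by this choice of sign, the degenerate endpoint of $\Omega(\theta_*)$ is shifted off $0$ so that~\ref{p_oneway} is always realized. You instead take a single non-negative $\phi$ vanishing (necessarily to second order, since $\phi\geq 0$) at all zeros of $\bar\omega$, and split into cases: when $\bar\omega^-(\theta^*)=0$ the shift realizes~\ref{p_oneway}, while when $\bar\omega^+(\theta^*)=0$ the shift realizes~\ref{p_twoway} using the uniform gap for $\bar\omega^-$ and~\ref{a_noCobo} at the endpoints. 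Both arguments are valid; the paper's is a touch shorter because it never needs the gap estimate. One further point where you are actually more careful: the paper dismisses the effect on~\ref{a_almostTrivial} by saying the bump is supported away from $\{\theta_{i,\pm}\}$, but $\bclyap(\theta_{k,-})$ depends on $\omega$ globally through the slope field $\slim$, so the perturbation does move it (by an amount vanishing with $\eps$ and $\ve$). Your rescaling via the remark after~\ref{a_almostTrivial} ($\omega\mapsto\varrho_r\omega$, $\varrho_r=1+\cO(\tau)$) cleanly restores the normalization and is the more robust way to close that loose end; it also keeps the new $\omega$ arbitrarily $\cC^4$-close to the original and leaves $\bar\omega$'s zero set, $\Omega(\theta)$'s signs, and~\ref{a_fluctuationGap} unaffected.
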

\begin{proof}
  Let $F_\ve$ be so that~\ref{a_noCobo},~\ref{a_discreteZeros} and~\ref{a_almostTrivial}
  are satisfied.  Assume that there exists an interval $J$ so that neither
  property~\ref{p_twoway} nor~\ref{p_oneway} holds.  To fix ideas let us assume that
  $\bar\omega(\theta)\ge0$ for any $\theta\in J$ (the other case can be treated
  analogously) \ie  $J = [\theta_{k,+},\theta_{k,-}]$ for some $k\in\{1,\cdots,\nz\}$.
  By assumption there exists $\theta_*\in J$ so that $0\in\partial\Omega(\theta_*)$.  By
  our previous discussion we know that $\theta_*\nin\Theta_{k,+}\cup\Theta_{k,-}$; thus
  let $\bar\omega_J(\theta)$ be a $C^\infty$ bump function which is $0$ on
  $\bT\setminus J$ and $1$ on $J\setminus\Theta_{k,+}\cup\Theta_{k,-}$.  Then for any
  $\eps > 0$, if we let
  $\omega(x,\theta)\mapsto\omega(x,\theta)+\eps\bar\omega_J(\theta)$, we obtain a
  dynamical system which satisfies property~\ref{p_oneway} in $J$, since
  $0\nin\cl\Omega(\theta_*)$.  Since $\omega_J$ is supported away from $\bT\setminus J$,
  the same construction can be applied independently to all other $J$'s for
  which~\ref{a_fluctuationGap} is not satisfied, which concludes the proof.  Observe in
  fact that our construction does not interfere with assumptions~\ref{a_noCobo} (since our
  perturbation is a function that is constant in $x$),~\ref{a_discreteZeros}
  and~\ref{a_almostTrivial} (since our perturbation is supported away from the set
  $\{\theta_{i,\pm}\}_{i = 1,\cdots,\nz}$).
\end{proof}
For $i = 1,\cdots,\nz$ define the $\aeps$-\emph{trapping set} of $\sink i$
\begin{align*}
  \trap_i = \{\theta\in\bT\st \freach{\theta}\subset\breach{\theta_{i,-}}\}.
\end{align*}
Observe that if $\aeps' < \aeps$, we have $\trapp_i\supset\trap_i$.  We say that a sink
$\sink i$ is \emph{recurrent} if $\trap_i\ne\emptyset$ for sufficiently small $\aeps$ and
\emph{transient} otherwise.

\begin{lem}[Properties of trapping sets]\label{l_propertiesTrappingSet} If $\aeps$ is
  sufficiently small, the following properties hold:%
  \begin{enumerate}
  \item \label{pp_trivialInclusion} There exists $\TTrap > 0$ so that $\trap_i\subset\breach{\sink i,\TTrap}$;
  \item \label{pp_invariance} if $\theta\in\trap_i$, then $\freach{\theta}\subset\trap_i$;
  \item \label{pp_disjoint} either $\trap_i\cap\trap_j = \emptyset$ or
    $\trap_i = \trap_j$;
  \item \label{pp_trapSink} $\sink i$ is recurrent if and only if
    $\trap_i\supset\Theta_{i,-}$;
  \item \label{pp_emptyTrap} $\sink i$ is transient if and only if $\exists$
    $j\in\{1,\cdots,\nz\}$ s.t.
    $\theta_{j,-}\in\freach{\theta_{i,-}}\setminus\breach{\theta_{i,-}}$;
  \end{enumerate}
\end{lem}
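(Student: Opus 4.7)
The plan is to prove the five properties in the order~\ref{pp_invariance}, \ref{pp_trivialInclusion}, \ref{pp_disjoint}, \ref{pp_trapSink}, \ref{pp_emptyTrap}; the first two do the real work, and the last three reduce to algebraic manipulations exploiting Lemma~\ref{l_trivialAdmissibleProperties}, the averaged trajectory, and the fact (recorded just before Lemma~\ref{l_genericFluctuationGap}) that both $\freach{\theta}$ and $\breach{\theta}$ are constant as $\theta$ ranges over a single $\Theta_{i,\pm}$.

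Property~\ref{pp_invariance} follows immediately from Lemma~\ref{l_trivialAdmissibleProperties}(\ref{pp_simpleInclusion}): if $\theta\in\trap_i$ and $\theta'\in\freach{\theta}$, then $\freach{\theta'}\subset\freach{\theta}\subset\breach{\theta_{i,-}}$, so $\theta'\in\trap_i$. For~\ref{pp_trivialInclusion}, I would fix $\theta\in\trap_i$ and follow the averaged trajectory $\bar\theta(\cdot;\theta)$: this is an admissible path since $\bar\omega(\bar\theta(t))\in\intr\Omega(\bar\theta(t))$ by Assumption~\ref{a_noCobo}. Non-degeneracy of the zeros of $\bar\omega$ implies that, after some time $T_*(\theta)$ bounded by a uniform $T_{\text{avg}}$, the trajectory enters a neighborhood $\Theta_{j,-}$; the corner case $\theta=\theta_{k,+}$ is handled by replacing the averaged trajectory with an admissible path of small slope $\pm\aeps/2$ (valid for small $\aeps$ by~\ref{a_noCobo}). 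Set $\eta^*=\bar\theta(T_*(\theta);\theta)\in\Theta_{j,-}$; by~\ref{pp_invariance}, $\eta^*\in\trap_i$, and since $\freach{\eta^*}=\freach{\theta_{j,-}}$ we get $\theta_{j,-}\in\trap_i$ as well; then Lemma~\ref{l_trivialAdmissibleProperties}(4) yields $\theta_{j,-}\in\freach{\theta_{j,-}}$, and hence $\theta_{j,-}\in\breach{\theta_{i,-}}$. Since there are at most $\nz$ possible values of $j$, we fix once and for all an admissible \tpath{\theta_{j,-}}{\theta_{i,-}} of length $T_{ij}$ for each such $j$. Concatenating the averaged segment from $\theta$ to $\eta^*$, a short admissible path inside $\Theta_{j,-}$ connecting $\eta^*$ to $\theta_{j,-}$, and the chosen fixed path produces an admissible \tpath{\theta}{\theta_{i,-}} of length at most $\TTrap:=T_{\text{avg}}+T_{\text{short}}+\max_j T_{ij}$.

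With~\ref{pp_invariance} and~\ref{pp_trivialInclusion} in hand, the remaining items follow quickly. For~\ref{pp_disjoint}: if $\theta\in\trap_i\cap\trap_j$, then~\ref{pp_trivialInclusion} gives $\theta_{i,-},\theta_{j,-}\in\freach{\theta}$, so applying~\ref{pp_invariance} to each trap yields $\theta_{i,-}\in\trap_j$ and $\theta_{j,-}\in\trap_i$; Lemma~\ref{l_trivialAdmissibleProperties}(4) then produces admissible paths in both directions between $\theta_{i,-}$ and $\theta_{j,-}$, whose concatenation gives $\breach{\theta_{i,-}}=\breach{\theta_{j,-}}$, and therefore $\trap_i=\trap_j$. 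For~\ref{pp_trapSink}: if $\trap_i\ne\emptyset$, pick any $\theta\in\trap_i$; by~\ref{pp_trivialInclusion}, $\theta_{i,-}\in\freach{\theta}$, so~\ref{pp_invariance} gives $\theta_{i,-}\in\trap_i$, and then for any $\theta'\in\Theta_{i,-}$ we have $\freach{\theta'}=\freach{\theta_{i,-}}\subset\breach{\theta_{i,-}}$, whence $\theta'\in\trap_i$. For~\ref{pp_emptyTrap}: if $\sink i$ is transient then $\theta_{i,-}\notin\trap_i$, so some $\eta\in\freach{\theta_{i,-}}\setminus\breach{\theta_{i,-}}$ exists; following the averaged trajectory from $\eta$ into some $\Theta_{j,-}$ and applying Lemma~\ref{l_trivialAdmissibleProperties}(\ref{pp_simpleInclusion}) shows $\theta_{j,-}\in\freach{\theta_{i,-}}$, while $\theta_{j,-}\in\breach{\theta_{i,-}}$ would, via $\breach{\eta^*}=\breach{\theta_{j,-}}$ (for the corresponding entry point $\eta^*\in\Theta_{j,-}$) and admissible concatenation, force $\eta\in\breach{\theta_{i,-}}$, contradicting the choice of $\eta$. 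The converse direction is immediate from~\ref{pp_trapSink}: existence of such an $\eta$ is incompatible with $\theta_{i,-}\in\trap_i$ and thus with $\trap_i\ne\emptyset$.

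The main obstacle I anticipate is securing the uniform bound $\TTrap$ in~\ref{pp_trivialInclusion}. Two independent uniformity claims need verification: the entry time of the averaged trajectory into $\bigcup_j\Theta_{j,-}$ must be uniformly bounded in the initial condition (requiring care at the source points $\theta_{k,+}$, which are equilibria of the averaging ODE), and the admissible paths from each reachable $\theta_{j,-}$ to $\theta_{i,-}$ must be chosen with uniformly bounded length (which is automatic since $\nz$ is finite). Everything else is bookkeeping based on Lemma~\ref{l_trivialAdmissibleProperties}, repeated concatenation of admissible paths, and the identification of $\freach{\cdot}$ and $\breach{\cdot}$ across each $\Theta_{i,\pm}$.
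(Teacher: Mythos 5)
Your overall strategy (prove~\ref{pp_invariance} and~\ref{pp_trivialInclusion} first, then derive the others by concatenation) is the same as the paper's, and your arguments for~\ref{pp_invariance}, \ref{pp_disjoint}, \ref{pp_trapSink}, \ref{pp_emptyTrap} are substantively correct, modulo a minor miscitation in~\ref{pp_disjoint} (the concatenation step uses Lemma~\ref{l_trivialAdmissibleProperties}\ref{pp_simpleInclusion}, not item (4)).

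The one genuine gap is the one you flag yourself at the end, in~\ref{pp_trivialInclusion}: the entry time of the averaged trajectory into $\bigcup_j\Theta_{j,-}$ is \emph{not} uniformly bounded in $\theta$, because near a source $\theta_{k,+}$ the averaged flow slows to a halt. Your fix of replacing the trajectory by a path of slope $\pm\aeps/2$ handles only $\theta=\theta_{k,+}$ exactly, not the whole neighborhood where the averaged flow is slow. The correct resolution is the one the paper already has in hand and simply cites: in the paragraph just before Lemma~\ref{l_genericFluctuationGap}, the neighborhoods $\Theta_{i,\pm}$ are constructed so that $\Omega(\theta)\supset(-\varrho,\varrho)$ whenever $\theta$ lies in any $\Theta_{i,\pm}$, while $|\bar\omega(\theta)|>\varrho$ outside them. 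One should therefore travel at speed $\pm\varrho$ (admissible by the inclusion $\Omega\supset(-\varrho,\varrho)$) whenever inside some $\Theta_{i,\pm}$, and follow the averaged trajectory (speed $>\varrho$) otherwise. This yields $\fbas{j}\subset\breach{\theta_{j,-},\TBas}$ with the explicit $\TBas=2\varrho^{-1}$. The paper's proof of~\ref{pp_trivialInclusion} then just combines this with the dichotomy (there are finitely many pairs of sinks, so there is a $T'$ so that either $\theta_{j,-}\in\breach{\theta_{i,-},T'}$ or $\theta_{j,-}\nin\breach{\theta_{i,-}}$) and concatenates via Lemma~\ref{l_trivialAdmissibleProperties}\ref{pp_simpleInclusion} to get $\TTrap=\TBas+T'$. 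Once you swap your averaged-trajectory step for the already-established $\fbas{j}\subset\breach{\theta_{j,-},\TBas}$, your proof coincides with the paper's.
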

\begin{proof}
  Choose an arbitrary $\theta\in\trap_i$ and let $j\in\{1,\cdots,\nz\}$ so that
  $\fbas{j}\ni\theta$; if $\aeps$ is sufficiently small,
  $\theta\in\breach{\sink j,\TBas}$; then, by definition of $\trap_i$, we have
  $\sink j\in\breach{\sink i}$.  Observe that since there are only finitely many pairs of
  sinks, there exists $T' > 0$ so that for any $i,j\in\{1,\cdots,\nz\}$, either
  $\theta_{i,-}\in\breach{\theta_{j,-},T'}$ or $\theta_{i,-}\nin\breach{\theta_{j,-}}$.
  Hence, by Lemma~\ref{l_trivialAdmissibleProperties}\ref{pp_simpleInclusion} we have
  $\theta\in\breach{\sink i,\TTrap}$, where we set $\TTrap = \TBas+T'$; this
  proves~\ref{pp_trivialInclusion}.  On the other hand,~\ref{pp_invariance} follows from
  the fact that if $\theta'\in\freach{\theta}$, we have
  $\freach{\theta'}\subset\freach{\theta}\subset\breach{\theta_{i,-}}$.  Assume now
  $\theta\in\trap_i\cap\trap_j\ne\emptyset$: then by~\ref{pp_invariance} we have
  $\sink i\in\trap_j$ and $\sink j\in\trap_i$; by~\ref{pp_trivialInclusion}, we conclude
  that $\sink i\in\breach{\sink j}$ and $\sink j\in\breach{\sink i}$, which by definition
  imply respectively that $\trap_i\subset\trap_j$ and $\trap_j\subset\trap_i$,
  proving~\ref{pp_disjoint}.

  Now assume $\trap_i\ne\emptyset$ and let $\theta\in\trap_i$:
  by~\ref{pp_trivialInclusion} $\theta_{i,-}\in\freach{\theta}$ and thus,
  by~\ref{pp_invariance}, $\theta_{i,-}\in\trap_i$.  Then, by construction,
  $\fa\theta'\in\Theta_{i,-}$, $\freach{\theta'} = \freach{\theta_{i,-}}$, which in
  particular proves~\ref{pp_trapSink}.  In turn~\ref{pp_trapSink} implies that
  $\trap_i = \emptyset$ if and only if
  $\freach{\theta_{i,-}}\setminus\breach{\theta_{i,-}}\ne\emptyset$; let
  $\theta\in\freach{\theta_{i,-}}\setminus\breach{\theta_{i,-}}$.  Then
  $\theta\in\fbas{j}\subset\breach{\theta_{j,-}}$ for some $j\ne i$, which in turn
  implies~\ref{pp_emptyTrap}.
\end{proof}
In general it is possible for a system to have no recurrent sinks.  However, as the
following lemma shows, Condition~\ref{a_fluctuationGap} guarantees that this cannot
happen.
\begin{lem}\label{l_fluctuationGap}
  Assume that Condition~\ref{a_fluctuationGap} holds and $\aeps$ is sufficiently small: then
  \begin{enumerate}
  \item \label{pp_backAndForth} for any $i,j\in\{1,\cdots,\nz\}$ we have $\sink j\in\freach{\sink i}$ if and only if
    $\sink i\in\breach{\sink j}$.
  \item \label{pp_recurrentSink} for any $\theta\in\bT$ there exists a recurrent sink $\sink i$ so that
    $\theta\in\breach{\theta_{i,-},\TTrap}$.
  \end{enumerate}
\end{lem}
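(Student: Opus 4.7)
The plan is to deduce Part~(1) from the sharp characterizations of \tpath{\sink{i}}{\sink{j}} existence given by Lemma~\ref{l_forbiddenEquivalence} and the remark that follows it: a non-$\aeps$-forbidden \tpath{\sink{i}}{\sink{j}} exists if and only if $\min_{[\sink{i},\sink{j}]}\bar\omega^+>-\aeps$ or $\max_{[\sink{j},\sink{i}]}\bar\omega^-<\aeps$, while an $\aeps$-admissible one exists if and only if $\min_{[\sink{i},\sink{j}]}\bar\omega^+>\aeps$ or $\max_{[\sink{j},\sink{i}]}\bar\omega^-<-\aeps$. The implication $\sink{i}\in\breach{\sink{j}}\Rightarrow\sink{j}\in\freach{\sink{i}}$ is immediate, since $\aeps$-admissible paths are in particular non-$\aeps$-forbidden. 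For the converse I will show that under~\ref{a_fluctuationGap} each quantity $\min_A\bar\omega^+$ and $\max_A\bar\omega^-$ --- with $A$ ranging over the two arcs joining any ordered pair of sinks --- is bounded away from $0$ by a uniform constant $c^{*}>0$, so that choosing $\aeps<c^{*}$ collapses the two characterizations and yields Part~(1).

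The main (and essentially only) step is an interval-by-interval analysis on each interval $J$ between consecutive zeros of $\bar\omega$ inside the arc $A$: either (i) $\bar\omega>0$ on $J$, whence $\bar\omega^+\ge\bar\omega>0$ on $\intr J$, while $\bar\omega^+>0$ at each zero endpoint by~\ref{a_noCobo} (through $0\in\intr\Omega(\cdot)$, see~\cite[Lemma~6.3]{DeL1}); or (ii) $\bar\omega<0$ on $J$ and~\ref{p_twoway} holds, giving $\bar\omega^+>0$ on $J$ by definition; or (iii) $\bar\omega<0$ and~\ref{p_oneway} holds, in which case the witness $\theta_{*}\in J$ with $0\nin\Omega(\theta_{*})$ has $\bar\omega^+(\theta_{*})<0$. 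Continuity of $\bar\omega^{\pm}$ (Lemma~\ref{l_continuityOmegapm}) and compactness then give a strictly positive lower bound on $\min_{A}\bar\omega^+$ when every $J\subset A$ with $\bar\omega<0$ is of type~\ref{p_twoway}, and a strictly negative upper bound otherwise; the symmetric argument handles $\max_{A}\bar\omega^-$. Taking a minimum over the finitely many arc--sink combinations produces $c^{*}$. This is the only place where~\ref{a_fluctuationGap} is used and, while routine, it is the main conceptual obstacle: ensuring the gap is genuinely uniform and that the endpoint (zero) contribution is handled via~\ref{a_noCobo}.

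Part~(2) then follows from Part~(1) by a short finite preorder argument. Given $\theta\in\bT$, pick $i_0$ with $\theta\in\fbas{i_0}$; by the discussion preceding Lemma~\ref{l_genericFluctuationGap} one has $\theta\in\breach{\sink{i_0},\TBas}$ via the admissible averaged trajectory. Introduce the preorder $\sink{i}\preceq\sink{j}\Leftrightarrow\sink{j}\in\freach{\sink{i}}$ on the finite set $\{\sink{1},\dots,\sink{\nz}\}$ and pick any maximal element $\sink{i^{*}}\succeq\sink{i_0}$. To check that $\sink{i^{*}}$ is recurrent, take $\theta'\in\freach{\sink{i^{*}}}$ and let $j$ be such that $\theta'\in\fbas{j}$: concatenating the non-forbidden piece $\sink{i^{*}}\to\theta'$ with the admissible averaged piece $\theta'\to\sink{j}$ gives $\sink{j}\in\freach{\sink{i^{*}}}$, so maximality of $\sink{i^{*}}$ forces $\sink{j}\preceq\sink{i^{*}}$; Part~(1) converts this to $\sink{j}\in\breach{\sink{i^{*}}}$, and concatenation of admissible paths yields $\theta'\in\breach{\sink{i^{*}}}$, proving $\freach{\sink{i^{*}}}\subset\breach{\sink{i^{*}}}$. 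Finally, $\sink{i^{*}}\in\freach{\sink{i_0}}$ and Part~(1) give $\sink{i_0}\in\breach{\sink{i^{*}},T'}$ (with $T'$ the uniform constant appearing in the proof of Lemma~\ref{l_propertiesTrappingSet}\ref{pp_trivialInclusion}), and concatenating with the admissible averaged path $\theta\to\sink{i_0}$ produces $\theta\in\breach{\sink{i^{*}},\TBas+T'}\subset\breach{\sink{i^{*}},\TTrap}$.
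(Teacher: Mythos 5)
Your proof is correct and follows essentially the same route as the paper's: both arguments rest on the characterizations from Lemma~\ref{l_forbiddenEquivalence} and the remark after it, together with~\ref{a_fluctuationGap} and~\ref{a_noCobo}, for Part~(1), and on the inclusion $\fbas{i}\subset\breach{\sink{i},\TBas}$ together with a finite reachability argument among the sinks for Part~(2). The differences are purely organizational: you make the uniform threshold $c^{*}$ explicit where the paper argues by contradiction along a vanishing sequence of $\aeps$'s, and you recast the paper's chain-of-transient-sinks contradiction (which invokes Lemma~\ref{l_propertiesTrappingSet}\ref{pp_emptyTrap}) as the existence of a maximal element in a finite preorder.
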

\begin{proof}
  By Lemma~\ref{l_trivialAdmissibleProperties}\ref{pp_relationFbreach} we have that if
  $\sink i\in\breach{\sink j}$ then $\sink j\in\freach{\sink i}$ thus we only need to
  prove the direct implication.  Assume by contradiction that one can find arbitrarily
  small $\aeps$ so that there exists a non $\aeps$-forbidden \tpath{\sink i}{\sink j} but
  yet all \tpath{\sink i}{\sink j}s are not $\aeps$-admissible.  By
  Lemma~\ref{l_forbiddenEquivalence} we gather that
  $\min_{\theta\in[\sink i,\sink j]}\bar\omega^+(\theta) > -\aeps$ or
  $\max_{\theta\in[\sink j,\sink i]}\bar\omega^-(\theta) < \aeps$.  Indeed by the same
  argument used in the proof of Lemma~\ref{l_forbiddenEquivalence} we can conclude that if
  every \tpath{\sink i}{\sink j} is not $\aeps$-admissible, then
  $\min_{\theta\in[\sink i,\sink j]}\bar\omega^+(\theta) \le \aeps$ and
  $\max_{\theta\in[\sink j,\sink i]}\bar\omega^-(\theta) \ge -\aeps$.  Since $\aeps$ is
  arbitrarily small, we conclude that
  $\min_{\theta\in[\sink i,\sink j]}\bar\omega^+(\theta) = 0$ or
  $\max_{\theta\in[\sink j,\sink i]}\bar\omega^-(\theta) = 0$.  In either case,
  assumption~\ref{a_fluctuationGap} is violated, which is a contradiction.  This
  proves~\ref{pp_backAndForth}.

  Let now $\theta\in\bT$ be arbitrary and assume by contradiction that every sink
  $\sink i$ so that $\breach{\sink i}\ni\theta$ is transient.  Let $i_0$ so that
  $\theta\in\fbas{i_0}$: in particular $\theta\in\breach{\sink {i_0},\TBas}$.  Since
  $\sink{i_0}$ is transient, by~\ref{l_propertiesTrappingSet}\ref{pp_emptyTrap} there
  exists another sink $\sink{i_1}\in\freach{\sink{i_0}}\setminus\breach{\sink{i_0}}$; by
  part~\ref{pp_backAndForth} and
  Lemma~\ref{l_trivialAdmissibleProperties}\ref{pp_simpleInclusion} we conclude that
  $\sink{i_0}\in\breach{\sink{i_1}}$, and therefore $\theta\in\breach{\sink{i_1}}$ by
  Lemma~\ref{l_trivialAdmissibleProperties}\ref{pp_simpleInclusion}.  Hence $\sink{i_1}$
  is also transient and we can again
  apply~\ref{l_propertiesTrappingSet}\ref{pp_emptyTrap}.  By repeating this construction,
  we obtain a sequence of sinks $\{\sink{i_k}\}$; since there are only finitely many
  sinks, eventually we have $\sink{i_k} = \sink{i_{l}}$ for some $l > k$, which in
  particular implies $\sink{i_{k+1}}\in\breach{\sink {i_k}}$, which
  contradicts~\ref{l_propertiesTrappingSet}\ref{pp_emptyTrap}.

  Hence, we conclude that $\sink{i_k}$ is recurrent; by definition we have
  $\sink{i_0}\in\breach{\sink{i_k},T'}$ (where $T'$ was defined in the proof of
  Lemma~\ref{l_propertiesTrappingSet}), which gives $\theta\in\breach{\sink{i_k,\TTrap}}$,
  as we needed to show.
\end{proof}%

\begin{rem}\label{r_fluctuationAssumption}%
  In this language,~\ref{a_fluctuation} states that $\breacha{\theta_{1,-}} = \bT$; by
  Lemma~\ref{l_fbreachOpen} and compactness of $\bT$ we conclude that if $\aeps$ is
  sufficiently small, $\breach{\theta_{1,-}} = \bT$, which gives $\trap_1 = \bT$.  In
  particular, by Lemma~\ref{l_propertiesTrappingSet}\ref{pp_disjoint}, there can be only
  one trapping set: for any $i = 1,\cdots,\nz$, either $\trap_i = \emptyset$ or
  $\trap_i = \trap_1$.

  On the other hand,~\ref{a_completeness} implies that $\fbreach{\theta} = \bT$ for any
  $\theta\in\bT$.
\end{rem}%
We will henceforth fix $\aeps > 0$ so small that all above results hold true.  Observe
that Lemma~\ref{l_fluctuationGap}\ref{pp_recurrentSink}, together with
Theorem~\ref{l_largeDevzLowerBound} immediately implies that for any standard pair $\ell$:
\begin{align}\label{e_eventuallyTrapped}
  \mu_\ell\bigg(\theta_\pint{T\vei }\nin\bigcup_{i = 1,\cdots,\nz}\trap_i\bigg) \le (1-\expo{-\const\vei})^\pint{T/\TTrap};
\end{align}
in other words: any point on a standard pair will eventually be trapped by some $\trap_i$.
Observe moreover that Theorem~\ref{t_forbiddenForRealDyn} implies that if $\sink i$ is
recurrent:
\begin{align}\label{e_protoInv}
  F_\ve^n (\bT\times \trap_i^+)\subset\bT\times\trap_i^- \text{ for any }n > \pint{\TForb\vei}.
\end{align}
where $\trap_i^+ = B(\trap_i,\varrho)$,
$\trap_i^- = \{\theta\st B(\theta,\varrho)\subset\trap_i\}$ and $\varrho$ and $\TForb$ are
the constants appearing in the statement of Theorem~\ref{t_forbiddenForRealDyn}.
\begin{cor}
  If $\sink i$ is recurrent, there exists an $F_\ve$-invariant
  $\invariant_i\subset\bT\times\trap_i$ which attracts every point in $\bT\times\trap_i$.
\end{cor}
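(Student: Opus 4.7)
The plan is to exploit~\eqref{e_protoInv} with $N:=\pint{\TForb\vei}+1$, which gives $F_\ve^N(\bT\times\trap_i^+)\subset\bT\times\trap_i^-\subset\bT\times\trap_i^+$, so that the compact set $K:=\overline{\bT\times\trap_i^+}$ is $F_\ve^N$-forward-invariant. I would then consider the nested decreasing sequence $K_k:=F_\ve^{kN}(K)$ of non-empty compact sets and define $\tilde\invariant_i:=\bigcap_{k\geq 0}K_k$, a non-empty compact set satisfying $F_\ve^N(\tilde\invariant_i)=\tilde\invariant_i$ by continuity of $F_\ve$ together with compactness. From this I would build
\[
\invariant_i:=\bigcup_{j=0}^{N-1}F_\ve^j(\tilde\invariant_i),
\]
which is $F_\ve$-invariant in view of $F_\ve^N(\tilde\invariant_i)=\tilde\invariant_i$.

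Next I would verify $\invariant_i\subset\bT\times\trap_i$. Since $\tilde\invariant_i\subset K_1\subset\overline{\bT\times\trap_i^-}$, the first ingredient is closedness of $\trap_i$ in $\bT$, which I would establish via a $\cC^0$-perturbation argument on non-$\aeps$-forbidden paths, very much as in Lemma~\ref{l_fbreachOpen} and using openness of $\Omp(\theta)$ together with continuity of $\bar\omega^\pm$ from Lemma~\ref{l_continuityOmegapm}: if $\theta_n\in\trap_i$, $\theta_n\to\theta$ and $\theta'\in\freach\theta$, one perturbs a non-$\aeps$-forbidden \tpath\theta{\theta'} into a non-$\aeps$-forbidden \tpath{\theta_n}{\theta'+(\theta_n-\theta)}, and then uses openness of $\breach{\theta_{i,-}}$ to conclude $\theta'\in\breach{\theta_{i,-}}$. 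Closedness gives $\overline{\trap_i^-}\subset\trap_i$, hence $\tilde\invariant_i\subset\bT\times\trap_i$. For the intermediate iterates $F_\ve^j(\tilde\invariant_i)$ with $1\leq j<N$, the $\theta$-drift per step being at most $\ve\|\omega\|$ yields $F_\ve^j(\bT\times\trap_i^-)\subset \bT\times B(\trap_i^-,j\ve\|\omega\|)$, which sits inside $\bT\times\trap_i$ provided $N\ve\|\omega\|<\varrho$; this can be arranged by a suitable (slightly smaller) choice of $\varrho$ in Theorem~\ref{t_forbiddenForRealDyn}.

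The attraction statement then follows by a standard Hausdorff-convergence argument: for any $p\in\bT\times\trap_i\subset K$, the sequence $F_\ve^{kN}(p)\in K_k$ satisfies $\dist(F_\ve^{kN}(p),\tilde\invariant_i)\to 0$; continuity of $F_\ve^r$ for $0\leq r<N$ propagates this to $\dist(F_\ve^{kN+r}(p),F_\ve^r(\tilde\invariant_i))\to 0$, whence $\dist(F_\ve^n(p),\invariant_i)\to 0$ as $n\to\infty$. I expect the main technical obstacle to be the bookkeeping of constants in ensuring that the intermediate iterates remain in $\bT\times\trap_i$ rather than in a slight enlargement thereof; this reduces to a tuning of the constants $\varrho$ and $\TForb$ in Theorem~\ref{t_forbiddenForRealDyn} and does not involve any substantive new difficulty.
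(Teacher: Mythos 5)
Your construction of $\tilde\invariant_i$ as the nested intersection $\bigcap_{k}F_\ve^{kN}(K)$ of iterates of a forward-invariant compact set, together with the Hausdorff-convergence argument for attraction, is essentially the paper's proof (which in fact leaves the attraction part implicit, so you are being more careful there). However, the step where you upgrade from $F_\ve^N$-invariance to $F_\ve$-invariance via the union $\invariant_i=\bigcup_{j=0}^{N-1}F_\ve^j(\tilde\invariant_i)$ introduces a genuine gap. You need the intermediate iterates $F_\ve^j(\tilde\invariant_i)$, $1\le j<N$, to remain in $\bT\times\trap_i$, and your drift bound asks for $N\ve\|\omega\|<\varrho$. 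But with $N=\pint{\TForb\vei}+1$ and the constants produced in the proof of Theorem~\ref{t_forbiddenForRealDyn}, namely $\TForb=\unifsmall/\|\omega\|$ and $\varrho=\unifsmall\min\{1,\aeps/(8\|\omega\|)\}$, one has $N\ve\|\omega\|\approx\TForb\|\omega\|=\unifsmall$ while $\varrho\le\unifsmall$; the inequality can only fail (unless $\aeps\ge 8\|\omega\|$, which is not the regime of interest). Your proposed fix — shrinking $\varrho$ in Theorem~\ref{t_forbiddenForRealDyn} — pushes in the wrong direction, since it tightens the right-hand side of $N\ve\|\omega\|<\varrho$ while leaving the left-hand side essentially unchanged.

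The repair, and what the paper's proof actually exploits, is that~\eqref{e_protoInv} holds for \emph{all} $n>\pint{\TForb\vei}$, not merely for $n=N$. With your notation $K_k=F_\ve^{kN}(K)$, one gets for $k\ge1$
\begin{equation*}
 F_\ve(K_k)=F_\ve^{(k-1)N}\bigl(F_\ve^{N+1}(K)\bigr)\subset F_\ve^{(k-1)N}(K)=K_{k-1},
\end{equation*}
since $N+1>\pint{\TForb\vei}$, whence $F_\ve(\tilde\invariant_i)\subset\bigcap_{k\ge1}K_{k-1}=\tilde\invariant_i$; combined with $F_\ve^N(\tilde\invariant_i)=\tilde\invariant_i$ this forces $F_\ve(\tilde\invariant_i)=\tilde\invariant_i$. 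The union $\bigcup_{j<N}F_\ve^j(\tilde\invariant_i)$ then collapses to $\tilde\invariant_i$ and the intermediate-iterate containment problem disappears. Incidentally, the excursion through closedness of $\trap_i$ is unnecessary for the purpose you put it to: $\cl\trap_i^-\subset\trap_i$ follows directly from the definition, since any limit $\theta$ of a sequence $\theta_n\in\trap_i^-$ satisfies $B(\theta,\varrho/2)\subset B(\theta_n,\varrho)\subset\trap_i$ for $n$ large.
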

\begin{proof}
  Let us define $\invariant_i^{(0)} = \bT\times\cl\trap_i$; by~\eqref{e_protoInv} we have
  \begin{align*}
    F_\ve^n\invariant_i^{(0)}\subset\intr\invariant_i^{(0)}  \text{ for any }n > \pint{T\vei}.
  \end{align*}
  Let us define $\invariant_i^{(s)} = F_\ve^{s\pint{T\vei}}\invariant_i^{(0)}$; then
  $\invariant_i^{(s)}\supset \invariant_i^{(s+1)}$; define
  $\invariant_i = \bigcap_{s\ge0}\invariant_i^{(s)}\ne\emptyset$.  By definition
  $\invariant_i$ is invariant for $F_\ve^{\pint{T\vei}}$. In fact, it is invariant by $F_\ve$: let $p\in \invariant_i$, then in particular
  $F_\ve p\in F_\ve \invariant_i^{(s)}$ for any $s > 0$; then by~\eqref{e_protoInv} we
  conclude that $F_\ve p\in \invariant_i^{(s)}$ for any $s\ge0$, that is
  $F_\ve p\in \invariant_i$.
\end{proof}
We will from now on assume $r_-$ so small that $\happy_k\subset\Theta_{k,-}$ for all
$k = 1,\cdots,\nz$.
\section{From averaged to true dynamics}\label{s_averaged2real}
In this section we show that the true dynamics behaves similarly to the averaged one with
very high probability. To this end we will follow the dynamics in rather long time steps.
This strategy will be employed also in the following sections, using possibly even longer
time steps.  Unfortunately, this requires a somewhat cumbersome notation.  To guide the
reader through the various future constructions we establish the following conventions:
\begin{nrem}\label{rem:notation}
  In the following we will introduce constants $T_{\sharp}$, where $\sharp$ stands for
  some generic subscript, to designate a \emph{macroscopic} time step \ie, a time step of
  order $1$ for the averaged motion.  To such times we will associate the corresponding
  \emph{microscopic} time steps for the map $F_\ve$ which we will consistently denote with
  $N_{\sharp}=\pint{T_{\sharp}\vei}$.

 We will also need to consider $\cO(\log\vei)$
  multiples of such macroscopic times: to this end we will introduce various constants
  denoted with $\cR_\sharp$ and we will let $\Tgen_\sharp=\pint{\cR_\sharp\ln \vei}$.

  In this way the reader will be able to immediately distinguish shorter time steps
  (e.g. $N_\sharp$) from the (logarithmically) longer ones (e.g.
  $\Tgen_\sharp N_{\sharp'}$).
\end{nrem}

\subsection{Escape and contraction}\label{ss_contraction}
Lemma~\ref{l_goodSet} below essentially states that if $\ell$ is supported on some set
$\{\theta\in\happy_k\}$, the $O(\vei)$-image of $\ell$ will escape from
$\{\theta\in\happy_k\}$ with exponentially small probability.  Additionally, we have some
bounds on the random variable $\zeta_n$, which controls the contraction in the center
direction. Recall, from the previous section, that $(x_n(p),\theta_n(p))=F^n_\ve(p)$ and
$\zeta_n(p)$, defined in~\eqref{e_definitionZ}, are considered to be random variables when
$p\in\bT^2$ is distributed on a standard pair $\ell$.  Given a standard pair $\ell$,
define $\avgtheta\ell=\mu_\ell(\theta_0)$; given a set $P\subset\bT$, we say that $\ell$
\emph{\ispinnedto{}} $P$ if $\avgtheta\ell\in P$.

Let us fix at this point $\TCn>0$ sufficiently large\footnote{ The choice for $\TCn$
  depends on a number of assumptions in Lemmata~\ref{l_goodSet},~\ref{l_deepPurple}
  and~\ref{l_escapeFromAlcatraz}; it is however important to observe that such
  requirements depend on $f$ and $\omega$ only.}  and recall that, following the
convention introduced in the above Notational Remark~\ref{rem:notation}, we let
$\NCn=\pint{\TCn\vei}$.
\begin{lem}\label{l_goodSet}
  If $\TCn$ is sufficiently large and $\ve$ sufficiently small, for any standard pair
  $\ell$ \pinnedto{} $\happy_k$ for some $k$,\footnote{ Recall that $\happy_k$ and
    $\hhappy_k$ are defined in Subsection~\ref{subsec:averagedDynDescription}.} we have
  \begin{align*}
    \mu_{\ell}(\theta_{\NCn}\in\hhappy_k, \zeta_\NCn\le -9\TCn/16) \ge 1-\expo{-\const\vei}.
  \end{align*}
\end{lem}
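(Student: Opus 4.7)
The plan is to combine the Large Deviation bound of Theorem~\ref{t_largeDevz}, applied to both random variables $\Delta\theta$ and $\Delta\zeta$ over the macroscopic time window $[0,\TCn]$, with an elementary analysis of the averaged dynamics restricted to the sink neighborhood $\happy_k$.

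First I would observe that if $\ell$ is \pinnedto{} $\happy_k$, then since $\|G'\|\le\ve c_1$, all values of $\theta$ on $\supp\ell$ lie in a $\cO(\ve\delta)$-neighborhood of $\avgtheta\ell$; hence for $\ve$ small enough every initial condition $\theta_0$ on $\ell$ lies in $\VH{k}\supset\happy_k$. By~\eqref{e_sink}, $\bar\theta(t,\theta_0)$ stays in $\happy_k$ for all $t\ge 0$, and since $\bar\omega'(\theta)\le\bar\omega'(\theta_{k,-})/2<0$ throughout $\VH{k}$, the averaged trajectory contracts exponentially toward $\theta_{k,-}$: there exist $C,c>0$, uniform in $k$ and in $\theta_0\in\happy_k$, such that $|\bar\theta(t,\theta_0)-\theta_{k,-}|\le Ce^{-ct}r_-$. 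I can therefore fix $\TCn$ large enough (uniformly) so that $|\bar\theta(\TCn,\theta_0)-\theta_{k,-}|<r_-/2$.

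Next, since $\bclyapReg(\theta)<-3/4$ on $\VH{k}$ by the very definition of $\VH{k}$, integrating~\eqref{e_averagedSlo} yields
\begin{align*}
\bar\zeta(\TCn,\theta_0)=\int_0^{\TCn}\bclyapReg(\bar\theta(s,\theta_0))\,\deh s<-\tfrac{3\TCn}{4}.
\end{align*}
Now I apply Theorem~\ref{t_largeDevz} to $\Delta\theta$ with $R=r_-/4$ and to $\Delta\zeta$ with $R=\TCn/16$; for $\ve$ sufficiently small both radii exceed $\bar C\sqrt\ve$, so by a union bound
\begin{align*}
\mu_\ell\bigl(|\Delta\theta(\TCn;\cdot)|<r_-/4\text{ and }|\Delta\zeta(\TCn;\cdot)|<\TCn/16\bigr)\ge 1-2\expo{-\const\vei}.
\end{align*}
On this event, recalling $\Delta\slo=\slo_\ve-\bar\slo$ and $\slo_\ve(\TCn;\cdot)=\slo_\NCn+\cO(\ve)$, the triangle inequality gives $\theta_\NCn\in B(\theta_{k,-},3r_-/4)=\hhappy_k$ and $\zeta_\NCn\le-\tfrac{3\TCn}{4}+\tfrac{\TCn}{16}=-\tfrac{11\TCn}{16}\le-\tfrac{9\TCn}{16}$, which is the required conclusion.

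The only mild difficulty is ensuring uniformity of constants: $\TCn$ must be chosen large enough so that the averaged contraction brings $\bar\theta$ within $r_-/2$ of $\theta_{k,-}$ starting from anywhere in $\happy_k$, simultaneously for all $k\in\{1,\dots,\nz\}$, and small enough relative to $r_-$ so that the margins $r_-/4$ and $\TCn/16$ exceed $\bar C\sqrt\ve$. Both requirements depend only on $f$ and $\omega$, so a single choice of $\TCn$ works uniformly; this is precisely the content of ``$\TCn$ sufficiently large'' in the statement.
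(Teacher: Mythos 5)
Your proposal is correct and follows essentially the same approach as the paper's proof: apply Theorem~\ref{t_largeDevz} to control $\Delta\theta$ and $\Delta\zeta$, use the forward-invariance~\eqref{e_sink} and exponential contraction of the averaged flow so that $\bar\theta$ stays near $\theta_{k,-}$ and enters $\happy_{k,1/2}$ by time $\TCn$, and use the sign condition on $\bclyapReg$ in $\VH{k}$ to integrate~\eqref{e_averagedSlo} and obtain a linear decrease of $\bar\zeta$.

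Two small remarks on the write-up, neither of which affects the substance. First, after noting that $\theta_0$ need only lie in $\VH{k}\supset\happy_k$ (rather than in $\happy_k$ itself), you should not then invoke~\eqref{e_sink} to say $\bar\theta(t;\theta_0)$ ``stays in $\happy_k$''; what you actually get is that $\bar\theta$ stays in $B(\theta_{k,-},|\theta_0-\theta_{k,-}|)\subset\happy_{k,1+\cO(\ve)}\subset\VH{k}$ for $\ve$ small, which is all that is needed since $\bclyapReg<-3/4$ on all of $\VH{k}$. The paper handles this same $\cO(\ve)$ slack explicitly via the fuzzy sets $\happy_{k,\fuzz}$. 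Second, your argument for the $\zeta$ bound is actually a little cleaner than the paper's: you deduce $\bar\zeta(\TCn;\theta_0)<-3\TCn/4$ purely from the deterministic invariance of the averaged flow and the definition of $\VH{k}$, without needing to track where the \emph{actual} trajectory $\theta_n(p)$ sits (the paper routes this through the intermediate observation $\theta_n(\qh)\subset\happy_{k,5/4}$). As a consequence you get the slightly stronger bound $-11\TCn/16$ rather than $-9\TCn/16$, which of course still proves the claim.
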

\begin{proof}
  Fix $TCn > 0$ to be specified later and define the set
  \begin{align*}
    \qh&=\{ p\in\supp\ell \st
    \sup_{t\in[0,\TCn]}|\Delta\theta(t,p)|< r_-/8,\,
    \sup_{t\in[0,\TCn]}|\Delta\zeta(t,p)|< \TCn/16 \}.
  \end{align*}
  We claim that we can choose $\ve$ sufficiently small so that for any $p\in\qh$, we have
  $\theta_{\NCn}(p)\in\hhappy_k$ and $\zeta_\NCn(p)\le -9\TCn/16$.  This would then prove
  our lemma, since Theorem~\ref{t_largeDevz} implies that
  $\mu_\ell(\qh)\ge 1-\exp(-\const\vei)$.

  To prove our claim, it is convenient to make our set $\happy_k$ fuzzy; for
  $\fuzz\in\fuzziness$, define $\happy_{k,\fuzz}=B(\theta_{k,-},\fuzz r_{-})$.  First, we
  assume $\TCn$ to be so large that, for any $k$,
  $\bar\theta(\TCn;\happy_{k})\subset\happy_{k,1/2}$.  Then, since
  $\avgtheta\ell\in\happy_k$, we can assume $\ve$ to be small enough to ensure that
  $\theta_\NCn(\qh) \subset \happy_{k,3/4} = \hhappy_k$, which proves the first part of
  our claim.

  Additionally, observe that $\theta_n(\qh)\subset\happy_{k,5/4}$ for any $0\le n\le\NCn$;
  by choosing a smaller $r_-$ if necessary we can assume that $\bclyapReg(\theta)<-5/8$
  for any $\theta\in\happy_{k,5/4}$.  Thus, for any $p\in\qh$,
  $\bclyapReg(\theta_n(p))<-5/8$: hence~\eqref{e_averagedSlo} and the definition of
  $\qh$ then imply that:
  \begin{equation*}
    \zeta_n(p)\le -\frac58n\ve+ \frac\TCn{16},
  \end{equation*}
  which concludes the proof of our claim.
\end{proof}
Lemma~\ref{l_goodSet} implies that as long as a standard pair \ispinnedto{} $\uhappy$, it
will stay there with large probability for an exponentially long time.
\begin{cor}\label{c_iterateGoodSet}
  Let $\ell$ be a standard pair \pinnedto{} $\happy_k$ for some $k$.  For any $l>0$:
  \begin{align*}
    \mu_\ell(\theta_{l\NCn}\in\hhappy_k) \ge (1-\expo{-\const\vei})^l.
  \end{align*}
\end{cor}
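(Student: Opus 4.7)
The plan is to prove this by induction on $l$, using Lemma~\ref{l_goodSet} as the base case and the standard-pair formalism developed in Section~\ref{s_standardPairs} to iterate. The basic intuition is that although $\mu_\ell$ at time $\NCn$ is no longer a single standard pair, it can be disintegrated into a standard family by Proposition~\ref{p_invarianceStandardPairs}, and we can apply Lemma~\ref{l_goodSet} independently to each component that is still pinned to $\happy_k$.

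More precisely, assume the inductive bound holds for $l$. By (iterating) Proposition~\ref{p_invarianceStandardPairs}, write
\begin{align*}
  F_{\ve*}^{l\NCn}\mu_\ell = \sum_\alpha\kappa_\alpha\,\mu_{\ell_\alpha},
\end{align*}
so that
\begin{align*}
  \mu_\ell(\theta_{(l+1)\NCn}\in\hhappy_k) = \sum_\alpha \kappa_\alpha\,\mu_{\ell_\alpha}(\theta_{\NCn}\in\hhappy_k).
\end{align*}
Let $A=\{\alpha\st\avgtheta{\ell_\alpha}\in\happy_k\}$. Each standard pair has support of $\theta$-extent at most $\delta\ve c_1$ (so essentially horizontal), and $\hhappy_k=B(\theta_{k,-},3r_-/4)$ sits strictly inside $\happy_k=B(\theta_{k,-},r_-)$; therefore, choosing $\delta$ small enough (already done in Section~\ref{s_standardPairs}), any $\ell_\alpha$ whose image meets $\hhappy_k$ satisfies $\avgtheta{\ell_\alpha}\in\happy_k$, \ie $\alpha\in A$. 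Hence
\begin{align*}
  \sum_{\alpha\in A}\kappa_\alpha\;\ge\;\mu_\ell(\theta_{l\NCn}\in\hhappy_k) \ge (1-\expo{-\const\vei})^l,
\end{align*}
by the inductive hypothesis.

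For each $\alpha\in A$ the pair $\ell_\alpha$ is \pinnedto{} $\happy_k$, so Lemma~\ref{l_goodSet} applies and gives $\mu_{\ell_\alpha}(\theta_\NCn\in\hhappy_k)\ge 1-\expo{-\const\vei}$. Combining with the previous bound,
\begin{align*}
  \mu_\ell(\theta_{(l+1)\NCn}\in\hhappy_k) \ge \sum_{\alpha\in A}\kappa_\alpha(1-\expo{-\const\vei}) \ge (1-\expo{-\const\vei})^{l+1},
\end{align*}
closing the induction.

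The only genuinely delicate point is verifying that whenever a pushforward standard pair $\ell_\alpha$ is responsible for mass landing in $\hhappy_k$, it is itself \pinnedto{} $\happy_k$, so that Lemma~\ref{l_goodSet} may be reapplied. This is where the buffer between $\hhappy_k$ and $\happy_k$ (namely the gap $r_-/4$) matters: it absorbs the maximal $\theta$-oscillation $\delta\ve c_1$ of a standard pair, so no boundary-straddling issues arise. Once this geometric observation is in place, the rest is a textbook Markov-type chaining of estimates.
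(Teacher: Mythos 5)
Your proof is correct and follows essentially the same route as the paper: an induction on $l$ built from Lemma~\ref{l_goodSet} plus the geometric observation that a standard pair's $\theta$-oscillation is $O(\ve)$, so that any pair whose support meets $\hhappy_k$ is necessarily \pinnedto{} $\happy_k$. The only cosmetic difference is the direction in which the induction peels: the paper conditions on the event $\{\theta_\NCn\in\hhappy_k\}$ after the \emph{first} $\NCn$ steps and then applies the inductive bound to the conditioned subfamily over the remaining $(l-1)\NCn$ steps, whereas you apply the inductive bound for the first $l\NCn$ steps and then apply Lemma~\ref{l_goodSet} for one additional block at the end; the two chainings are equivalent.
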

\begin{proof}
  The proof follows by induction on $l$: Lemma~\ref{l_goodSet} proves the base step $l=1$.
  Assume now that the statement holds for $l-1$.  Let
  $\alphaset'_\NCn=\alphaMap{\NCn}(\theta_\NCn\in\hhappy_k)$, where $\alphaMap{\NCn}$ was
  defined in Remark~\ref{rem:lofp}; by definition of standard curve, for any $\alpha\in\alphaset'_\NCn$ and
  $p,q\in\gap_\NCn(\alpha)$ (recall that $\gap_\NCn(\alpha) = \alphaMap{\NCn}^{-1}(\alpha)$), we have $|\theta_\NCn(q)-\theta_\NCn(p)|<\Const\ve$; this in turn
  implies that $\avgtheta{\fellf{\alpha}}\in\happy_k$.  Then, by the inductive assumption
  and Lemma~\ref{l_goodSet},
\[
\begin{split}
 \mu_\ell(\theta_{l\NCn}\in\hhappy_k) &\ge
  \mu_{\stdf_{\NCn}}(\theta_{(l-1)\NCn}\in\hhappy_k\cond\alphaset'_\NCn)\mu_\ell(\alphaset'_\NCn)\\
  &\geq  (1-\expo{\const\vei})^{l-1} \mu_{\ell}(\theta_{\NCn}\in\hhappy_k)\geq  (1-\expo{\const\vei})^l.\qedhere
\end{split}
\]
\end{proof}
The above corollary allows to obtain sharper information on the $\theta$ variable by
means of the following lemma.
\begin{lem}\label{l_deepPurple}
  If $\TCn$ is sufficiently large, there exists $C,\Rdp>0$ so that if $\ve$ is
  sufficiently small, for any standard pair $\ell$ \pinnedto{} $\happy_k$ and for any
  $\Rgen\ge\Rdp$, letting  $\Tgen=\pint{\Rgen\log\vei}$:
  \begin{align}\label{e_deepPurple}
    \mu_{\ell}(\avgtheta{\fellf{\cdot}_{\Tgen\NCn}}\nin B(\theta_{k,-},C\sqrt\ve))<\frac13,
  \end{align}
  where, recall, we consider $\fellf{\cdot}_{\Tgen\NCn}$ to be a random standard pair according
  to Remark~\ref{rem:lofp}.
\end{lem}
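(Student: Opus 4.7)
The plan is to track the random variable $Y_j(p) := \theta_{j\NCn}(p) - \theta_{k,-}$ on $\ell$, establish the recursion $\expec{Y_{j+1}^2\cond\cF_{j\NCn}}\le 2\sigma^2 Y_j^2 + \Const\ve$ on the event that the orbit stays in $\hhappy_k$, iterate it to obtain $\expec{Y_\Tgen^2}\le\Const\ve$, and then conclude via Chebyshev. The driving intuition is that over each chunk of $\NCn$ iterations two effects compete: the averaged flow~\eqref{e_averagedSlo} contracts distances to $\theta_{k,-}$ inside $\VH k$ by a factor $\sigma<1$, while Theorem~\ref{thm:lclt} tells us that the true dynamics superimposes a centered fluctuation of variance $\cO(\ve)$.

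First, I would fix $\TCn$ large enough so that $|\bar\theta(\TCn;\theta_0)-\theta_{k,-}|\le\sigma|\theta_0-\theta_{k,-}|$ for $\theta_0\in\hhappy_k$, with $\sigma\le 1/4$; this is possible by~\eqref{e_averagedSlo} because $\bar\omega'<\bar\omega'(\theta_{k,-})/2<0$ on $\happy_k\supset\hhappy_k$. Corollary~\ref{c_iterateGoodSet} then supplies the good event $G:=\{\theta_{i\NCn}\in\hhappy_k\text{ for all }0\le i\le\Tgen\}$ of probability at least $1-\Tgen\expo{-\const\vei}\ge 5/6$ for $\ve$ small (recall $\Tgen=\pint{\Rgen\log\vei}$ grows only logarithmically). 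On this event, conditional on the $\sigma$-algebra $\cF_{j\NCn}$ generated by the pushforward labeling $\alphaMap{j\NCn}$ (see Remark~\ref{rem:lofp}), the standard pair $\fellf{\alpha}_{j\NCn}$ is determined, and applying Theorem~\ref{thm:lclt} to it with the fixed time $t=\TCn$, after truncating the Gaussian tail beyond a large multiple of $\sqrt\ve$ using Theorem~\ref{t_largeDevz}, would yield
\begin{align*}
  \expec{(\theta_{(j+1)\NCn}-\bar\theta(\TCn;\theta_{j\NCn}))^2\cond\cF_{j\NCn}}\le C_*\ve,
\end{align*}
where $C_*$ is a uniform bound on $\Var_\TCn^2$, finite since $\TCn$ is fixed and $\bVar$ is bounded on $\bT$. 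Combined with the contraction estimate and the inequality $(a+b)^2\le 2a^2+2b^2$, this gives the recursion $\expec{Y_{j+1}^2\mathbf{1}_G\cond\cF_{j\NCn}}\le 2\sigma^2 Y_j^2\mathbf{1}_G + 2C_*\ve\mathbf{1}_G$.

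Iterating via the tower property yields $\expec{Y_\Tgen^2\mathbf{1}_G}\le(2\sigma^2)^\Tgen r_-^2 + 2C_*\ve/(1-2\sigma^2)$. Since $\Tgen=\pint{\Rgen\log\vei}$, the first term equals $r_-^2\,\ve^{\Rgen\log(1/(2\sigma^2))}$, which becomes $o(\ve)$ once $\Rdp>1/\log(1/(2\sigma^2))$ is fixed. Hence $\expec{Y_\Tgen^2\mathbf{1}_G}\le C'\ve$, and Chebyshev gives $\mu_\ell(|Y_\Tgen|>C\sqrt\ve/2,\,G)\le 4C'/C^2<1/6$ for $C$ sufficiently large. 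Combining with $\mu_\ell(G^c)<1/6$, and noting that $\avgtheta{\fellf{\cdot}_{\Tgen\NCn}}$ differs from $Y_\Tgen+\theta_{k,-}$ by at most $\Const\ve\ll\sqrt\ve$ since $\theta$ varies by only $\cO(\ve)$ across any standard pair, concludes~\eqref{e_deepPurple}.

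The main obstacle is promoting the pointwise density expansion~\eqref{e_indicatorlclt} of the LCLT into the conditional second-moment bound used above: this requires truncating the integral at a scale beyond a large multiple of $\sqrt\ve$ using the subexponential tail estimate of Theorem~\ref{t_largeDevz}, and then integrating the Gaussian density on the bulk. A further technical subtlety is that the pushforward decomposition refines with each step, so the moment bound must be established pair-by-pair and aggregated via the tower property over the filtration $\{\cF_{j\NCn}\}$; strictly speaking, one works with the $\cF_{j\NCn}$-measurable event $G_j:=\{\theta_{i\NCn}\in\hhappy_k\text{ for all }i\le j\}$ rather than $G$ itself, but the accumulated error $\Tgen\expo{-\const\vei}$ remains negligible.
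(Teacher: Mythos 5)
Your proposal is correct and follows the same basic strategy as the paper — a Lyapunov--drift argument driven by the LCLT (Theorem~\ref{thm:lclt}) for the bulk and Theorem~\ref{t_largeDevz} plus Corollary~\ref{c_iterateGoodSet} for the tails — but with a genuinely different technical realization. The paper tracks the $L^1$-quantity $\mu_\ell(\lyapFunc\circ\theta_{\NCn})$ for the \emph{bounded, Lipschitz} Lyapunov function $\lyapFunc(\theta)=\min\{|\theta-\theta_{k,-}|,r_-\}$, proves a one-step drift $\mu_\ell(\lyapFunc\circ\theta_\NCn)\le\tfrac12\mu_\ell(\lyapFunc)+C_{\TCn}\sqrt\ve$, and closes with Markov's inequality. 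You track the second moment $\bE(Y_j^2\mathbf{1}_{G_j})$ for the \emph{unbounded} quantity $Y_j=\theta_{j\NCn}-\theta_{k,-}$, obtain the drift $\bE(Y_{j+1}^2\mathbf{1}_{G_j}\mid\cF_{j\NCn})\le2\sigma^2Y_j^2\mathbf{1}_{G_j}+2C_*\ve$ on the good event, and close with Chebyshev.

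The trade-off: because $\lyapFunc$ is bounded and Lipschitz, the paper can feed it directly into the LCLT as a test function and absorb the tail mass cheaply using $\|\lyapFunc\|_\infty\le 1$; the truncation is essentially free. Your $Y^2$ requires the explicit cutoff at $K\sqrt\ve$ plus a dyadic summation against the sub-Gaussian tail of Theorem~\ref{t_largeDevz} to see that the tail contribution is also $\cO(\ve)$, and it also requires replacing $G$ by the $\cF_{j\NCn}$-adapted events $G_j$ so the tower property applies — both of which you flag and handle correctly. Conversely, your version makes the ``diffusion around the contracting averaged flow'' mechanism perhaps a bit more transparent, since you quantify the fluctuation literally as a variance rather than via the Lipschitz modulus of the test function. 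Both arguments produce the same $C\sqrt\ve$ scale with failure probability $<1/3$; neither buys a sharper constant. One small inaccuracy: your union bound for $\mu_\ell(G^c)$ via Corollary~\ref{c_iterateGoodSet} actually yields $\Const\Tgen^2\expo{-\const\vei}$ rather than $\Tgen\expo{-\const\vei}$ (you sum $l\expo{-\const\vei}$ over $l\le\Tgen$), but this is still $\ll1$ for small $\ve$ and does not affect the conclusion.
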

\begin{proof}
  Define the function $\lyapFunc:\bT\to\bRp$:
  \begin{align*}
    \lyapFunc(\theta)= \min\{|\theta-\theta_{k,-}|,r_-\}.
  \end{align*}
  We will use $\lyapFunc$ as a sort of Lyapunov function, namely, we claim that if $\ell$
  \ispinnedto{} $\happy_k$, $\lyapFunc$ satisfies the following geometric drift condition:
  \begin{align}\label{e_geometricDrift}
    \mu_\ell(\lyapFunc\circ \theta_{\NCn}) \le \frac12 \mu_\ell(\lyapFunc) + C_{\TCn}\sqrt\ve,
  \end{align}
  where, in the above expression, we regard $\theta_{n}$ as a random variable on $\ell$
  and to simplify the exposition we will abuse notation and write $\mu_\ell(\lyapFunc)$
  instead of $\mu_{\ell}(\lyapFunc\circ\theta_0)$.  Moreover $C_{\TCn}$ is a constant
  which depends on $\TCn$ only.  In fact, first observe that, by
  Theorem~\ref{t_largeDevz}, since $\|\lyapFunc\|_\infty\le 1$:
    \begin{align*}
      \mu_{\ell}(\lyapFunc\circ \theta_{\NCn}) =%
      \mu_{\ell}((\lyapFunc\cdot \Id_{B(\bar\theta(\TCn;\avgtheta\ell),\ve^{1/2-\alpha_0/2})})\circ
      \theta_{\NCn})+\cO(\expo{-C_{\TCn}\ve^{-\alpha_0}}).
    \end{align*}
   Now let us subdivide the interval ${B(\bar\theta(\TCn;\avgtheta\ell),\ve^{1/2-\alpha_0/2})}$ in
    $\cO(\ve^{-1/2-\alpha_0/2})$ intervals $I_j$ of size $\cO(\ve)$,
    so that we can write
    \begin{align*}
      \mu_{\ell}((\lyapFunc\cdot
      \Id_{B(\bar\theta(\TCn;\avgtheta\ell),\ve^{1/2-\alpha_0/2})})\circ
      \theta_{\NCn}) = \sum_{j}\mu_{\ell}((\lyapFunc\cdot
      \Id_{I_j})\circ \theta_{\NCn}).
    \end{align*}
    Using Theorem~\ref{thm:lclt} and the fact that $\lyapFunc$ is Lipschitz yields, on
    each interval $I_j$,
\begin{align*}
  \mu_\ell((\lyapFunc\cdot\Id_{I_j})\circ \theta_{\NCn}) =&
  \int_{I_j}\frac{e^{-(y-\bar\theta(\TCn;\avgtheta{\ell}))^2/(2\ve\Var_{\TCn}^2)}}{\Var_{\TCn}\sqrt{2\pi \ve}}\lyapFunc(y)\deh
  y \\
  &+ \cO\left(\ve^{\alpha_0-\frac12}\int_{I_j} \lyapFunc(y)dy+\ve^{\frac32}\right).
\end{align*}
Hence, summing over all intervals and using standard Large Deviations bounds for the
Normal Distribution, we obtain
\begin{align}\label{e_estimateLyapFunc}
  \mu_\ell(\lyapFunc\circ \theta_{\NCn}) &= 
  \int\frac{e^{-(y-\bar\theta(\TCn;\avgtheta{\ell}))^2/(2\ve\Var_{\TCn}^2)}}{\Var_{\TCn}\sqrt{2\pi \ve}}\lyapFunc(y)\deh
  y + \cO(\ve^{1-\alpha_0/2})\notag\\ &\phantom{=}
  +\cO\left(\ve^{\alpha_0-1/2}\int_{\bar\theta(\TCn;\avgtheta\ell)-\ve^{1/2-\alpha_0/2}}^{\bar\theta(\TCn;\avgtheta\ell)+\ve^{1/2-\alpha_0/2}}\lyapFunc(y)\deh y\right)\notag\\
  \intertext{and, again, since $\lyapFunc$ is Lipschitz:}
  \mu_\ell(\lyapFunc\circ \theta_{\NCn}) &= 
  (1+\cO(\ve^{\alpha_0/2}))\lyapFunc(\bar\theta(\TCn;\avgtheta{\ell}))+
  (\Var_{\TCn}+1)\cO(\sqrt\ve).
  \end{align}
  Then, let us assume that $\TCn$ has been chosen sufficiently large that for any
  $\theta\in\happy_k$:
  \begin{align*}
    |\bar\theta(\TCn;\theta)-\theta_{k,-}| \le \frac13|\theta-\theta_{k,-}|,
  \end{align*}
  so that in particular we have $\lyapFunc(\bar\theta(\TCn;\theta)) <\lyapFunc(\theta)/3$.
  Since
  $\mu_\ell(\lyapFunc) = \lyapFunc(\avgtheta\ell) +\cO(\ve)$,~\eqref{e_estimateLyapFunc}
  reads:
  \begin{align*}
    \mu_\ell(\lyapFunc\circ\theta_{\NCn}) <
    \frac{1+\ve^{\alpha_0/2}}3\mu_\ell(\lyapFunc)+ C_{\TCn}\sqrt\ve
  \end{align*}
  which, gives~\eqref{e_geometricDrift}, provided $\ve$ is chosen small enough.

  Observe now that by Corollary~\ref{c_iterateGoodSet},
  $\mu_\ell(\avgtheta{\fellf{\cdot}_{l\NCn}}\nin\happy_k) =
  \fm_{l\NCn}(\avgtheta\fell\nin\happy_k)< l\expo{-\const\vei}$.
  Using~\eqref{e_geometricDrift} we can then conclude that there exists $\Rdp > 0$
  sufficiently large so that for any $\Rgen \ge\Rdp$ and $\Tgen = \pint{\Rgen\log\vei}$:
  \begin{align*}
    \mu_{\stdf_{\Tgen\NCn}}(\lyapFunc) &= %
    \mu_{\stdf_{(\Tgen-1)\NCn}}(\lyapFunc\circ \theta_\NCn)\\
    &=\mu_{\stdf_{(\Tgen-1)\NCn}}(\lyapFunc\circ
    \theta_\NCn|\avgtheta{\fell}\in\happy_k)\nu_{(\Tgen-1)\NCn}(\avgtheta{\fell}\in\happy_k) + \Const\Tgen\expo{-\const\vei} \\
    &\le \frac12\mu_{\stdf_{(\Tgen-1)\NCn}}(\lyapFunc|\avgtheta{\fell}\in\happy_k) +
      C_{\TCn}\sqrt\ve + \Const\Tgen\expo{-\const\vei} \\
                                       &\le \frac12 \mu_{\stdf_{(\Tgen-1)\NCn}}(\lyapFunc) + C_{\TCn}\sqrt\ve+
                                         \Const\Tgen\expo{-\const\vei}.\intertext{Iterating the above estimates $\Tgen$ times yields}
                                         \mu_{\stdf_{\Tgen\NCn}}(\lyapFunc)&\le C_{\TCn}\sqrt\ve(1+\mu_\ell(\lyapFunc)) + \Const \Tgen^2\expo{-\const \vei} \le C_{\TCn}\sqrt\ve.
  \end{align*}
  Markov Inequality thus implies~\eqref{e_deepPurple} \eg choosing $C=3C_{\TCn}$.
\end{proof}
\subsection{Attractors: return to $\uhappy$}
We now proceed to describe the dynamics outside $\uhappy$ (for its definition,
see~\eqref{eq:HS-def}); indeed we will not need very refined results in this region;
essentially we will only prove that the dynamics comes to $\uhappy$ with very large
probability in time $\cO(\log\vei)$.
\begin{lem} \label{l_escapeFromAlcatraz}%
  If $\TCn$ is sufficiently large, there exists $\beta>0$ and $\RSl>1$ such that if $\ve$
  is sufficiently small, for any standard pair $\ell$ we have:
  \begin{align}\label{e_escapeFromAlcatraz}
    \mu_\ell(\theta_{\TSl\NCn}\nin\uhhappy)<\ve^\beta.
  \end{align}
  where, according to Notational Remark~\ref{rem:notation},  $\TSl=\pint{\RSl\log\vei}$.
\end{lem}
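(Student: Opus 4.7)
The plan is to combine three ingredients: (i) the averaged dynamics attracts any point outside a small neighborhood of the sources into $\uhhappy$ in logarithmic time; (ii) large deviation estimates (Theorem~\ref{t_largeDevz}), suitably iterated over fixed-length macroscopic intervals, guarantee that the true dynamics shadows the averaged one over the window $[0,\TSl\NCn]$; and (iii) the local CLT (Theorem~\ref{thm:lclt}) provides a diffusion-driven mechanism that spreads mass away from sources.

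First, by assumption~\ref{a_discreteZeros} and compactness of $\bT$, there exists $C_{*}>0$ such that for any $\delta>0$ and any $\theta_0\in\bT$ with $\dist(\theta_0,\{\theta_{k,+}\})\ge \delta$, the averaged trajectory $\bar\theta(\cdot;\theta_0)$ enters $\hhappy_k$ for some $k$ at some $t\le C_{*}|\log\delta|$ and, by~\eqref{e_sink}, remains in $\uhhappy$ thereafter. For a standard pair $\ell$ with $\avgtheta\ell$ at distance $\ge \ve^{\holexpa}$ from every source (for some small $\holexpa>0$), I would iterate Theorem~\ref{t_largeDevz} over consecutive macroscopic intervals of fixed length $T_0$: on each interval the constants are uniform, and one obtains, for $T=C_{*}\holexpa\log\vei$,
\begin{align*}
\mu_\ell\Bigl(\sup_{t\in[0,T]}|\Delta\theta(t;\cdot)|\ge r_{-}/4\Bigr)\le \tfrac{T}{T_0}\expo{-\const\vei}\le \expo{-\const\vei/2},
\end{align*}
giving $\theta_{\pint{T\vei}}\in\happy_k$ with overwhelming probability. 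Corollary~\ref{c_iterateGoodSet}, iterated over the remaining $\cO(\log\vei)$ macroscopic time, then keeps the mass in $\hhappy_k$ up to time $\TSl\NCn$.

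The harder case is a standard pair $\ell$ pinned within $\ve^{\holexpa}$ of some source $\theta_{k,+}$. Here I would exploit the exponential instability of the source via Theorem~\ref{thm:lclt}: after one macroscopic step of size $\NCn$, a pair pinned near the source spreads into an approximately Gaussian distribution of variance $\ve\Var_{\TCn}^2\sim \ve\,\expo{2\bar\omega'(\theta_{k,+})\TCn}$; by taking $\TCn$ sufficiently large (which we may, by revisiting the choice in Section~\ref{ss_contraction}), the fraction of mass in the resulting family still pinned within $\ve^{\holexpa}$ of any source is uniformly bounded by some $\gamma_0<1$. Iterating this ``one-step contraction'' over $n$ consecutive $\NCn$-steps, recursively controlling at each step the sub-family of standard pairs still pinned near a source, yields
\begin{align*}
\mu_\ell\bigl(\dist(\theta_{n\NCn},\{\theta_{k,+}\})\le \ve^{\holexpa}\bigr)\le \gamma_0^n+n\,\expo{-\const\vei}.
\end{align*}
Taking $n$ of order $\log\vei$ gives a bound of the form $\ve^\beta$, and the argument of the preceding paragraph then carries the remaining mass into $\hhappy_k$ during the residual $\TSl\NCn-n\NCn$ steps.

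The main obstacle is the rigorous execution of the iterated CLT argument: Theorem~\ref{thm:lclt} is a fixed-time statement, so one must patch together estimates across $\cO(\log\vei)$ consecutive macroscopic intervals, tracking the evolution of the Gaussian approximation as mass repeatedly passes through (and escapes from) the various source neighborhoods. A secondary subtlety is the compatibility of constants: the value of $\TCn$ fixed in Section~\ref{ss_contraction} must simultaneously satisfy the earlier ``sufficiently large'' requirements and make $\gamma_0=\expo{-\bar\omega'(\theta_{k,+})\TCn}<1$ required here, so the various ``sufficiently large'' choices in Lemmata~\ref{l_goodSet},~\ref{l_deepPurple} and~\ref{l_escapeFromAlcatraz} must be made jointly.
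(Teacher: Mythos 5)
There is a real gap, and it is exactly at the scale that the paper's proof is designed to handle. Your second paragraph claims that after one macroscopic step of length $\NCn$, the fraction of mass of a pair pinned within $\ve^\holexpa$ of a source and still within $\ve^\holexpa$ afterwards is bounded by a fixed $\gamma_0<1$. For this one-step $\gamma_0$-contraction you need the Gaussian spread $\sqrt\ve\,\Var_{\TCn}$ to be comparable to (or larger than) $\ve^{\holexpa}$, i.e.\ $\holexpa\ge1/2$; if $\holexpa<1/2$, a pair pinned at distance, say, $\ve^{0.4}$ from $\theta_{k,+}$ spreads into a Gaussian whose entire mass still lies well inside $\{\dist(\theta,\theta_{k,+})<\ve^{\holexpa}\}$, so the fraction stays $\approx 1$, not $\gamma_0$. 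But the large-deviation shadowing in your first paragraph, iterated with a fixed radius $R=r_-/4$, does not close for pairs at distance $d\in[\sqrt\ve,\,R]$ from a source: the averaged trajectory starting at $d$ takes $\cO(\log(1/d))$ macroscopic steps to escape, and during the early steps the actual trajectory can deviate by up to $R\gg d$ from the averaged one and fall back into, or cross, the source neighborhood. Shrinking $R$ to close the iteration forces $R\gtrsim\sqrt\ve$ for Theorem~\ref{t_largeDevz} to give a small error, at which point the bound is only $e^{-\const}$ per step, not $o(1)$. Thus there is no single threshold $\ve^{\holexpa}$ for which your two cases jointly cover the intermediate band around $\sqrt\ve$, and the recursion you set up (with error $\gamma_0^n+n\expo{-\const\vei}$ starting from step one) does not hold.

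The missing idea is the paper's Lyapunov function $\lyapFunc(\theta)\sim 1/|\theta-\theta_+|$, truncated at $1/(\soRad\sqrt\ve)$ on the $\soRad\sqrt\ve$-ball and set to $0$ outside $\hat\sad$. Because $\lyapFunc$ is scale-free, a single geometric drift $\mu_\ell(\lyapFunc\circ\theta_{\NCn})\le\tfrac56\mu_\ell(\lyapFunc)+\text{(exp.\ small)}$ can be proved uniformly over the whole range of distances: for $|\avgtheta\ell-\theta_+|\ge\soRad\sqrt\ve$ one uses the large-deviation estimate with $R=\tfrac12|\avgtheta\ell-\theta_+|$ (so the error $e^{-C|\avgtheta\ell-\theta_+|^2\vei}$ is traded off against the blow-up of $\lyapFunc$ by choosing $\soRad$ large), and for $|\avgtheta\ell-\theta_+|<\soRad\sqrt\ve$ one uses the local CLT anti-concentration bound $\mu_\ell(\Delta\theta(\Tesc{};\cdot)\ve^{-1/2}\in B(\shiftPar,2\soRad))<\tfrac13$. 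Iterating this drift $\cO(\log\vei)$ times and invoking Markov's inequality yields~\eqref{e_simplerEscape}, and the paper's Sub-Lemma (your first paragraph, with the threshold at $r_+$ rather than $\ve^\holexpa$) finishes the job. So your ingredients are the right ones, but you need the $1/\dist$ weighting to interpolate between the LD and LCLT regimes; a hard cutoff at any single $\ve$-dependent scale will not do it.
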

\begin{proof}
  Fix $\RSl>1$ sufficiently large to be specified later.  We will prove the lemma in two
  steps; first let us show the following auxiliary result:
  \begin{sublem} \label{l_homeRun}%
    There exists $\Thr > 0$, and $c=c(\RSl)$ so that if $\ve$ is sufficiently small, for
    any $\pint{\Thr\vei} = \Mhr\le N\le\TSl\NCn$ and standard pair $\ell$ \notpinnedto{}
    $\usad$:
    \begin{align*}
      \mu_\ell(\theta_{N}\nin\uhhappy) < \expo{-c\vei}.
    \end{align*}
  \end{sublem}
  \begin{proof}
    Our stipulations on $\bar\omega$ guarantee that, if $\theta\not\in\usad$, there exists
    $\Thr>0$ such that if $T>\Thr$, then $\bar\theta(T;\theta)\in\uhhappy$.  According to
    Notational Remark~\ref{rem:notation}, let
    $\Mhr=\pint{\Thr\vei}$ and write $N=l\NCn+M$ where $\Mhr\le M <\Mhr+\NCn$.  By
    Large Deviations arguments analogous to the ones used in the proof of
    Lemma~\ref{l_goodSet} we conclude that
    \begin{align*}
      \mu_\ell(\theta_M\nin\uhhappy) < \expo{-\bar c\vei}.
    \end{align*}
    Let $\stdf_M$ be a standard $M$-pushforward of $\ell$; observe that if
    $\theta_M(p)\in\uhhappy$, necessarily $\fellf{p}_M$ \ispinnedto{} $ \uhappy$ (recall
    that $\fellf{p} _M$ was defined in Remark~\ref{rem:lofp}); we thus conclude that
    $\mu_\ell(\avgtheta{\fell_M}\nin\uhappy)<\expo{-\bar c\vei}$.  Hence, since
    \begin{align*}
      \mu_\ell(\theta_N\nin\uhhappy) &\le \mu_{\ell}(\theta_N\nin\uhhappy\cond\avgtheta{\fell_M}\in\uhappy)
                                       + \expo{-\bar c\vei} \\&\le
                                                                \mu_{\stdf_M}(\theta_{N-M}\nin\uhhappy\cond\avgtheta{\fell}\in\uhappy) +\expo{-\bar c\vei};
    \end{align*}
    our result then follows by applying Corollary~\ref{c_iterateGoodSet}.
  \end{proof}
  Observe that Sub-Lemma~\ref{l_homeRun} proves Lemma~\ref{l_escapeFromAlcatraz} in the
  particular case $\avgtheta\ell\not\in\usad$.  If this is not the case, we have $\ell$
  \ispinnedto $\usad=\bigcup_k\sad_k$: for ease of exposition, let $k$ be fixed so that
  $\avgtheta\ell\in\sad_k$ and let us drop $k$ from our notations; that is, let
  $\theta_+=\theta_{k,+}$, $\sad=\sad_k$.  In order to prove our lemma we claim that it
  suffices to show that for some $\beta'>0$:
  \begin{align}\label{e_simplerEscape}
    \mu_\ell(\theta_N\in\hat\sad \text{ for all }0\le N\le \TSl\NCn-\Mhr)<\ve^{\beta'}
  \end{align}
  where $\hat\sad$ is an $\cO(\ve^{1/4})$-neighborhood of $S$.  In fact,
  by~\eqref{e_simplerEscape}, with probability $1-\ve^{\beta'}$, there exists some
  $0\le N \le \TSl\NCn-\Mhr$ so that $\avgtheta{\ell_N(p)}\nin\usad$; applying
  Sub-Lemma~\ref{l_homeRun} to such standard pair then guarantees that the
  $(\TSl\NCn-N)$-iterate of $\ell_N$ will be supported in $\uhhappy$ with probability
  $1-\expo{-\bar c\vei}$, which in turn implies~\eqref{e_escapeFromAlcatraz} and
  concludes our proof.

  We are thus left to prove~\eqref{e_simplerEscape}: fix $\soRad>0$ to be specified later
  and define the function $\lyapFunc:\bT\to\bR$:
  \begin{align*}
    \lyapFunc(\theta)=
    \begin{cases}
      0&\text{ if }\theta\nin\hat\sad\\
      \frac1{\soRad\sqrt\ve}&\text{ if } |\theta-\theta_+|\le \soRad\sqrt\ve\\
      \frac1{|\theta-\theta_+|} &\text{ otherwise}.
    \end{cases}
  \end{align*}
  We claim there exists $\vt\in(0,1)$ so that for any $0 < k < \TSl$:
  \begin{align}\label{e_escapeLyapunov}
    \mu_{\ell}(\lyapFunc\circ \theta_{k\NCn})\le \Const\vt^{k}\mu_\ell(\lyapFunc)+\Const \expo{-\const\vei},
  \end{align}
  Then, if $\bar \cR\sim\log\vei$ is so that $\vt^{\bar \cR}=\cO(\ve^{\beta'})$,
  by~\eqref{e_escapeLyapunov} we gather
  $\mu_{\ell}(\lyapFunc\circ\theta_{\bar \cR\NCn})\le\Const\ve^{\beta'}$.  Markov
  Inequality then implies that
  \begin{align*}
    \mu_{\ell}(\theta_{\bar \cR\NCn}\in\hat\sad)=\mu_\ell\left(\lyapFunc\circ\theta_{\bar
    \cR\NCn}>1/(2r_+)\right)<\Const\ve^{\beta'},
  \end{align*}
  provided that $\ve$ is sufficiently small, which in turn
  implies~\eqref{e_simplerEscape}, choosing $\RSl$ sufficiently large (\eg,
  $\RSl\ge 2\bar \cR$ is certainly enough).

  Thus, to conclude our proof, it suffices to prove~\eqref{e_escapeLyapunov}.  We have
  three cases:
  \begin{enumerate}
  \item \label{i_ellnotsad} $\avgtheta\ell\nin \sad$
  \item \label{i_ellnotdesperate} $\avgtheta\ell\in \sad$,
    $|\avgtheta\ell-\theta_+|\ge\soRad\sqrt\ve$
  \item \label{i_elldesperate} $|\avgtheta\ell-\theta_+| < \soRad\sqrt\ve$.
  \end{enumerate}
\begin{subequations}\label{e_escapeLyapunovBounds}
  Let us first consider case~\ref{i_ellnotsad}: in this case, for any $\Mhr < N <\TSl\NCn$, we
  claim that:
    \begin{align}
      \mu_{\ell}(\lyapFunc\circ\theta_{N})\le \expo{-\const\vei}.
    \end{align}
    The above estimates immediately follows by Sub-Lemma~\ref{l_homeRun} if
    $\avgtheta\ell\nin\usad$, and by a similar large deviations argument
    otherwise.\footnote{ In fact $\hat\sad$ is a repelling set for the averaged dynamics,
      hence if $\avgtheta\ell\in\usad\setminus\sad$, the averaged dynamics will certainly
      keep $\theta$ away from $\hat\sad$. }

    Let us now consider case~\ref{i_ellnotdesperate}: by definition of
    $\VSshort$ (see~\eqref{e_definitionW}), we know that if $\theta_0\in\VSshort$,
    $|\bar\omega(\theta_0)|\ge\bar\omega'(\theta_+)|\theta_0-\theta_+|/2$.  We assume
    $\TCn$ so large that for any $\theta_0\in\sad$, we have either
    $\bar\theta(\TCn;\theta_{0})\nin\hat\sad$ or
    $|\bar\theta(\TCn;\theta_{0})-\theta_{+}|\geq 2|\theta_0-\theta_{+}|$.  Hence, we
    have
    \begin{align*}
      \mu_{\ell}\left(|\theta_{\NCn}-\theta_+|\le\frac32|\avgtheta\ell-\theta_+|\right)
      \le
      \mu_{\ell}\left(|\theta_{\NCn}-\bar\theta(\TCn;\avgtheta\ell)|\ge\frac12|\avgtheta\ell-\theta_+|\right).
    \end{align*}
    Assuming $\soRad$ sufficiently large, we can apply Theorem~\ref{t_largeDevz}
    and gather that
    \begin{align*}
      \mu_{\ell}\left(|\theta_{\NCn}-\bar\theta(\TCn;\avgtheta\ell)|\ge\frac12|\avgtheta\ell-\theta_+|\right)
      \le
      \expo{-C_{\TCn}|\avgtheta\ell-\theta_+|^2\vei}.
    \end{align*}
    Consequently:
    \begin{align*}
      \mu_\ell(\lyapFunc\circ\theta_{\NCn}) \le \frac23\mu_\ell(\lyapFunc) +
      \frac1{\soRad\sqrt\ve}\expo{-C_{\TCn}|\avgtheta\ell-\theta_+|^2\vei}.
    \end{align*}
    Choosing $\soRad$ to be so large\footnote{ Observe that the choice of
      $\soRad$ depends on $C_{\TCn}$ and thus on $\TCn$.} that
    \begin{align*}
      \frac1{\soRad\sqrt\ve}\expo{-C_{\TCn}|\avgtheta\ell-\theta_+|^2\vei} \le
      \frac1{12|\avgtheta\ell-\theta_+|},
    \end{align*}
    we obtain
    \begin{align}
      \mu_\ell(\lyapFunc\circ\theta_{\NCn}) \le \frac56\mu_\ell(\lyapFunc).
    \end{align}
    Finally, we need to consider case~\ref{i_elldesperate}:
    first of all by definition of $\lyapFunc$ we can immediately conclude that for any
    $n\ge0$:
    \begin{align}
      \mu_\ell(\lyapFunc\circ\theta_n)\le \frac76\mu_\ell(\lyapFunc).
    \end{align}
    Moreover, using once again Theorem~\ref{thm:lclt} and the lower bound
    in~\eqref{e_varianceEstimate} we can choose $\Tesc{}$ to be so large that, for any
    $\shiftPar\in\bR$,
    $\mu_\ell(\Delta\theta({\Tesc{}};\cdot)\ve^{-1/2}\in B(\shiftPar,2\soRad))<1/3$.  We
    thus conclude that for any standard pair $\ell$ so that
    $|\avgtheta\ell-\theta_+|<\soRad\sqrt\ve$:
    \begin{align}
      \mu_\ell(\lyapFunc\circ\theta_{\Nesc{}}) \le \frac13 \cdot \frac1{\soRad\sqrt\ve} +
      \frac1{2\soRad\sqrt\ve} \le \frac56\mu_\ell(\lyapFunc).
    \end{align}
  \end{subequations}
  Observe that by possibly increasing $\Tesc{}$, we can guarantee $\Nesc{}=p\NCn$ for
  some $p\in\bN$.  Collecting bounds~\eqref{e_escapeLyapunovBounds} we can therefore
  conclude that, for any $k\ge0$ and for any sequence $\stdf_n$ of pushforwards of
  $\ell$: {\newcommand{\altvt}{\vt_*}
    \begin{align*}
      \mu_{\ell}(\lyapFunc\circ\theta_{kp\NCn}) &=
                                                  \mu_{\stdf_{(k-1)p\NCn}}(\lyapFunc\circ\theta_{p\NCn}) \le
                                                  \altvt\mu_{\stdf_{(k-1)p\NCn}}(\lyapFunc)+\expo{-\const\vei}\\
                                                &\le \altvt^k\mu_\ell(\lyapFunc)+\Const\expo{-\const\vei},
    \end{align*}
    for some $\altvt\in(0,1)$ (e.g., $\altvt=(5/6)(7/6) = 35/36$ works).  The above
    inequality immediately implies~\eqref{e_escapeLyapunov}, choosing $\vt=\altvt^{1/p}$.}
\end{proof}
The above results show quantitatively that the dynamics tends to concentrate around the
sinks of the averaged dynamics, where most of the center vectors are contracted at an
exponential rate.  This fact will be the crucial ingredient in our arguments.
\section{Coupling: basic facts and definitions}\label{sec:coupling-def}
We are now ready to start the discussion of statistical properties of the map $F_\ve$.  As
anticipated, we will classify its SRB measures (in the sense of
Remark~\ref{r_endomorphismSRB}) and study their statistical properties using the framework
of standard pairs.

The main advantage of using standard pairs is that we are in a sense able to separate the
deterministic behavior (at the level of standard pairs) from the stochastic behavior
(regarding standard pairs as ``atoms'').

Let us start by recalling the main ideas: the crucial observation (due to Dolgopyat, see
e.g.~\cite{DimaPH,DimaSRB,DimaAveraging}) is that SRB measures can be written as weak
limits of measures that admit a standard decomposition (see also Lemma~\ref{l_weakLimit}).

Then, given any two such measures let $\stdfa$ and $\stdfb$ be the associated standard
families.  If we can prove that the measures induced by $\pFve^n\stdfa$ and
$\pFve^n\stdfb$ are asymptotically equal; then this would imply uniqueness of the SRB
measure for the system.  If, additionally, we can control the rate at which
$\pFve^n\stdfa$ and $\pFve^n\stdfb$ are approaching, then we are able to retrieve precise
information about the rate of mixing of sufficiently smooth observables.

This project can be carried out provided that there exists only one trapping set for the
dynamics (that is guaranteed if~\ref{a_fluctuation} holds).  Otherwise,
if~\ref{a_fluctuationGap} holds, it is still possible to prove uniqueness of SRB measure
supported on each set $\{\theta\in\trap_i\}$, and obtain information about the rate of
mixing of sufficiently smooth observables supported in each of these sets.

The strategy which is most commonly employed in order to study the above mentioned
asymptotic equality, and estimate the speed of mixing, is the \emph{coupling
  technique}.\footnote{ Coupling has been long used in abstract Ergodic Theory under the
  name of \emph{joining}, but it has been re-introduced in the the study of the statistical
  properties of smooth systems (smooth Ergodic Theory) by Lai-Sang Young~\cite{Young99},
 borrowing it from the theory of Markov chains.  The version we are going to present here
  has been developed by Dmitry Dolgopyat in the standard pair framework.}
\subsection{Basic coupling definitions}\label{ss_couplingDefinitions}
Let us start by recalling some useful definitions: a \emph{coupling} of two probability
measures $\mu_0$ and $\mu_1$ (on the measurable space $\bT^2$) is given by a probability
measure $\couplingmu$ on the product space $\bT^2\times\bT^2$ whose marginals on the first
and second factor coincide with $\mu_0$ and $\mu_1$ respectively.  We denote with
$\Gamma(\mu_0,\mu_1)$ the set of couplings of the two probability measures.  The
\emph{Wasserstein distance} of two probability measures $\mu_0$, $\mu_1$ is defined as
\begin{equation}\label{eq:Wasserstein}
  \wDist(\mu_0,\mu_1)=\inf_{\couplingmu\in\Gamma(\mu_0,\mu_1)}\bE_{\couplingmu}(\vdist).
\end{equation}
Note that the definition of $\wDist$ depends on the choice of the distance.  In this
paper, we find convenient to employ the \emph{vertical distance} $\vdist$ given by:
\begin{equation}\label{eq:Wasserstein-dist}
\vdist((x,\theta),(x',\theta'))=\begin{cases} 1 &\textrm{ if } x\neq x'\\
|\theta-\theta'|&\textrm{ otherwise.}
\end{cases}
\end{equation}
Observe that $\vdist$ controls the standard Euclidean distance (which we denote by
$\dist$), \ie $\dist(p,p')\le\sqrt 2\, \vdist(p,p')$ for any $p$, $p'\in\bT^2$.

When two measures admit a standard decomposition, it is convenient to describe their
couplings in terms of standard families.  Let us start by considering standard pairs: by a
$(\spc1',\spc2')$-\emph{standard couple} $\fellC=(\fellCa,\fellCb)$ we mean a couple of
$(\spc1',\spc2')$-standard pairs $\fellCa$ and $\fellCb$ and a measure $\boldsymbol \mu$
on $\bT^2$ such that the marginals are the measures are $\mu_{\fellCa}$ and
$\mu_{\fellCb}$ respectively.\footnote{ We do not include explicitly $\boldsymbol \mu$ in
  the notation to make it more readable and as it does not create confusion.}  Let now
$\stdfa$ and $\stdfb$ be two (pre)standard families; a $(\spc1',\spc2')$-\emph{standard
  coupling} of $\stdfa$ and $\stdfb$ is a random element
$\stdfC=\SFF{\fellC}{(\alphaset,\fm)}$ where
$\fellC:\alphaset\to\stdpSet{\spc1',\spc2'}\times\stdpSet{\spc1',\spc2'}$ is a random
couple $\fellC=(\fellCa,\fellCb)$ of $(\spc1',\spc2')$-standard pairs so that
$\stdfCi=\SFF{\fellCi}{(\alphaset,\fm)}\sim\stdfi$.

Given two (pre)standard families $\stdfa$ and $\stdfb$, there is no canonical choice of a
standard coupling.  A simple, but important example is given by the \emph{independent}
coupling: we let $(\alphaset,\fm)=\bigtimes_{i=\privatea,\privateb}(\alphaseti,\fmi)$ and
we define
$\fellCf{(\alpha_\privatea,\alpha_\privateb)} =
(\fellaf{\alpha_\privatea},\fellbf{\alpha_\privateb})=(\fellCfa{\alpha_\privatea,
  \alpha_\privateb},\fellCfb{\alpha_\privatea, \alpha_\privateb})$
equipped with the product measure.  Observe that, in this case, the families $\stdfCa$ and
$\stdfCb$ are independent random variables. As for standard families, we will declare two
coupling equivalent if their marginal measures $\mu_{\stdf^i}$ are the same, and we will
designate the equivalence classes by $ \eqc{\stdfC}$.

Let $\stdfC=\SFF{\fellC}{(\alphaset,\fm)}$; given $\alphaset'\subset\alphaset$ we
define the \emph{subcoupling conditioned on $\alphaset'$} to be
$\stdfC\cond{\alphaset'}=\SFF{\fellC\cond{\alphaset'}}{(\alphaset',\fm')}$, where, once again,
$\nu'(E)=\nu(E|\alphaset')$.

We say that $\ellC=(\ellCa,\ellCb)$ is a \emph{matched \couple{}} (\resp
$\Delta$-\emph{matched \couple{}}) if $\ella$ and $\ellb$ are stacked (\resp
$\Delta$-stacked), have equal densities (see Section~\ref{subsec:Standardpairs} for the
definition of ``stacked") and are coupled along the vertical direction, that is:
$\boldsymbol\mu(g)=\int g(\bG^0(x), \bG^1(x))\rho(x) \deh x$ where $\rho$ is their common
density.  We will call this the {\em canonical coupling} for matched pairs.  Note that
  \begin{equation}\label{eq:wasserstein}
    \wDist(\mu_{\ellCa},\mu_{\ellCb}) \le \Delta.
  \end{equation}

Recall that, given the (pre)standard families $\{\stdf^i\}$ it is defined for all $n$ the
pushforward $\eqc{F_\ve^n\stdf^i}=\eqc{\stdf^i_n}$. Then, given any standard coupling
$\stdfC$ of $\stdf^0,\stdf^1$, we define $ \eqc{F_\ve^n\stdfC}$ as the equivalence class
of the product coupling of $\eqc{\stdf^i_n}$.  A sequence of (pre)standard couplings
$(\stdfC_n)_n$ is said to be a \emph{pushforward} the (pre)standard coupling $\stdfC$ if
$\stdfC_n\in\eqc{F_\ve^n\stdfC}$. Note that such a definition is much less stringent than
the notion of pushforward for a standard family, it is then not surprising that we will
need a more stringent definition for standard couplings as well.

If $\stdfCa_n$ is a pushforward of a standard family $\stdfCa$ and, for each
$\alpha\in \alphaset_n$, $\fellCf{\alpha}_n$ is a $\Delta$-matched pair, for some
$\Delta$, then we say that we have a {\em matched pushforward}.\footnote{ Note that, with
  the above definition, $\stdfCb$ will not be, in general, a standard family. Yet, it will
  be $n$-prestandard and that is all is needed in the following.}
\begin{rem}\label{rem:lCofp} Note that, if we have a matched pushforward $\stdfC_n$ of
  $\ellC$, then Remark~\ref{rem:lofp} applies to the families $\fellCa_n$ and, by the
  matching property, to $\fellCb_n $ as well. It makes thus sense to write $\fellCf{p}_n$.
\end{rem}
\begin{nrem}\label{r_standardNotation}
  In the sequel, to simplify our notation, we adopt the convention that the symbols
  $\stdf, \alphaset, \fm, \fell$ will carry subscripts and superscripts in the natural
  consistent way.
\end{nrem}
\subsection{The holonomy map}\label{ss_couplingEstimates} Loosely speaking, in the hyperbolic setting, the
coupling technique is based on the dynamical idea of ``linking mass of standard pairs to
nearby ones along stable manifold''.  In our setting, since we lack a stable direction, we
will ``link mass'' along curves that approximate the center direction for at least
$\NCn = \cO(\vei)$ iterates (as it turns out, we will use local $\NCn$-step center
manifolds $\cW_{\NCn}^\nt$, that have been defined in Section~\ref{s_geometry}).  In the
next section we will show that~\ref{a_almostTrivialp} guarantees average contraction along
such curves and, as a consequence, modulo large deviations, they can indeed serve the
purpose of stable manifolds.  The crucial issue, however, is that the regularity of the
holonomy map along the curves $\cW_{\NCn}^\nt$ deteriorates very quickly compared to the
average contraction rate on $\cW_{\NCn}^\nt$.  This fact is the main obstacle to set up an
efficient coupling strategy and, in fact, will force us to use very short center manifolds
(see Remark~\ref{r_holonomyBad}).

To make precise the above issue let us start by properly defining the \emph{holonomy map}
along center manifolds: for some small $\Delta>0$, let $\bGa=(x,\Ga(x))$ and
$\bGb=(x,\Gb(x))$ be two $\Delta$-stacked standard curves above $[a,b]$ (recall the
appropriate definitions given in Section~\ref{subsec:Standardpairs}).  Then, for
$s\in[0,1]$ define the interpolating curves $\bGs{s}$ by convex combination, \ie
\begin{align*}
  \Gs{s}(x)&=(1-s)\Ga(x)+s\Gb(x)& \bGs{s}&=(x,\Gs{s}(x)).
\end{align*}

Let $\holoDiff_n(s;x)$ be the unique solution of the following
non-autonomous ODE:
\[
\frac{\deh}{\deh s} \holoDiff_n (s;x)=
\frac{\Gb(\holoDiff_n(s;x))-\Ga(\holoDiff_n(s;x))}
{1 - \stable_n(\bGs{s}(\holoDiff_n(s;x)))\dGs{s}(\holoDiff_n(s;x))}
\stable_n(\bGs{s}(\holoDiff_n(s;x)))
\]
with initial condition $\holoDiff_n(0;x)=x$.  By invariance of the center cone and by
definition of standard curve it is immediate to show that the above problem is
well-defined\footnote{The reader can easily check that the differential equation admits a
  unique solution; recall~\eqref{e_defineSlopes} for the definition of $\stable_n$.}
provided that $x\in[a',b']$ where $a'=a+2\Delta \Kconec$ and $b'=b-2\Delta \Kconec$
(recall the definition of $\Kconec$ given in~\eqref{e_definitionKconeuc}).
Likewise, it is not difficult to check that
\( \pi F_\ve^n(\bGa(x)) = \pi F_\ve^n(\bGs{s}(\holoDiff_n(s;x)))\)
for all $x\in[a',b']$ and all $s\in[0,1]$, where $\pi$ is the projection on the
$x$-coordinate.  Let us define the \emph{$n$-step holonomy map}
$\holoMap_n:[a',b']\to [a,b]$ as $\holoMap_n(x)= \holoDiff_n(1;x)$; observe that:
\begin{equation}\label{e_holonomyDef}
  \pi F_\ve^n(\bGa(x)) = \pi F_\ve^n(\bGb(\holoMap_n(x))).
\end{equation}
The map $\holoMap_n$ is an orientation preserving diffeomorphism; geometrically, if
$\pa\in\supp\ella$ and $\pb=\bGbb(\holoMap_n(\pi \pa))\in\supp\ellb$, then $\pa$ and $\pb$
are joined by a local $n$-step center manifold $\cW_n^\nt$

We are now ready to state the relevant properties of the holonomy map $\holoMap_n$.
\begin{prop}\label{p_regularityHolonomy}
  Let $\bGa$ and $\bGb$ be two $\Delta\ve$-stacked standard curves above $[a,b]$.  For any
  $T>0$ let $N=\pint{T\vei}$ (according to Notational Remark~\ref{rem:notation}), and
  $\holoMap_N$ be the $N$-step holonomy map between the two curves;
  \begin{enumerate}
    \item\label{i_betterBoundHolonomy} for any 
    $\pa\in\bGa([a+2\Delta\Kconec \ve,b-2\Delta\Kconec \ve])$, let
    $\pb=\bGbb(\holoMap_N(\pi \pa))$.  Then, recalling the notation $\pii_n=F^n_\ve \pii$:
    \begin{equation}\label{e_betterBoundHolonomy}
      \vdist(\pa_{N},\pb_{N}) \le (1+C_T\ve+2T\closeness)\expo{\zeta_{N}(\pa)}\Delta\ve.
    \end{equation}
    \item\label{i_boundu} let $\ua(x)=\Ga{}'(x)\vei$ and
    $\ub(x)=\Gb{}'(\holoMap_N(x))\vei$ and define $\ui_n(x)=\Xi_{\pii(x)}^{(n)}\ui(x)$,
    where $\Xi_{p}^{(n)} = \Xi_{p_{n-1}}\circ\cdots\circ\Xi_{p_0}$ and $\Xi_p$ was defined
    in Section~\ref{s_geometry}; then $\ui_n(x)$ are
    the rescaled slopes of the image curve at the point $\pii_n(x)=F^n_\ve\circ \bGi(x)$
    and they satisfy, for each $0\le n\le N$:
    \begin{equation}\label{e_boundu}
      \|\ub_{n}-\ua_n\|_{\infty} \leq \Const\exp(\expb T) \Delta [\ve + (2\lambda^{-1})^n];
    \end{equation}
    where recall that $\lambda > 2$ is the minimal expansion of $f_\theta$ (see
    Section~\ref{sec:results}) and
    $\expb=\|\partial_\theta\omega\|+\gamma^u\|\partial_x\omega\|$ (see~\eqref{e_trivialBound});%
    \item\label{i_holonomy'} there exists $\consth_T\sim \Const T\exp(\expb T)$ such that:
    \begin{align}\label{e_holonomy'}
      \|\log  \holoMap'_N\| &\leq \consth_T\Delta.
    \end{align}
  \end{enumerate}
\end{prop}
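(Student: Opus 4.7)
All three parts rest on the following geometric picture: the points $\pa$ and $\pb = \bGbb(\holoMap_N(\pi\pa))$ lie on a curve $\gamma\subset\cW_N^\nt(\pa,\Delta\ve)$ tangent to the $N$-step central field $(\stable_N,1)$. By~\eqref{e_defineSlopes}, $\deh F_\ve^N$ sends such a tangent exactly to a vertical vector of magnitude $\mu_N$, while $\deh F_\ve^n$ for $n<N$ produces a vector in the $(N{-}n)$-step central cone with amplification at most $\expo{\cO(n\expb\ve)}$ by~\eqref{e_trivialBound}. The $\Delta\ve$-stacking, together with $|x-\holoMap_N(x)|\le 2\Delta\Kconec\ve$, bounds the $\theta$-extent of $\gamma$ by $\Delta\ve(1+\Const\ve)$.

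For part~\ref{i_betterBoundHolonomy}, I would push $\gamma$ forward by $F_\ve^N$ to obtain a vertical arc joining $\pa_N$ and $\pb_N$ (verticality is just the defining property $\pi\pa_N = \pi\pb_N$ of the holonomy). Its $\vdist$-length is at most $\sup_{q\in\gamma}\mu_N(q)$ times the $\theta$-extent of $\gamma$, and Lemma~\ref{l_distortion} with $h=\Delta\ve$ gives $\sup_\gamma\mu_N \le \expo{\zeta_N(\pa)+C_T\Delta\ve+2T\closeness+2\bar n\expb\ve}$. Since $\bar n$ is $\ve$-independent (Remark~\ref{rem:newpsi}), the prefactor $\expo{C_T\Delta\ve+2\bar n\expb\ve}\cdot\expo{2T\closeness}$ collapses to $1 + C_T\ve + 2T\closeness$ after enlarging $C_T$, producing~\eqref{e_betterBoundHolonomy}.

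For part~\ref{i_boundu}, by~\eqref{e_formulaExpansionX} the rescaled slopes evolve as $u^i_{n+1}(x) = \Xi_{\pii_n(x)}(u^i_n(x))$. Writing
\begin{align*}
u^b_{n+1} - u^a_{n+1} = \bigl[\Xi_{\pb_n}(u^b_n) - \Xi_{\pb_n}(u^a_n)\bigr] + \bigl[\Xi_{\pb_n}(u^a_n) - \Xi_{\pa_n}(u^a_n)\bigr],
\end{align*}
a direct computation from~\eqref{e_evolutionXi} gives $|\partial_u\Xi_p(u)|\le 2\lambda^{-1}$ for $|u|\le\spc1$ and $\ve$ small, so the first bracket is bounded by $2\lambda^{-1}|u^b_n-u^a_n|$. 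For the second bracket, smoothness of $\Xi$ in $p$ together with part~\ref{i_betterBoundHolonomy} applied to the intermediate orbits (and the crude estimate $\expo{\zeta_n(\pa)}\le\expo{\expb T}$) yields $|\Xi_{\pb_n}(u^a_n)-\Xi_{\pa_n}(u^a_n)|\le \Const\expo{\expb T}\Delta\ve$. The initial difference $|u^b_0 - u^a_0|\le\Const\Delta$ follows from $\|\Ga{}'-\Gb{}'\|\le\Delta\ve$, $\|\Gb{}''\|\le\ve\dtt0\spc1$, and $|x-\holoMap_N(x)|\le2\Delta\Kconec\ve$. Iterating the linear recursion and summing the geometric series in $2\lambda^{-1}$ gives~\eqref{e_boundu}.

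For part~\ref{i_holonomy'}, differentiating the holonomy identity $\pi F_\ve^N\bGa(x) = \pi F_\ve^N\bGbb(\holoMap_N(x))$ gives $\holoMap'_N(x) = \Lambda^a_N(x)/\Lambda^b_N(\holoMap_N(x))$, where $\Lambda^i_N = \prod_{n=0}^{N-1}[\partial_x f(\pii_n)+\ve\partial_\theta f(\pii_n)u^i_n]$. The difference of the logs is a sum of $N$ terms, each bounded by $\Const[\dist(\pa_n,\pb_n) + \ve|u^a_n-u^b_n|]\le\Const\expo{\expb T}\Delta\ve$ thanks to parts~\ref{i_betterBoundHolonomy} and~\ref{i_boundu}; summing yields $\|\log\holoMap'_N\|\le\Const T\expo{\expb T}\Delta$, which is~\eqref{e_holonomy'}. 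The main obstacle is the tight bookkeeping in part~\ref{i_betterBoundHolonomy}, where every correction factor from Lemma~\ref{l_distortion} must combine cleanly into $1+C_T\ve+2T\closeness$; once this is in place, parts~\ref{i_boundu} and~\ref{i_holonomy'} follow mechanically.
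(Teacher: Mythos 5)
Your proof is correct and follows essentially the same route as the paper: part (a) via Lemma~\ref{l_distortion} applied to the center curve joining $\pa$ to $\pb$, part (b) via the recursion for $\Xi_p$ split into a $u$-Lipschitz term (factor $2\lambda^{-1}$) and a $p$-Lipschitz term (using the intermediate distance bound, which the paper records as~\eqref{e_trivialBoundIntegrated}), and part (c) via differentiating the holonomy identity to get the ratio of $x$-expansion factors. The only slight imprecision is attributing the bound on $\dist(\pa_n,\pb_n)$ for $n<N$ to "part (a) applied to intermediate orbits" — part (a) as stated concerns only $n=N$ where $\pi\pa_N=\pi\pb_N$; what you actually need (and in effect use, via the crude $\expo{\zeta_n}\le\expo{\expb T}$ remark) is the direct estimate on $F_\ve^n\gamma$ via~\eqref{e_trivialBound}, which is exactly what the paper does.
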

\begin{proof}
  Since the standard curves $\bGi$'s are $\Delta\ve$-stacked and $\pa$ and $\pb$ are
  joined by a center manifold (which we will denote by $\cW_{N}^\nt(\pa)$) whose tangent
  belongs to the center cone, we gather that $\dist(\pa,\pb)\leq \Const\Delta\ve$;
  moreover $\pi\pa_{N}=\pi\pb_{N}$ as already observed earlier.  Moreover, let
  $h \le (1+\Const\ve)\Delta\ve$ be the distance of the projection of $\pa$ and $\pb$ on
  the $\theta$ coordinate; by~\eqref{e_defineSlopes} and definition of the center
  cone~\eqref{e_definitionCones}, we obtain that:
  \begin{align*}
    \dist(\pa_{n},\pb_{n}) \le
    (1+\Kconec)\sup_{q\in\cW_{N}(\pa,\pb)}\frac{\mu_{N}(q)}{\mu_{N-n}(F_\ve^n q)}\cdot h%
  \end{align*}
  from the above and~\eqref{e_trivialBound} we conclude that for any $0\le n\le N$
  \begin{align}\label{e_trivialBoundIntegrated}
    \dist(\pa_n,\pb_n)\le \Const\expb n\ve\Delta\ve.
  \end{align}
  If $n = N$, since $\pi\pa_N = \pi\pb_N$, we have the better estimate
  \begin{align*}
    \dist(\pa_{N},\pb_{N}) = \vdist(\pa_{N},\pb_{N}) \le \sup_{q\in\cW_{N}(\pa,\pb)}\mu_{N}(q)\cdot h%
  \end{align*}
  which using Lemma~\ref{l_distortion} immediately implies~\eqref{e_betterBoundHolonomy}
  and proves item~\ref{i_betterBoundHolonomy}.  On the other hand, since $0\leq n\leq N$,~\eqref{e_evolutionXi}
  and~\eqref{e_trivialBoundIntegrated} yield, for small enough $\ve$,
  \begin{align*}
    |\ub_n(x)-\ua_n(x)| &= |\Xi_{p_{n-1}^1(x)}(u^1_{n-1}(x))-\Xi_{p_{n-1}^0(x)}(u^0_{n-1}(x))|\\
    &\leq \Const\exp(\expb T)\Delta\ve+2\lambda^{-1}|\ub_{n-1}(x)-\ua_{n-1}(x)|\leq\notag\\
    &\leq \frac{\Const\exp(\expb T)\Delta\ve}{1-2\lambda^{-1}}+(2\lambda^{-1})^{n}|\ub(x)-\ua(x)|;
  \end{align*}
  by the definition of stacked curves we obtain $|\ub(x)-\ua(x)|\leq\Const\Delta$, which
  implies~\eqref{e_boundu} and proves item~\ref{i_boundu}.

  Let us now prove item~\ref{i_holonomy'}: differentiating~\eqref{e_holonomyDef} yields:
  \begin{equation*}
    \underbrace{\deh\pi\,\deh F^N_\ve(\bGa(x)) (1,\Ga{}'(x))}_{\text{$x$-expansion along }\bGa} =
    \underbrace{\deh\pi\,\deh F^N_\ve(\bGb(\holoMap_N(x))) (1,\Gb{}'(\holoMap_N(x)))}_{\text{$x$-expansion
        along }\bGb}\holoMap'_N(x),
  \end{equation*}
  which we can rewrite, using~\eqref{e_formulaExpansionX} and letting
  $\Gamma_N=\prod_{k=0}^{N-1}\partial_x f\circ F_\ve^k$, as

  \begin{equation}\label{e_bXn}
    \holoMap'_N(x)=\frac{\Gamma_N(\pa(x))}{\Gamma_N(\pb(x))}%
    \prod_{n=0}^{N-1}\frac{1+\ve \frac{\partial_\theta f}{\partial_x f}(\pa_n(x))\ua_{n}(x)}{1+\ve \frac{\partial_\theta f}{\partial_x f}(\pb_{n}(x))\ub_n(x)}.
  \end{equation}
   Then, using~\eqref{e_trivialBoundIntegrated} we gather that:
  \begin{equation}\label{e_dominantTermHolonomy}
    \left|\log\frac{\Gamma_N(\pa(x))}{\Gamma_N(\pb(x))}\right|\leq\Const T\exp(\expb T)\Delta,
  \end{equation}
  and using item (b) we can conclude:
  \[
  \left|\log \prod_{n=0}^{N-1}\frac{1+\ve \frac{\partial_\theta f}{\partial_x
        f}(\pa_n(x))\ua_n(x)}{1+\ve \frac{\partial_\theta f}{\partial_x
        f}(\pb_n(x))\ub_n(x)}\right|\leq \Const(T+1)\exp(\expb T)\Delta\ve.
  \]
  The above estimates imply~\eqref{e_holonomy'} and consequently conclude the proof of our
  lemma.
\end{proof}
\begin{rem}\label{r_holonomyBad}
  Item~\ref{i_holonomy'} in the above lemma implies in particular that the $N$-step
  holonomy map $\holoMap_N$ has good regularity properties for $T=\cO(1)$ provided that
  $\Delta=\cO(1)$, \ie if the stacked pairs are at a distance $\cO(\ve)$.  A Local Central
  Limit Theorem is thus crucial for the effectiveness of the coupling procedure, since it
  provides information about the distribution of standard pairs at the $\cO(\ve)$-scale.
\end{rem}
\section{The coupling argument}\label{s_coupling}
\subsection{An informal exposition of the global strategy}
For simplicity let us first assume that $\nz=1$ so that $\{\theta=\theta_{1,-}\}$ is the
only attractor for the averaged dynamics. Since we expect the real dynamics to be well
approximated by a $\sve$-diffusion around the averaged dynamics, we will be able to
conclude (see the Bootstrap Lemma~\ref{l_bootstrap}) that, if we let any two standard
families evolve for a sufficiently long time (which turns out to be $\cO(\vei\log\vei)$),
then a substantial portion of their mass will be carried by standard pairs which are
supported in a $\cO(\sqrt\ve)$ neighborhood of $\theta_{1,-}$.  Using a Local Central
Limit Theorem (see Theorem~\ref{thm:lclt}) we can control effectively the distribution of
such standard pairs with a $\cO(\ve)$-resolution.  Once two standard pairs are stacked at
a distance $\ve$, since we are close to a sink and by~\ref{a_almostTrivial}, the averaged
system will make them (slowly) approach to each other (see
Lemma~\ref{l_couplingStep}). Once they are sufficiently close
(e.g. $\cO(\ve^{1+\expoCloseness})$ for some $\expoCloseness>0$) we can show that the real
dynamics follows the average one almost all the time (a part from rare large deviations)
and that the distance between the standard pairs keeps contracting forever with positive
probability. Thus we can couple (see the Coupling procedure,
Lemma~\ref{l_couplingProcedure}) almost all their mass forever.  We then conclude by
iteratively applying the same argument to the mass in the leftover pieces (see
Lemma~\ref{l_coupling}).

If $\nz>1$ there are two possibilities: if Assumption~\ref{a_fluctuation} holds,
then our Large Deviations results (see Theorem~\ref{l_largeDevzLowerBound}) allow us to
prove that any standard pair will have some positive (although exponentially small in
$\vei$) probability of being close to $\theta_{1,-}$ after time $\cO(\vei\log\vei)$; we
can then conclude the proof by applying the argument above to this tiny amount of mass at
each step.

Otherwise, if~\ref{a_fluctuation} does not hold but~\ref{a_fluctuationGap} holds,
  Lemma~\ref{l_fluctuationGap}\ref{pp_recurrentSink} guarantees the existence of at least
  one and at most $\nz$ recurrent sinks.  Then, once again using Large Deviations
  arguments, any standard pair that is supported on $\{\theta\in\trap_i\}$ will have
  positive (although exponentially small in $\vei$) probability of being close to
  $\sink i$ after time $\cO(\vei\log\vei)$; moreover~\eqref{e_protoInv} guarantees that
  any pushforward will also enjoy the same property.  As before, we can conclude by
  iteratively applying the same argument to this tiny amount of mass at each step.  Of
  course this procedure does not necessarily yield a unique invariant measure, but rather as many
  distinct invariant measures as the number of distinct trapping sets.
\subsection{The basic Coupling Step}\label{ss_couplingStep}
We now describe the core of our coupling argument, \ie, we describe how to actually
couple two standard pairs (or more precisely, the processes they generate) for $\cO(\vei)$
iterations.

Recall that at the beginning of the previous section we fixed the constant $\TCn > 0$ (and
correspondingly $\NCn$) sufficiently large; fix $\closeness$ so that
$\TCn\closeness < 1/64$ (recall that $\closeness$ was introduced in the definition
(see~\eqref{eq:regularizedpsi}) of $\clyapReg$).  Recall also the definitions of a matched
couple and of a matched pushforward given in Section~\ref{ss_couplingDefinitions}
\begin{lem}[Coupling Step]\label{l_couplingStep}%
  For any $\bar\Delta>0$, there exist $\bar\ve>0$ so that the following holds.  For any
  $0\le\NCoup\le \NCn$,
  $\ve\in(0,\bar\ve)$, $\Delta\in(0,\bar\Delta)$ and $\Delta\ve$-matched standard couple
  $\ellC$, there exist sequences of pushforwards
  $\left(\coupledStepSeq{\stdfC}n\right)_{n=0}^{\NCoup}$ and
  $\left(\uncoupledStepSeq{\stdfC}n\right)_{n=0}^{\infty}$ so that
  $\coupledStepSeq{\stdfC}\NCoup$ is a matched pushforward and
  \begin{align*}
    \eqc{\pFve^n\ellC}\ni\mC\coupledStepSeq{\stdfC}n+(1-\mC)\uncoupledStepSeq{\stdfC}n.
  \end{align*}
In addition,
  \begin{enumerate}
  \item \label{i_subcurve} $\coupledStepSeq\stdfa0=\{\{0\},\ell_0\}$, $\ell_0\subset\ellCa$ and
\[
\hat\mu_{\stdfa}(\supp\coupledStepSeq\stdfa0) = \mC=\mC(\Delta)=(1-\caa\Delta\ve)\cdot\exp(-4\consth_{\TCn}\Delta),
\]
where $\caa$ is a constant which does not depend on $\ellC$ and
  $\consth_{\TCn}$ is defined in Proposition~\ref{p_regularityHolonomy}\ref{i_holonomy'};
\item \label{i_matchedCloseness} any
  $\fellCf{p}_{\NCoup}\in \coupledStepSeq{\stdfC}\NCoup$ is a
  $\Delta\ve\expo{\zeta_{\NCoup}(p)+\Const\ve+1/32}$-matched standard couple;
  \item\label{i_leftoversPrestandard} $\uncoupledStepSeq{\stdfC}n$ is a standard coupling
  provided that $n\ge\NCoup+ \Const|\log(\const\Delta)|$.
  \end{enumerate}
\end{lem}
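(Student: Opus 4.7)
The plan is to build the coupled part by matching points on $\ellCa$ with points on $\ellCb$ along the $\NCoup$-step holonomy map $\holoMap_\NCoup$ introduced in Section~\ref{ss_couplingEstimates}. Since $\ellC$ is $\Delta\ve$-matched, for any $x$ in the domain of $\holoMap_\NCoup$ the points $\bGa(x)$ and $\bGb(\holoMap_\NCoup(x))$ lie on a common $\NCoup$-step local center manifold and, by~\eqref{e_holonomyDef}, their $F_\ve^\NCoup$-images share the same $x$-coordinate. These pairs are the natural candidates to form matched standard couples at time $\NCoup$: applying Proposition~\ref{p_invarianceStandardPairs} to each of $\ellCa$ and $\ellCb$ produces standard pushforwards, and $\holoMap_\NCoup$ identifies corresponding pieces over a common $x$-interval on which mass can then be matched.

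To quantify $\mC$ I would proceed in two steps. First, $\holoMap_\NCoup$ is only defined on $[a+2\Delta\Kconec\ve,b-2\Delta\Kconec\ve]$, so the two $(2\Delta\Kconec\ve)$-thin endpoint regions of $\ellCa$ must be discarded, which costs the factor $(1-\caa\Delta\ve)$. Second, on the region where holonomy is defined, the two pushed-forward measures, viewed as measures on the common $x$-coordinate, have densities that differ by the Jacobian $\holoMap'_\NCoup$ and by a base-point correction $\rho(x)/\rho(\holoMap_\NCoup(x))=1+\cO(\Delta\ve)$; by Proposition~\ref{p_regularityHolonomy}\ref{i_holonomy'} the Jacobian is bounded by $\exp(\pm\consth_\TCn\Delta)$. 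Taking the pointwise minimum of these two densities would already let us couple a fraction $\exp(-2\consth_\TCn\Delta)$, but this minimum is not smooth enough to define a standard density; the extra factor $\exp(-2\consth_\TCn\Delta)$ in the definition of $\mC$ is what is needed to multiply the minimum by a smooth weight so that both the coupled and the residual densities satisfy the bounds on $\rho'/\rho$ and $\rho''/\rho$ required by the definition of a standard pair.

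Item~\ref{i_matchedCloseness} is then immediate from Proposition~\ref{p_regularityHolonomy}\ref{i_betterBoundHolonomy}: for coupled $p$ and its holonomy partner $\tilde p$, the vertical distance between $F_\ve^\NCoup p$ and $F_\ve^\NCoup \tilde p$ is bounded by $(1+C_{\TCn}\ve+2\TCn\closeness)\exp(\zeta_\NCoup(p))\Delta\ve$, and since $\TCn\closeness<1/64$ the factor $2\TCn\closeness$ is absorbed into $1/32$. For intermediate times $n\in[0,\NCoup]$ the decomposition is obtained by lifting the $\NCoup$-time coupled/uncoupled partition back to $\supp\ellC$ via the invertible map of Remark~\ref{rem:inv-F} and then taking $n$-pushforwards; this also identifies the coupled subcurve $\ell_0\subset\ellCa$ of item~\ref{i_subcurve} with the prescribed total mass. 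For item~\ref{i_leftoversPrestandard}, the uncoupled densities are pointwise comparable to the original standard densities but may have $\rho'/\rho$ slightly exceeding $c_2$ where the smooth cut-off is applied, so they are only prestandard; by Remark~\ref{r_prestandard} they regularize into genuine standard pairs after at most $\Const|\log(\const\Delta)|$ further iterations. The main obstacle, which dictates the precise form of $\mC$, is exactly this regularity bookkeeping: one must arrange the cut so that both the coupled and the residual densities remain standard uniformly in $\ve$ and $\Delta$, while losing as little mass as possible.
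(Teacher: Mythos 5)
Your plan matches the paper's proof closely in structure: cut the $\cO(\Delta\ve)$ endpoint pieces, use the $\NCoup$-step holonomy map to identify corresponding points, control its Jacobian through Proposition~\ref{p_regularityHolonomy}\ref{i_holonomy'}, lift the decomposition back to time $0$ via Remark~\ref{rem:inv-F}, and read off items~\ref{i_matchedCloseness} and~\ref{i_leftoversPrestandard} from~\eqref{e_betterBoundHolonomy} and Remark~\ref{r_prestandard}. Your computation for item~\ref{i_matchedCloseness} (absorbing $1+C_T\ve+2\TCn\closeness$ into $\exp(\Const\ve+1/32)$ using $\TCn\closeness<1/64$) is exactly right.

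There is, however, an internal contradiction in your explanation of $\exp(-4\consth_{\TCn}\Delta)$, which is the only genuinely delicate estimate in this step. You first claim the second factor $\exp(-2\consth_{\TCn}\Delta)$ is needed so that ``both the coupled and the residual densities satisfy the bounds on $\rho'/\rho$ and $\rho''/\rho$ required by the definition of a standard pair,'' yet item~\ref{i_leftoversPrestandard} only asserts the uncoupled family is standard after $\Const|\log(\const\Delta)|$ further iterates, and your own final paragraph concedes the residual is merely prestandard. These cannot both hold. What the second factor actually buys is a quantitative lower bound on the $b$-side residual: once the domination $\rhob_\Cp\leq\exp(2\consth_{\TCn}\Delta)\mb_*\rhob_*$ is established, choosing $\mC=\ma_*\exp(-4\consth_{\TCn}\Delta)$ gives $\mb_*\rhob_*-\mC\rhob_\Cp\geq(1-\exp(-2\consth_{\TCn}\Delta))\mb_*\rhob_*$, so the residual's log-derivative ratio is bounded by $\cO(\spc2/\Delta)$, not by $\spc2$. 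That $1/\Delta$ blow-up --- not the ``slightly exceeding $c_2$'' you wrote, which would give only an $\cO(1)$ recovery time --- is exactly what Remark~\ref{r_prestandard} converts into the $\Const|\log(\const\Delta)|$ of item~\ref{i_leftoversPrestandard}. Note also that the paper never actually takes a pointwise minimum: it uses the $a$-side density $\rhoa_*$ as canonical, pushes it through $\holoMap_\NCoup$, and argues by domination; the $a$-side residual is again $\rhoa_*$ and hence trivially standard, while only the $b$-side residual suffers the $1/\Delta$ deterioration. Your minimum-then-smooth picture yields the correct order of magnitude for $\mC$, but it misattributes the role of the extra exponential factor and hides the asymmetry between the two sides.
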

\begin{proof}
  To fix ideas, for $i=\privatea,\privateb$, let $\ellCi=(\Gi,\rho)$ and $[a,b] =
  \pi(\supp\ellCa) = \pi(\supp\ellCb)$; let $\holoMap_\NCoup$ be the $\NCoup$-step holonomy map
  between $\ellCa$ and $\ellCb$, defined in Section~\ref{ss_couplingEstimates}.

  Fix $\caa$ to be specified later and define $\aaa$ and $\ba$ so that
  \begin{align*}
    \int_a^{\aaa}\rho(x)\deh x = \int_{\ba}^b\rho(x)\deh x = \frac12\caa\Delta\ve.
  \end{align*}
  By~\eqref{e_boundRho}, the definition of $\holoMap_{\NCoup}$ and our estimates for the
  center cone, we can choose $\caa$ to be so large that the interval $[\aaa,\ba]$ is in
  the domain of definition of $\holoMap_\NCoup$; we let
  $[\ab,\bb]=\holoMap_\NCoup[\aaa,\ba]$.  Moreover, eventually by further increasing
  $\caa$, we also guarantee that for $i\in\{\privatea,\privateb\}$:
  \begin{equation}
    \ai-a,b-\bi\ge \Delta\ve\label{e_shortLengthBound}.
  \end{equation}
  Finally, we assume $\caa$ to be sufficiently large so that
  \begin{align*}
    \int_a^{\ab}\rho(x)\deh x = \int_{\bb}^b\rho(x)\deh x \le \caa\Delta\ve.
  \end{align*}

 For $i\in\{\privatea,\privateb\}$ let us cut the standard pair
  $\ellCi$ at the points $\ai$ and $\bi$;
  in doing so we obtain two (very) short standard pairs (which we denote by $\ellLi$ and
  $\ellRi$) whose lengths are bounded below by~\eqref{e_shortLengthBound} and a (possibly
  short) standard pair, which we denote by $\elli_*$, with
  $|\elli_*|\geq\delta/2-\Const\Delta\ve$ (see Figure~\ref{f_couplingStep} for a sketch of
  our setup).
  \begin{figure}[!ht]
    \centering
    \includegraphics{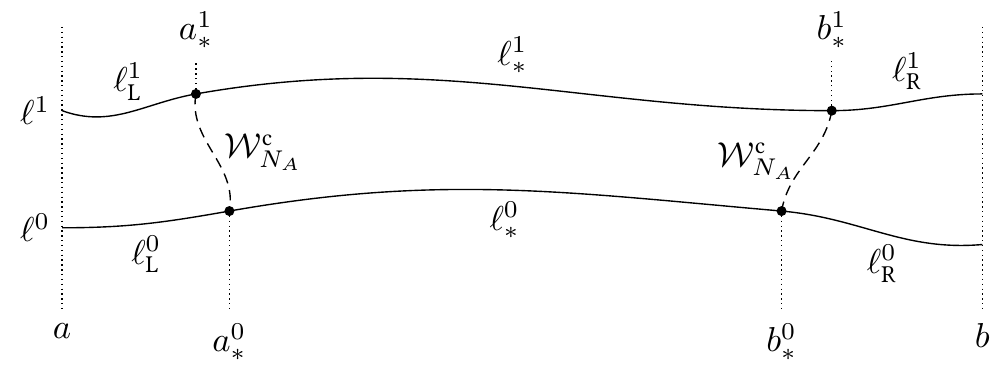}
    \caption{Setup for our decomposition.}
    \label{f_couplingStep}
  \end{figure}

  Let us introduce some notation: we define $\elli_*=(\Gi_*,\rhoi_*)$
  (\resp $\ellLi=(\GLi,\rhoLi)$, $\ellRi=(\GRi,\rhoRi)$), where $\Gi_*$ (\resp $\GLi$,
  $\GRi$) is the restriction of $\Gi$ to the interval $[\ai,\bi]$ (\resp $[a,\ai]$,
  $[\bi,b]$) and $\rhoi_*=\rho/\mi_*$ (\resp $\rhoLi=\rho/\mLi$, $\rhoRi=\rho/\mRi$) where
  $\mi_*=\int_{\ai}^{\bi}\rho(x)\deh x$ (\resp  $\mLi=\int_{a}^{\ai}\rho(x)\deh x$,
  $\mRi=\int_{\bi}^{b}\rho(x)\deh x$).  In particular, our construction yields
  $\mLa=\mRa=\caa\Delta\ve/2$ and $\ma_*=1-\caa\Delta\ve$.

  Let $\rhoa_\Cp=\rhoa_*$ and define $\rhob_\Cp$ on
  $[\ab,\bb]$ as the push-forward $\rhob_\Cp=\holoMap_{\NCoup*}\rhoa_\Cp$.  More explicitly,
  for any $\xb\in[\ab,\bb]$ let $\xa=\holoMap_\NCoup\invr\xb$, then:
  \begin{equation}
    \rhob_\Cp(\xb)=\frac{\rhoa_\Cp(\xa)}{\holoMap_\NCoup'(\xa)};\label{e_definitionRhobCp}
  \end{equation}
  observe that $\rhob_\Cp$ is not necessarily a standard density.  We now claim that
  \begin{equation}\label{e_estimateXi}
    \rhob_\Cp(\xb)\leq\expo{2\consth_{\TCn}\Delta}\mb_*\rhob_*(\xb).
  \end{equation}
  In fact, by~\eqref{e_definitionRhobCp} and since by definition $\mi_*\rhoi_* = \rho_i$:
  \begin{align*}
    \left|\log\frac{\rhob_\Cp(\xb)}{\mb_*\rhob_*(\xb)}\right|&=
    \left|\log\frac1{\ma_*}\frac{\ma_*\rhoa_*(\xa)}{\mb_*\rhob_*(\xb)}\frac{1}{\holoMap'_\NCoup(\xa)}\right|
    \\&\leq%
    \left|\log{\ma_*}\right|+\left|\log\frac{\rho(\xa)}{\rho(\xb)}\right|+\left|\log
      \holoMap_\NCoup'(\xa)\right|\leq
    \Const\Delta\ve + \consth_{\TCn}\Delta,
  \end{align*}
  where the first two terms can be bounded using invariance of the center cone and the
  definition of standard density, and the third one using
  Proposition~\ref{p_regularityHolonomy} (where the reader can also find the definition of
  $\consth_{\TCn}$).  Thus, provided that $\bar\ve$ is sufficiently
  small,~\eqref{e_estimateXi} holds, which in turn implies that there exist positive
  densities $\rhoi_\Rd$ so that, letting $\mC=\ma_*\expo{-4\consth_{\TCn}\Delta}$:
  \begin{subequations}\label{e_definitionRho}
    \begin{align}
      \ma_*\rhoa_*(\xa)&=\mC\rhoa_\Cp(\xa)+(\ma_*-\mC)\rhoa_\Rd(\xa)\label{e_definitionRhoa}\\
      \mb_*\rhob_*(\xb)&=\underbrace{\mC\rhob_\Cp(\xb)}_{\text{coupled}}
                         +\underbrace{(\mb_*-\mC)\rhob_\Rd(\xb)}_{\text{uncoupled}};\label{e_definitionRhob}
    \end{align}
  \end{subequations}
  where, in particular $\rhoa_\Rd=\rhoa_\Cp=\rho_*^0$.

  Let us now define $\coupledStepSeq{\elli}{} = (\Gi_*,\rhoi_\Cp)$ and $\elli_\Rd=(\Gi_*,\rhoi_\Rd)$; let
  furthermore
  \begin{align*}
  \uncoupledStepSeq{\stdfi}{} = \frac\mLi{1-\mC} \ellLi + \frac\mRi{1-\mC} \ellRi +
  \frac{\mi_*-\mC}{1-\mC} \elli_\Rd.
  \end{align*}
  We let $\coupledStepSeq{\ellC}{}$ be the coupling of $\coupledStepSeq{\ella}{}$ and
  $\coupledStepSeq{\ellb}{}$ given by
  \[
  \boldsymbol\mu(g)=\int g(\bG_*^0(x), \bG_*^1(H_N(x)) \rhoa_\Cp(x) \deh x,
  \]
  and $\uncoupledStepSeq{\stdfC}{}$ be the independent coupling of
  $\uncoupledStepSeq{\stdfa}{}$ and $\uncoupledStepSeq{\stdfb}{}$.  Also, let
  $\coupledStepSeq{\stdfC}n$ and $\uncoupledStepSeq{\stdfC}n$ be pushforwards of
  $\coupledStepSeq{\ellC}{}$ and $\uncoupledStepSeq{\stdfC}{}$, respectively.  We claim
  that these couplings satisfy properties~\ref{i_subcurve}--\ref{i_leftoversPrestandard}.

  In fact,~\ref{i_subcurve} follows by our construction.  Then, observe that, by
  Proposition~\ref{p_regularityHolonomy}\ref{i_betterBoundHolonomy}, the \couples{}
  $\fellCf{p}_\NCoup$ are in fact matched.  Since ${\coupledStepSeq{\ella}{}}$ is
  standard, $\fellCfa{p}_\NCoup$ will also be standard, and consequently so will be
  $\fellCfb{p}_\NCoup$, since the two pairs have equal densities;
  item~\ref{i_matchedCloseness} then follows by estimates~\eqref{e_betterBoundHolonomy}.

\item We now proceed to prove item~\ref{i_leftoversPrestandard}: let $\fellCfi{\alpha}_\NCoup$ be a pair in
  $\uncoupledStepSeq{\stdfi}{\NCoup}$; there are two possibilities:
  \begin{enumerate}[label=\roman*.,ref = \roman*]
  \item\label{i_caseLeftRight} $\fellCfi{\alpha}_\NCoup$ belongs to the $\NCoup$-th image of either $\ellRi$ or
    $\ellLi$
  \item\label{i_caseUncoupled} $\fellCfi{\alpha}_\NCoup$ belongs to the $\NCoup$-th image of $\elli_\Rd$.
  \end{enumerate}
  In the first case, we know by~\eqref{e_shortLengthBound} that the length of the short
  curves $\ellLi$ or $\ellRi$ is bounded below by $\const\Delta\ve$; Remark~\ref{r_short}
  then implies that the pairs $\ellLi$ and $\ellRi$ are
  $\Const|\log(\const\Delta\ve)|$-prestandard, which in particular proves
  item~\ref{i_leftoversPrestandard} in case~\ref{i_caseLeftRight}.  We are left with
  case~\ref{i_caseUncoupled}: by definition $\ella_\Rd$ is a standard pair, hence so will
  be $\fellaf{\alpha}_\NCoup$.  We therefore only need to prove our statement for pairs in
  the image of $\ellb_\Rd=(\Gb_*,\rhob_\Rd)$.  By~\eqref{e_definitionRhob} we have:
  \[
  (\mb_*-\mC)\rhob_\Rd=\mb_*\rhob_*-\mC\rhob_\Cp
  \]
  Let us denote by $\rhob_{\Rd,\NCoup}$ the pushforward of $\rhob_\Rd$ by $F_{\ve*}^\NCoup$; we
  obtain:
  \begin{align}\label{e_equationRhoR}
    (\mb_*-\mC)\rhob_{\Rd,\NCoup}(x_\NCoup)&=
    (\mb_*-\mC)\rhob_\Rd(\xb_0(x_\NCoup))\frac{\deh \xb_0}{\deh x_\NCoup}\notag\\
    &=\mb_*\rhob_*(\xb_0(x_\NCoup))\frac{\deh \xb_0}{\deh x_\NCoup}-%
    \mC\rhob_\Cp(\xb_0(x_\NCoup))\frac{\deh \xb_0}{\deh x_\NCoup},
  \end{align}
  where we denote with $\xii_0(x_{\NCoup})$ the $x$-coordinate of the point
    $\pii_0(x_{\NCoup})\in\gapi_\NCoup(\alpha)\subset\supp\elli_0$ so that
    $\pi F_\ve^{\NCoup}(\pii_0(x_{\NCoup})) = x_\NCoup$ (recall the definition of
    $\gapi_\NCoup$ given in Section~\ref{ss_pushforwards}).
  The first term is the push-forward of a standard density (and thus a standard density);
  the second term is also a standard density, since by our construction
  $\rhob_\Cp(\xb_0(x_\NCoup))\frac{\deh \xb_0}{\deh
    x_\NCoup}=\rhoa_\Cp(\xa_0(x_\NCoup))\frac{\deh \xa_0}{\deh x_\NCoup}$,
  which is the push-forward of a standard density.  We now take derivatives
  of~\eqref{e_equationRhoR} and obtain:
  \begin{align*}
    \left\|\frac{{\rhob_{\Rd,\NCoup}}'}{\rhob_{\Rd,\NCoup}}\right\|&\leq%
    \left\|\frac{\mb_*\rhob_*+\mC\rhob_\Cp}{\mb_*\rhob_*-\mC\rhob_\Cp}\right\|\spc2%
    &\left\|\frac{{\rhob_{\Rd,\NCoup}}''}{\rhob_{\Rd,\NCoup}}\right\|&\leq%
    \left\|\frac{\mb_*\rhob_*+\mC\rhob_\Cp}{\mb_*\rhob_*-\mC\rhob_\Cp}\right\| \dttrho0\spc2,%
  \end{align*}
  and using~\eqref{e_estimateXi}:
  \begin{align*}
    \left\|\frac{\mb_*\rhob_*+\mC\rhob_\Cp}{\mb_*\rhob_*-\mC\rhob_\Cp}\right\|&\leq
    \frac{2}{1-\expo{-2\consth_{\TCn}\Delta}}
    \leq2\left(1+\frac1{2\consth_{\TCn}\Delta}\right).
  \end{align*}
  Hence,
  $\rhob_{\Rd,\NCoup}\in
  D_{2\left(1+1/{2\consth_{\TCn}\Delta}\right)\spc2}(\Gb_{\Rd,\NCoup})$
  and by Remark~\ref{r_prestandard} we can thus conclude that any pair in
  case~\ref{i_caseUncoupled} is a $\Const|\log(\const\Delta)|$-prestandard pair.
\end{proof}
\begin{cor}\label{c_wasserstein}
  For any $\bar\Delta>0$, there exist $\bar\ve>0$ so that the following holds.  For any
  $\NCoup\le \NCn$, $\ve\in(0,\bar\ve)$, $\Delta\in(0,\bar\Delta)$ and $\Delta\ve$-matched
  standard couple $\ellC$ we have
  \begin{align*}
    \wDist(F_{\ve*}^{\NCoup}\mu_{\ella},F_{\ve*}^{\NCoup}\mu_{\ellb})&\le\Const\Delta.
   \end{align*}
\end{cor}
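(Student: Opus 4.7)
The plan is to apply the Coupling Step Lemma~\ref{l_couplingStep} directly and estimate the Wasserstein distance by splitting the contributions from the coupled and uncoupled parts.

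More precisely, Lemma~\ref{l_couplingStep} provides matched pushforwards $\coupledStepSeq{\stdfC}\NCoup$ and (pre)standard couplings $\uncoupledStepSeq{\stdfC}\NCoup$ such that
\[
\eqc{\pFve^{\NCoup}\ellC}\ni \mC\coupledStepSeq{\stdfC}\NCoup+(1-\mC)\uncoupledStepSeq{\stdfC}\NCoup,
\]
with $\mC=(1-\caa\Delta\ve)\expo{-4\consth_{\TCn}\Delta}\ge 1-\Const\Delta$ by the definition of $\mC$ in item~\ref{i_subcurve}. Since each of these couplings is a coupling of the two marginals $F_{\ve*}^{\NCoup}\mu_{\ella}$ and $F_{\ve*}^{\NCoup}\mu_{\ellb}$ (after combining them with the appropriate weights), by~\eqref{eq:Wasserstein} it suffices to bound the $\bE_{\couplingmu}(\vdist)$ of this combined coupling.

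For the coupled part: by item~\ref{i_matchedCloseness}, each standard couple $\fellCf{p}_{\NCoup}\in\coupledStepSeq{\stdfC}\NCoup$ is $\Delta\ve\expo{\zeta_{\NCoup}(p)+\Const\ve+1/32}$-matched, so the canonical coupling on such a couple has expected vertical distance bounded by this quantity (see~\eqref{eq:wasserstein}). Since $\NCoup\le\NCn$, we have
\[
|\zeta_{\NCoup}(p)|\le \NCoup\ve\|\clyapReg\|\nc0 \le \TCn\expb,
\]
uniformly in $p$, so the contribution of the coupled part to the Wasserstein distance is at most $\mC\cdot\Delta\ve\cdot\expo{\TCn\expb+\Const\ve+1/32}\le\Const\Delta\ve$. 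For the uncoupled part, the vertical distance is uniformly bounded by $1$ (the diameter of $\bT^2$ in the $\vdist$ metric), so its contribution is at most $1-\mC\le\Const\Delta$.

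Combining the two bounds yields $\wDist(F_{\ve*}^{\NCoup}\mu_{\ella},F_{\ve*}^{\NCoup}\mu_{\ellb})\le\Const\Delta\ve+\Const\Delta\le\Const\Delta$, as required. No serious obstacle is expected here — the corollary is essentially a packaging of the quantitative information already established in Lemma~\ref{l_couplingStep}; the only mild point is the uniform control on $\zeta_{\NCoup}$, which follows from the elementary bound $\NCoup\le\NCn\sim\TCn\vei$.
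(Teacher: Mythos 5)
Your proof is correct and follows essentially the same route as the paper's: apply Lemma~\ref{l_couplingStep}, bound the coupled contribution by $\Const\Delta\ve$ via item~\ref{i_matchedCloseness}, bound the uncoupled contribution by $1-\mC\le\Const\Delta$ via item~\ref{i_subcurve}, and sum. The only difference is that you spell out the uniform bound $|\zeta_{\NCoup}(p)|\le\TCn\expb$ needed to turn the matching parameter $\Delta\ve\expo{\zeta_{\NCoup}(p)+\Const\ve+1/32}$ into $\Const\Delta\ve$, which the paper leaves implicit.
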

\begin{proof}
  Applying Lemma~\ref{l_couplingStep} to $\ellC$ we obtain
  \begin{align*}
    \eqc{\pFve^\NCoup\ellC}\ni\mC\coupledStepSeq{\stdfC}\NCoup+(1-\mC)\uncoupledStepSeq{\stdfC}\NCoup.
  \end{align*}
  By item~\ref{i_matchedCloseness} we gather that $\coupledStepSeq{\stdfC}\NCoup$ is a
  $\Const\Delta\ve$-matched coupling; thus:
  \begin{align*}
    \wDist(F_{\ve*}^{\NCoup}\mu_{\ella},F_{\ve*}^{\NCoup}\mu_{\ellb})&
    \le \Const\mC\Delta\ve + (1-\mC),
  \end{align*}
  from which we conclude using the estimate for $\mC$ given in item~\ref{i_subcurve}.
 \end{proof}
\subsection{The global Coupling procedure}
The idea is now to iterate Lemma~\ref{l_couplingStep} with $\NCoup = \NCn$ and discard
those \couples{} which at step $k$ are not exponentially close in $k$.  The crucial fact
to prove is that if we start coupling pairs which are sufficiently close, this strategy
can be carried out with probability arbitrarily close to $1$; this, together with other
useful estimates, is the content of the following lemma, whose proof will be given in
Section~\ref{ss_proofCouplingProcedure}.  Recall the definition of conditioned
subcouplings given in Section~\ref{ss_couplingDefinitions}.

\begin{lem}\label{l_couplingProcedure}
  For any $\smallness>0$, provided that $\ve$ is small enough, there exists
  $\expoCloseness>0$ so that for any $\ve^{1+\expoCloseness}$-matched standard couple
  $\ellC$, there exists a sequence $\auxFamily{\stdfC}k\in\eqc{\pFve^{k\NCn}\ellC}$,
  $k\in\bN$, and random variables\footnote{ Recall that, according to Notational
    Remark~\ref{r_standardNotation}, $\auxFamily{\alphaset}k$ is the index set of
    $\auxFamily{\stdfC}k$ and $\auxFamily\fm{k}$ the corresponding measure.}
  $\auxFamily\butime{k}:\auxFamily{\alphaset}k\to\bZ\cup\{\infty\}$ satisfying the
  following properties:
  \begin{enumerate}
  \item \label{i_gettingCloser} for any $k\ge0$, $\alpha\in\auxFamily{\alphaset}k$ so that $\auxFamily\butime{k}
    (\alpha)=\infty$, the standard couple $\auxFamily{\fellCf{\alpha}}k$ is
    $\Const\expo{-\const k}\ve^{1+\expoCloseness/2}$-matched.
  \item \label{i_compatibility} for any $l< k\le k'$ we have
    $\auxFamily\fm{k}(\{\auxFamily\butime k=l\})=\auxFamily\fm{k'}(\{\auxFamily\butime {k'}=l\})$; moreover
    $\auxFamily{\stdfC}{k'}|\{\auxFamily\butime
    {k'}=l\}\in\eqc{\pFve^{({k'-k})\NCn}\auxFamily{\stdfC}k|\{\auxFamily\butime k=l\}}$;
    finally, the family $\auxFamily{\stdfC}{l}|\{\auxFamily\butime {l}=l-1\}$ is
    $l\NCn$-prestandard.
  \item \label{i_tailBoundM} $\MCp{k}=\auxFamily{\fm}k(\auxFamily\butime k=\infty)$ is a
    non-increasing sequence in $[0,1]$ so that, for all $k\in\bN$,
    \begin{equation}
      \MCp{k}\geq\expo{-\smallness}.\label{e_firstMass}
    \end{equation}
Moreover, if $k'\ge k$, we have
    \begin{equation}
      \MCp{k}-\MCp{k'}\leq \smallness \exp(-\const k/\log\vei).\label{e_tailboundM}
    \end{equation}
  \end{enumerate}
\end{lem}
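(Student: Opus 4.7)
The plan is to iterate the Coupling Step of Lemma~\ref{l_couplingStep} with $\NCoup=\NCn$, decomposing at each stage the still-matched mass into a portion that remains matched (and contracts further in the center direction thanks to~\ref{a_almostTrivialp}) and a portion that gets permanently relabelled as decoupled. I will assume the initial matched couple $\ellC$ is \pinnedto{} some $\happy_k$; this is implicit in the overall strategy (couples only become $\ve^{1+\expoCloseness}$-matched after the bootstrap step in Section~\ref{s_coupling} has driven them into a neighborhood of a sink).

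Inductively, suppose that at step $k$ I have constructed a standard coupling $\coupledStepSeq{\stdfC}{k}$ of $\Delta_k\ve$-matched couples, each \pinnedto{} some $\happy_{j(\alpha)}$, with $\Delta_0=\ve^\expoCloseness$. At step $k{+}1$ I apply Lemma~\ref{l_couplingStep} to each $\fellCf\alpha_{k\NCn}$ individually: of the mass $\mC(\Delta_k)=(1-\caa\Delta_k\ve)\exp(-4\consth_{\TCn}\Delta_k)$ remaining matched, I further excise the exceptional subset where either $\theta_{\NCn}\nin\hhappy_{j(\alpha)}$ or $\zeta_{\NCn}>-9\TCn/16+1/32$, of total $\hat\mu$-mass at most $e^{-\const\vei}$ by Lemma~\ref{l_goodSet} and Theorem~\ref{t_largeDevz}. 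All mass so excised, together with the complement of $\mC(\Delta_k)$, is relabelled $\auxFamily\butime{k+1}=k$ and added to the leftover, inheriting the $(k\NCn{+}\Const|\log(\const\Delta_k)|)$-prestandard property from Lemma~\ref{l_couplingStep}\ref{i_leftoversPrestandard}; on the surviving good subset item~\ref{i_matchedCloseness} of Lemma~\ref{l_couplingStep} combined with $\TCn\closeness<1/64$ yields new matching scale
\begin{align*}
\Delta_{k+1}\le \Delta_k\exp(-9\TCn/16+1/32+\Const\ve+2\TCn\closeness)\le \Delta_k e^{-\TCn/2}
\end{align*}
for $\TCn$ large, and a new index $j(\alpha)$ (the $\hhappy$-component containing the image). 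The resulting coupling $\coupledStepSeq{\stdfC}{k+1}$, extended by the freshly decoupled part with $\butime=k$ and the previously frozen parts with $\butime\le k-1$ (evolved by $F_\ve^\NCn$), defines $\auxFamily{\stdfC}{k+1}$. Properties~\ref{i_gettingCloser} and~\ref{i_compatibility} then follow from $\Delta_k\le\ve^\expoCloseness e^{-k\TCn/2}$ (giving the desired $\Const e^{-\const k}\ve^{1+\expoCloseness/2}$ after replacing $\expoCloseness$ by a slightly smaller value) and by construction, respectively.

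The main obstacle is the mass estimate~\eqref{e_firstMass}--\eqref{e_tailboundM}. The per-step multiplicative loss obeys
\begin{align*}
-\log\frac{\MCp{k+1}}{\MCp{k}}\le \caa\Delta_k\ve+4\consth_{\TCn}\Delta_k+ e^{-\const\vei},
\end{align*}
so summing and using the geometric decay $\sum_{j\ge0}\Delta_j\le \Const \ve^\expoCloseness/(1-e^{-\TCn/2})$ gives $-\log\MCp{k}\le \Const\,\consth_{\TCn}\ve^\expoCloseness+k\,e^{-\const\vei}$. Since $\consth_{\TCn}\sim\expo{\const\TCn}$ is a fixed constant and $k e^{-\const\vei}$ is dominated by $\smallness/2$ up to times $k\sim e^{\const\vei/2}$, choosing $\expoCloseness$ large enough that $\Const\consth_{\TCn}\ve^\expoCloseness<\smallness/2$ establishes~\eqref{e_firstMass} on that range; beyond it the weaker ergodic argument used in~\eqref{e_eventuallyTrapped} keeps $\MCp{k}\ge e^{-\smallness}$ since the losses saturate geometrically. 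For~\eqref{e_tailboundM} I observe that the difference $\MCp{k}-\MCp{k'}$ is bounded by the contributions of steps $j\in[k,k')$, namely $\Const\sum_{j\ge k}\Delta_j+(k'-k)e^{-\const\vei}\le \Const\ve^\expoCloseness e^{-\const k\TCn}+(k'-k)e^{-\const\vei}$; the key balancing point is that for any $k\le\vei$ both terms are dominated by $\smallness\exp(-\const k/\log\vei)$ once $\expoCloseness$ is chosen to absorb the $\TCn$-dependence, while for $k\gtrsim\vei$ the large-deviation term becomes negligible compared to its target. The delicate point is precisely this double calibration of $\expoCloseness$ versus $\TCn$ and $\smallness$, which forces $\expoCloseness$ to be chosen last.
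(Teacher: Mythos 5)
The central step in your plan — \emph{discard, at every coupling step, the entire exceptional event where the contraction fails} — cannot deliver the mass bounds~\eqref{e_firstMass} and~\eqref{e_tailboundM} for all $k\in\bN$, and this is precisely the difficulty the paper's proof is designed to overcome.

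Concretely, the failure probability per coupling step given by Lemma~\ref{l_goodSet} (and inherited by Corollary~\ref{c_fundamentalBound}) is bounded by $\Const e^{-\const\vei}$, a quantity that is tiny but \emph{does not decay in $k$}.  If you excise that mass unconditionally at each step, then after $k$ steps the total loss is at least of order $k\,e^{-\const\vei}$, which exceeds any prescribed $\smallness$ as soon as $k\gtrsim \smallness\, e^{\const\vei}$; yet the lemma asserts $\MCp k\ge e^{-\smallness}$ for \emph{all} $k\in\bN$.  Your attempted rescue — ``beyond it the weaker ergodic argument used in~\eqref{e_eventuallyTrapped} keeps $\MCp{k}\ge e^{-\smallness}$ since the losses saturate geometrically'' — does not work: $\MCp k$ is by construction non-increasing, so once it drops below $e^{-\smallness}$ nothing restores it; and~\eqref{e_eventuallyTrapped} concerns mass being captured by trapping sets and has no bearing on the survival of matched couples.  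The same problem makes~\eqref{e_tailboundM} fail for $k\gtrsim (\log\vei)/\ve$: in that regime $(k'-k)e^{-\const\vei}$ is \emph{not} dominated by $\smallness\exp(-\const k/\log\vei)$ (you would need $k\lesssim (\log\vei)/\ve$), contrary to the ``negligible'' claim.

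The paper avoids this by \emph{not} discarding mass at the first failed step.  It introduces an auxiliary counter $\auxFamily{\counter}{k}$ that increases by $1/4$ on a good step and decreases by $2\expb$ on a bad one, and a couple is declared broken only once $\auxFamily\counter{k}<0$; since $\counter_0\sim\log\vei$ and the drift is positive, Sub-Lemma~\ref{sublem:counter-control} compares $\counter$ with a biased random walk (Lemmata~\ref{l_simpleRandomWalkGame} and~\ref{l_comparison}) and deduces $\bP(\inf_{j\le\TSl}\auxFamily\counter{k+j}<0)\le\smallnesst\vt^{k/\log\vei}$, a bound that \emph{does} decay in $k$.  Summing these geometrically decaying contributions over the windows of size $\TSl=\pint{\RSl\log\vei}$ gives total loss $\cO(\smallnesst)$, which yields both~\eqref{e_firstMass} and~\eqref{e_tailboundM}.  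Relatedly, you also implicitly assume the couple remains \pinnedto{} $\happy_k$ at all times, but Sub-Lemma~\ref{sublem:counter-control} must (and does) invoke Lemma~\ref{l_escapeFromAlcatraz} to account for excursions out of $\uhappy$ and their return.  These ingredients — the counter, the random-walk comparison, and the escape-and-return estimate — are exactly what your proposal lacks.
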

\begin{rem}
  As we already explained, the lack of uniform hyperbolicity implies that the dynamics
  might fail to bring together at a uniform rate two standard pairs which started close
  together.  When such a failure happens, we declare the couple to \emph{break up} and we
  give up tracking them in the future.  The above lemma tells us that if two standard
  pairs are sufficiently close, such break ups are relatively unlikely.  The random
  variable $\auxFamily\butime k$ in the above statement keeps track of the coupling step
  at which the corresponding couple broke up (up to the $k$-th step); if it is $\infty$,
  it means that the couple did not break up (yet).  Thus~\eqref{e_firstMass} guarantees
  that a break up will happen with probability which is arbitrarily small with
  $\smallness$; similarly~\eqref{e_tailboundM} gives an exponential tail bound (with rate
  $\cO({\ve/\log\vei})$) on the probability of a break up occurring after $k$ steps.
\end{rem}

Observe that Lemma~\ref{l_couplingProcedure} requires the standard couple $\ellC$ to be
$\Const\ve^{1+\expoCloseness}$-matched; the following lemma specifies under which
conditions the dynamics will, in $\cO(\vei\log\vei)$ iterations, bring a portion of the
image of two standard pairs in such a convenient position.  Recall that a standard pair
$\ell$ \emph{\ispinnedto{} the trapping set $\trap_i$} if $\avgtheta\ell\in\trap_i$;
likewise a standard family $\stdf$ is said to \emph{\bepinnedto{} $\trap_i$} if for any
$\alpha\in\alphaset$, $\avgtheta{\fellf{\alpha}}$ \ispinnedto{} to $\trap_i$.  A standard
couple $\ellC = (\ella,\ellb)$ is said to \emph{\bepinnedto{} the trapping set $\trap_i$}
if both $\ella$ and $\ellb$ \arepinnedto{} $\trap_i$.  Finally, we denote with $\nz_{i}$
the number of sinks $\sink j$ that are contained in
$\trap_i$\footnote{\label{f_nzIndependent} Remark that $\nz_{i}$ does not depend on
  $\aeps$, provided $\aeps$ has been chosen small enough.}.
\begin{rem}\label{r_eventualInvariance}
  Observe that by~\eqref{e_protoInv}, if $\ell$ \ispinnedto{} $\trap_i$, then any
  $n$-pushforward of $\ell$ \ispinnedto{} $\trap_i$ provided that $n\ge\pint{\TForb\vei}$.
\end{rem}

\begin{lem}[Bootstrap]\label{l_bootstrap}
  Let $\sink i$ be a recurrent sink; for any $\expoCloseness>0$, there exist
  $\RBs,\bar\ve>0$ so that for any $\Rgen\ge\RBs$,
  $\Tgen=\pint{\Rgen\log\vei}, \ve\in(0,\bar\ve)$ and any standard couple $\ellC$
  \pinnedto{} $\trap_i$, we have
  \begin{align*}
    [F_{\ve*}^{\Tgen\NCn}\mu_{\ellC}]\ni\mB\bootstrapped\stdfC{N}+(1-\mB)\notBootstrapped\stdfC{N},
  \end{align*}
where:
  \begin{enumerate}
  \item $\bootstrapped\stdfC{}=(\bootstrapped\stdfCa{},\bootstrapped\stdfCb{})$ is an
    $\ve^{1+\expoCloseness}$-matched standard coupling;
  \item $\notBootstrapped\stdfa{N}$ and $\notBootstrapped\stdfb{N}$ are
    $\cO(\expoCloseness\log\vei)$-prestandard families;
  \item \label{i_estimatemB} $\mB=\mB(\Rgen)$ is a non-increasing function of $\Rgen$;
    moreover if $\nz_{i}=1$, $\mB$ can be chosen to be uniform in $\ve$; otherwise
    $\mB\sim\expo{-\const\vei}$.
  \end{enumerate}
\end{lem}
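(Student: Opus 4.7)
My plan is to proceed in two phases. First, concentrate a uniformly positive fraction of mass from both $\ella$ and $\ellb$ in a $\cO(\sqrt{\ve})$-neighborhood of a fixed recurrent sink $\sink k \in \trap_i$ (or an exponentially small fraction when $\nz_i > 1$). Second, use the Local Central Limit Theorem together with the freedom of partition choice in Proposition~\ref{p_invarianceStandardPairs} to produce $\ve^{1+\expoCloseness}$-matched pairs from this concentrated mass.

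\textbf{Concentration.} Since $\ella$ and $\ellb$ are pinned to $\trap_i$, Remark~\ref{r_eventualInvariance} and the disjointness of trapping sets (Lemma~\ref{l_propertiesTrappingSet}\ref{pp_disjoint}) guarantee that the pushforward mass remains essentially inside $\trap_i$. Applying Lemma~\ref{l_escapeFromAlcatraz} on each side shows that after $\TSl\NCn$ iterations a fraction $1-\ve^{\beta}$ of the mass lies in $\uhhappy\cap\trap_i$. Fix a recurrent $\sink k \in \trap_i$ (which exists since $\trap_i\ne\emptyset$). If $\nz_i=1$ then $\uhhappy\cap\trap_i$ reduces to $\hhappy_k$, and Lemma~\ref{l_deepPurple} drives a uniformly positive fraction of this mass into $B(\sink k, C\sqrt\ve)$ after an additional $\cO(\vei\log\vei)$ iterations. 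If $\nz_i>1$, I would instead use Theorem~\ref{l_largeDevzLowerBound} to transfer an $\cO(\expo{-\const\vei})$ fraction of mass from each $\hhappy_j\cap\trap_i$ ($j\ne k$) to $B(\sink k, C\ve^{5/12})$ along an $\aeps$-admissible path, whose existence is ensured by the recurrence of $\sink k$ together with Lemma~\ref{l_fluctuationGap}.

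\textbf{Matching and main obstacle.} Once mass is concentrated near $\sink k$, I would coordinate the (freely chosen) partitions in the pushforward construction of Proposition~\ref{p_invarianceStandardPairs} so that $\stdfa_n$ and $\stdfb_n$ share a common $x$-partition into short intervals; the pair-per-interval from each side is then automatically stacked. Running the dynamics for a further $\cO(\log\vei)$ iterations, the central contraction near $\sink k$ (via~\eqref{eq:central-lyap} and $\bclyapReg<-3/4$ on $\hhappy_k$) shrinks the $\theta$-distance of paired stacked curves by a multiplicative factor $\ve^{\cO(\Rgen)}$, bringing initial $\cO(\sqrt\ve)$-separations below $\ve^{1+\expoCloseness}$ provided $\Rgen$ is large enough. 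Simultaneously, Theorem~\ref{thm:lclt} certifies that the $\theta$-densities from both sides agree up to a multiplicative factor $1+\cO(\ve^{\alpha_0})$ on any $\cO(\ve)$-window, so the common density overlap of stacked pairs is a $1-\cO(\ve^{\alpha_0})$ fraction. Taking the common density as in~\eqref{e_definitionRho} of Lemma~\ref{l_couplingStep} then yields the required matched coupling; the leftover short pieces of length at least $\ve^{1+\expoCloseness}$ form an $\cO(\expoCloseness\log\vei)$-prestandard family by Remark~\ref{r_short}. The principal obstacle is precisely this matching step: the LCLT provides only marginal control on $\theta$, whereas matched pairs demand joint $(x,\theta)$-proximity at the $\ve^{1+\expoCloseness}$-scale; the resolution combines the coordinated partition choice, the center-direction contraction, and the LCLT density comparison to simultaneously align the $x$-structure and drive $\theta$-distances below the required scale while preserving a uniformly positive (respectively $\cO(\expo{-\const\vei})$) mass fraction.
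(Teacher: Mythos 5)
Your concentration phase is essentially the paper's argument: Lemma~\ref{l_escapeFromAlcatraz} plus~\eqref{e_protoInv} to reach $\hhappy_i$, Theorem~\ref{l_largeDevzLowerBound} for the exponentially small fraction when $\nz_i>1$, and Lemma~\ref{l_deepPurple} to reach $B(\sink i,C\sqrt\ve)$. The matching phase is where the real difficulties lie, and you correctly flag it as the main obstacle, but your resolution has genuine gaps.

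First, a $\Delta\ve$-matched couple must be not only $\Delta\ve$-stacked but have \emph{equal} densities and be coupled vertically. Your ``coordinated partition'' trick would at best produce stacked curves, not equal densities, and your appeal to~\eqref{e_definitionRho} is misplaced: that decomposition is performed \emph{inside} Lemma~\ref{l_couplingStep} and already presupposes a matched couple (it uses the holonomy $\holoMap_\NCoup$, which requires stacked curves as input). The paper solves the density problem via Remark~\ref{r_flatDensity}: split each standard density as a constant piece $m_*/|\ell|$ plus a prestandard remainder; the constant pieces on both sides are automatically equal, at a bounded loss of mass. Without this (or an equivalent device) there is no matched couple.

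Second, your LCLT claim overreaches. Theorem~\ref{thm:lclt} controls the law of $\theta_{\pint{T\vei}}$ on the initial standard pair with an \emph{additive} error $\cO(\ve^{\alpha_0})$ relative to a Gaussian; for two pairs with $\avgtheta{\ell}$ differing by $\cO(\sqrt\ve)$ the Gaussians have centers that differ by $\cO(\sqrt\ve)$, which is of the same order as their width, so a multiplicative ``$1+\cO(\ve^{\alpha_0})$ agreement'' on each $\cO(\ve)$-window does not follow. The paper uses the LCLT only for a uniform \emph{lower} bound $\mu_\ell(\theta_{\pint{T\vei}}\in I_j)\ge p_\Bs''\sqrt\ve$ on each of the $\pint{\ve^{-1/2}}$ windows $I_j$ of $B(\sink i,C\sqrt\ve)$, taking $T$ large so the Gaussian is spread out; this is weaker but robust, and it is what delivers the $\Const\ve$-proximity. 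Relatedly, your final contraction step begins from ``$\cO(\sqrt\ve)$-separations,'' but Lemma~\ref{l_couplingStep} and Proposition~\ref{p_regularityHolonomy} need $\Delta$ bounded (separation $\Delta\ve$ with $\Delta<\bar\Delta$); starting from $\Delta\sim\ve^{-1/2}$ is outside the range where the holonomy regularity estimate $\|\log\holoMap_N'\|\le\consth_T\Delta$ is useful. One must first reduce to $\cO(\ve)$-close (via the $I_j$ partitioning) and only then iterate Lemma~\ref{l_couplingStep} with Corollary~\ref{c_fundamentalBound} for $\cO(\expoCloseness\log\vei)$ steps to reach $\ve^{1+\expoCloseness}$.

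Finally, as a structural remark: for the $x$-alignment the paper does not coordinate partitions across the two pushforward families (whose admissible partitions are constrained by the respective curve images $f_{\bG^i}([a^i,b^i])$), but instead fixes a reference interval $I$ of length $\delta$ and uses uniform $x$-expansion to guarantee that after $M$ steps each side has, with probability $\ge p$ bounded away from zero, a standard pair lying exactly above $I$. Combined with Remark~\ref{r_flatDensity} this gives a matched couple directly. Your approach is not unreasonable, but you would still need to (i) split densities to make them equal and (ii) control the mass of short leftover pieces of length down to $\cO(\ve)$ (Remark~\ref{r_short} is fine for that part), so the gain over the paper's cleaner mechanism is unclear.
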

The proof of Lemma~\ref{l_bootstrap} will be given in Section~\ref{sec:coup-prooftwo}.
Recall the definition of Wasserstein distance given in~\eqref{eq:Wasserstein}.  We now see
how the previous results allow us to prove the following
\begin{lem}[Coupling Lemma]\label{l_coupling}
  There exist $\bar\ve>0$ so that, if $\ve\in(0,\bar\ve)$, for any two standard pairs
  $\ella$, $\ellb$ \pinnedto{} the same trapping set:
  \begin{align*}
    \wDist(F_{\ve*}^{n}\mu_{\ella},F_{\ve*}^{n}\mu_{\ellb})&\le\Const\expo{-\const \mB\cdot
      n\ve/\log\vei}
  \end{align*}
\end{lem}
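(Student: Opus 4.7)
The plan is to combine Lemmas~\ref{l_bootstrap} and~\ref{l_couplingProcedure} into a single ``bootstrap--coupling super-step'' and then iterate this super-step to produce geometric decay of the uncoupled mass.

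I would first assemble one super-step as follows. Starting from any two standard families pinned to a common recurrent trapping set $\trap_i$, Lemma~\ref{l_bootstrap} gives, after $\pint{\RBs\log\vei}\cdot\NCn$ iterations, a decomposition into a $\ve^{1+\expoCloseness}$-matched standard coupling of mass $\mB$ plus a prestandard remainder of mass $1-\mB$. I would then apply Lemma~\ref{l_couplingProcedure} to the matched coupling for $k_*$ further coupling steps, with $k_*\NCn$ of order $\vei\log\vei$, chosen so that the tail bound~\eqref{e_tailboundM} still guarantees $\MCp{k_*} \ge \tfrac{1}{2}\expo{-\smallness}$. This produces a permanent coupling carrying at least $\tfrac{1}{2}\mB\expo{-\smallness}$ of the original total mass, whose component couples enjoy $\vdist \le \Const\expo{-\const k}\ve^{1+\expoCloseness/2}$ by item~\ref{i_gettingCloser} of Lemma~\ref{l_couplingProcedure}. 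The complementary mass (break-ups from the Coupling Procedure plus the Bootstrap remainder) is only prestandard, so I would insert an additional recovery window of length $\cO(\log\vei)\cdot\NCn$ to regroup it into a genuine standard family; by Remark~\ref{r_eventualInvariance} (which uses~\eqref{e_protoInv}) this family remains pinned to $\trap_i$, so the construction is self-replicating.

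Second, I would iterate this super-step, of total length $T_\text{super}$ of order $\vei\log\vei$. After $j$ super-steps the mass that has \emph{not} yet been permanently coupled is at most $(1-\tfrac{1}{2}\mB\expo{-\smallness})^j$. For general $n$, setting $j = \pint{n/T_\text{super}}$, this is bounded by $\expo{-\const\mB n\ve/\log\vei}$. I would then build a coupling $\couplingmu_n \in \Gamma(F_{\ve*}^n\mu_\ella, F_{\ve*}^n\mu_\ellb)$ as a convex combination of the canonical matched coupling on each permanently coupled piece, noting that a couple created in super-step $j-k$ contributes $\vdist \le \Const\expo{-\const k}$, and an arbitrary coupling on the residual mass, where $\vdist \le 1$. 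Summing and bounding the resulting geometric series yields
\begin{equation*}
\wDist(F_{\ve*}^n\mu_\ella, F_{\ve*}^n\mu_\ellb) \le \Const\expo{-\const\mB n\ve/\log\vei},
\end{equation*}
as claimed.

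The main obstacle will be the bookkeeping at the interface of Lemmas~\ref{l_bootstrap} and~\ref{l_couplingProcedure}. The Bootstrap leftovers and the Coupling Procedure break-ups are only prestandard, hence recovery windows must be inserted at each super-step and the reorganized family must be shown still to be pinned to $\trap_i$. A secondary difficulty is the calibration of $k_*$: the exponential break-up tail in~\eqref{e_tailboundM} has rate $\expo{-\const k/\log\vei}$, so $k_*$ must be large enough to make the tail negligible yet small enough that $T_\text{super}$ does not exceed $\cO(\vei\log\vei)$, since any inflation would immediately degrade the rate in the conclusion. Finally, when $\nz_i > 1$, item~\ref{i_estimatemB} of Lemma~\ref{l_bootstrap} only gives $\mB \sim \expo{-\const\vei}$, so the argument above must be run uniformly in this regime, where it then reproduces the slower rate listed in~\eqref{e_lowerBoundRate}.
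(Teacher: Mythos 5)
Your plan captures the core architecture of the paper's argument---Bootstrap, then Coupling Procedure, then recycle the leftovers and iterate---but there is a genuine gap in your treatment of couples that survive a super-step. You describe applying Lemma~\ref{l_couplingProcedure} for $k_*$ coupling steps as producing a ``permanent coupling'' on the mass $\MCp{k_*}$, and you propose calibrating $k_*$ so that $\MCp{k_*}\ge\tfrac12\expo{-\smallness}$. Both points miss the mark: the bound~\eqref{e_firstMass} already gives $\MCp{k}\ge\expo{-\smallness}$ for \emph{every} $k$, so no calibration of that type is needed; and, more seriously, having survived $k_*$ coupling steps is not the same as being permanently coupled. A couple carrying $\auxFamily{\butime}{k_*}=\infty$ can still break up at any later step. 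If you truncate the Coupling Procedure at step $k_*$ and re-launch a fresh super-step only on the leftovers, you lose control of precisely these post-super-step break-ups. Their cumulative mass---from every earlier super-step---forms a third error term beyond your ``surviving matched'' and ``never-yet-coupled'' pieces, and your proposal never budgets for it; it is not a priori negligible and must be shown to decay at the same rate $\vartheta^q$.

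The paper's proof never truncates: Lemma~\ref{l_couplingProcedure} is run for all time on each dating coupling $\datingC\stdfC{s}$, and the random variables $\auxFamily{\datingC\butime s}{k}$ record \emph{when} each couple broke up, so that a couple breaking up at coupling step $l$ (which is $l\NCn$-prestandard) is reinserted into the available pool only after it recovers, roughly at step $2l$. This sliding-window recursion drives the available-mass sequence $\MUS{q}$ with $\MUS{q+1}/\MUS{q}\in[\vartheta_*,\vartheta]$, and the final Wasserstein bound then splits into three terms---surviving matched couples, in-transit broken couples, and available mass---each bounded by $\Const\vartheta^q$ with $\vartheta=1-\tfrac12\mB$. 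Your geometric-series summation and final rate computation are sound once the in-transit term is introduced and controlled; incorporating the paper's breakup-time bookkeeping is exactly what closes the gap.
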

\begin{proof} Our main task is essentially a bookkeeping problem: as we push forward a
  standard couple we will produce matched pairs (hopefully more and more of them),
  prestandard pairs that cannot be used for anything as yet, and standard pairs that have
  recovered and are ready to reenter in the dating business. To keep track of all these
  objects some notation is needed. Let $\smallness>0$ small and $r\in\bN$ large enough to
  be specified later; define $\TCp$ by requiring that
  \begin{align*}
    \RCp = \pint{\TCp\log\vei}=2\pint{r\log\vei}>\pint{\RBs\log\vei},
  \end{align*}
  where $\RBs$ is the constant appearing in Lemma~\ref{l_bootstrap}.

  To fix ideas we assume that $\ella$ and $\ellb$ both \pinnedto{} $\trap_i$; we will now
  inductively define:
  \begin{itemize}
  \item for $q\ge 0$, a sequence $(\availableC{\stdfC}{q})_q$ of couplings of $\NCn$-prestandard
    families \pinnedto{} $\trap_i$ and a corresponding sequence of weights $(\MUS{q})_q$
  \item for $q\ge 1$, a sequence $(\datingC{\stdfC}{q})_q$ of $\Const\ve^{1+\expoCloseness}$-matched standard
    couplings \pinnedto{} $\trap_i$ and a corresponding sequence of weights $(\MCS{q})_q$.
  \end{itemize}
  The reader should think of such families as a bookkeeping device to account for the
  dynamics after $q\RCp\NCn$ iterates.  Roughly speaking $\datingC{\stdfC}{q}$ are the
  standard pairs that we are able to couple at time $q\RCp\TCn$. At later times some of
  this standard pairs break up (this is recorded by the random variables
  $\auxFamily\butime{k}$ defined in Lemma~\ref{l_couplingProcedure}) or lose some mass (in
  form of, possibly very short, prestandard pairs) while some standard pairs never had a
  chance to couple. The family $\availableC{\stdfC}{q}$ contains all the standard pairs
  that are available to try a new coupling in the time interval
  $[q\RCp\TCn, (q+1)\RCp\TCn]$. The reason why such a scheme is going to converge is that,
  as time goes on, less and less mass uncouples (see Lemma~\ref{l_couplingProcedure}),
  while it is always possible to couple a fix percentage of the uncoupled mass (see
  Lemma~\ref{l_bootstrap}).

  Let us now describe the induction step: at step $q$, we inductively assume
  that $\MUS{q}$ and $\availableC\stdfC q$ are defined, together with $\MCS{s}$ and
  $\datingC\stdfC{s}$ for $0< s\leq q$ and construct $\MCS{q+1}$, $\datingC\stdfC{q+1}$,
  $\MUS{q+1}$ and $\availableC\stdfC {q+1}$.

  For the base step, let  $\MUS0=1$ and $\availableC\stdfC 0=\ellC$.

  Next, consider $q>0$. Let  $\expoCloseness>0$ be the constant given by Lemma~\ref{l_couplingProcedure}.  By our inductive assumptions $\availableC\stdfC q$ is a
  \couple{} of $\NCn$-prestandard families; let $\availableC\stdfC q_{\NCn}$ be a standard
  pushforward of $\availableC\stdfC q$; then, we can apply Lemma~\ref{l_bootstrap} to each couple of
  standard pairs in $\availableC\stdfC q_{\NCn}$ with $\Rgen=(\RCp-1)/\log\vei\ge\RBs$
  and obtain
  \[
  \eqc{\pFve^{\RCp\NCn}\availableC\stdfC{q}}\ni \mB\bootstrapped{\prepC{\availableC\stdfC{q}}}{{\RCp\NCn}} +
  (1-\mB)\notBootstrapped{\prepC{\availableC\stdfC{q}}}{{\RCp\NCn}}.
  \]
  We define $\datingC\stdfC{q+1}=\bootstrapped{\prepC{\availableC\stdfC{q}}}{{\RCp\NCn}}$
  and $\MCS{q+1}=\MUS{q}\mB$.  Observe that, by construction, $\datingC\stdfC{q+1}$ is a
  $\ve^{1+\expoCloseness}$-matched standard coupling \pinnedto{} $\trap_i$.  Let us
  denote with $\auxFamily{\datingC\stdfC{q+1}}k$ the sequence of couplings and with
  $\auxFamily{\datingC\butime{q+1}}k$ the sequence of random variables which we obtain by
  applying Lemma~\ref{l_couplingProcedure} to each standard coupling in
  $\datingC\stdfC{q+1}$.

  \newcommand{\blob}[2]{Y_{#2}^{#1}}

  Note that, according to Lemma~\ref{l_couplingProcedure}, a certain number of pairs will
  break up as times goes by. The variables $\auxFamily{\datingC\butime{q}}k$ keep track of
  when such breakups occurred. Moreover, recall that Lemma~\ref{l_couplingProcedure}
  asserts that if a standard couple in $\auxFamily{\datingC\stdfC{q}}k$ broke up at time
  $s$ (\ie $\auxFamily{\datingC\butime{q}}k=s$), then it will recover at time
  $s\NCn$. Hence the couple in the family $\auxFamily{\datingC\stdfC{q}}k$ that broke up at
  the step $\cO(k/2)$ have recovered (that is, are standard), thus available for starting
  again a coupling procedure.

  Then, we define the coupling $\availableC\stdfC{q+1}$ so that
  \begin{align*}
    \MUS{q+1}\availableC\stdfC{q+1}%
    &=\MUS{q}(1-\mB)\notBootstrapped{\prepC{\availableC\stdfC{q}}}{{\RCp\NCn}}+\\&\phantom{=}
                                                                                   +\sum_{s=1}^{q}\MCS{s} \auxFamily{\datingC\fm s}{\blob q
                                                                                   s\RCp}\left(\auxFamily{\datingC\butime s}{\blob q s\RCp}\in[(\blob qs-1)\RCp/2,\blob q
                                                                                   s\RCp/2)\right)\\
    &\phantom{=\sum_{s=1}^{q}\MCS{s}}\times\auxFamily{\datingC\stdfC s}{\blob q
      s\RCp}\cond\left\{\auxFamily{\datingC\butime s}{\blob q s\RCp}\in[(\blob qs-1)\RCp/2,\blob q
      s\RCp/2)\right\}
  \end{align*}
  where $\blob q s=q-s+1$.  In the above expression, the first term accounts for standard
  pairs which did not come close enough during the current step and we could not start coupling.
  The second terms account for standard pairs which we coupled in some
  previous step, broke up and recovered some time between the beginning and the end of the
  current step.  Correspondingly we let
  \begin{align}\label{e_MUS}
    \MUS{q+1}&=%
               \MUS{q}(1-\mB)
               \sum_{s=1}^{q}\MCS{s}\left(\MCp{(\blob q s-1)\RCp/2}-\MCp{\blob q s\RCp/2}\right).%
  \end{align}
  Observe that by definition $\availableC\stdfC{q+1}$ \ispinnedto{} $\trap_i$.  Now that we
  defined the auxiliary sequences of couplings, we claim that
  \begin{align}\label{e_boundMUS}
    \frac{\MUS{q+1}}{\MUS{q}}\in[\vartheta_*,\vartheta],
  \end{align}
  where $\vt = 1-\frac12\mB$ and $\vt_*=1-\mB$; observe that both $\vt$ and $\vt_*$
  increase with $r$ by Lemma~\ref{l_bootstrap}\ref{i_estimatemB}.  In fact
  by~\eqref{e_MUS} and the definition of $\MCS{s}$, we have:
  \begin{align*}
    \frac{\MUS{q+1}}{\MUS{q}}&=%
    (1-\mB)+\mB\sum_{s=0}^{q-1}\frac{\MUS{s}}{\MUS{q}}\left(\MCp{(\blob
        qs-2)\RCp/2}-\MCp{(\blob qs-1)\RCp/2}\right);
  \end{align*}
  the above immediately implies the lower bound $\frac{\MUS{q+1}}{\MUS{q}}\ge\vt_*$, since
  every term of the sum is positive.  In order to prove the upper bound observe that, by
  the lower bound and the above equation:
  \begin{align*}
    \frac{\MUS{q+1}}{\MUS{q}}&\le(1-\mB)+\vt_*\invr\mB\smallness\left[\sum_{k=0}^{q-1}\vt_*^{-k}\exp(-\const
      r k)\right]
  \end{align*}
  where we used~\eqref{e_tailboundM}; observe that by choosing $\gamma$ small and $r$
  large we can make the second term arbitrarily small, from which we conclude
  that~\eqref{e_boundMUS} holds.

  Let us now fix $n > 0$; let $k = \pint{n/\NCn}$, $q=\pint{k/\RCp}$ and, for
  $0\le s\le q$ define ${\leftoverm}_s=k-s\RCp$.  Let us first construct a coupling
  $\stdfC_{k\NCn}\in\eqc{\pFve^{k\NCn}\ellC}$ given by:
  \begin{align*}
    \stdfC_{k\NCn} &:= \sum_{s=1}^{q}\MCS{s} \auxFamily{\datingC\fm s}{{\leftoverm}_s}(\auxFamily{\datingC\butime s}{{\leftoverm}_s}=\infty)\cdot\auxFamily{\datingC\stdfC
      s}{{\leftoverm}_s}|\{\auxFamily{\datingC\butime s}{{\leftoverm}_s}=\infty\}\\
      &\phantom=+    \sum_{s=1}^{q}\MCS{s} \auxFamily{\datingC\fm s}{{\leftoverm}_s}(\auxFamily{\datingC\butime
      s}{{\leftoverm}_s}\in[\blob{q-1} s\RCp/2,{\leftoverm}_s))\cdot
    \auxFamily{\datingC\stdfC s}{{\leftoverm}_s}|\{\auxFamily{\datingC\butime s}{{\leftoverm}_s}\in[\blob {q-1} s\RCp/2,{\leftoverm}_s)\}
    \\&\phantom{=}+  \MUS{q}\availableC\stdfC{q}_{{\leftoverm}_q\NCn},
  \end{align*}
  where, $\availableC\stdfC{q}_{n}$ is an arbitrary $n$-pushforward of
  $\availableC\stdfC{q}$.  In the above expression, the first term accounts for pairs
  which we coupled at earlier steps and have not broken up yet; the second term accounts
  for all pairs which we coupled at any of the previous steps, broke up and have not
  recovered yet.  The third and last term accounts for $\NCn$-prestandard pairs which were
  uncoupled but recovered by the beginning of step $q$ and will try to get coupled in this
  step.

  For pairs belonging to the first term we can use
  Lemma~\ref{l_couplingProcedure}\ref{i_gettingCloser} and obtain that every pair in
  $\auxFamily{\datingC\stdfC s}{{\leftoverm}_s}|\{\auxFamily{\datingC\butime
    s}{{\leftoverm}}=\infty\}$
  is $\Const\ve^{1+\expoCloseness/2}\expo{-\const{\leftoverm}_s}$-matched.  For pairs
  belonging to the families appearing in the remaining two terms we do not have any
  estimate on the Wasserstein distance , therefore we can only bound it with $1$.

  Thus, we can use Corollary~\ref{c_wasserstein} and conclude that:
  \begin{align*}
    \wDist(F_{\ve*}^{n}\mu_{\ella},F_{\ve*}^{n}\mu_{\ellb})&\le\wDist(F_{\ve*}^{n-k\NCn}\mu_{\stdfCa_{k\NCn}},F_{\ve*}^{n-k\NCn}\mu_{\stdfCb_{k\NCn}})\\
                                                           &\le \Const\sum_{s=1}^{q}\MCS{s}\auxFamily{\datingC\fm s}{{\leftoverm}_s}(\auxFamily{\datingC\butime s}{{\leftoverm}_s}=\infty)\ve^{\expoCloseness/2}\expo{-\const{\leftoverm}_s}\\
                                                           &\phantom\leq +\Const\sum_{s=1}^{q}\MCS{s}\auxFamily{\datingC\fm s}{{\leftoverm}_s}(\auxFamily{\datingC\butime s}{{\leftoverm}_s}\in[\blob {q-1} s\RCp/2,\bar
                                                             m_s))+ \Const\MUS{q}
    \\&=\tI+\tII+\tIII.
  \end{align*}
  Let us estimate term $\tI$: by our estimate for $\MUS{s}$ we gather
  \begin{align*}
    \tI&\le\Const\mB\ve^{\expoCloseness/2}\sum_{s=1}^{q}\vartheta^s\expo{-\const{\leftoverm}_s}\\
    &\le\Const\mB\ve^{\expoCloseness/2}\sum_{s=1}^{q}\vartheta^s\expo{-\const(q-s)\RCp}\le\Const\mB\ve^{\expoCloseness/2}
    \vartheta^q.
  \end{align*}
This proves exponential decay for term $\tI$.
Similarly, for term $\tII$: using~\eqref{e_tailboundM} we obtain
  \begin{align*}
    \tII&\leq \Const\mB\smallness \sum_{s=1}^{q}\vartheta^s\expo{-\const(q-s)r}\leq
    \Const\mB\smallness \vartheta^q,
  \end{align*}
  by choosing $r$ sufficiently large.  We already proved, just after~\eqref{e_MUS},
  exponential decay for term $\tIII$, (\ie $\MUS{q}\le\vt^q$).  The proof then readily
  follows by collecting all above estimates.
\end{proof}
\subsection{Proof of the Main Theorem}\label{ss_proofMainTheorem}
Our Main Theorem is a direct consequence of Lemma~\ref{l_coupling} and the definition of
Wasserstein distance (see~\eqref{eq:Wasserstein}).  First, we owe to the reader the proof
of the the following
\begin{lem}\label{l_weakLimit}
  Let $\mu$ be an SRB measure and let $B(\mu)$ denote its ergodic basin (see
  Remark~\ref{r_endomorphismSRB}); then
  \begin{enumerate}
  \item \label{pp_weakLimit} $\mu$ is a weak limit of standard families.
  \item \label{pp_weakLimitTrap} if $\Leb(B(\mu)\cap\trap_i) > 0$, then $\mu$ is a weak
    limit of standard families that \arepinnedto{} $\trap_i$
  \end{enumerate}
\end{lem}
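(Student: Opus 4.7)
The plan is to exhibit $\mu$ as the weak limit of probability measures induced by standard families, by combining a standard disintegration of Lebesgue restricted to the ergodic basin with a Cesàro time-averaging argument that invokes the very definition of an SRB measure. The starting observation is that Lebesgue measure on $\bT^2$ itself admits a natural standard disintegration: partition $\bT$ into intervals $\{I_j\}$ of length in $[\delta/2,\delta]$ and, for each $j$ and each $\theta\in\bT$, consider the standard pair $\ell_{j,\theta}=(\bG_{j,\theta},1/|I_j|)$ with $\bG_{j,\theta}(x)=(x,\theta)$ for $x\in I_j$. Integrating over $\theta\in\bT$ against Lebesgue and summing over $j$ yields a standard family whose induced measure is $\Leb$.

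In order to isolate $\mu$ one must restrict this decomposition to $B(\mu)$, which is only Borel. The main technical step is thus to approximate $\Leb|_{B(\mu)}$ in total variation by measures induced by standard families. For each $m\in\bN$, on each horizontal slice $I_j\times\{\theta\}$ the measurable set $\bG_{j,\theta}^{-1}(B(\mu))\subset I_j$ can be approximated from inside by a finite union of subintervals with $1$-dimensional Lebesgue error at most $2^{-j-m}$. Each subinterval, equipped with the constant density $1/|I_j|$, defines a short standard pair of length at least some $\delta_m>0$; by Remark~\ref{r_short} such pairs are $\NRg(m)$-prestandard with $\NRg(m)\sim\Const|\log\delta_m|$. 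Pushing forward by $F_\ve^{\NRg(m)}$ then produces a genuine standard family $\stdf_m$ whose induced measure satisfies $\|\mu_{\stdf_m}-F_{\ve*}^{\NRg(m)}(\Leb|_{B(\mu)}/\Leb(B(\mu)))\|\nTV\le1/m$.

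Next I would form the Cesàro averages $\hat\mu_{m,n}=\frac1n\sum_{k=0}^{n-1}F_{\ve*}^k\mu_{\stdf_m}$. By Proposition~\ref{p_invarianceStandardPairs} each $F_{\ve*}^k\mu_{\stdf_m}$ is induced by a standard family, and since convex combinations of standard families are standard families, so is $\hat\mu_{m,n}$. On the other hand, the definition of the ergodic basin (Remark~\ref{r_endomorphismSRB}) ensures that $\frac1n\sum_{k=0}^{n-1}\delta_{F_\ve^kp}\to\mu$ weakly for every $p\in B(\mu)$; testing against a continuous $g$, integrating over $B(\mu)$ against Lebesgue and using dominated convergence yields $\frac1n\sum_{k=0}^{n-1}F_{\ve*}^k\Leb|_{B(\mu)}\to\Leb(B(\mu))\,\mu$ weakly as $n\to\infty$. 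Choosing $n=n(m)\gg\NRg(m)$ makes the boundary terms of order $\NRg(m)/n$ coming from the $\NRg(m)$-shift in pushforwards negligible, and a diagonal extraction produces standard families whose induced probability measures converge weakly to $\mu$, proving~\ref{pp_weakLimit}. The delicate point is the simultaneous control of the three scales: the TV approximation error $\cO(1/m)$, the recovery time $\NRg(m)$, and the averaging window $n(m)$, so that all errors vanish together in the limit.

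For~\ref{pp_weakLimitTrap}, I would repeat the same construction starting from $\Leb|_{B(\mu)\cap\trap_i}$, which has positive mass by hypothesis, so that the approximating prestandard families are supported in $\bT\times\trap_i\subset\bT\times\trap_i^+$. By~\eqref{e_protoInv}, composing with an additional pushforward $F_\ve^{N_i}$ with $N_i>\pint{\TForb\vei}$ sends the support into $\bT\times\trap_i^-\subset\bT\times\trap_i$; this containment is inherited by every constituent standard pair, whose $\theta$-average then lies in $\trap_i$, making each pair pinned to $\trap_i$. The Cesàro averaging and diagonal extraction then go through verbatim, and produce standard families pinned to $\trap_i$ whose induced probability measures converge weakly to $\mu$.
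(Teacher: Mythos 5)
Your proposal follows essentially the same route as the paper: approximate the restriction of a reference measure to $B(\mu)$ in total variation by a prestandard family, invoke the definition of the ergodic basin to get weak convergence of the Ces\`aro averages, control the recovery-time offset, and for~\ref{pp_weakLimitTrap} use~\eqref{e_protoInv} to keep everything pinned to $\trap_i$. The one simplification the paper makes is to choose, via Fubini, a \emph{single} standard pair $\ell$ with $\mu_\ell(B(\mu))>0$ and approximate $\bG^{-1}(B(\mu))\subset[a,b]$ by a finite union of intervals; this keeps the index set countable and sidesteps the measurable-selection questions that are implicit in your two-parameter ($j,\theta$) approximation of the slices $\bG_{j,\theta}^{-1}(B(\mu))$.
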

\begin{proof}
  For ease of notation, let $B = B(\mu)$; by Fubini's Theorem there exists a standard
  pair %
  $\ell = (\bG,\rho)$ (e.g. horizontal and with constant density) which intersects $B$ and
  so that $\mu_\ell(B) > 0$; let us denote by $\mu_{\ell,B}$ the normalized restriction of
  $\mu_\ell$ to $B$, \ie for any test function $\Phi$ we let
  $\mu_{\ell,B}(\Phi) = \mu_\ell(B)\invr\cdot\mu_\ell(\Id_B\cdot\Phi)$.  Observe that by
  definition of $B$:
  \begin{align*}
    \frac1n\sum_{k= 0}^{n-1}F_{\ve*}^{k}\mu_{\ell,B}\to\mu\text{ weakly as } n\to\infty.
  \end{align*}
  Fix $\varrho > 0$ be arbitrarily small; since the set $\bG^{-1}(B)\subset[a,b]$ is
  measurable, it can be approximated with a finite number of disjoint intervals up to
  error $\varrho$.  We conclude that there exist $N > 0$ and an $N$-prestandard family
  $\stdf_B$ so that $\|\mu_{\stdf_B}-\mu_{\ell,B}\|\nTV < \varrho$, where $\|\cdot\|\nTV$
  denotes the total variation norm.  Hence, for any $n$:
  \begin{align*}
    \left\|\frac1n\sum_{k = 0}^{n-1}F_{\ve*}^{k}\mu_{\stdf_B} - \frac1n\sum_{k =
    0}^{n-1}F_{\ve*}^k\mu_{\ell,B}\right\|\nTV < \varrho.
  \end{align*}
  Moreover, observe that for any $n$
  \begin{align*}
    \left\|\frac1n\sum_{k = 0}^{n-1}F_{\ve*}^{k}\mu_{\stdf_B} - \frac1{n-N}\sum_{k =
    N}^{n-1}F_{\ve*}^{k}\mu_{\stdf_B}\right\|\nTV     < 2\frac{N}{n-N}.
  \end{align*}
  Since $\frac1{N-n}\sum_{k = N}^{n-1}F_{\ve*}^k\mu_{\stdf_B}$ can be decomposed, by
  definition, in a standard family, the proof of~\ref{pp_weakLimit} follows choosing $n$
  sufficiently large.

  The proof of~\ref{pp_weakLimitTrap} also follows from the same argument, since our
  assumption guarantees that we can choose $\ell$ to \bepinnedto{} $\trap_i$;
  by~\eqref{e_protoInv} the standard family
  $\frac1{N-n}\sum_{k = N}^{n-1}F_{\ve*}^k\mu_{\stdf_B}$ \ispinnedto{} to $\trap_i$, which
  proves~\ref{pp_weakLimitTrap}.
\end{proof}
We now proceed to the proof of the Main Theorem, which we will now state, as promised in
Section~\ref{sec:results}, in a stronger version.  Let us denote with $\nTraps \le \nz$
the number of disjoint non-empty trapping sets $\trap_i$ and, for any $i$, recall that we
denote by $\nz_{i}$ the number of sinks $\sink j$ that are contained in $\trap_i$ (recall
also Footnote~\ref{f_nzIndependent}).  We now prove the following
\begin{thm}\label{t_mainTheoremForTraps}
  Assume that~\ref{a_noCobo},~\ref{a_discreteZeros},~\ref{a_almostTrivialp} hold and let
  $\sink i$ be a recurrent sink.  Then there exists a unique SRB measure $\mu_{\ve,i}$ so
  that $\supp\mu_{\ve,i}\subset\{\theta\in\trap_i\}$ (in particular, if
  $\trap_i = \trap_j$ then $\mu_{\ve,i} = \mu_{\ve,j}$).  The measure $\mu_{\ve,i}$ enjoys
  exponential decay of correlation for H\"older observables in the following sense.  There
  exist $C_1, C_2, C_3, C_4 > 0$ (independent of $\ve$) so that for any
  $\holexpa\in(0,3]$, $\holexpb\in(0,1]$ and any two functions
  $A\in\cC^\holexpa(\{\theta\in\trap_i\})$, $B\in\cC^\holexpb(\bT^2)$:
  \begin{align*}
    \left| \Leb(A\cdot B\circ F_\ve^n) - \Leb(A)\mu_{\ve,i}(B)\right|\leq
    C_1\sup_\theta\|A(\cdot, \theta)\|\nc\holexpa\sup_x\|B(x,\cdot)\|\nc\holexpb e^{-\holexpa\holexpb c_{\ve,i} n},
  \end{align*}
  where
  \begin{equation}\label{e_lowerBoundRateForTrap}
    c_{\ve,i}=
    \begin{cases}
      C_2\ve/\log\vei&\text{if }\nz_{i}=1,\\
      C_3\exp(-C_4\vei)&\text{otherwise.}
    \end{cases}
  \end{equation}
\end{thm}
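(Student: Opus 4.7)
\textbf{Construction of $\mu_{\ve,i}$.} The plan is to define $\mu_{\ve,i}$ as the Wasserstein limit of pushforwards of standard families pinned to $\trap_i$. For any standard family $\stdf$ pinned to $\trap_i$, Proposition~\ref{p_invarianceStandardPairs} combined with~\eqref{e_protoInv} and Remark~\ref{r_eventualInvariance} guarantees that $F_{\ve*}^n\mu_\stdf$ admits a standard decomposition pinned to $\trap_i$ for $n$ sufficiently large. Applying the Coupling Lemma~\ref{l_coupling} pair by pair between the standard decompositions of $F_{\ve*}^n\mu_\stdf$ and $F_{\ve*}^{n+m}\mu_\stdf$, and using the convexity of $\wDist$, the sequence $(F_{\ve*}^n\mu_\stdf)_n$ is Cauchy in $\wDist$ at the rate~\eqref{e_lowerBoundRateForTrap}; its limit $\mu_{\ve,i}$ is $F_\ve$-invariant, is independent of $\stdf$ by the same coupling estimate applied to two distinct families, and is supported on $\{\theta\in\trap_i\}$ by~\eqref{e_protoInv}.

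\textbf{Decay of correlations, first for smooth observables then by interpolation.} The strategy is to compare $F_{\ve*}^n(A\,d\Leb)$ with $\Leb(A)\mu_{\ve,i}$ in $\wDist$. After writing $A=(A+M)-M$ with $M=\|A\|_{C^0}$ (the contribution of the constant term reducing to the case $A\equiv 1$ by bilinearity), the normalized measure $(A+M)\,d\Leb/\Leb(A+M)$ restricted to $\bT\times\trap_i$ admits a standard family decomposition into short horizontal pairs, with standard pair constants controlled by $\|A\|_{C^2}$; $\mu_{\ve,i}$ admits a standard family decomposition pinned to $\trap_i$ by its construction. Pairing both decompositions and applying Lemma~\ref{l_coupling} pair by pair, then integrating against $B$ (which is Lipschitz in $\theta$ and hence $\vdist$-Lipschitz by~\eqref{eq:Wasserstein-dist}), yields for $A\in C^2$ and $B$ Lipschitz in $\theta$
\[
\left|\Leb(A\cdot B\circ F_\ve^n)-\Leb(A)\mu_{\ve,i}(B)\right|\le C\,\|A\|_{C^2}\sup_x\|B(x,\cdot)\|_{\textup{Lip}}\,\expo{-c_{\ve,i}n}.
\]
To extend this estimate to $A\in C^\holexpa$ and $B\in C^\holexpb$, I would mollify $A$ in the $x$-variable at scale $\eta_A$ and $B$ in the $\theta$-variable at scale $\eta_B$, obtaining $\tilde A,\tilde B$ with the standard estimates $\|A-\tilde A\|_{C^0}\le C\|A\|_{C^\holexpa}\eta_A^\holexpa$, $\|\partial_x^k\tilde A\|\le C\|A\|_{C^\holexpa}\eta_A^{\holexpa-k}$ (and analogously for $\tilde B$). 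Combining the smooth estimate applied to $(\tilde A,\tilde B)$ with the mollification errors and optimizing $\eta_A,\eta_B$ against $\expo{-c_{\ve,i}n}$ produces the claimed rate $\expo{-\holexpa\holexpb c_{\ve,i}n}$ after balancing the resulting terms.

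\textbf{SRB property and uniqueness.} To verify that $\mu_{\ve,i}$ is SRB, I would fix a horizontal standard pair $\ell$ inside $\bT\times\trap_i^-$ and show that the empirical measures $\mu_n(p)=\frac1n\sum_{k<n}\delta_{F_\ve^k p}$ converge weakly to $\mu_{\ve,i}$ for $\mu_\ell$-a.e.\ $p$. By the construction of $\mu_{\ve,i}$, $\mu_\ell(\mu_n(\Phi))\to\mu_{\ve,i}(\Phi)$ for any continuous $\Phi$, while the variance $\mu_\ell((\mu_n(\Phi)-\mu_{\ve,i}(\Phi))^2)$ decays by the $C^2\times\textup{Lip}_\theta$ decay-of-correlations estimate already proved, applied to the off-diagonal terms $\mu_\ell(\Phi\circ F_\ve^j\cdot\Phi\circ F_\ve^k)$. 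Chebyshev plus a Borel--Cantelli argument along a rapid subsequence give almost-sure convergence for a countable dense set of Lipschitz test functions; varying $\ell$ over a family foliating $\bT\times\trap_i^-$ and applying Fubini give a positive Lebesgue measure ergodic basin. Uniqueness of the SRB measure supported on $\{\theta\in\trap_i\}$ then follows from Lemma~\ref{l_weakLimit}\ref{pp_weakLimitTrap}: every such measure is a weak limit of standard families pinned to $\trap_i$, and hence coincides with $\mu_{\ve,i}$ by the Cauchy convergence above.

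\textbf{Main obstacle.} The main technical difficulty is making the standard family decomposition of $(\tilde A+M)\,d\Leb$ compatible with the constants $\spc1,\spc2$ fixed in the definition of a standard pair, uniformly in the mollification scale $\eta_A$: this forces a careful calibration between $M$ and $\eta_A$ (so that $\log(\tilde A+M)$ has controlled $C^2$ norm along horizontal pairs) and causes the prefactor in the smooth decay-of-correlations estimate to degenerate polynomially in $\eta_A^{-1}$. Balancing this polynomial loss against the exponential gain $\expo{-c_{\ve,i}n}$ and the mollification errors is precisely what produces the exponent $\holexpa\holexpb c_{\ve,i}$ in the final estimate. A secondary subtlety is the ordering of the argument: since the SRB verification invokes decay of correlations, the smooth estimate must be established before the SRB step, with the H\"older extension taking place only once uniqueness of $\mu_{\ve,i}$ has been established.
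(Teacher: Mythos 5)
Your proposal follows essentially the same overall architecture as the paper's proof (construct $\mu_{\ve,i}$ as a Wasserstein-Cauchy limit via Lemma~\ref{l_coupling}, reduce to positive densities that serve as standard pairs, prove the smooth case, then interpolate to H\"older). Two steps are executed by a genuinely different route and are worth comparing.

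\emph{H\"older dependence in $B$.} You propose mollifying $B$ in $\theta$ at a scale $\eta_B$ and optimizing. The paper never mollifies $B$: the Wasserstein bound from Lemma~\ref{l_coupling} together with the concavity of $t\mapsto t^\holexpb$ (Jensen applied to the coupling realizing $\wDist$ in~\eqref{eq:Wasserstein}) gives directly $|\mu_1(B)-\mu_2(B)|\le \|B\|_{x,\holexpb}\wDist(\mu_1,\mu_2)^{\holexpb}$, so the exponent $\holexpb$ appears for free in~\eqref{e_protoDecay}. Both routes work; the Jensen route is shorter and avoids introducing a second scale to optimize. For $A$ the two treatments coincide: your decomposition $A=(A+M)-M$ and the paper's $cA=\{c(A+\|A\|_\infty)+1\}-\{c\|A\|_\infty+1\}$ are the same trick (the paper's extra scaling by $c$ is precisely what controls the $\cC^3$ standard-pair constants, the issue you flag in your last paragraph), and both mollify $A$ at a single scale and optimize.

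\emph{SRB verification.} You propose a second-moment argument: show $\mu_\ell(\mu_n(\Phi))\to\mu_{\ve,i}(\Phi)$, control $\mu_\ell\big((\mu_n(\Phi)-\mu_{\ve,i}(\Phi))^2\big)$ by applying the smooth decay estimate to the off-diagonal terms $\mu_\ell(\Phi\circ F_\ve^j\cdot\Phi\circ F_\ve^k)$, then Chebyshev plus Borel--Cantelli along a subsequence. This is a legitimate and self-contained dynamical argument, but it has a hidden subtlety: for $k>j$ you must rewrite the correlation as $F_{\ve*}^j\mu_\ell(\Phi\cdot\Phi\circ F_\ve^{k-j})$ and apply the decay estimate with $F_{\ve*}^j\mu_\ell$ as initial measure (otherwise $\Phi\circ F_\ve^j$ has an unbounded $x$-derivative and cannot serve as a density); this is fine because the coupling works for any pinned standard family, but it should be made explicit. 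The paper instead takes a shorter, more structural route: it observes that $\lim_n\frac1n\sum_{k<n}F_{\ve*}^k\Leb_i$ is a convex combination of SRB measures with basins meeting $\{\theta\in\trap_i\}$ in positive measure, and then invokes Lemma~\ref{l_weakLimit}\ref{pp_weakLimitTrap} together with the already-established uniqueness of the weak accumulation point to conclude that each such SRB measure must equal $\mu_{\ve,i}$. The paper's route is more economical because it piggybacks on Lemma~\ref{l_weakLimit}; your route is more elementary in that it does not presuppose that time averages already decompose into SRB components, at the cost of the variance computation.

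Both of your alternatives are sound; neither introduces a gap. The main thing to watch in your write-up would be the point above about applying the correlation estimate from a pushed-forward standard family rather than from Lebesgue, and (as you already note) the calibration between the mollification scale and the standard-pair constants when $(\tilde A+M)\,d\Leb$ is recast as a standard family.
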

Our Main Theorem then follows as a corollary:
\begin{cor}\label{c_mainTheoremCorollary}
  Under assumptions~\ref{a_noCobo},~\ref{a_discreteZeros},~\ref{a_almostTrivialp}
  and~\ref{a_fluctuationGap}, if $\ve>0$ is sufficiently small, $F_\ve$ admits exactly
  $\nTraps$ SRB measures.

  Under assumptions~\ref{a_noCobo},~\ref{a_discreteZeros},~\ref{a_almostTrivialp}
  and~\ref{a_fluctuation}, there exists a unique SRB measure $\mu_\ve$ for $F_\ve$; moreover $\mu_\ve$
  enjoys exponential decay of correlations as stated in the Main Theorem.
\end{cor}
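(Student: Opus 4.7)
The plan is to derive Corollary~\ref{c_mainTheoremCorollary} directly from Theorem~\ref{t_mainTheoremForTraps} together with the geometric information about trapping sets collected in Lemmata~\ref{l_fluctuationGap} and~\ref{l_weakLimit}. The two parts of the corollary will follow by essentially the same mechanism: Theorem~\ref{t_mainTheoremForTraps} produces one SRB measure per distinct trapping set, and we only need to argue that \emph{every} SRB measure is one of these.

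For the first part, observe first that under~\ref{a_fluctuationGap} every trapping set contains a recurrent sink (by Lemma~\ref{l_propertiesTrappingSet}\ref{pp_trapSink}), and conversely two recurrent sinks contained in the same $\trap_i$ give rise (via Theorem~\ref{t_mainTheoremForTraps}) to the same SRB measure. Since different trapping sets are disjoint (Lemma~\ref{l_propertiesTrappingSet}\ref{pp_disjoint}), the measures $\mu_{\ve,i}$ attached to distinct $\trap_i$'s are mutually singular. This yields \emph{at least} $\nTraps$ distinct SRB measures. For the matching upper bound, let $\mu$ be any SRB measure. By definition, $\Leb(B(\mu)) > 0$. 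Combining Lemma~\ref{l_fluctuationGap}\ref{pp_recurrentSink} with the large deviation estimate~\eqref{e_eventuallyTrapped} applied to standard pairs Fubini-slicing $B(\mu)$, one deduces that for Lebesgue-almost every $p\in B(\mu)$ the forward orbit of $p$ eventually enters $\bigcup_i\{\theta\in\trap_i\}$ and (by~\eqref{e_protoInv}) remains there. Hence $\Leb(B(\mu)\cap\{\theta\in\trap_i\})>0$ for some $i$. Lemma~\ref{l_weakLimit}\ref{pp_weakLimitTrap} then exhibits $\mu$ as a weak limit of standard families pinned to $\trap_i$, and the uniqueness clause of Theorem~\ref{t_mainTheoremForTraps} forces $\mu = \mu_{\ve,i}$.

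For the second part, Remark~\ref{r_fluctuationAssumption} tells us that under~\ref{a_fluctuation} (which is strictly stronger than~\ref{a_fluctuationGap}) we have $\trap_1 = \bT$ and every other non-empty trapping set coincides with $\trap_1$, so $\nTraps = 1$. The first part of the corollary therefore produces a unique SRB measure $\mu_\ve := \mu_{\ve,1}$. The decay-of-correlations estimate is inherited verbatim from Theorem~\ref{t_mainTheoremForTraps}: because $\{\theta\in\trap_1\} = \bT^2$, every $A\in\cC^{\holexpa}(\bT^2)$ belongs to the class of admissible test functions, and the rate $c_{\ve,1}$ from~\eqref{e_lowerBoundRateForTrap} matches the rate $c_\ve$ from~\eqref{e_lowerBoundRate} after noting that $\nz_{1} = 1$ precisely when $\nz = 1$ (the single trapping set $\bT$ contains all $\nz$ sinks).

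The only substantive step is the one in the middle of the first part: showing that the ergodic basin of an arbitrary SRB measure must meet some $\{\theta\in\trap_i\}$ in positive Lebesgue measure. I expect this to be the main obstacle, because~\eqref{e_eventuallyTrapped} and~\eqref{e_protoInv} are formulated for (pushforwards of) standard pairs rather than for arbitrary points of $B(\mu)$; bridging this gap cleanly requires invoking Lemma~\ref{l_weakLimit}\ref{pp_weakLimit} to rewrite $\mu$ as a weak limit of standard families $\stdf^{(n)}$ and then checking that the mass these families place outside $\bigcup_i\{\theta\in\trap_i\}$ tends to zero as one iterates, while the trapping sets themselves are asymptotically invariant by~\eqref{e_protoInv}. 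Once this localisation is established, everything reduces mechanically to Theorem~\ref{t_mainTheoremForTraps}.
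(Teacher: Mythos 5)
Your overall route matches the paper's: combine Theorem~\ref{t_mainTheoremForTraps} with Lemma~\ref{l_weakLimit}\ref{pp_weakLimitTrap} and a localisation argument showing that the ergodic basin of any SRB measure meets some $\{\theta\in\trap_i\}$ in positive Lebesgue measure. However, you correctly flag the localisation as the crux and then leave it unresolved: from ``Lebesgue-a.e.\ $p\in B(\mu)$ eventually enters $\bigcup_i\{\theta\in\trap_i\}$'' you jump to ``$\Leb(B(\mu)\cap\{\theta\in\trap_i\})>0$.'' What the Fubini-slicing plus~\eqref{e_eventuallyTrapped} argument actually produces is $\Leb\big(B(\mu)\cap F_\ve^{-n}\{\theta\in\trap_i\}\big)>0$ for some $n>0$ and some $i$, and you still need to remove the $F_\ve^{-n}$. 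The paper does this with a simple but essential observation you omit: $B(\mu)$ is $F_\ve$-invariant, so $B(\mu)\cap F_\ve^{-n}\{\theta\in\trap_i\}=F_\ve^{-n}\big(B(\mu)\cap\{\theta\in\trap_i\}\big)$, and since $F_\ve$ is a local diffeomorphism, $F_{\ve*}^n\Leb$ is absolutely continuous with respect to $\Leb$; hence $\Leb(F_\ve^{-n}A)>0$ forces $\Leb(A)>0$. This single step closes the gap and renders the tentative alternative in your last paragraph (rewriting $\mu$ as a limit of standard families via Lemma~\ref{l_weakLimit}\ref{pp_weakLimit} and tracking where their mass goes) unnecessary --- and, as you present it, that alternative is too vague to be verified.

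Two smaller inaccuracies. The assertion that~\ref{a_fluctuation} is ``strictly stronger than''~\ref{a_fluctuationGap} is neither established in the paper nor needed, and is in fact doubtful: the two conditions are logically independent in general. Relatedly, for the second part you should not route through ``the first part of the corollary'' (which presupposes~\ref{a_fluctuationGap}): under~\ref{a_fluctuation} alone Remark~\ref{r_fluctuationAssumption} gives $\trap_1=\bT$, so $\{\theta\in\trap_1\}=\bT^2$ and Theorem~\ref{t_mainTheoremForTraps} applied directly to $\sink 1$ already yields the unique SRB measure together with the decay-of-correlations estimate, with no appeal to~\ref{a_fluctuationGap} at all.
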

\begin{proof}
  If~\ref{a_fluctuation} holds, then (see Remark~\ref{r_fluctuationAssumption})
  $\trap_1 = \bT$ and thus $\nz = \nz_{1}$.  Then, Theorem~\ref{t_mainTheoremForTraps},
  immediately implies existence and uniqueness of the SRB measure $\mu_\ve$ for $F_\ve$
  and that $\mu_\ve$ enjoys the required properties.

  On the other hand, if~\ref{a_fluctuationGap} holds, we want to prove that there cannot
  be any other SRB measure than the ones found by Theorem~\ref{t_mainTheoremForTraps}.  We
  can argue as follows: let $\mu$ be an SRB measure; as in the proof of
  Lemma~\ref{l_weakLimit}, there exists a standard pair $\ell$ so that
  $\mu_{\ell}(B(\mu)) > 0$; by~\eqref{e_eventuallyTrapped}, we gather that, for some
  $n > 0$ and $i\in\{1,\cdots,\nz\}$,
  $\Leb\left(B(\mu)\cap F_\ve^{-n}\{\theta\in\trap_i\}\right) > 0$.  Since by definition
  $B(\mu)$ is a $F_\ve$-invariant set, we gather
  $F_{\ve*}^n\Leb\left(B(\mu)\cap \{\theta\in\trap_i\}\right) > 0$, but since $F_{\ve}$ is a
  local diffeomorphism, $F_{\ve*}^n\Leb$ is absolutely continuous with respect to the
  Lebesgue measure.  Consequently, we have
  $\Leb\left(B(\mu)\cap \{\theta\in\trap_i\}\right) > 0$, hence $\mu = \mu_{\ve,i}$ by
  Lemma~\ref{l_weakLimit}\ref{pp_weakLimitTrap}.  We thus conclude that $F_\ve$ admits
  exactly $\nTraps$ SRB measures.
\end{proof}
\begin{proof}[Proof of Theorem~\ref{t_mainTheoremForTraps}]
  Let $\sink i$ be a recurrent sink and $\ell$ be a standard pair \pinnedto{} $\trap_i$.
  First, we prove that the sequence $F_{\ve*}^n\mu_\ell$ weakly converges to a SRB measure
  $\mu_{\ve,i}$ which is independent of $\ell$.  In fact,
  Remark~\ref{r_eventualInvariance} implies that if $n > \pint{\TForb\vei}$, the measure
  $F_\ve^{n}\mu_{\ell}$ can be decomposed in a standard family which \ispinnedto{} $\trap_i$.
  Then, for any $n > \pint{\TForb\vei}$, $m > 0$ and H\"older observable
  $B\in\cC^{\holexp}(\bT^2,\bR)$ (where $\holexp\in(0,1]$), Lemma~\ref{l_coupling} implies:
  \begin{align}\label{e_protoDecay}
    |F_{\ve*}^{n+m}\mu_{\ell}(B)-F_{\ve*}^{n}\mu_{\ell}(B)| &\le 
    \int_{\alphaset_m}d\fm(\alpha)\left|
    F_{\ve*}^n\mu_{\fellf{\alpha}_m}(B)-F_{\ve*}^n\mu_{\ell}(B)\right|\notag\\ &
    \le \Const\expo{-\holexp c_\ve (n-\pint{\TForb\vei})}\|B\|_{x,\holexp}\notag\\ &
    \le \Const\expo{-\holexp c_\ve n}\|B\|_{x,\holexp},
  \end{align}
  where $\stdf_m = \SFF{\alphaset_m}{\ell_m}$ is a standard $m$-pushforward of $\ell$,
  $\|B\|_{x,\holexp}=\sup_x\|B(x,\cdot)\|_{\cC^\holexp}$ and $c_\ve$
  satisfies~\eqref{e_lowerBoundRateForTrap} by Lemma~\ref{l_bootstrap}\ref{i_estimatemB}.

  In particular, $F_{\ve*}^n\mu_{\ell}(B)$ is a Cauchy sequence and, if $B$ is Lipschitz,
  this implies that the sequence of probability measures $F_{\ve*}^n\mu_{\ell}$ has a
  unique weak accumulation point.  Lemma~\ref{l_coupling} also implies that the sequence
  $F_{\ve*}^n\mu_{\ell'}$ has the same weak accumulation point for any $\ell'$ which
  \ispinnedto{} $\trap_i$ and that convergence is exponentially fast.  Let us denote by
  $\mu_{\ve,i}$ this accumulation point; by construction it is $F_\ve$-invariant.

  We now show that $\mu_{\ve,i}$ is indeed a SRB measure in the sense of Remark~\ref{r_endomorphismSRB}: consider a measurable partition $\{I_\xi\}_{\xi\in \Xi}$ of
  $\bT\times\trap_i$ in horizontal segments\footnote{ Notice that by
    Lemma~\ref{l_propertiesTrappingSet}\ref{pp_trapSink} $\trap_i$ contains a neighborhood
    of $\sink i$.} of length between $\delta/2$ and $\delta$ with indices in some measure
  space $\Xi$. That is, we let $I_\xi= [a_\xi,b_\xi]\times \{y_\xi\}$ for some
  $a_\xi,b_\xi,y_\xi\in\bT$ with $\delta/2\leq b_\xi-a_\xi\leq \delta$.  Let
  $\Leb_i = \Leb_{\{\theta\in\trap_i\}}$ be the restriction of Lebesgue measure to
  $\{\theta\in\trap_i\}$ normalized to be a probability measure.  Then by definition
  $\lim_{n\to\infty}\frac1n\sum_{k = 0}^{n-1}F_{\ve*}^k\Leb_i$ is a convex combination of
  SRB measures whose ergodic basin intersects $\{\theta\in\trap_i\}$ in a positive
  Lebesgue measure set.  By Lemma~\ref{l_weakLimit}\ref{pp_weakLimitTrap} and our previous
  argument, we conclude that any such measure has to be equal to $\mu_{\ve,i}$; we
  conclude that $\mu_{\ve,i}$ is itself an SRB measure.  By invariance of $\mu_{\ve,i}$
  and since it can be approximated by standard families \pinnedto{} $\trap_i$, we
  conclude using~\eqref{e_protoInv}, that $\supp \mu_{\ve,i}\subset\{\theta\in\trap_i\}$.
  Moreover, by our construction, it is clear that if $\trap_i = \trap_j$, then
  $\mu_{\ve,i} = \mu_{\ve,j}$.

  In order to conclude, we need to check that we have exponential decay of correlations
  for H\"older observables.  To start, let us first assume
  $A\in\cC^3(\{\theta\in\trap_i\})$ and consider the measurable partition
  $\{I_\xi\}_{\xi\in \Xi}$ introduced above.  Then we can write:
  \begin{equation}\label{eq:startingpoint0}
    \Leb_i(A\cdot B\circ F_\ve^n) =\int_{\Xi} \nu(d\xi)\int_{I_\xi} A(x,y_\xi) B\circ
    F_\ve^n(x,y_\xi) \deh x,
  \end{equation}
  where $\nu$ is the natural factor measure on $\Xi$.  Next, we set
  $\deh\hat \nu= \left[\int_{I_\xi}A(x,y_\xi) dx\right]d\nu$ and
  $\hat A_\xi(x)=A(x,y_\xi)\left[\int_{I_\xi}A(x,y_\xi) dx\right]^{-1}$.  In particular,
  $\int_{\Xi}\hat\fm(d\xi) = \Leb_i(A)$.  Then, by definition,
  $\ell_\xi=( \bG_\xi, \hat A_\xi)$ with $\bG_\xi(x)=(x,y_\xi)$, is a standard pair
  provided $\min A\geq 1$ and $\|A(x, \cdot)\|_{\cC^3}\leq C$ for some appropriate
  constant $C>0$ (see Section~\ref{subsec:Standardpairs} to recall definitions and
  notations).  Thus we can write
  \[
  \Leb_i(A\cdot B\circ F_\ve^n) =\int_{\Xi}
  \hat\nu(d\xi)\mu_{\ell_\xi}(B\circ F_\ve^n)=\int_{\Xi}
  \hat\nu(d\xi)F_{\ve*}^n\mu_{\ell_\xi}(B).
  \]
  since
  $|F_{\ve*}^n\mu_{\ell_\xi}(B)-\mu_{\ve,i}(B)| < \Const\expo{-\holexp c_\ve
    n}\|B\|_{x,\holexp}$,
  we obtain exponential decay of correlations, provided that $A$ satisfies the additional
  properties listed above.

  Let us now consider the case of a general $A$.  Obviously it suffices to have an
  estimate for $c A$, where $c$ is some small constant. But then we can write
  \[
  cA=\{c(A+\|A\|_{L^\infty})+1\}-\{c\|A\|_{L^\infty}+1\}
  \]
  which, for $c\leq (C-1)(2\|A(x, \cdot)\|_{\cC^3})^{-1}$, is the difference of two
  functions both satisfying the hypotheses above.  Thus, for all $A\in\cC^3$ and
  $B\in \cC^\holexp$, we have
  \begin{equation}\label{eq:smoothmain}
    \left| \Leb_i(A\cdot B\circ F_\ve^n) - \Leb_i(A)\mu_\ve(B)\right|\leq
    C_1\|A\|_{\theta, 3}\|B\|_{x,\holexp} e^{-\holexp c_\ve n}.
  \end{equation}
  To conclude, let us consider the case $A\in \cC^\holexpa$, $\holexpa<3$; for arbitrary
  $\varrho > 0$ let $A_\varrho\in\cC^3$ such that
  $\|A-A_\varrho\|_{\theta, 0}\leq \varrho^\holexpa \|A\|_{\theta,\holexpa}$, and
  $\|A_\varrho\|_{\theta,3}\leq
  \Const\varrho^{-3+\holexpa}\|A\|_{\theta,\holexpa}$.\footnote{
    Such approximate functions can be obtained by standard mollification.}  Then, by
  equations~\eqref{eq:startingpoint0},~\eqref{e_protoDecay} and~\eqref{eq:smoothmain}, we
  have
  \[
  \begin{split}
    \left| \Leb_i(A\cdot B\circ F_\ve^n) - \Leb_i(A)\mu_\ve(B)\right|\leq& \left|
      \Leb_i(A_\varrho\cdot B_\varrho\circ F_\ve^n) - \Leb_i(A_\varrho)\mu_\ve(B_\varrho)\right|\\
    &+\Const \varrho^{\holexpa} \|A\|_{\theta,\holexpa}\|B\|_{x,\holexp}\\
    \leq& (\Const\varrho^{-3+\holexpa} e^{-\beta c_\ve n}+\Const
    \varrho^\holexpa)\|A\|_{\theta,\holexp}\|B\|_{x,\holexp}.
  \end{split}
  \]
  Optimizing $\varrho$, as a function of $n$, we obtain
  $\varrho = e^{-\holexpb c_\ve n/3}$, which yields the wanted result (absorbing the
  factor $3$ in the constants $C_2$ and $C_3$).
\end{proof}
\begin{rem}
  Once again (see Remark~\ref{rem:lebtd}) Theorem~\ref{t_mainTheoremForTraps} is stated
  for Lebesgue measure just for simplicity.  In fact it holds for any initial measure that
  can be obtained as weak limit of standard families \pinnedto{} $\trap_i$.  In
  particular, we have exponential decay of correlations for initial conditions distributed
  according to the SRB measures $\mu_{\ve,i}$ themselves.  Only, in this case our
  estimate~\eqref{e_lowerBoundRateForTrap} for the decay rate is quite possibly not
  optimal when $\nz_i>1$ (e.g. see discussion in Subsection~\ref{subsec:onesink}).
\end{rem}
\section{Coupling: Proofs}\label{sec:coup-proof}
This is the most probabilistic part of the paper: it is then natural to adopt a more
probabilistic notation.  As we have painstakingly explained on which spaces the various
relevant random variables live and how their laws are defined, from now on we will simply
use $\bP$ and $\bE$ for designating, respectively, their probability and expectation,
unless some ambiguity might arise.

We start with an easy corollary of Lemma~\ref{l_goodSet} and Lemma~\ref{l_couplingStep}
with $\NCoup = \NCn$ which ensures that a $\Delta\ve$-matched coupling which is supported
on $\uhappy$ will geometrically decrease its Wasserstein distance after time $\NCn$ except in
an event of exponentially small probability.
\begin{cor}\label{c_fundamentalBound}
  For any $\bar\Delta>0$ there exists $\bar\ve>0$ so that the following holds.  For any
  $\ve\in(0,\bar\ve)$, $\Delta\in(0,\bar\Delta)$ and $\ellC$ a $\Delta\ve$-matched
  standard couple so that $\avgtheta{\ellCa}\in\uhappy$; let $\coupledStepSeq\stdfC\NCn$
  be the family obtained by applying Lemma~\ref{l_couplingStep} with $\NCoup = \NCn$ to
  the couple $\ellC$. Then:
  \begin{align*}
    \bP\big(\avgtheta{\coupledStepSeq\fellCa\NCn(\cdot)}\in\uhappy,
  \coupledStepSeq\fellC\NCn(\cdot)\text{ is
    $\wDist(\ellCa,\ellCb)\expo{-\TCn/2}$-matched}\big)\ge 1-\Const\expo{-\const\vei}.
  \end{align*}
\end{cor}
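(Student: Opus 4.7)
The plan is to derive the corollary as an essentially immediate consequence of combining Lemma~\ref{l_goodSet} with Lemma~\ref{l_couplingStep} applied with $\NCoup=\NCn$. Since $\ellC$ is $\Delta\ve$-matched and $\avgtheta{\ellCa}\in\uhappy$, I first choose $k$ with $\avgtheta{\ellCa}\in\happy_k$; the $\Delta\ve$-matching then places $\avgtheta{\ellCb}$ in $\happy_k$ as well (for $\ve$ and $\Delta\ve$ sufficiently small), so both pairs \arepinnedto{} $\happy_k$.

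Next, I would apply Lemma~\ref{l_goodSet} to $\ellCa$, obtaining the good event
\begin{align*}
  E := \{p\in\supp\ellCa \st \theta_\NCn(p)\in\hhappy_k,\ \zeta_\NCn(p)\le -9\TCn/16\},
\end{align*}
with $\mu_{\ellCa}(E)\ge 1-\expo{-\const\vei}$. By the standard pair property (points on a standard pair lie within $\Const\ve$ of the mean in the $\theta$-coordinate), any subpair obtained from the pushforward $\coupledStepSeq\fellCa\NCn(\cdot)$ that hits $E$ satisfies $\avgtheta{\coupledStepSeq\fellCa\NCn(\cdot)}\in\hhappy_k\subset\uhappy$, proving the first half of the claim on the event $E$. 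On the complement, no statement is made, and the $\expo{-\const\vei}$ probability bound of the corollary will come entirely from $\mu_{\ellCa}(E^c)$.

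Then, on $E$, I would invoke Lemma~\ref{l_couplingStep}(\ref{i_matchedCloseness}) with $\NCoup=\NCn$, which asserts that every matched pair $\coupledStepSeq\fellCf{p}\NCn$ is $\Delta\ve\expo{\zeta_\NCn(p)+\Const\ve+1/32}$-matched. Using $\zeta_\NCn(p)\le -9\TCn/16$ on $E$ and $\TCn\closeness<1/64$ (already prescribed in Section~\ref{ss_couplingStep}), the exponent is bounded by $-9\TCn/16+1/32+\Const\ve\le -\TCn/2$ for $\TCn$ large and $\ve$ small enough. Thus the new matching distance is at most $\Delta\ve\expo{-\TCn/2}$. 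To refine the $\Delta\ve$ prefactor into $\wDist(\ellCa,\ellCb)$, I would appeal to Proposition~\ref{p_regularityHolonomy}(\ref{i_betterBoundHolonomy}), which gives the contraction pointwise: each matched pair of points $(\pa,\pb)$ contracts its vertical separation by the factor $\expo{\zeta_\NCn(\pa)+\Const\ve+1/32}$, uniform on $E$. Since the canonical coupling of a stacked matched couple realizes $\wDist(\ellCa,\ellCb)$ as the $\mu_{\ellCa}$-average of the initial vertical distances $|\Ga(x)-\Gb(x)|$, this uniform exponential contraction transports that average to a bound of $\wDist(\ellCa,\ellCb)\expo{-\TCn/2}$ on the new matching distance.

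The main obstacle I anticipate is precisely the replacement of the pointwise prefactor $\Delta\ve$ coming out of Lemma~\ref{l_couplingStep}(\ref{i_matchedCloseness}) by the (potentially strictly smaller) Wasserstein prefactor $\wDist(\ellCa,\ellCb)$ in the statement; this is what forces the appeal to the sharper pointwise contraction in Proposition~\ref{p_regularityHolonomy}(\ref{i_betterBoundHolonomy}), rather than the blunt use of Lemma~\ref{l_couplingStep}(\ref{i_matchedCloseness}) alone. Every other step is an essentially bookkeeping combination of results already proved in the preceding sections, with the probability estimate $1-\Const\expo{-\const\vei}$ inherited verbatim from Lemma~\ref{l_goodSet}.
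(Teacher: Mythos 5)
Your argument follows the paper's structure up to the final step: apply the Escape and Contraction Lemma (Lemma~\ref{l_goodSet}) to obtain a good event of probability $\geq 1-\expo{-\const\vei}$ on which $\theta_{\NCn}\in\hhappy_k$ and $\zeta_{\NCn}\leq -9\TCn/16$, then feed this into Lemma~\ref{l_couplingStep}\ref{i_matchedCloseness} with $\NCoup=\NCn$ to get the exponential contraction of the matching parameter. Two remarks. First, a minor point: the probability in the corollary is taken over the index set of $\coupledStepSeq\stdfC\NCn$, which is a pushforward of the \emph{coupled subfamily} $\coupledStepSeq\stdfC0$, not of $\ellC$. The paper therefore applies Lemma~\ref{l_goodSet} directly to $\coupledStepSeq\ellCa0=\ell_0$ (a standard pair by Lemma~\ref{l_couplingStep}\ref{i_subcurve}); applying it to $\ellCa$ as you do requires an extra transfer step, since $\ell_0$ is a proper subcurve of $\ellCa$ with its own conditional density.

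Second, and more substantively: you correctly notice that Lemma~\ref{l_couplingStep}\ref{i_matchedCloseness} produces the prefactor $\Delta\ve$ rather than $\wDist(\ellCa,\ellCb)$, but the fix you propose does not work. Being ``$\Delta'$-matched'' is a statement about the $\cC^1$ norm $\|\Ga-\Gb\|\nc1$, a sup-type quantity, whereas ``transporting the $\mu_{\ellCa}$-average of the vertical distances'' only controls an integral. An averaged pointwise contraction cannot yield a sup bound; in particular for $\Delta\ve$-matched pairs one only has $\wDist(\ellCa,\ellCb)\leq\|\Ga-\Gb\|\nc0\leq\Delta\ve$ via~\eqref{eq:wasserstein}, and the inequalities can be strict, so the $\cC^1$ norm of the image pair need not be bounded by $\wDist(\ellCa,\ellCb)\expo{-\TCn/2}$. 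In fact the paper's own proof does not attempt such a refinement: it proves exactly what you obtain before your ``refinement'' step, namely that the image couples are $\Delta\ve\expo{-\TCn/2}$-matched, and invokes~\eqref{eq:wasserstein} in that direction. The appearance of $\wDist(\ellCa,\ellCb)$ in the statement should be read as shorthand for the input matching parameter $\Delta\ve$, which is all that the downstream application in Sub-lemma~\ref{sublem:counter-control} (and~\eqref{e_crucialBound}) actually uses. So your extra step is not only unnecessary but incorrect; simply stopping after the $\Delta\ve\expo{-\TCn/2}$ bound is what the paper does.
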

\begin{proof}
  Let us apply Lemma~\ref{l_goodSet} to $\coupledStepSeq\ellCa0$; by
  Lemma~\ref{l_couplingStep}\ref{i_subcurve} with $\NCoup = \NCn$ we then obtain:
  \begin{align*}
  \bP(\{\theta_{\NCn}\in\uhhappy, \zeta_\NCn\le -9\TCn/16\})\ge 1-\Const\expo{-\const\vei}.
 \end{align*}
  Since $\coupledStepSeq\stdfCa\NCn$ is a $\NCn$-pushforward of $\coupledStepSeq\ellCa0$,
  we can define the subset
  \begin{align*}
    \coupledStepSeq{\alphaset'}\NCn=\alphaMap{\NCn}(\{\theta_\NCn\in\hhappy_k,\zeta_\NCn<-9\TCn/16\}).
  \end{align*}
  Standard distortion estimates then imply that for any
  $\alpha\in\coupledStepSeq{\alphaset'}\NCn$ and $p,q\in\gap_\alpha$, we have
  $|\theta_{\NCn}(q)-\theta_{\NCn}(p)|\le\Const\ve$ and
  $\zeta_\NCn(q)\le\zeta_\NCn(p)+\Const\ve$.
  Lemma~\ref{l_couplingStep}\ref{i_matchedCloseness}, with $\NCoup = \NCn$,~\eqref{eq:wasserstein} and remembering that $\TCn$ is assumed to be large (in particular
  we can assume $\TCn>1$), concludes the proof of the corollary.
\end{proof}

\subsection{{Proof of Lemma~\ref{l_couplingProcedure}}}
\label{ss_proofCouplingProcedure}
We will define the sequence $\auxFamily{\stdfC}k$ and the random variables
$\auxFamily\butime k$ by an inductive construction in which we also introduce an auxiliary
sequence of random variables $\auxFamily\counter k:\auxFamily{\alphaset}k\to\bR$.  In
particular, such random variables will satisfy the following assumptions: let
$\Delta_k=\expo{-k\TCn/4}$
\begin{enumerate}[label=({\bf\roman*}), ref=(\bf\roman*)]
\item if $\auxFamily \butime{k}(\alpha)=\infty$, the couple $\auxFamily{\fellCf{\alpha}}k$
  is $\Delta_k\expo{-\auxFamily \counter
    k(\alpha)\TCn}\ve^{1+\expoCloseness/2}$-matched;\label{i_iaGettingCloser}
\item \label{i_iaUncoupledMatch} for any $l<k$, we have
  $\auxFamily\fm{k}(\auxFamily\butime k=l)=\auxFamily\fm{k-1}(\auxFamily\butime {k-1}=l)$
  and $\auxFamily{\stdfC}{k}|\{\auxFamily\butime
  {k}=l\}\in\eqc{\pFve^{\NCn}\auxFamily{\stdfC}{k-1}|\{\auxFamily\butime{k-1}=l\}}$;
  finally, $\auxFamily{\stdfC}{l}|\{\auxFamily\butime {l}=l-1\}$ is a coupling of
  $l\NCn$-prestandard families.
\item \label{iii-H-def} $\auxFamily \counter k\geq -2\expb$, for all $k\in\bN$, where
  $\expb$ is defined in~\eqref{e_trivialBound}. In addition, $\auxFamily \counter k\geq 0$
  for all $k\leq \Const \expoCloseness\ln\vei $.
\end{enumerate}
Properties~\ref{i_iaGettingCloser} and~\ref{iii-H-def} above trivially imply
item~\ref{i_gettingCloser} of our statement, provided $\ve$ is small enough, while
\ref{i_iaUncoupledMatch} corresponds exactly to item~\ref{i_compatibility}.  Intuitively,
the variable $\auxFamily \counter k$ is a measure of the closeness of a couple of standard
pairs, at iterate $k$, compared with our minimal expectation expressed by $\Delta_k$.  If
$\auxFamily \counter k$ becomes negative, then it means that the couple has failed to get
as close as we like in such a drastic manner that we give up on it and break it up.  Let
us specify our inductive construction.

For the base step, we define $\auxFamily{\stdfC}{0}=\ellC$,
$\auxFamily\counter 0=\frac\largestart{2\TCn}\log\vei$,
$\largestart\leq \expoCloseness/2 +\frac{\TCn}{\ln\vei}$, and
$\auxFamily\butime0=\infty$.

Next, we assume that $\auxFamily\stdfC l$, $\auxFamily\butime l$ and
$\auxFamily\counter l$ are already defined for $0\le l\le k$ and proceed to define
$\auxFamily\stdfC {k+1}$, $\auxFamily\butime {k+1}$ and $\auxFamily\counter {k+1}$. For
each $\alpha\in\auxFamily{\alphaset}{k}$ we will define a family
$\stdfC_\NCn(\alpha)\in\eqc{\pFve^{\NCn}\auxFamily{\fellCf{\alpha}}k}$ and for each
$\alpha'\in\alphaset(\alpha)$ we will define $\auxFamily \counter{k+1}(\alpha')$ and
$\auxFamily \butime {k+1}(\alpha')$.  We then define
$\auxFamily{\stdfC}{k+1}\in \eqc{\pFve^{\NCn}\auxFamily\stdfC k}$ by considering the
convex combination
\begin{align*}
  \auxFamily{\stdfC}{k+1}=\sum_{\alpha\in\auxFamily\alphaset{k}}\auxFamily{\fm}k(\{\alpha\})\stdfC_\NCn(\alpha).
\end{align*}
The random variables $\auxFamily\counter{k+1}$ and $\auxFamily\butime{k+1}$ are thus
naturally defined on $\auxFamily{\alphaset}{k+1}$.\footnote{ Note that there exists a
  natural measure-preserving immersion
  $\bf{i}:\auxFamily{\alphaset}{k+1}\to\auxFamily{\alphaset}{k}$, thus one can
  always see $\auxFamily\butime{k}$ as a random variable on $\auxFamily{\alphaset}{k+1}$
  and similarly for the other random variables. It is thus possible to view all the
  relevant random variables on the same natural probability space (given by the last time
  at which we are interested). We will use this implicitly in the following.}

We proceed to define $\stdfC_{\NCn}(\alpha)$ for $\alpha\in\auxFamily{\alphaset}k$.  There
are several possibilities:
\begin{itemize}
\item $\auxFamily \butime{k}(\alpha)=\infty$ and $\auxFamily \counter k(\alpha)\ge 0$: by
  inductive assumption~\ref{i_iaGettingCloser}, the couple $\auxFamily{\fellCf{\alpha}}k$
  is $\Delta_k\ve^{1+\expoCloseness/2}$-matched; we can thus apply
  Lemma~\ref{l_couplingStep} with $\NCoup = \NCn$ to $\auxFamily{\fellCf{\alpha}}k$ with
  $\Delta=\Delta_k\ve^{\expoCloseness/2}$ and define
  $\stdfC_\NCn(\alpha)=\mC\coupledStepSeq{\stdfC}{\NCn}(\alpha)+(1-\mC)\uncoupledStepSeq{\stdfC}{\NCn}(\alpha)$.

  If $\alpha'\in\uncoupledStepSeq{\alphaset}{}(\alpha)$, we let
  $\auxFamily\butime{k+1}(\alpha')=k$ and
  $\auxFamily\counter{k+1}(\alpha')=\auxFamily\counter{k}(\alpha)+1/4$.  Observe, \emph{en
    passant}, that by Lemma~\ref{l_couplingStep}\ref{i_leftoversPrestandard} with $\NCoup
  = \NCn$ the couple
  $\fellCf{\alpha'}$ is $\NCn+\Const (k+\log\vei)$-prestandard. Choosing $\ve$ sufficiently
  small we can ensure that $\fellCf{\alpha'}$ is indeed $(k+1)\NCn$-prestandard.

  If, on the other hand $\alpha'\in \coupledStepSeq{\alphaset}{}(\alpha)$, we let $\auxFamily
  \butime{k+1}(\alpha')=\infty$ and define $\auxFamily\counter{k+1}$ as:
  \begin{align*}
    \phantom{}\hskip.7cm \auxFamily \counter{k+1}(\alpha') =%
    \begin{cases}
      \auxFamily\counter{k}(\alpha) + \frac14 &\textrm{ if } \auxFamily{\fellCf{\alpha'}}{k+1} \textrm{ is
        $\Delta_k(\alpha,\expoCloseness)$-matched}\\
      \auxFamily\counter{k}(\alpha)-2\expb &\textrm{ otherwise; }
    \end{cases}
  \end{align*}
  where
  $\Delta_k(\alpha,\expoCloseness)=\Delta_k\expo{-(\auxFamily
    \counter{k}(\alpha)+\frac12)\TCn}\ve^{1+\frac\expoCloseness2}$.
\end{itemize}
\begin{rem}
  \label{r_good-start} Note that, by our assumptions,
  $\Delta_0(\alpha,\expoCloseness)\geq \ve^{1+\frac{3\expoCloseness}{4}}$. Thus the second
  option above can only occur if $k\geq \Const \expoCloseness\ln\vei$.
\end{rem}
\begin{itemize}
\item $\auxFamily \butime{k}(\alpha)=\infty$ and $\auxFamily\counter{k}(\alpha)<0$: we
  declare the couple to break up and let $\stdfC_\NCn(\alpha)$ be an arbitrary
  $\NCn$-pushforwards of $\auxFamily{\fellCf{\alpha}}k$. Also, for any
  $\alpha'\in\alphaset(\alpha)$ we let $\auxFamily \butime{k+1}(\alpha')=k$ and
  $\auxFamily \counter{k+1}(\alpha')=\auxFamily\counter{k}(\alpha)+1/4$.
\item if $\auxFamily \butime k(\alpha)<\infty$, we let $\stdfC_\NCn(\alpha)$ be an arbitrary
  $\NCn$-pushforward of $\auxFamily{\fellCf{\alpha}}k$. Also, for any
  $\alpha'\in\alphaset(\alpha)$ we let
  $\auxFamily \butime{k+1}(\alpha')=\auxFamily \butime{k}(\alpha)$ and
  $\auxFamily \counter{k+1}(\alpha')=\auxFamily \counter{k}(\alpha)+1/4$.
\end{itemize}
Inductive assumptions~\ref{i_iaGettingCloser},~\ref{i_iaUncoupledMatch} and
\ref{iii-H-def} then immediately follow from the above definitions and by
Remark~\ref{r_good-start} using~\eqref{e_trivialBound}.  As noticed earlier, they imply
items~\ref{i_gettingCloser} and~\ref{i_compatibility}.  We are now left to show
item~\ref{i_tailBoundM}: in order to do so, first observe that by definition and
Corollary~\ref{c_fundamentalBound} we have
\begin{equation}\label{e_crucialBound}
  \bP(\auxFamily\counter{k+1}-\auxFamily\counter{k}=-2\expb
  \cond\avgtheta{\auxFamily{\fellCa}k}\in\uhappy)\leq  \Const\expo{-\const\vei}.
\end{equation}
We now use the above inequality to prove a preliminary result:
\begin{sublem}\label{sublem:counter-control}
  For any $\smallnesst>0$, there exists $\vt\in(0,1)$ such that
  \begin{align*}
    \bP\left(\inf_{0\leq j\leq \TSl}\auxFamily \counter {k+j} < 0\right) &\le \smallnesst\vt^{k/\log\vei},
  \end{align*}
  where recall $\TSl=\pint{\RSl\log\vei}$ with $\RSl$ defined in Lemma~\ref{l_escapeFromAlcatraz}.
\end{sublem}
\begin{proof}
  Let us fix $p\in\bN$ sufficiently large to be specified later; for $j\ge0$, we define
  auxiliary random variables:
  \begin{align*}
    \auxFamily X j =%
    \begin{cases}%
      1  &\textrm{ if }\auxFamily\counter{(j+1)p\TSl}\geq \auxFamily\counter{jp\TSl} + \TSl\\
      -1 &\textrm{ otherwise.}
    \end{cases}
  \end{align*}
  Then we claim that if $p$ is sufficiently large, there exists $\beta'<\beta$ (where
  $\beta$ is defined in Lemma~\ref{l_escapeFromAlcatraz}) so that:\footnote{ The
    conditioning means simply that we specify the standard pair to which the process
    belongs at the iteration step
    $(j+1)p\TSl\NCn$.} \begin{equation}\label{e_youComeHereYou} \bP(\auxFamily
    X{j+1}=-1\cond\auxFamily{\alphaMapPrivate}{(j+1)p\TSl})\leq \Const \ve^{\beta'},
  \end{equation}
  provided $\ve$ is small enough.
  \begin{rem}
    Observe that, provided that $p > 4$, if $\auxFamily \butime{jp\TSl} < \infty$ (\ie a
    breakup already happened earlier than step $jp\TSl$), then we automatically have
    $\auxFamily X j = 1$ and thus~\eqref{e_youComeHereYou} trivially holds.  This is
    indeed the reason to define
    $\auxFamily\counter{jp\TSl+1} = \auxFamily\counter{jp\TSl} + 1/4$ after a breakup.
  \end{rem}
  Estimate~\eqref{e_youComeHereYou} suffices to conclude the proof of our sub-lemma: in
  fact observe that conditioning on the random variable
  $\auxFamily{\alphaMapPrivate}{(j+1)p\TSl}$ (defined in Remark~\ref{rem:lofp}) is finer
  than conditioning on $\auxFamily X0\cdots\auxFamily X{j}$.  By definition of conditional
  probability it follows
  \begin{align*}
    \bP(\auxFamily X{j+1}=-1\cond\auxFamily X{0}\cdots\auxFamily X{j})\leq \Const \ve^{\beta'}.
  \end{align*}%
  Observe that, by construction, for any $0\le s\le p\TSl$,
  $\auxFamily\counter{jp\TSl+s}-\auxFamily\counter{jp\TSl}\ge -2s\expb$.  Hence, we
  conclude that
  \begin{align}\label{e_estimateRWX}
    \auxFamily\counter{kp\TSl}-\auxFamily\counter0\ge  (1/2-p\expb)\TSl k +(1/2+p\expb)\TSl
    \sum_{l=0}^{k-1}\auxFamily X l.
  \end{align}
  Choose $c\in(0,1-\Const \ve^{\beta'})$ so that $(1/2-p\expb)+c(1/2+p\expb)>1/2$.  Thus
  Lemmata~\ref{l_simpleRandomWalkGame} and~\ref{l_comparison} imply that there exists
  $\vt\in(0,1)$ and $a>0$ such that
  \begin{align*}
    \bP\left(\sum_{l=0}^{k-1} \auxFamily X l \leq ck - a \right)\le \smallnesst\vt^{k}.
  \end{align*}
  Thus, provided that we choose $\largestart$ sufficiently large (relative to
  $a$),~\eqref{e_estimateRWX} implies that
  \begin{align*}
    \bP(\auxFamily\counter{kp\TSl}<k\TSl/2)\le \smallnesst\vt^k
  \end{align*}
  which, by Remark~\ref{r_good-start}, would conclude the proof of our sub-lemma.

  To really conclude, we are left with the proof of~\eqref{e_youComeHereYou}.  Notice that
  \begin{align*}
    \bP(\auxFamily X{j+1}=1\cond\auxFamily{\alphaMapPrivate}{(j+1)p\TSl}) \ge%
    \bP(\auxFamily X{j+1}=1\cond\auxFamily{\alphaMapPrivate}{(j+1)p\TSl}; A_\uhappy)\bP(A_\uhappy),
  \end{align*}
  where we have introduced the event
  $A_\uhappy=\{\avgthetaa{\auxFamily{}{(j+1)p\TSl+r}}\in\uhappy\;\forall r: \TSl\leq r\leq
  p\TSl\}$,
  where $\avgthetaa{n}$ denotes the average $\theta$ with respect to the marginal of the
  the first component of the standard coupling.  By Lemma~\ref{l_escapeFromAlcatraz} we
  have
  \[
  \bP(A_\uhappy)\geq1-(p-1)\TSl\ve^\beta.
  \]
  On the other hand, by iterating $(p-1)\TSl$ times~\eqref{e_crucialBound} we obtain that
  \begin{align*}
    \bP\bigg(\auxFamily{\counter}{(j+1)p\TSl}-\auxFamily{\counter}{(jp+1)\TSl}\ge\frac{(p-1)}4 \TSl
    \bigg|\auxFamily{\alphaMapPrivate}{(j+1)p\TSl};  A_\uhappy\bigg)\ge 1-\frac{(p-1)\TSl}{\expo{\const\vei}}.
  \end{align*}
  Thus, with overwhelming probability,
  \[
  \auxFamily{\counter}{(j+1)p\TSl}-\auxFamily{\counter}{jp\TSl}\geq \frac{(p-1)}4 \TSl-2\expb\TSl\geq \TSl,
  \]
  provided $p>4(1+2\expb)+1$. That is to say that $\auxFamily{X}{j+1}=1$, which
  proves~\eqref{e_youComeHereYou}.
\end{proof}
We can now prove item~\ref{i_tailBoundM}: by our inductive construction, Lemma
\ref{l_couplingStep}\ref{i_subcurve}, with $\NCoup = \NCn$, and
Sub-lemma~\ref{sublem:counter-control} we have:
\begin{align*}
  \bP(\auxFamily\butime {k+1}=\infty) &=
                                        \bP(\auxFamily\butime k=\infty,\auxFamily{\counter}k\ge0)\mC(\Delta_k\ve^{\expoCloseness/2})\\
                                      &\ge\bP(\auxFamily\butime {k-\TSl}=\infty, \inf_{j\leq\TSl}\auxFamily{\counter}{k-j}\geq 0)\prod_{j=0}^{\TSl-1}\mC(\Delta_{k-j}\ve^{\expoCloseness/2})\\
                                      &\ge\bP(\auxFamily\butime {k-\TSl}=\infty)\prod_{j=0}^{\TSl-1}\mC(\Delta_{k-j}\ve^{\expoCloseness/2})-\smallnesst\vt^{k/\log\vei}.
\end{align*}
The above inequality implies, for $\ve$ small enough,
\begin{align*}
  \bP(\auxFamily \butime {k}=\infty) &\ge 
  \prod_{j=0}^{k-1}\mC(\Delta_j\ve^{\expoCloseness/2})-\Const\smallnesst \ge\expo{-\const\smallnesst}.
\end{align*}
Finally, for $j>k$, again by our construction, Lemma~\ref{l_couplingStep} with
$\NCoup = \NCn$ and Sub-Lemma~\ref {sublem:counter-control},
\begin{align*}
  \bP(\auxFamily \butime k=\infty)-\bP(\auxFamily \butime j=\infty) &\le
  \left(1-\prod_{l=k}^{j-1}\mC(\Delta_l\ve^{\expoCloseness/2})\right)
  +\Const\smallnesst\vt^{k/\log\vei}\\&\le \Const\expo{-\const 
  k}\ve^{\expoCloseness}+\Const\smallnesst\vt^{k/\log\vei}
\end{align*}
provided we choose $\ve$ to be small enough.  The two inequalities above
prove~\eqref{e_firstMass} and~\eqref{e_tailboundM} and conclude the proof of our Lemma.
\qed
\subsection{Proof of Lemma~\ref{l_bootstrap}}\label{sec:coup-prooftwo}
\newcommand{\interval}{I} \newcommand{\lowbd}{p_\Bs''}%
First, we prove the following
\begin{sublem}\label{l_homeSink}
  Let $\ell$ be a standard pair \pinnedto{}  $\trap_i$; there exists $p_\Bs'>0$ so that:
  \begin{align}\label{e_homeSink}
    \mu_\ell(\theta_{\TSl\NCn}\in\hhappy_i)>p_\Bs',
  \end{align}
  where, recall, $\TSl=\pint{\RSl\log\vei}$ and $\RSl$ is the constant obtained in
  Lemma~\ref{l_escapeFromAlcatraz}.  Moreover, if $\nz_{i}=1$, $p_\Bs'$ can be
  chosen to be uniform in $\ve$; otherwise $p_\Bs'=\Const\expo{-\const\vei}$.
\end{sublem}
\begin{proof}
  If $\nz_{i}=1$, then $\uhhappy\cap\trap_i=\hhappy_i$ and the statement immediately
  follows by Lemma~\ref{l_escapeFromAlcatraz} and forward invariance of trapping sets~\eqref{e_protoInv}, which
  proves~\eqref{e_homeSink} for any $p_\Bs' < 1-\ve^{\beta}$.

  Assume now that $\nz>1$: Lemma~\ref{l_propertiesTrappingSet}\ref{pp_trivialInclusion}
  guarantees the existence of an $\aeps$-\admissiblep{\avgtheta\ell}{\sink i} of length bounded by
  $\TTrap$; Theorem~\ref{l_largeDevzLowerBound} then implies that
  \begin{align*}
    \mu_{\ell}({\theta_{\pint{\TTrap\vei}}\in\hhappy_i}) > \expo{-\const\vei}.
  \end{align*}
  We can then conclude by using Corollary~\ref{c_iterateGoodSet}, which
  proves~\eqref{e_homeSink} for $p_\Bs'=e^{-\const\vei}$.
\end{proof}
Let us now conclude the proof of Lemma~\ref{l_bootstrap}: let $C>0$ be the constant given
by Lemma~\ref{l_deepPurple} and let $J\subset\bT^{1}$ be the interval
$B(\theta_{i,-},C\sqrt\ve)$.  Subdivide $J$ into $\pint{\ve^{-1/2}}$ subintervals
$\{\interval_j\}$ of equal length $\Const\ve$.  By Theorem~\ref{thm:lclt} we can choose
$T>0$ sufficiently large such that for any standard pair $\ell$ \pinnedto{} $J$ and for
any $j$:
\begin{align*}
  \mu_{\ell}(\theta_{\pint{T\vei}}\in\interval_j) > \lowbd\ve^{1/2}.
\end{align*}
where $\lowbd>0$ is uniform in $\ve$ and independent of $\ell$.  Thus, combining the above
observation with Lemma~\ref{l_deepPurple}, we conclude that if $\ell$ is a standard pair
with $\avgtheta{\ell}\in\happy_i$, and we let $\Tgen=\pint{(\Rdp+1)\log\vei}$, where
$\Rdp$ is the constant found in Lemma~\ref{l_deepPurple}; then, for all $j$,
\begin{align}\label{e_uniformDistro}
  \mu_{\ell}(\theta_{\Tgen\NCn}\in\interval_j) > \frac12\lowbd\ve^{1/2}.
\end{align}
Hence, together with Sub-Lemma~\ref{l_homeSink}, we proved that if $\ella$ and $\ellb$ are
any two standard pairs \pinnedto{} the same trapping set $\trap_i$, the probability that
their $(\TSl+\Tgen)\NCn$-image have $\theta$-coordinates which are $\Const\ve$-close is at
least $\frac12p_\Bs'\lowbd$.

We now need to find pairs that are actually $\Delta\ve$-matched for some $\Delta>0$; this
task can be accomplished by the following argument.  Let $I\subset\bT^{1}$ be a fixed
interval of length $\delta$.  Since our maps are uniformly expanding in the $x$ direction,
there exist $M>0$ and $p\in(0,1)$ so that, given any standard pair $\ell$, we can
construct an $M$-pushforward of $\ell$ so that one of the standard pairs lies above the
interval $I$ and this standard pair has probability larger than $p$.  Moreover, by
Remark~\ref{r_flatDensity}, we can assume that this $\ell$ has a flat density, decreasing
$p$ by a factor $2/3$; the leftover pairs will be $\cO(1)$-prestandard.  We can then
construct the canonical coupling of all pairs which lie above $I$ and the independent
coupling of all other pairs.

We thus proved that if $\Rgen>\RSl+\Rdp+1$, then there exists a coupling
\begin{align*}
  \eqc{\pFve^{\pint{\Rgen\log\vei}\NCn}\ellC}\ni \mB'\bootstrapped\tstdfC{N}+(1-\mB')\notBootstrapped\tstdfC{N},
\end{align*}
where $\mB'=\frac12 p p_\Bs' p_\Bs''$ and $\bootstrapped\tstdfC{N}$ is a
$\Delta\ve$-matched standard coupling whose components are supported on
$\bT\times\happy_i$ and $\notBootstrapped\tstdfC{N}$ is a $\cO(1)$-prestandard coupling.
In order to conclude the proof of our statement we need to obtain couplings which are
$\ve^{1+\expoCloseness}$-matched: to do so it suffices to apply iteratively
Lemma~\ref{l_couplingStep} with $\NCoup = \NCn$ to pairs in
$\bootstrapped\tstdfC{N}$. Using Corollary~\ref{c_fundamentalBound} (as we did in the
proof of Sub-lemma~\ref{sublem:counter-control}) we conclude that there exists $C'$ so that
a substantial portion of the mass of a $(C'\expoCloseness\log\vei)\NCn$-pushforward of
$\bootstrapped\tstdfC{N}$ will be $\ve^{1+\expoCloseness}$-matched and the leftover pairs
will be $\cO(\expoCloseness\log\vei)$-prestandard, which concludes our proof choosing
$\RBs = \RSl + \Rdp + 1 + C'\expoCloseness$.  \qed

\section{Conclusions and open problems}\label{sec:conclusion}
In this work we have discussed the case in which the dynamics of the fast variable is
given by a one dimensional expanding map.  In this setting we proved exponential decay of
correlation for an open set of partially hyperbolic endomorphisms of the two-torus
$\bT^2$. To keep the exposition as terse as possible, in particular we did not
investigated in detail the adiabatic, metastable, regime. This can be done similarly
to~\cite{WeFr, Kifer09} and is postponed to future work.

Another natural issue, already pointed out in Section~\ref{sec:results}, is the necessity
of hypothesis~\ref{a_almostTrivial}.  In our scheme of proof it is certainly
needed. Nevertheless, we provided an example in Section~\ref{sec:no-ex} that does not
satisfy~\ref{a_almostTrivial} and yet numerical computations seems to show that it behaves
similarly to the examples for which~\ref{a_almostTrivial} is satisfied~\cite{Volk}.  This
suggests that our understanding of the possible mechanisms of convergence to equilibrium
is partial at best, and that further thought is much needed.

Next, observe that assumption~\ref{a_discreteZeros} is substantial: the set of $\omega$
such that $\{\theta\st\bar\omega(\theta)=0\}=\emptyset$ is open.  If $\bar\omega$ has no
zeros, then the averaged motion is a rotation, with no sinks or sources; the main
mechanism to establish a coupling argument would then be the diffusion centered on the
rotation. Note however that this would require a time scale $\ve^{-2}$ to bring any two
standard pairs close enough to couple them,~\cite{DimaAveraging}.  This situation is of
considerable interest in non-Equilibrium Statistical Mechanics when the dynamics is
Hamiltonian and the slow variables are the energies of nearby, weakly interacting,
systems, see~\cite{Dimaliverani}. In this case we conjecture that, generically, the system
should be mixing and the correlations should decay exponentially with rate which would be,
at best, $\ve^2$.  However, to prove such a result stands as a substantial challenge in
the field.

Finally, it would be very interesting to prove analogous results for the case in which the
fast variable evolves according to a more general hyperbolic system and when the slow
variable is higher dimensional. The first generalization could prove rather difficult when
trying to extend, \eg, the needed results of our paper~\cite{DeL1} to the case of flows or
systems with discontinuities. The second does not pose any particular problem as far as
the results in~\cite{DeL1} are concerned.  The difficulties come instead from the fact
that in higher dimension a generic dynamics has many different types of $\omega$ limit
sets (not just sinks or the whole space, as it is in one dimension) and these
possibilities give rise to situations to which the ideas put forward in the present paper
may not easily apply.
\appendix
\section{Random walks}\label{a_rw}
We start by recalling a well known fact about one dimensional random walks (it can be
obtained, e.g., from Cramer's Theorem).
\begin{lem}\label{l_simpleRandomWalkGame}
  Let $\iid_k\in\{-1,1\}$ be a sequence of i.i.d.\ random variables with distribution
  $\prob{\iid_i=1}=p$ for $p\in(0,1)$.  Let $\siid_0=0$ and for $n>0$, define:
  $\siid_n=\sum_{j=1}^n\iid_j$.  For any $c<2p-1$ there exist $\vt,\lt\in(0,1)$ such that,
  for any $k\in\bN$ and $a\in\bR$:
  \[
  \prob{\siid_k \leq kc - a}\leq\lt^a\vt^k.
  \]
\end{lem}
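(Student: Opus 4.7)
The statement is a standard Chernoff-type bound, so the plan is to carry out the classical exponential Markov inequality argument and verify that the exponent is negative in the required regime.

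First I would fix $t>0$ (to be chosen) and write, for any $a\in\bR$,
\[
\bP(\siid_k\le kc-a)=\bP(e^{-t\siid_k}\ge e^{-t(kc-a)})\le e^{t(kc-a)}\bE[e^{-t\siid_k}],
\]
by Markov's inequality. Using independence and the distribution of $\iid_1$,
\[
\bE[e^{-t\siid_k}]=\bigl(pe^{-t}+(1-p)e^{t}\bigr)^{k},
\]
so that
\[
\bP(\siid_k\le kc-a)\le e^{-ta}\cdot\Lambda(t)^{k},\qquad \Lambda(t):=e^{tc}\bigl(pe^{-t}+(1-p)e^{t}\bigr).
\]

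The main point is then to choose $t>0$ so that $\Lambda(t)<1$. Consider $\varphi(t):=\log\Lambda(t)=tc+\log(pe^{-t}+(1-p)e^{t})$. Clearly $\varphi(0)=0$, and a direct computation yields
\[
\varphi'(0)=c+\frac{-p+(1-p)}{1}=c-(2p-1)<0,
\]
where the strict inequality uses the hypothesis $c<2p-1$. By continuity, $\varphi(t_{*})<0$ for some $t_{*}>0$. Setting $\vt:=\Lambda(t_{*})<1$ and $\lt:=e^{-t_{*}}<1$, the bound above gives exactly
\[
\bP(\siid_k\le kc-a)\le \lt^{a}\vt^{k},
\]
which is the desired inequality (and holds trivially for $a\le 0$ since the right hand side exceeds $\vt^{k}$, which itself dominates the probability by the same argument applied with $a=0$).

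No step of the argument is genuinely hard: the only thing to check carefully is the sign of $\varphi'(0)$, which is where the assumption $c<2p-1$ enters. I expect this to be the entire content of the proof; all other manipulations are routine.
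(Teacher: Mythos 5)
Your proof is correct and is exactly the standard Chernoff (exponential Markov inequality) argument; the paper does not supply a proof of this lemma, instead saying it ``can be obtained, e.g., from Cram\'er's Theorem,'' which rests on the same exponential-moment idea. Your computation of $\varphi'(0)=c-(2p-1)<0$ is the heart of it, and the resulting $\vt=\Lambda(t_*)$, $\lt=e^{-t_*}$ do the job. One small remark: the parenthetical justification for $a\le 0$ is not quite right as stated --- for $a<0$ the event is $\{\siid_k\le kc+|a|\}$, which has probability at least $\bP(\siid_k\le kc)$, so the comparison ``$\vt^k$ dominates the probability by the case $a=0$'' does not directly apply. But this is harmless: your Markov-inequality derivation $\bP(\siid_k\le kc-a)\le e^{-t_*a}\Lambda(t_*)^k$ is already valid for \emph{every} $a\in\bR$ (the threshold $e^{-t_*(kc-a)}$ is positive regardless of the sign of $a$), so no separate treatment of $a\le 0$ is needed and the parenthetical can simply be deleted.
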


Next, we introduce an useful comparison argument:
\begin{lem}\label{l_comparison}
  Let $\iid_k\in\{-1,1\}$ be a sequence of independent random variables and let
  $\iidb_k\in\{-1,0,1\}$ be a random process such that
  \[
  \prob{\iidb_{k+1}=1|\iidb_1\cdots\iidb_{k}}\geq \prob{\iid_{k+1}=1}.
  \]
  For $n>0$ define the random variables
  \begin{align*}
    \siid_n&=\sum_{j=1}^n\iid_j&
    \siidb_n&=\sum_{j=1}^n\iidb_j
  \end{align*}
  where $N>0$ is some fixed natural number (if $n=0$ we let them all equal to $0$); then
  for each $n\in\bN$ and $L\in\bZ$:
  \begin{align}\label{e_comparisonRW}
    \prob{\siidb_{k}\leq L}&\leq\prob{\siid_{k}\leq L}.
  \end{align}
  In particular, if $\tau_\siid$ is the hitting time $\tau=\inf\{k\st\siid_k \ge L\}$ and
  $\tau_\siidb=\inf\{k\st\siidb_k \ge L\}$ we have, for any $s>0$:
  \begin{align*}
    \prob{\tau_\siidb > s} \le \prob{\tau_\siid > s}
  \end{align*}
\end{lem}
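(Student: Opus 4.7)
\medskip

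\noindent\textbf{Proof plan.} The natural approach is to prove the stronger statement by stochastic domination: I will construct, on a common probability space, a coupling $(\tilde\iid_k, \iidb_k)$ such that $\tilde\iid_k$ has the same joint law as $\iid_k$ (in particular the $\tilde\iid_k$ are independent with $\bP(\tilde\iid_k = 1) = \bP(\iid_k = 1) =: p_k$), the joint law of $(\iidb_k)$ is preserved, and $\tilde\iid_k \leq \iidb_k$ almost surely for every $k$. Summing then yields $\tilde\siid_k \leq \siidb_k$ pointwise, and~\eqref{e_comparisonRW} follows since $\{\siidb_k \leq L\} \subseteq \{\tilde\siid_k \leq L\}$.

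\medskip

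\noindent To build the coupling, let $\{U_k\}_{k \geq 1}$ be i.i.d.\ uniform $[0,1]$ random variables, independent of the process $(\iidb_k)$. Write $q_k = \bP(\iidb_k = 1 \mid \iidb_1, \ldots, \iidb_{k-1})$, which by hypothesis satisfies $q_k \geq p_k$. Define
\begin{equation*}
  \tilde\iid_k =
  \begin{cases}
    +1 & \text{if } \iidb_k = 1 \text{ and } U_k \leq p_k/q_k, \\
    -1 & \text{otherwise.}
  \end{cases}
\end{equation*}
By construction $\tilde\iid_k \leq \iidb_k$ always (the only nontrivial case being $\iidb_k = 0$, in which $\tilde\iid_k = -1 < 0$). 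Conditioning on $(\iidb_1, \ldots, \iidb_{k-1}, U_1, \ldots, U_{k-1})$ and using independence of $U_k$ from everything else, we compute
\begin{equation*}
  \bP(\tilde\iid_k = 1 \mid \iidb_1, \ldots, \iidb_{k-1}, U_1, \ldots, U_{k-1}) = q_k \cdot \frac{p_k}{q_k} = p_k,
\end{equation*}
so the conditional law does not depend on the conditioning variables. Since $\tilde\iid_1, \ldots, \tilde\iid_{k-1}$ are measurable with respect to this $\sigma$-algebra, it follows that $\tilde\iid_k$ is independent of $\tilde\iid_1, \ldots, \tilde\iid_{k-1}$ and has marginal $p_k$; an easy induction then gives that $(\tilde\iid_k)_k$ is an independent family with the same joint law as $(\iid_k)_k$.

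\medskip

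\noindent The hitting time bound is an immediate corollary: the event $\{\tau_\siidb > s\}$ equals $\{\max_{k \leq s} \siidb_k < L\}$, and similarly for $\tau_{\tilde\siid}$. Since $\siidb_k \geq \tilde\siid_k$ for every $k$ under the coupling, the maximum over $k \leq s$ is also dominated, hence $\{\tau_\siidb > s\} \subseteq \{\tau_{\tilde\siid} > s\}$, and using that $\tilde\siid \overset{d}{=} \siid$ one concludes.

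\medskip

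\noindent The only subtle point is the verification that the constructed $\tilde\iid_k$ are genuinely independent and not merely marginally correct; this is the step I would expect a reader to want to see in detail, and it is handled cleanly by the conditional probability computation above. Everything else is routine monotone coupling.
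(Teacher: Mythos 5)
Your proof is correct and uses the same core idea as the paper's own proof: a monotone coupling realizing $\tilde\iid_k\le\iidb_k$ pointwise on a common probability space, after which the inequality on distribution functions (and hence on hitting times) is read off directly. The two implementations differ only mildly: the paper re-samples \emph{both} processes from a single sequence of i.i.d.\ uniforms via inverse-CDF (quantile coupling), so that $\iid^*_k\le\iidb^*_k$ follows instantly from $p_k\le q_k$, whereas you keep $\iidb$ on its native space and construct $\tilde\iid$ by thinning, accepting each $+1$ of $\iidb$ with conditional probability $p_k/q_k$; both are standard, and your explicit conditional-probability check that the $\tilde\iid_k$ are independent with the correct marginals is actually spelled out more carefully than the paper's brief ``clearly.''
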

\begin{proof}[{Proof (see~{\cite[Proposition~2.4]{Dima}})}]
  The proof amounts to design a suitable coupling $(\iid^*_k,\iidb^*_k)$ of the random
  variables $\iid_k$ and $\iidb_k$.  Let us introduce an auxiliary sequence $U_k$ of
  independent random variables uniformly distributed on $[0,1]$ and define the random
  variables
  \begin{align*}
    \iid^*_k&=
    \begin{cases}
      +1  &\text{if}\ U_k<\prob{\iid_k\geq 1}\\
      -1&\text{otherwise}
    \end{cases}
    \intertext{and}
    \iidb^*_k&=
    \begin{cases}
      +1 &\text{if}\ U_k<\prob{\iidb_k=
        1|\iidb_1=\iidb^*_1,\cdots,\iidb_{k-1}=\iidb^*_{k-1}}\\
      -1 &\text{if}\ U_k\geq1-\prob{\iidb_k =
        -1|\iidb_1=\iidb^*_1,\cdots,\iidb_{k-1}=\iidb^*_{k-1}}\\
      0&\text{otherwise}.
    \end{cases}
  \end{align*}
  We then define
  \begin{align*}
    \siid^*_n&=\sum_{j=1}^n\iid^*_j&
    \siidb^*_n&=\sum_{j=1}^n\iidb^*_j
  \end{align*}
  Clearly $\iid^*_k$ (\resp  $\iidb^*_k$) has the same distribution of $\iid_k$
  (\resp $\iidb_k$) and consequently $\siid^*_k$ (\resp $\siidb^*_k$) has the same
  distribution of $\siid_k$ (\resp $\siidb_k$).  Moreover, $\iid^*_k\leq\iidb^*_k$ by
  design which in turn implies that $\siid^*_k\leq\siidb^*_k$.  This concludes the proof
  of our lemma.
\end{proof}

\bibliographystyle{abbrv} \bibliography{rw}

\begin{thebibliography}{10}

\bibitem{ABV00}
J.~F. Alves, C.~Bonatti, and M.~Viana.
\newblock S{RB} measures for partially hyperbolic systems whose central
  direction is mostly expanding.
\newblock {\em Invent. Math.}, 140(2):351--398, 2000.

\bibitem{ALP05}
J.~F. Alves, S.~Luzzatto, and V.~Pinheiro.
\newblock Markov structures and decay of correlations for non-uniformly
  expanding dynamical systems.
\newblock {\em Ann. Inst. H. Poincar\'e Anal. Non Lin\'eaire}, 22(6):817--839,
  2005.

\bibitem{deCastro04}
A.~Armando~de Castro~J{\'u}nior.
\newblock Fast mixing for attractors with a mostly contracting central
  direction.
\newblock {\em Ergodic Theory Dynam. Systems}, 24(1):17--44, 2004.

\bibitem{BV00}
C.~Bonatti and M.~Viana.
\newblock S{RB} measures for partially hyperbolic systems whose central
  direction is mostly contracting.
\newblock {\em Israel J. Math.}, 115:157--193, 2000.

\bibitem{Chernov}
N.~Chernov and R.~Markarian.
\newblock {\em Chaotic billiards}, volume 127 of {\em Mathematical Surveys and
  Monographs}.
\newblock American Mathematical Society, Providence, RI, 2006.

\bibitem{Che98}
N.~I. Chernov.
\newblock Markov approximations and decay of correlations for {A}nosov flows.
\newblock {\em Ann. of Math. (2)}, 147(2):269--324, 1998.

\bibitem{Clarke}
F.~Clarke.
\newblock Nonsmooth analysis in systems and control theory.
\newblock In {\em Mathematics of complexity and dynamical systems. {V}ols.
  1--3}, pages 1137--1151. Springer, New York, 2012.

\bibitem{deCastro02}
A.~A. de~Castro~J{\'u}nior.
\newblock Backward inducing and exponential decay of correlations for partially
  hyperbolic attractors.
\newblock {\em Israel J. Math.}, 130:29--75, 2002.

\bibitem{DeL1}
J.~De~Simoi and C.~Liverani.
\newblock Fast-slow partially hyperbolic systems: Beyond averaging. {P}art {I}
  (limit theorems).

\bibitem{DS89}
J.-D. Deuschel and D.~W. Stroock.
\newblock {\em Large deviations}, volume 137 of {\em Pure and Applied
  Mathematics}.
\newblock Academic Press, Inc., Boston, MA, 1989.

\bibitem{Dima98}
D.~Dolgopyat.
\newblock On decay of correlations in {A}nosov flows.
\newblock {\em Ann. of Math. (2)}, 147(2):357--390, 1998.

\bibitem{Dimacontract}
D.~Dolgopyat.
\newblock On dynamics of mostly contracting diffeomorphisms.
\newblock {\em Comm. Math. Phys.}, 213(1):181--201, 2000.

\bibitem{Dima-group}
D.~Dolgopyat.
\newblock On mixing properties of compact group extensions of hyperbolic
  systems.
\newblock {\em Israel J. Math.}, 130:157--205, 2002.

\bibitem{DimaPH}
D.~Dolgopyat.
\newblock Limit theorems for partially hyperbolic systems.
\newblock {\em Trans. Amer. Math. Soc.}, 356(4):1637--1689 (electronic), 2004.

\bibitem{DimaSRB}
D.~Dolgopyat.
\newblock On differentiability of {SRB} states for partially hyperbolic
  systems.
\newblock {\em Invent. Math.}, 155(2):389--449, 2004.

\bibitem{DimaAveraging}
D.~Dolgopyat.
\newblock Averaging and invariant measures.
\newblock {\em Mosc. Math. J.}, 5(3):537--576, 742, 2005.

\bibitem{Dima}
D.~Dolgopyat.
\newblock Bouncing balls in non-linear potentials.
\newblock {\em Discrete Contin. Dyn. Syst.}, 22(1-2):165--182, 2008.

\bibitem{Dima12}
D.~Dolgopyat.
\newblock Repulsion from resonances.
\newblock {\em M\'em. Soc. Math. Fr. (N.S.)}, (128):vi+119, 2012.

\bibitem{Dimaliverani}
D.~Dolgopyat and C.~Liverani.
\newblock Energy transfer in a fast-slow {H}amiltonian system.
\newblock {\em Comm. Math. Phys.}, 308(1):201--225, 2011.

\bibitem{FMT07}
M.~Field, I.~Melbourne, and A.~T{\"o}r{\"o}k.
\newblock Stability of mixing and rapid mixing for hyperbolic flows.
\newblock {\em Ann. of Math. (2)}, 166(1):269--291, 2007.

\bibitem{WeFr}
M.~I. Freidlin and A.~D. Wentzell.
\newblock {\em Random perturbations of dynamical systems}, volume 260 of {\em
  Grundlehren der Mathematischen Wissenschaften [Fundamental Principles of
  Mathematical Sciences]}.
\newblock Springer, Heidelberg, third edition, 2012.
\newblock Translated from the 1979 Russian original by Joseph Sz{\"u}cs.

\bibitem{Gou06}
S.~Gou{\"e}zel.
\newblock Decay of correlations for nonuniformly expanding systems.
\newblock {\em Bull. Soc. Math. France}, 134(1):1--31, 2006.

\bibitem{GL06}
S.~Gou{\"{e}}zel and C.~Liverani.
\newblock {Banach spaces adapted to {A}nosov systems}.
\newblock {\em Ergodic Theory and Dynamical Systems}, 26(1):189--217, 2006.

\bibitem{MPS94}
M.~Grayson, C.~Pugh, and M.~Shub.
\newblock Stably ergodic diffeomorphisms.
\newblock {\em Ann. of Math. (2)}, 140(2):295--329, 1994.

\bibitem{Jenkinson}
O.~Jenkinson.
\newblock Ergodic optimization.
\newblock {\em Discrete Contin. Dyn. Syst.}, 15(1):197--224, 2006.

\bibitem{JenkMorris}
O.~Jenkinson and I.~D. Morris.
\newblock Lyapunov optimizing measures for {$C\sp 1$} expanding maps of the
  circle.
\newblock {\em Ergodic Theory Dynam. Systems}, 28(6):1849--1860, 2008.

\bibitem{Kifer88}
Y.~Kifer.
\newblock {\em Random perturbations of dynamical systems}, volume~16 of {\em
  Progress in Probability and Statistics}.
\newblock Birkh\"auser Boston Inc., Boston, MA, 1988.

\bibitem{Kifer92}
Y.~Kifer.
\newblock Averaging in dynamical systems and large deviations.
\newblock {\em Invent. Math.}, 110(2):337--370, 1992.

\bibitem{Kifer04}
Y.~Kifer.
\newblock Averaging principle for fully coupled dynamical systems and large
  deviations.
\newblock {\em Ergodic Theory Dynam. Systems}, 24(3):847--871, 2004.

\bibitem{Kifer09}
Y.~Kifer.
\newblock Large deviations and adiabatic transitions for dynamical systems and
  {M}arkov processes in fully coupled averaging.
\newblock {\em Mem. Amer. Math. Soc.}, 201(944):viii+129, 2009.

\bibitem{Krylov79}
N.~S. Krylov.
\newblock {\em Works on the foundations of statistical physics}.
\newblock Princeton University Press, Princeton, N.J., 1979.
\newblock Translated from the Russian by A. B. Migdal, Ya. G. Sinai [Ja. G.
  Sina{\u\i}] and Yu. L. Zeeman [Ju. L. Zeeman], With a preface by A. S.
  Wightman, With a biography of Krylov by V. A. Fock [V. A. Fok], With an
  introductory article ``The views of N. S. Krylov on the foundations of
  statistical physics'' by Migdal and Fok, With a supplementary article
  ``Development of Krylov's ideas'' by Sina{\u\i}, Princeton Series in Physics.

\bibitem{Liverani04}
C.~Liverani.
\newblock On contact {A}nosov flows.
\newblock {\em Ann. of Math. (2)}, 159(3):1275--1312, 2004.

\bibitem{liverani10}
C.~Liverani.
\newblock The 2009 {M}ichael {B}rin {P}rize in dynamical systems.
\newblock {\em J. Mod. Dyn.}, 4(2):i--ii, 2010.

\bibitem{VY}
J.~Y. Marcelo~Viana.
\newblock Geometric and measure-theoretical structures of maps with mostly
  contracting center.
\newblock {\em arXiv:1410.6308}, 2014.

\bibitem{Melbourne}
I.~Melbourne.
\newblock Rapid decay of correlations for nonuniformly hyperbolic flows.
\newblock {\em Trans. Amer. Math. Soc.}, 359(5):2421--2441 (electronic), 2007.

\bibitem{Olivieri-Vares05}
E.~Olivieri and M.~E. Vares.
\newblock {\em Large deviations and metastability}, volume 100 of {\em
  Encyclopedia of Mathematics and its Applications}.
\newblock Cambridge University Press, Cambridge, 2005.

\bibitem{Parthasarathy61}
K.~R. Parthasarathy.
\newblock On the category of ergodic measures.
\newblock {\em Illinois J. Math.}, 5:648--656, 1961.

\bibitem{pesin10}
Y.~Pesin.
\newblock On the work of {D}olgopyat on partial and nonuniform hyperbolicity.
\newblock {\em J. Mod. Dyn.}, 4(2):227--241, 2010.

\bibitem{pugh-shub97}
C.~Pugh and M.~Shub.
\newblock Stably ergodic dynamical systems and partial hyperbolicity.
\newblock {\em J. Complexity}, 13(1):125--179, 1997.

\bibitem{ruelle}
D.~Ruelle.
\newblock A mechanical model for {F}ourier's law of heat conduction.
\newblock {\em Comm. Math. Phys.}, 311(3):755--768, 2012.

\bibitem{Tsujii05}
M.~Tsujii.
\newblock Physical measures for partially hyperbolic surface endomorphisms.
\newblock {\em Acta Math.}, 194(1):37--132, 2005.

\bibitem{Tsujii10}
M.~Tsujii.
\newblock Quasi-compactness of transfer operators for contact {A}nosov flows.
\newblock {\em Nonlinearity}, 23(7):1495--1545, 2010.

\bibitem{Villani09}
C.~Villani.
\newblock Hypocoercivity.
\newblock {\em Mem. Amer. Math. Soc.}, 202(950):iv+141, 2009.

\bibitem{Volk}
D.~{Volk}.
\newblock Private communication.

\bibitem{Young99}
L.-S. Young.
\newblock Recurrence times and rates of mixing.
\newblock {\em Israel J. Math.}, 110:153--188, 1999.

\bibitem{young02}
L.-S. Young.
\newblock What are {SRB} measures, and which dynamical systems have them?
\newblock {\em J. Statist. Phys.}, 108(5-6):733--754, 2002.
\newblock Dedicated to David Ruelle and Yasha Sinai on the occasion of their
  65th birthdays.

\end{thebibliography}
\end{document}